\theoremstyle{plain} 
\newtheorem{Thm}{Theorem}[section] 
\newtheorem{Lem}[Thm]{Lemma}     
\newtheorem{Prop}[Thm]{Proposition}
\newtheorem{Cor}[Thm]{Corollary}
\theoremstyle{definition}
\theoremstyle{remark}
\newtheorem{Rem}[Thm]{Remark}
\numberwithin{equation}{section} 
\newcommand{\beq}{\begin{equation}}
	\newcommand{\eeq}{\end{equation}}
\newcommand{\ben}{\begin{eqnarray}}
	\newcommand{\een}{\end{eqnarray}}
\newcommand{\beno}{\begin{eqnarray*}}
	\newcommand{\eeno}{\end{eqnarray*}}
\newcommand{\no}{\nonumber}
\newcommand{\lt}{\left}
\newcommand{\rt}{\right}
\newcommand{\px}{\partial_x}
\newcommand{\py}{\partial_y}
\newcommand{\pt}{\partial_t}
\newcommand{\pxi}{\partial_\xi}
\newcommand{\abs}[1]{\lvert#1\rvert}  
\begin{document}
	
	\title[Stability threshold of the Couette flow for Boussinesq equation]{Stability threshold of Couette flow for  Boussinesq equations in $\mathbb{R}^2$}	
	\author{Yubo~Chen}
	\address[Yubo~Chen]{School of Mathematical Sciences, Dalian University of Technology, Dalian, 116024,  China}
	\email{1220823215@mail.dlut.edu.cn}
	
	\author{Wendong~Wang}
	\address[Wendong~Wang]{School of Mathematical Sciences, Dalian University of Technology, Dalian, 116024,  China}
	\email{wendong@dlut.edu.cn}
	
	\author{Guoxu~Yang}
	\address[Guoxu~Yang]{School of Mathematical Sciences, Dalian University of Technology, Dalian, 116024,  China}
	\email{guoxu\_dlut@outlook.com}
	\begin{abstract}
		
		This paper establishes the asymptotic stability threshold for the Couette flow $(y,0)$ under the 2D Boussinesq system in $\mathbb{R}^2$. 
		It was proved that for initial perturbations in Sobolev spaces with controlled low horizontal frequencies, 
		the stability threshold is at most $\left\{\frac{1}{3}+, \frac{2}{3}+\right\}$, extending the known threshold results from the periodic case $\mathbb{T}_x \times \mathbb{R}_y$ to the whole space. 
		
		The core innovations are twofold: First, the $\langle D_x^{-1} \rangle$ control on the initial data simultaneously resolves horizontal frequency singularities and optimizes integral indices when applying Young's convolution inequality. 
		Second, we develop a modified multiplier $\mathcal{M}_3$ that effectively absorbs the $|D_x|^{1/3}$ derivative structure induced by the temperature equation while handling nonlinear echo cascades. 
	\end{abstract}

	\maketitle
	\tableofcontents
	\section{Introduction}    
	In this paper, we investigate the nonlinear stability of the Couette flow $(y, 0)$ for the two-dimensional Boussinesq equations in $\mathbb{R}^2$ :
	\begin{align} \label{eq:main0}
		\left\{\begin{array}{l}
			\partial_t v-\nu \Delta v+v \cdot \nabla v+\nabla p =  \vartheta e_2, \\
			\pt \vartheta - \mu \Delta \vartheta +v\cdot \nabla \vartheta = 0    ,\\
			\operatorname{div} v=0, \\
			\left.v\right|_{t=0}=v_{\text{in}}(x, y), \quad \left.\vartheta\right|_{t=0}=\vartheta_{\text{in}}(x, y),
		\end{array}\right.
	\end{align}
	where $v(t, x, y)=\left(v^1, v^2\right)$ denotes the velocity field, $\vartheta$ the temperature and $p$ the pressure. Moreover, $e_2 = (0, 1)^T$, $\nu$ is the viscosity coefficient and $\mu$ is the thermal diffusivity.

The Boussinesq approximation simplifies fluid dynamics by assuming that density variations are negligible except in the buoyancy term (typically due to temperature or concentration gradients). The Boussinesq approximation and Boussinesq equations provide a balanced approach to modeling buoyancy-driven flows by simplifying density variations while retaining essential physics. They are fundamental in studying thermal convection, environmental flows, and heat transfer where gravitational effects dominate; see \cite{ZZ2023, DWZ2021}. Consider the perturbation of the system \eqref{eq:main0} under the Couette flow $(y, 0)$ and the constant temperature, and 
letting $u=v-(y, 0)$ and $\theta = \vartheta - a$, where $a\in \mathbb{R}$, we have
	$$
	\left\{\begin{array}{l}
		\partial_t u-\nu \Delta u+y \partial_x u+\left(u^2,\, 0\right)+u \cdot \nabla u+\nabla p  =   (\theta + a)e_2,\\
		\partial_t \theta -\mu \Delta \theta +y \partial_x \theta  +  u \cdot \nabla \theta  = 0,  \\
		\operatorname{div} u=0, \\
		\left.u\right|_{t=0}=u_{\text {in}}(x, y),\quad \left.\theta \right|_{t=0}= \theta_{\text{in}}(x, y).
	\end{array}\right.
	$$
	Define $\omega=\partial_y u^1-\partial_x u^2$ be the vorticity, and it follows that
	\begin{equation}\begin{aligned} \label{eq:main}
			\left\{\begin{array}{l}
				\partial_t \omega-\nu \Delta \omega+y \partial_x \omega+u \cdot \nabla \omega = \px \theta,  \\
				\partial_t \theta -\mu \Delta \theta +y \partial_x \theta  +  u \cdot \nabla \theta  = 0,  \\
				u=\nabla^{\perp}(-\Delta)^{-1} \omega=\left(-\partial_y, \partial_x\right)(-\Delta)^{-1} \omega, \\
				\left.\omega\right|_{t=0}=\omega_{\mathrm{in}}(x, y) ,\quad \left.\theta \right|_{t=0}= \theta_{\mathrm{in}}(x, y).
			\end{array}\right.	
	\end{aligned}\end{equation}
    When $\vartheta=0$, \eqref{eq:main0} is reduced to the famous Navier-Stokes equations.
    The transition threshold problem was formulated by Bedrossian, Germain and Masmoudi \cite{BGM2019} as follows:
	Given a norm $\|\cdot\|_X$, determine a $\beta=\beta(X)$ so that
	$$
	\begin{aligned}
		& \left\|u_0\right\|_X \ll R e^{-\gamma} \Rightarrow \text { stability, } \\
		& \left\|u_0\right\|_X \gg R e^{-\gamma} \Rightarrow \text { instability. }
	\end{aligned}
	$$
	The exponent $\gamma$ is referred to as the transition threshold.
    Similarly, for the Boussinesq system, it is interesting to consider the following stability threshold problem:\\
	Given norms $\|\cdot\|_{Y_1}$ and $\|\cdot\|_{Y_2}$, find $\alpha=\alpha\left(Y_1, Y_2\right)$ and $\beta=\beta\left(Y_1, Y_2\right)$ such that
	$$
	\begin{aligned}
		& \left\|u_{\mathrm{in}}\right\|_{Y_1} \leq \nu^\alpha \text { and }\left\|\theta_{\mathrm{in}}\right\|_{Y_2} \leq \nu^\beta \Rightarrow \text { stability, } \\
		& \left\|u_{\mathrm{in}}\right\|_{Y_1} \gg \nu^\alpha \text { or }\left\|\theta_{\mathrm{in}}\right\|_{Y_2} \gg \nu^\beta \Rightarrow \text { instability. }
	\end{aligned}
	$$

	 For $\mathbb{T} \times \mathbb{R}$, Deng--Wu--Zhang \cite{DWZ2021} proved the asymptotic stability of the steady state if the initial perturbations $(u_{\mathrm{in}}, \theta_{\mathrm{in}})$ satisfy
    \begin{equation} \label{eq:temp0.0}
        \left\|u_{\mathrm{in}}\right\|_{H^2} \leq \epsilon_0 \nu^{\frac{1}{2}}, \quad \left\|\theta_{\mathrm{in}}\right\|_{H^1}+\left\|\left|D_x\right|^{\frac{1}{6}} \theta_{\mathrm{in}}\right\|_{H^1} \leq \epsilon_1 \nu^{\frac{11}{12}}.
    \end{equation}
	The threshold obtained was improved by Zhang--Zi \cite{ZZ2023} to
	$$
	\left\|u_{\text {in}}\right\|_{H^{s+1}} \leq \epsilon_0 \nu^{\frac{1}{3}},\quad \left\|\theta_{\text {in}}\right\|_{H^s}+\nu^{\frac{1}{6}}\left\|\left|D_x\right|^{\frac{1}{3}} \theta_{\text {in}}\right\|_{H^s} \leq \epsilon_1 \nu^{\frac{5}{6}},
	$$
	where $s>7$. Later, the threshold  was improved again by Niu--Zhao \cite{NZ2024} to
	$$
	\left\|u_{\mathrm{in}}\right\|_{H^{s+1}} \leq \epsilon_0 \nu^{\frac{1}{3}}, \quad \left\|\left\langle\partial_x\right\rangle \theta_{\mathrm{in}}\right\|_{H^s} \leq \epsilon_0 \nu^{\frac{2}{3}},
	$$
	where $s>5$.

    For $\mathbb{T} \times \mathbb{I}$ with non-slip boundary data, Masmoudi--Zhai--Zhao \cite{MZZ2023} obtained the same threshold as \eqref{eq:temp0.0}. For $\mathbb{T} \times \mathbb{I}$ with Navier-slip boundary data, the threshold was obtained by Wang--Yang \cite{WY2024} as follows:
	$$
	\left\|u_{\text{in}}\right\|_{H^{4}}  + \nu^{-\frac{7}{12}}\left\|\theta_{\text{in}}\right\|_{H^3} + \nu^{-\frac13}\left\|\partial_x \theta_{\text{in}}\right\|_{H^3} \leq \epsilon_0 \nu^{\frac{1}{3}}.
	$$

    In the whole space $\mathbb{R} \times \mathbb{R}$, Wang--Wang \cite{WW202502} proved the asymptotic stability of the steady state if the initial perturbations $(u_{\mathrm{in}}, \theta_{\mathrm{in}})$ satisfy
	$$
	\left\|\omega_{\mathrm{in}}\right\|_{H^1 \cap L^1} \leq \epsilon_0 \nu^{\frac{3}{4}}, \quad\left\|\theta_{\mathrm{in}}\right\|_{H^1 \cap L^1} \leq \epsilon_1 \nu^{\frac{5}{4}}.
	$$
Recently, for the Navier-Stokes equations on $\mathbb{R} \times \mathbb{R}$, Arbon--Bedrossian \cite{AB2024} proved
       \beno
        \sum_{0\leq j\leq 1}\|\langle\partial_x\rangle ^m\langle\frac{\partial_x}{\nu}\rangle^{-\frac{j}{3}}\partial_y^jw_{in}\|_{L^2_{x,y}}+\|w_{in,k}\|_{L^\infty_k L_y^2}\leq \delta_1 \frac{\nu^{\frac12}}{1+\ln(\frac{1}{\nu})^{1/2}},
        \eeno
        where $m\in (1/2,1)$, $\langle \cdot \rangle:=\sqrt{1 + (\cdot)^2}$ and $\delta_1>0 $ is a constant.
        Then, the result was improved in \cite{LLZ2025} by
        Li--Liu--Zhao 
		$$
		\left\|\left\langle D_x, D_y\right\rangle^6\left\langle\frac{1}{D_x}\right\rangle^4 \omega_{\mathrm{in}}\right\|_{L^2}+\left\|\left\langle D_x, D_y\right\rangle^5\left\langle\frac{1}{D_x}\right\rangle^4 \omega_{\mathrm{in}}\right\|_{L^1} \leq \nu^{\frac{1}{3}+\delta},
		$$
		where $\delta>0$ is small, $D = \frac1i \partial$  and $\langle D_x, D_y\rangle:=\sqrt{1 + D_x^2 + D_y^2}$.

For the 3D Boussinesq equations via the Couette flow, we refer to \cite{ZZ2025,ZZW2024,CWW202501} and the reference therein. 

    Motivated by the progress in \cite{LLZ2025}, it is interesting whether the  threshold holds for the Boussinesq system (\ref{eq:main}).

    Our main result is follows: 
    \begin{Thm} \label{thm:main}
		Suppose that $\nu=\mu$. For arbitrarily small $0<\delta<1$, $\frac{1-\delta}{2}<\epsilon<\frac{1}{2}$, there exists $0<c, \nu_0<1$ such that for all $0<\nu \leq \nu_0$, if the initial data $\omega_{\mathrm{in}}$ and $\theta_{\mathrm{in}}$ satisfy
		\begin{equation}\begin{aligned} \label{eq:the initial data}
				&\|\langle D_x, D_y\rangle^6\langle\frac{1}{D_x}\rangle^4 \omega_{\mathrm{in}}\|_{L^2\cap L^1} \leq \nu^{\frac{1}{3}+\delta},  \\
				&\|\langle D_x, D_y\rangle^6\langle\frac{1}{D_x}\rangle^4 \langle D_x \rangle \theta_{\mathrm{in}}\|_{L^2\cap L^1} \leq \nu^{\frac{2}{3} + 2\delta},
		\end{aligned}\end{equation}
		then \eqref{eq:main} has a pair of unique global solution $(\omega(t),\,\theta(t))$ with the following stability estimate:
		\begin{equation*}\begin{aligned}
				\| e^{c \nu^{\frac{1}{3}}  \lambda(D_x) t} \langle D_x \rangle &\langle \tfrac{1}{D_x}  \rangle^\epsilon \omega\|_{L^\infty_{[0, +\infty)} L^2}  \\
				&\quad + \nu^{-\frac{1}{3} - \delta} \| e^{c \nu^{\frac{1}{3}} \lambda(D_x) t} \langle D_x \rangle \langle \tfrac{1}{D_x} \rangle^\epsilon \langle D_x \rangle^{\frac{1}{3}} \theta \|_{L^\infty_{[0, +\infty)} L^2} 
				\leq  C \nu^{\frac{1}{3} +\delta},
		\end{aligned}\end{equation*}
		where $\lambda(k)=\min \{1,|k|^{\frac{2}{3}} \}$.
	\end{Thm}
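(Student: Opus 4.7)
The plan is to follow the established methodology for the transition threshold problem, adapted to the whole-space setting. First, I would pass to the moving frame $(X,Y) = (x-yt,\,y)$, which removes the transport term $y\partial_x$ and makes the stabilizing effects of enhanced dissipation and inviscid damping visible. In Fourier variables $(k,\eta)$, the Laplacian acts as $-(k^2+(\eta-kt)^2)$, giving enhanced dissipation on timescale $\nu^{-1/3}\lambda(k)^{-1}$; the velocity is recovered via $\hat{u}(t,k,\eta) = (i(\eta-kt), -ik)\hat{W}/(k^2+(\eta-kt)^2)$, exhibiting $t^{-1}$ inviscid damping of $u^2$ and $t^{-2}$ inviscid damping of $u^1$.

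Second, I would construct three Fourier multipliers. The multiplier $\mathcal{M}_1$ encodes the enhanced dissipation weight $e^{c\nu^{1/3}\lambda(D_x)t}$, so that its logarithmic time derivative $\partial_t \mathcal{M}_1/\mathcal{M}_1 \sim c\nu^{1/3}\lambda(k)$ supplies a coercive term matching the exponential factor in the target norm. The multiplier $\mathcal{M}_2$ is an inviscid damping multiplier whose derivative quotient concentrates on the critical region $|\eta-kt|\lesssim |k|$ and furnishes control of $\partial_X u$ in the transport nonlinearities. The key new ingredient is $\mathcal{M}_3$, designed so that $\partial_t \mathcal{M}_3/\mathcal{M}_3$ dominates the $|D_x|^{1/3}$ derivative arising from the coupling $\partial_x \theta$ on the right-hand side of the vorticity equation, while simultaneously supplying the coercivity needed to absorb the nonlinear echo cascades.

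Third, I would set up a continuity (bootstrap) argument: let $T^\star$ denote the maximal time on which enlarged versions of the target estimates hold, together with companion dissipation-in-time bounds, and aim to improve the doubled constants back to their original values. For the energy estimates themselves, I would differentiate the weighted $L^2$ norms in time, extract from the linear contributions the viscous gain and the multiplier gains $\|\sqrt{-\partial_t \mathcal{M}/\mathcal{M}}\,\cdot\|_{L^2}^2$, and estimate the nonlinearities $u\cdot\nabla W$, $u\cdot\nabla \Theta$, and the source $\partial_X \Theta$ via a paraproduct decomposition into reaction, transport, and remainder pieces. The $\langle 1/D_x\rangle^4$ weight in \eqref{eq:the initial data}, combined with $L^1\cap L^2$ control, enables Young's convolution inequality to be applied with optimal exponents in the low-frequency regime $|k|\ll 1$, which is the new difficulty absent from the periodic case $\mathbb{T}_x\times\mathbb{R}_y$.

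The main obstacle I expect is the reaction term in the vorticity equation, where low-horizontal-frequency components of the velocity transport high-frequency vorticity into the resonant region of frequency space. In the whole space one must simultaneously handle the $|D_x|^{1/3}$ loss from the buoyancy source and the low-frequency singularity of the Biot--Savart kernel $1/(k^2+(\eta-kt)^2)$ as $k\to 0$; the tailored design of $\mathcal{M}_3$, together with the $\langle 1/D_x\rangle$-weighted initial data emphasized in the abstract, is precisely what makes these two difficulties compatible and allows the bootstrap to close with the thresholds $\nu^{1/3+\delta}$ and $\nu^{2/3+2\delta}$.
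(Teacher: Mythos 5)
Your outline captures the right general philosophy (ghost-weight multipliers $\mathcal{M}_1,\mathcal{M}_2,\mathcal{M}_3$, enhanced dissipation and inviscid damping, a bootstrap, and the role of $\langle 1/D_x\rangle$ at low horizontal frequency), but it omits two structural components that the paper's proof actually leans on, and without which I do not see the sketched single bootstrap closing.

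First, you never perform the quasi-linearization $\omega=\omega^{\mathrm{L}}+\omega^{\mathrm{NL}}$, $\theta=\theta^{\mathrm{L}}+\theta^{\mathrm{NL}}$, where $(\omega^{\mathrm{L}},\theta^{\mathrm{L}})$ solves the \emph{coupled} linear system \eqref{eq:linear equ} with the full initial data (solved explicitly by the Kelvin formula \eqref{eq:solutions of the linear equ2}) and the nonlinear remainder starts from zero data. This split is load-bearing: the nonlinear parts gain an extra power of $\nu$ ($\omega^{\mathrm{NL}}\sim\nu^{2/3+2\delta}$ versus $\omega^{\mathrm{L}}\sim\nu^{1/3+\delta}$), and the linear factors come with explicit pointwise-in-time decay ($\langle t\rangle^{-2}$ for the stream function via \eqref{eq:the point-wise inviscid damping estimate} together with the $\langle 1/D_x\rangle^4$ weight) that is used in every reaction and source estimate. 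In the whole space, where $\lambda(k)=|k|^{2/3}\to 0$, the multiplier gains alone degenerate at low horizontal frequency; the paper's remark around \eqref{eq:temp0.1} makes clear that it is precisely the combination of the L/NL split and the extra $\delta$ in the threshold that rescues the estimate of $u^{\mathrm{NL}}\cdot\nabla\theta^{\mathrm{L}}$ near $|l|+|\eta|=0$. Treating the full quadratic term $u\cdot\nabla\theta$ with both factors at their full size, as your bootstrap implicitly does, loses this mechanism.

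Second, you use a single continuity argument from $t=0$, whereas the paper splits at $T_0=\nu^{-1/6}$: for $t\le T_0$ no multipliers are needed and the $\langle t\rangle^{1+\delta}$ commutator growth is absorbed by the short time horizon and the smallness of the data (Proposition \ref{cor}), while for $t\ge T_0$ the inviscid damping factors $\langle t\rangle^{-1},\langle t\rangle^{-2}$ are strong enough (cf. \eqref{eq:t}) to close the multiplier energy estimate. Early on, neither $\Upsilon$ nor $\nu^{1/3}|D_x|^{2/3}$ controls the growing commutators, so the unsplit bootstrap stalls in the regime $1\ll t\ll\nu^{-1/6}$. A smaller point: the real novelty of $\mathcal{M}_3$ here is not absorbing the $|D_x|^{1/3}$ from the buoyancy source in the vorticity equation, but absorbing the extra $|l|^{1/3}$ that appears when the $\langle D_x\rangle^{1/3}$ weight on $\theta$ lands on $u^{\mathrm{NL}}$ in the temperature nonlinearity (the term $I_4$ in Step VI of Proposition \ref{lem:est of thetanl}); this is why the weight $\langle 1/l\rangle^{-1/3-\kappa}|l|^{-4/3}$ in \eqref{eq:M} differs from the one in \cite{LLZ2025}.
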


\begin{Rem} 
		When the temperature $\theta$ vanishes, the system reduces to the Navier-Stokes equations, as studied in Arbon--Bedrossian~\cite{AB2024} and Li--Liu--Zhao~\cite{LLZ2025}. The above theorem extends their results to the Boussinesq setting, achieving the same stability threshold. Compared with~\cite{LLZ2025}, the main difficulty arises from the nonlinear term $u\cdot\nabla \theta$ when certain derivatives are propagated onto $u$. In particular, the multiplier $\mathcal{M}_3$ in~\cite{LLZ2025} appears insufficient for estimating 
		\[
		|D_x|^{1/3} u^{\mathrm{NL}} \cdot \nabla \theta^{\mathrm{L}}
		\]
		(see Step~VI in Proposition~\ref{lem:est of thetanl}). To address this, we construct {\bf a new multiplier $\mathcal{M}_3$} in~\eqref{eq:M}, which is employed to handle the term $I_4$ in Step~VI:
		$$
		\begin{gathered}
			I_4 :=\left|\int_{\mathbb{R}^4} \mathcal{M}(t, k, \xi) e^{2 c \nu^{\frac{1}{3}} \lambda(k) t}\langle k\rangle^2\langle\frac{1}{k}\rangle^{2 \epsilon} |k|^\frac13\hat{\theta}_k^{\mathrm{NL}}(\xi) |l|^\frac13  \frac{t l(k-l)}{|l|^2+|\eta|^2} \hat{\omega}_l^{\mathrm{NL}}(\eta) \hat{\theta}_{k-l}^{\mathrm{L}}(\xi-\eta) d k d l d \xi d \eta\right| .
		\end{gathered}
		$$ 
		When $|k-l| < |k|$, this new $\mathcal{M}_3$ successfully absorbs the additional factor $|l|^{1/3}$ due to the fact $\langle 1/l \rangle^{-2/3} {|l|^{-2/3}}  \lesssim 1$.
	\end{Rem}

    \begin{Rem}
		It is worth mentioning that it remains challenging to improve the transition threshold of $\{\omega,\theta\}$ from $\{\frac{1}{3}+\delta,\, \frac{2}{3}+2\delta\}$ to $\{\frac{1}{3},\, \frac{2}{3}\}$ under Sobolev perturbations. The latter seems to be the best known result for the Navier-Stokes equations in $\Omega = \mathbb{T} \times \mathbb{R}$, as established by Masmoudi--Zhao~\cite{MZ2022} and Wei--Zhang~\cite{WZ2023} by different methods. The main difficulty lies in the singular behavior of the frequency variables $(l,\eta)$ as $|l| + |\eta| \to 0$. For instance, when dealing with the term $u^{\mathrm{NL}} \cdot \nabla \theta^{\mathrm{L}}$, it is difficult to use the inviscid damping to control $L^2$-norm of $u^{\mathrm{NL}}$. The additional $\delta$ in the threshold, however, allows this estimate to succeed (see~\eqref{eq:temp0.1} in Lemma~\ref{eq:est of theta NL short time} for details).
	\end{Rem}

	\begin{Rem}
		For more precise estimates, we apply a standard quasi-linearization method to handle the difficulty arising from the buoyancy term $\partial_{x}\theta$. This approach is natural when making energy estimates $L^{p}$. The estimates of the equations for $\{\omega, \theta\}$ are divided into linear and non-linear parts. Estimates for linear parts are determined by the size of the initial data and even the required regularity. For linear parts, there are many splendorous ideas introduced by Kelvin\cite{K1887} for the incompressible Navier-Stokes equation in whole space. However, if it is coupled with the temperature equation, it seems puzzling. Here, {\bf a new observation} is we calculate the Kelvin solution of the coupled system as in \cite{DWZ2021} and make some energy estimates in the sense of the Fourier transform, which will be used in the nonlinear parts.


        Besides, the difficulty of the nonlinear term comes from the interaction between linear and nonlinear terms.
	 Inspired by the tricks in \cite{WZ2023}, the calculations of the nonlinear parts will be separated into short time scale $t\leq \nu^{-\frac16}$ and long time scale $t\geq \nu^{-\frac16}$. In the short time interval, we do not need to propagate the derivative of $\partial_{x}\theta$ since the time integral can be handled. During the long time interval, we need to propagate some derivatives to deal with the buoyancy term. Due to the pointwise inviscid damping estimate of the potential function, i.e. $\Delta^{-1}$, it can provide a power of $t^{-1}$, which  is effective to control  some  non-linear terms (see, e.g., \eqref{eq:temp9} in Proposition \ref{lem:est of thetanl}). 
     \end{Rem}


	\begin{Rem}
	Another difficulty  may lie in the estimates for Fourier frequency. For instance, when dealing with the nonlinear terms in the equation of $\omega$ and $\theta$, we will encounter with the convolution for both horizontal and vertical frequency, it needs to  carefully  do some scaling and overcome the singularity. Inspired by \cite{LLZ2025}, one can follow their ideas when dealing with the transform between $k$, $l$ and $k-l$. Moreover, the setting of $\langle \frac{1}{D_x}\rangle$ and $\langle D_x\rangle$ can prevent the singularity in horizontal frequency in one hand, and share responsibility for integral index when using Young's convolution inequality. 

 \end{Rem}

	\subsection{The progress of Navier-Stokes equations}
	One special case of \eqref{eq:main0} is the  Navier-Stokes equations by taking $\vartheta=0$. The stability of the Couette flow for the Navier-Stokes equation has been widely studied. Let us recall some related results. 
	
	When $\Omega=\mathbb{T} \times \mathbb{R}$,
	\begin{itemize}
		\item if the perturbation for vorticity is in the Gevrey class, then $\gamma=0$ \cite{BMV2016},
		\item if the perturbation for velocity is in Sobolev space $H^2$, then $\gamma \leq \frac{1}{2}$ \cite{BWV2018},
		\item if the perturbation for vorticity is in Sobolev space $H_x^{\text {log }} L_y^2$, then $\gamma \leq \frac{1}{2}$ \cite{MZ202002},
		\item if the perturbation for vorticity is in Gevrey-$\frac{1}{s}$, then $\gamma \in\left[0, \frac{1}{3}\right]$ \cite{LMZ2022},
		\item if the perturbation for vorticity is in Sobolev space $H^\sigma (\sigma \geq 40)$, then $\gamma \leq \frac{1}{3}$ \cite{MZ2022},
		\item if the perturbation for vorticity is in Sobolev space $H^b (b \geq 2)$, then $\gamma \leq \frac{1}{3}$ \cite{WZ2023}.
	\end{itemize}
	
	When $\Omega=\mathbb{T} \times \mathbb{I}$,
	\begin{itemize}
		\item if the velocity perturbation is in the Sobolev space $H^2$ with non-slip boundary data, then $\gamma \leq \frac{1}{2}$ \cite{CLWZ2020},
		\item if the perturbation for vorticity is in Sobolev space $H^3$ with Navier-slip boundary data, then $\gamma \leq \frac{1}{3}$ \cite{WZ2024}.
	\end{itemize}
	
	When $\Omega=\mathbb{R} \times \mathbb{R}$,
	\begin{itemize}
		\item if the perturbation for velocity is in Sobolev space $L^2 \cap L^1$, then $\gamma \leq \frac{3}{4}$ \cite{WW2025},
		\item the perturbation for velocity satisfies \cite{AB2024}
      $\gamma \leq \frac{1}{2}+,$
        which was improved  to  $\gamma \leq \frac{1}{3}+$ in 
        \cite{LLZ2025}.
		.
	\end{itemize} 
	
	When $\Omega=\mathbb{T} \times \mathbb{R} \times \mathbb{T}$,
	\begin{itemize}
		\item if the perturbation for velocity is in Gevery class, then $\gamma=1$ \cite{BGM2020,BGM2022},
		\item if the perturbation for velocity is in Sobolev class $H^\sigma\left(\sigma>\frac{9}{2}\right)$, then $\gamma \leq \frac{3}{2}$ \cite{BGM2017},
		\item if the perturbation for velocity is in Sobolev class $H^2$, then $\gamma \leq 1$ \cite{WZ2021}.
	\end{itemize}
	
	When $\Omega=\mathbb{T} \times \mathbb{I} \times \mathbb{T}$,
	\begin{itemize}
		\item if the velocity perturbation is in the Sobolev class $H^2$, then $\gamma \leq 1$ \cite{CWZ2024}.
	\end{itemize}
	
	As for the  plane Poiseuille flow, the Kolmogorov flow or the rotation term, we refer to \cite{ZEW2020,DL2022,DL2024,CWZ2023,CDLZ2024, HSX2024-1, LWZ2020} and the reference therein.

	\subsection{Two stability mechanisms}
	The study of the stability of Couette flow for the Navier-Stokes equation has been a prominent topic in fluid mechanics since the pioneering works of Kelvin \cite{K1887}, Reyleigh \cite{R1879}, Orr \cite{O1907} and Sommerfeld \cite{S1908}. Consider the vorticity formulation of the linearized Navier–Stokes equations via the Couette flow:
	\begin{equation*}\begin{aligned} 
			\left\{\begin{array}{l}
				\partial_t \omega+y \partial_x \omega-\nu \Delta \omega=0,  \\
				\left. \omega\right|_{t=0}=\omega_{\mathrm {in}}.
			\end{array}\right.	
	\end{aligned}\end{equation*}
	By Fourier transform, the Kelvin's solution is
	$$
	\hat{\omega}(t, k, \xi)=\hat{\omega}_{\mathrm{in}}(k, \xi+k t) e^{-\nu \int_0^t|k|^2+|\xi+k(t-s)|^2 d s},
	$$
	which satisfies the following two linear estimates:
	\begin{equation}\begin{aligned} \label{eq:enhanced dissipation}
			|\hat{\omega}(t, k, \xi)| \leq C\left|\hat{\omega}_{\mathrm{in}}(k, \xi+k t)\right| e^{-c \nu^{\frac{1}{3}}|k|^{\frac{2}{3}} t},
	\end{aligned}\end{equation}
	\begin{equation}\begin{aligned}  \label{eq:inviscid damping}
			|\hat{\phi}(t, k, \xi)| \leq C\langle t\rangle^{-2} \frac{1+|k|^2+|\xi+k t|^2}{|k|^4}\left|\hat{\omega}_{\text {in }}(k, \xi+k t)\right| e^{-c \nu^{\frac{1}{3}}|k|^{\frac{2}{3}} t},
	\end{aligned}\end{equation}
	for some $c>0$, where $\phi=\Delta^{-1} \omega$ is the stream function.
	
	\subsubsection{Enhanced dissipation}
	\eqref{eq:enhanced dissipation} is the enhanced dissipation estimate, since the dissipation time scale is $O\left(\nu^{-\frac{1}{3}}\right)$, which is much faster than the standard heat dissipation time scale $O\left(\nu^{-1}\right)$ for small $\nu$. Clearly the dissipation rate is inhomogeneous and depends on the frequencies $k$. 
	
	The phenomenon of enhanced dissipation has been widely observed and studied in physics literature (see, e.g., \cite{BL1994,LB2001,RY1983,T1887}). It has recently attracted enormous attention from the mathematics community and significant progress has been made. One of the earliest rigorous results on the enhanced dissipation is obtained by Constantin--Kiselev--Ryzhik--Zlatos \cite{CKRZ2008} on the enhancement of diffusive mixing. However, the quantitative enhanced dissipation rate is usually hard to obtain except for some special flows such as shear flow, spiral flow and Anosov flow \cite{ZDE2020}.
	
	In additional to the transition threshold problem, the enhanced dissipation also plays an important role for the suppression of blow-up in the Keller-Segel system \cite{BH2017,KX2016}. Let us refer to \cite{CW2024,CWWW2025} for more recent works.
	
	\subsubsection{Inviscid damping}
	\eqref{eq:inviscid damping} is so called inviscid damping estimate, which is due to the mixing of the vorticity
	induced by the shear flow. This effect is mainly based on the following inequality
	\begin{equation*}\begin{aligned}
			\frac{1}{|l|^2+|\eta|^2} \lesssim \frac{\langle l, \eta+l t\rangle^2}{|l|^4\langle t\rangle^2} \lesssim\langle t\rangle^{-2}\langle\frac{1}{l}\rangle^4\langle l, \eta+l t\rangle^2,
	\end{aligned}\end{equation*}
	which plays an important role in our proof. 
	
	Inviscid damping is analogous to Landau damping in plasma physics found by Landau \cite{L1946}. For general shear flows, establishing linear inviscid damping remains a challenging problem. In a series of works \cite{WZZ2018,WZZ2019,WZZ202001}, Wei-Zhang-Zhao proved linear inviscid damping for both monotone and certain non-monotone flows, including the Poiseuille and Kolmogorov flows. We refer the reader to \cite{O1907,BZV2019,ZZ2019,J2020,WZZ202002,Z2016} and the references therein for related results and recent developments on linear inviscid damping. Nonlinear inviscid damping poses even greater difficulties. Inspired by the groundbreaking work of Mouhot-Villani \cite{MV2011} on nonlinear Landau damping, Bedrossian-Masmoudi \cite{BM2015} established nonlinear inviscid damping for the Couette flow in the domain $ \mathbb{T} \times \mathbb{R}$. We refer to \cite{IJ2020,IJ2022,IJ2023,DM2023,MZ202001} for recent significant progress on nonlinear inviscid damping.

	\subsection{Some notations and outlines}
	
	Here are some notations used in this paper.
	
	\noindent\textbf{Notations}:
	\begin{itemize}
		\item For a given function $f(x,y)$ on on $\mathbb{R}^2$, its $k$-th horizontal Fourier modes can be defined by
		\begin{equation*}
			f_k(y)=\mathcal{F}_{x \rightarrow k}(f)(k, y) = \int_{\mathbb{R}} f(x, y) \mathrm{e}^{- ikx} dx,
		\end{equation*}
		In addition, we denote 
		$$
		\hat{f}_k(\xi)=\hat{f}(k, \xi) = \mathcal{F}_{y \rightarrow \xi}(f_k)(y) = \int_{\mathbb{R}^2} f(x, y) \mathrm{e}^{- i(kx + \xi y)} dxdy.
		$$
		\item Denote $C$ by  a positive constant independent of $\nu$, $\mu$, $t$ and the initial data, and it may be different from line to line. $A \lesssim B$ means there exists a absolutely constant $C$, such that $A \leq C B$.
		\item Denote by $(f | g)$ the $L^2\left(\mathbb{R}^2\right)$ inner product of $f$ and $g$.
		\item The space norm $\|f\|_{L^{p}}$ is defined by	
		$\|f\|_{L^{p}(\mathbb{R}^2)}=\left(\int_{\mathbb{R}^2}|f|^p dxdy\right)^{\frac{1}{p}}  $. The time-space norm $\|f\|_{L_T^{q}L^{p}}$ is defined by	
		$\|f\|_{L_T^qL^p}=\|\|f\|_{L^p(\mathbb{R}^2)}\ \|_{L^q(0,T)}$.
		Moreover, $\|f\|_{L_{(T_1, T_2)}^qL^p}=\|\|f\|_{L^p(\mathbb{R}^2)}\ \|_{L^q(T_1,T_2)}$. For simplicity, we write $\|f\|_{L^p(\mathbb{R}^2)}$ as $\|f\|_{L^p}.$
	\end{itemize}
	
	The paper is organized as follows. Section \ref{Sec.2} presents the main ideas and proof of the Theorem \ref{thm:main}.  Section \ref{sec.3} is dedicated to deriving estimates for the linear part. In Section \ref{sec.4}, we establish the energy estimates in short time scale $t\leq T_0 = \nu^{-\frac16}$. While in Section \ref{sec.5}, we establish the energy estimates in long time scale $t\geq T_0 = \nu^{-\frac16}$.
	
	\section{The proof of the main theorem} \label{Sec.2}
	\subsection{Construction of the multipliers}
	For $k,\xi \in \mathbb{R}$ and small $\nu, \kappa>0$, denote that 
	\begin{equation} \label{eq:M}
		\begin{aligned}
			\mathcal{M}_1(k, \xi) & := \arctan \left(\nu^{\frac{1}{3}}|k|^{-\frac{1}{3}} \operatorname{sgn}(k) \xi\right)+ \frac{\pi}{2},\\
			\mathcal{M}_2(k, \xi) & := \arctan \left(\frac{\xi}{k}\right)+\frac{\pi}{2}, \\
			\mathcal{M}_3(t, k, \xi) & := \int_{\mathbb{R}}\langle\frac{1}{l}\rangle^{-\frac13-\kappa} \frac{1}{|l|^\frac43}\left(\operatorname{sgn}(l) \arctan \left(\frac{\xi+t(k-l)}{1+|k-l|+|l|}\right)+\frac{\pi}{2}\right) d l .
		\end{aligned}
	\end{equation}
	Then the three multipliers above are self-adjoint Fourier multipliers and verify that
	\begin{equation} \label{eq:bound of M}
		1\leq \mathcal{M}:=\mathcal{M}_1+\mathcal{M}_2+\mathcal{M}_3+1 \leq C_\kappa.
	\end{equation}
	We also denote
	\begin{align} \label{eq:m3}
		\Upsilon(t, k, \xi) :=\left(-\partial_t+k \partial_{\xi}\right) \mathcal{M}_3=\int_{\mathbb{R}}\langle\frac{1}{l}\rangle^{-\frac13-\kappa} \frac{1}{|l|^\frac13} \frac{1+|k-l|+|l|}{(1+|k-l|+|l|)^2+|\xi+t(k-l)|^2} d l,
	\end{align}
	with $\Upsilon(t, k, \xi)>0$ for all $t>0$ and $k,\xi \in \mathbb{R}$.
	
	The enhanced dissipation multiplier $\mathcal{M}_1$ and the inviscid damping multiplier $\mathcal{M}_2$ are  motivated by \cite{DWZ2021}. The construction of multiplier $\mathcal{M}_3$ is motivated by \cite{WZ2023}. The multiplier $\mathcal{M}_3$ is designed to control the growth of the reaction term caused by echo cascades. 
	\textbf{An important observation} is that the multiplier $\mathcal{M}_3$ constructed here is not only effective in controlling the real reaction terms $u^{\mathrm{NL}} \cdot \nabla \omega^{\mathrm{L}}$ and $u^{\mathrm{NL}} \cdot \nabla |D_x|^{1/3} \theta^{\mathrm{L}}$, but also sufficient to handle staggered derivative term $|D_x|^{1/3} u^{\mathrm{NL}} \cdot \nabla \theta^{\mathrm{L}}$ (see around \eqref{eq:im} and \eqref{eq:im2} for more details).
	On the Fourier side, these multipliers can be understood as some kind of `ghost weight', which are bounded weights providing additional dissipation properties. The most crucial feature of the three multipliers mentioned above is that
	$$
	2 \Re\left(\left(\partial_t+y \partial_x\right) f \mid \mathcal{M}_i f\right)=\frac{d}{d t}\|\sqrt{\mathcal{M}_i} f\|_{L^2}^2+\int_{\mathbb{R}^2}\left(-\partial_t+k \partial_{\xi}\right) \mathcal{M}_i(t, k, \xi)|\hat{f}|^2 d k d \xi,
	$$
	for some $f= f(t, x ,y)$ on $(0, +\infty) \times \mathbb{R}^2$.
	\begin{Lem} \label{lem:M12}
		For smooth enough function $f= f(t, x ,y)$ on $(0, +\infty) \times \mathbb{R}^2$, it holds that
		\begin{align}  \label{eq:m1m2}
			\int_{\mathbb{R}^2}\left(-\partial_t+k \partial_{\xi}\right) &\mathcal{M}(t, k, \xi)|\hat{f}|^2 d k d \xi \no \\
			& \geq \frac{\nu^{\frac{1}{3}}}{4}\|\left|D_x\right|^{\frac{1}{3}} f\|_{L^2}^2-\frac{\nu}{2}\|\partial_y f\|_{L^2}^2 + \|\partial_x \nabla \Delta^{-1} f\|_{L^2}^2 + \|\sqrt{\Upsilon} f\|_{L^2}^2.
		\end{align}
	\end{Lem}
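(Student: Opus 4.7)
The plan is to exploit the linearity of the operator $(-\partial_t + k\partial_\xi)$ together with the additive structure $\mathcal{M} = \mathcal{M}_1 + \mathcal{M}_2 + \mathcal{M}_3 + 1$ and handle each multiplier separately. Since the constant $1$ contributes nothing and $\mathcal{M}_1, \mathcal{M}_2$ are independent of $t$, only $k\partial_\xi$ acts on them; the time derivative acts only on $\mathcal{M}_3$. Once the three symbols $(-\partial_t + k\partial_\xi)\mathcal{M}_i$ are computed pointwise in $(k,\xi)$, the four terms on the right-hand side of \eqref{eq:m1m2} will appear by Plancherel.

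The main obstacle lies in $\mathcal{M}_1$, where a case analysis is required. A direct computation gives
\[
k\partial_\xi \mathcal{M}_1(k,\xi) = \frac{\nu^{\frac13}|k|^{\frac23}}{1+\nu^{\frac23}|k|^{-\frac23}\xi^2}.
\]
Setting $X := \nu^{\frac23}|k|^{-\frac23}\xi^2$ (so $\nu\xi^2 = \nu^{\frac13}|k|^{\frac23}X$), I would split on the size of $X$. When $X\leq 3$, the denominator is bounded by $4$, so $k\partial_\xi\mathcal{M}_1 \geq \frac{\nu^{\frac13}|k|^{\frac23}}{4}$, and the inequality $k\partial_\xi\mathcal{M}_1 \geq \frac{\nu^{\frac13}|k|^{\frac23}}{4} - \frac{\nu\xi^2}{2}$ follows at once. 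When $X>3$, the right-hand side of that inequality satisfies $\frac{\nu^{\frac13}|k|^{\frac23}}{4}-\frac{\nu\xi^2}{2} = \nu^{\frac13}|k|^{\frac23}\bigl(\frac14-\frac{X}{2}\bigr)\leq 0$, so the inequality is trivial. Integrating against $|\hat f|^2$ via Plancherel produces the $\frac{\nu^{1/3}}{4}\||D_x|^{1/3}f\|_{L^2}^2$ gain at the cost of the $-\frac{\nu}{2}\|\partial_y f\|_{L^2}^2$ defect, as stated.

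For $\mathcal{M}_2$ the calculation is immediate:
\[
k\partial_\xi \mathcal{M}_2(k,\xi) = \frac{k^2}{k^2+\xi^2}.
\]
Multiplying by $|\hat f|^2$, integrating, and recognizing that the Fourier symbol of $\partial_x\nabla\Delta^{-1}$ has squared modulus exactly $k^2/(k^2+\xi^2)$, Plancherel yields the $\|\partial_x\nabla\Delta^{-1}f\|_{L^2}^2$ term. (The singularity at $k=0$ is a measure-zero set and causes no trouble.)

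Finally, $\mathcal{M}_3$ requires only that we invoke its construction. Differentiating the defining integral \eqref{eq:M} under the integral sign,
\[
-\partial_t \mathcal{M}_3 + k\partial_\xi \mathcal{M}_3 = \int_{\mathbb{R}}\langle\tfrac{1}{l}\rangle^{-\frac13-\kappa}\frac{\operatorname{sgn}(l)}{|l|^{\frac43}}\,\frac{\bigl(k-(k-l)\bigr)(1+|k-l|+|l|)}{(1+|k-l|+|l|)^2+|\xi+t(k-l)|^2}\,dl,
\]
and the factor $k-(k-l) = l$ collapses $\operatorname{sgn}(l)/|l|^{4/3}$ to $1/|l|^{1/3}$, reproducing the formula \eqref{eq:m3} for $\Upsilon$. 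Then $\int \Upsilon|\hat f|^2\,dkd\xi = \|\sqrt{\Upsilon}f\|_{L^2}^2$ by Plancherel, and adding the three contributions completes the proof.
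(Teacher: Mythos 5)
Your proposal is correct and follows essentially the same route as the paper: compute $k\partial_\xi\mathcal{M}_1$ and $k\partial_\xi\mathcal{M}_2$ pointwise, use the elementary lower bound $\frac{\nu^{1/3}|k|^{2/3}}{1+\nu^{2/3}|k|^{-2/3}\xi^2}\geq \frac14\nu^{1/3}|k|^{2/3}-\frac12\nu\xi^2$, identify $k^2/(k^2+\xi^2)$ as the squared symbol of $\partial_x\nabla\Delta^{-1}$, and read off $(-\partial_t+k\partial_\xi)\mathcal{M}_3=\Upsilon$ from the definition. The only difference is that you spell out the case analysis behind the $\mathcal{M}_1$ inequality and the differentiation under the integral for $\mathcal{M}_3$, which the paper leaves as "direct calculation."
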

	\begin{proof}
		Direct calculation shows that
		$$
		k \partial_{\xi} \mathcal{M}_1(k, \xi)=\frac{\nu^{\frac{1}{3}}|k|^{\frac{2}{3}}}{1+\nu^{\frac{2}{3}}|k|^{-\frac{2}{3}}|\xi|^2} \geq \frac{1}{4} \nu^{\frac{1}{3}}|k|^{\frac{2}{3}}-\frac{1}{2} \nu|\xi|^2
		$$
		and
		$$
		k \partial_{\xi} \mathcal{M}_2(k, \xi)=\frac{k^2}{k^2+\xi^2},
		$$
		which, along with \eqref{eq:m3}, completes the proof.
	\end{proof}

	\subsection{Proof of Theorem \ref{thm:main}} The proof of Theorem \ref{thm:main} relies on the following three key propositions, which will be proved in next sections:
	\begin{Prop} \label{lem:linear theta}
		For initial data satisfying \eqref{eq:the initial data}, there exists a universal constant $0<c_0<\frac{1}{16\pi}$ such that for any $t>0$, there holds that
		\begin{equation*}
			\begin{aligned}
				\| e^{c_0 \nu^\frac13  \left|D_x\right|^\frac23 t}   &\langle D_x, D_y+t D_x\rangle^6  \langle\frac{1}{D_x}\rangle^4  \omega^{\mathrm{L}}\|_{L_t^{\infty} L^2}  \\
				& +\nu^{\frac{1}{6}}\|e^{c_0 \nu^\frac13 \left|D_x\right|^\frac23 t}\langle D_x, D_y+t D_x\rangle^6\langle\frac{1}{D_x}\rangle^4\left|D_x\right|^{\frac{1}{3}} \omega^{\mathrm{L}}\|_{L_t^2 L^2} \lesssim \nu^{\frac{1}{3}+ \delta}
			\end{aligned}
		\end{equation*}
		and
		\begin{equation*}
			\begin{aligned}
				\| e^{c_0 \nu^\frac13  \left|D_x\right|^\frac23 t}   &\langle D_x, D_y+t D_x\rangle^6  \langle\frac{1}{D_x}\rangle^4  \langle D_x \rangle \theta^{\mathrm{L}}\|_{L_t^{\infty} L^2}  \\
				& +\nu^{\frac{1}{6}}\|e^{c_0 \nu^\frac13 \left|D_x\right|^\frac23 t}\langle D_x, D_y+t D_x\rangle^6\langle\frac{1}{D_x}\rangle^4\left|D_x\right|^{\frac{1}{3}} \langle D_x \rangle \theta^{\mathrm{L}}\|_{L_t^2 L^2} \lesssim \nu^{\frac{2}{3}+ 2\delta} .
			\end{aligned}
		\end{equation*}
	\end{Prop}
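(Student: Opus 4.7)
My plan is to derive an explicit Kelvin-type representation of $(\omega^{\mathrm L},\theta^{\mathrm L})$ in Fourier variables, extract an enhanced-dissipation exponential bound, and then reduce each of the four weighted norms to a fixed weighted norm of the initial data via Plancherel. Fourier-transforming the linearized system in $(x,y)$ turns the advection $y\partial_x$ into $-k\partial_\xi$, and passing to the shifted frequency $\eta=\xi+kt$ (the Fourier side of the Lagrangian coordinate $x-ty$) removes this transport. Because $\mu=\nu$, the integrating factors of the temperature and vorticity equations coincide, so Duhamel's formula for $\hat\omega^{\mathrm L}$ with source $ik\hat\theta^{\mathrm L}$ collapses to the clean closed form
\[
\hat\theta^{\mathrm L}(t,k,\xi)=e^{-\nu\Phi(t,k,\xi)}\hat\theta_{\mathrm{in}}(k,\xi+kt),
\]
\[
\hat\omega^{\mathrm L}(t,k,\xi)=e^{-\nu\Phi(t,k,\xi)}\bigl(\hat\omega_{\mathrm{in}}(k,\xi+kt)+ikt\,\hat\theta_{\mathrm{in}}(k,\xi+kt)\bigr),
\]
where $\Phi(t,k,\xi):=\int_0^t\bigl(k^2+(\xi+k(t-\tau))^2\bigr)\,d\tau$.

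The next step is the enhanced-dissipation bound $\nu\Phi(t,k,\xi)\ge 2c_0\nu^{1/3}|k|^{2/3}t-C$, uniformly in $(t,k,\xi)$ for every sufficiently small $c_0>0$. This follows from the two lower bounds $\Phi\ge k^2t$ and $\Phi\ge k^2t^3/12$ (the latter being the convex minimum of $u\mapsto(\xi+ku)^2$ on $[0,t]$) together with a one-variable optimization that caps the exponent by a universal constant of order $c_0^{3/2}$. In particular $e^{c_0\nu^{1/3}|k|^{2/3}t-\nu\Phi}\lesssim e^{-c_0\nu^{1/3}|k|^{2/3}t}$, so this factor is time-integrable with integral of order $\nu^{-1/3}|k|^{-2/3}$. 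The quantitative constraint $c_0<\tfrac{1}{16\pi}$ is absorbed by shrinking $c_0$ further; it is only required for compatibility with the multipliers $\mathcal M_1,\mathcal M_2$ used in the nonlinear analysis. With this bound in hand, Plancherel together with the substitution $\eta=\xi+kt$ (unit Jacobian, transferring the shifted weight $\langle k,\xi+kt\rangle^6$ to the fixed weight $\langle k,\eta\rangle^6$ acting on the initial data) immediately yields the $L^\infty_tL^2$ bound for $\theta^{\mathrm L}$, while the $L^2_tL^2$ bound with prefactor $|D_x|^{1/3}$ follows from $\int_0^\infty|k|^{2/3}e^{-2c_0\nu^{1/3}|k|^{2/3}t}\,dt\lesssim\nu^{-1/3}$, combined with the outer $\nu^{1/6}$ factor to produce the claimed $\nu^{2/3+2\delta}$ scaling.

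The principal obstacle, and the only point where the Boussinesq coupling differs structurally from the pure Navier--Stokes analysis of \cite{LLZ2025}, is the buoyancy source $ikt\,\hat\theta_{\mathrm{in}}(k,\xi+kt)$ appearing in $\hat\omega^{\mathrm L}$. The factor $|k|t$ grows linearly in time and must be defeated by the enhanced exponential decay. Applying $\sup_{t\ge0}te^{-at}=(ae)^{-1}$ with $a=c_0\nu^{1/3}|k|^{2/3}$ gives the pointwise bound $|k|t\,e^{c_0\nu^{1/3}|k|^{2/3}t-\nu\Phi}\lesssim\nu^{-1/3}|k|^{1/3}$, so this contribution to the $L^\infty_tL^2$ norm of $\omega^{\mathrm L}$ is controlled by
\[
\nu^{-1/3}\bigl\||D_x|^{1/3}\langle D_x,D_y\rangle^6\langle \tfrac{1}{D_x}\rangle^4\theta_{\mathrm{in}}\bigr\|_{L^2}\lesssim\nu^{-1/3}\cdot\nu^{2/3+2\delta}=\nu^{1/3+2\delta},
\]
which is dominated by $\nu^{1/3+\delta}$ precisely because of the extra $\delta$ built into the temperature threshold. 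The analogous $L^2_tL^2$ bound uses $\int_0^\infty|k|^{8/3}t^2e^{-2c_0\nu^{1/3}|k|^{2/3}t}\,dt\lesssim|k|^{2/3}\nu^{-1}$, which again fits the threshold after multiplication by $\nu^{1/6}$. No delicate low-horizontal-frequency issue arises: the coupling factor $ikt$ vanishes at $k=0$, and the singular weight $\langle 1/k\rangle^4$ is already accommodated by the initial-data hypothesis \eqref{eq:the initial data}.
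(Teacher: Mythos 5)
Your proposal is correct, and all the quantitative steps check out: the collapsed Kelvin formula $\hat\omega^{\mathrm L}=e^{-\nu\Phi}(\hat\omega_{\mathrm{in}}+ikt\,\hat\theta_{\mathrm{in}})(k,\xi+kt)$ is exactly what \eqref{eq:solutions of the linear equ2} gives when $\nu=\mu$; the uniform bound $a\nu^{1/3}|k|^{2/3}t\le \nu k^2t^3/12+C(a)$ kills the exponential weight; and the losses $\nu^{-1/3}|k|^{1/3}$ (from $\sup_t te^{-at}$) and $\nu^{-1}|k|^{-2}$ (from $\int t^2e^{-at}\,dt$) are absorbed by the extra $\langle D_x\rangle$ and the extra $\delta$ in the temperature threshold, yielding $\nu^{1/3+2\delta}\le\nu^{1/3+\delta}$. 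However, this is not the route the paper takes for this proposition. The paper proves Proposition~\ref{lem:linear theta} as the $p=1$, $m=\pi$ instance of Lemma~\ref{lem:linear}, i.e.\ by a ghost-weight energy estimate on the Fourier side with the multiplier $\mathcal M_1'=\mathcal M_1+m$: the coercivity $k\partial_\xi\mathcal M_1\ge\frac14\nu^{1/3}|k|^{2/3}-\frac12\nu|\xi|^2$ produces the enhanced-dissipation term $\nu^{1/3}\||k|^{1/3}\cdot\|_{L^2}^2$ directly in the energy identity, the buoyancy source $ik\hat\theta^{\mathrm L}$ is handled by H\"older against the temperature energy at the cost of $\nu^{-(2p-1)/3}$, and the constraint $c_0<\frac{m^{p-1}}{8(m+\pi)^p}=\frac{1}{16\pi}$ is exactly the price of keeping the multiplier coercive — you correctly identified that this constant is an artifact of the multiplier method and plays no role in your direct argument. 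Your explicit-solution approach is essentially the one the paper uses for the companion Lemma~\ref{lem:linear Linfty} (pointwise-in-time $L^p_{k,\xi}$ bounds with the weight $e^{c_0\nu|k|^2t^3}$), extended to the time-integrated norms. What your route buys is transparency and sharp frequency-by-frequency control with no restriction on $c_0$ beyond smallness; what it costs is that it leans entirely on the exact solvability of the linear system and the cancellation $\nu=\mu$ that collapses the Duhamel integral, whereas the paper's energy/multiplier computation is the template that is then reused verbatim for the nonlinear estimates in Sections~\ref{sec.4}--\ref{sec.5}, which is presumably why the authors present it that way. Either proof is acceptable for the statement as written.
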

	
	\begin{Prop} \label{cor}
		For initial data satisfying \eqref{eq:the initial data}, $T_0 = \nu^{-\frac16}$ and $1<p<2$, there exists a small enough $0< \nu_1<1$ such that if $0<\nu<\nu_1$, it holds that
		\begin{equation}\begin{aligned} \label{eq:temp7}
				&\|\langle D_x, D_y+t D_x\rangle^3 \omega^{\mathrm{NL}}\|_{L_{T_0}^\infty L^2} + \|\langle D_x, D_y+t D_x\rangle^2 \omega^{\mathrm{NL}}\|_{L_{T_0}^\infty L^p} \lesssim \nu^{\frac23 + 2\delta}, \\
				&\|\langle D_x, D_y+t D_x\rangle^3 \theta^{\mathrm{NL}}\|_{L_{T_0}^\infty L^2} + \|\langle D_x, D_y+t D_x\rangle^2 \theta^{\mathrm{NL}}\|_{L_{T_0}^\infty L^p} \lesssim \nu^{1 + 3\delta}, \\
				&\|\langle D_x, D_y+t D_x\rangle^3 \px \theta^{\mathrm{NL}}\|_{L_{T_0}^\infty L^2} + \|\langle D_x, D_y+t D_x\rangle^2 \px \theta^{\mathrm{NL}}\|_{L_{T_0}^\infty L^p} \lesssim \nu^{\frac56 + 3\delta}.
		\end{aligned}\end{equation}
	\end{Prop}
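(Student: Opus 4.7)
\medskip

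\textbf{Plan.} My strategy is a continuity/bootstrap argument on the interval $[0,T_0]$ with $T_0=\nu^{-\frac{1}{6}}$. First, I decompose the solution as $\omega=\omega^{\mathrm{L}}+\omega^{\mathrm{NL}}$ and $\theta=\theta^{\mathrm{L}}+\theta^{\mathrm{NL}}$, where the linear parts solve the linearization around the Couette flow with the original initial data (and the forcing $\partial_x\theta^{\mathrm{L}}$ for the vorticity), and the nonlinear parts satisfy
\begin{equation*}
\begin{aligned}
\partial_t\omega^{\mathrm{NL}}-\nu\Delta\omega^{\mathrm{NL}}+y\partial_x\omega^{\mathrm{NL}} &= \partial_x\theta^{\mathrm{NL}}-u\cdot\nabla\omega,\\
\partial_t\theta^{\mathrm{NL}}-\mu\Delta\theta^{\mathrm{NL}}+y\partial_x\theta^{\mathrm{NL}} &= -u\cdot\nabla\theta,
\end{aligned}
\end{equation*}
with zero initial data. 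I would then postulate the bootstrap ansatz that the left-hand sides of \eqref{eq:temp7} are bounded by $2$ times the desired constants times $\nu^{\frac{2}{3}+2\delta}$, $\nu^{1+3\delta}$ and $\nu^{\frac{5}{6}+3\delta}$ on a maximal subinterval $[0,T^*]\subset[0,T_0]$, and close the bootstrap by improving each bound by a factor of $\frac{1}{2}$ for $\nu$ small.

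\smallskip

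\textbf{Execution.} The natural energy functional is built in the moving frame, using the weight $\langle D_x,D_y+tD_x\rangle$, whose transport under $\partial_t+y\partial_x$ is harmless. For the $L^2$ estimates I would differentiate $\|\langle D_x,D_y+tD_x\rangle^3 \omega^{\mathrm{NL}}\|_{L^2}^2$ in time; the transport term vanishes thanks to the moving frame, the dissipation gives a nonnegative contribution, and the buoyancy term $\partial_x\theta^{\mathrm{NL}}$ is absorbed using the $\theta^{\mathrm{NL}}$ ansatz and the fact that $\int_0^{T_0}\!dt\le\nu^{-1/6}$. The nonlinear forcing splits into four pieces $u^{\sharp}\cdot\nabla f^{\flat}$ with $\sharp,\flat\in\{\mathrm L,\mathrm{NL}\}$. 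The $\mathrm{L}\mathrm{-L}$ interaction is the main term; using Proposition~\ref{lem:linear theta} with the linear bounds $\|\omega^{\mathrm{L}}\|\lesssim\nu^{\frac{1}{3}+\delta}$ and $\|\langle D_x\rangle\theta^{\mathrm{L}}\|\lesssim\nu^{\frac{2}{3}+2\delta}$, standard product estimates combined with the Biot--Savart gain $\|u^{\mathrm{L}}\|\lesssim\|\langle\frac{1}{D_x}\rangle\omega^{\mathrm{L}}\|$ produce forcing of size $\nu^{\frac{2}{3}+2\delta}\cdot\nu^{\frac{1}{3}+\delta}$ over a time of length $T_0$, which gives the desired $\nu^{\frac{2}{3}+2\delta}$ (and analogously for $\theta^{\mathrm{NL}}$). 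The mixed and $\mathrm{NL}\mathrm{-NL}$ pieces are smaller by powers of $\nu$ coming from the ansatz, so they close trivially.

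\smallskip

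For the $L^p$ bounds with $1<p<2$, I would work in Fourier space and use Young's convolution inequality. Writing the nonlinear term, in the moving frame, as a convolution of $\hat u^{\sharp}_l(\eta)$ with $\hat{(\nabla f^{\flat})}_{k-l}(\xi-\eta)$, the weight $\langle 1/D_x\rangle^4$ on the linear data lets me place one factor in $L^1_{k,\xi}$ after extracting $\langle 1/l\rangle^{\alpha}\cdot\langle l,\eta\rangle^{-\alpha}$; this simultaneously kills the horizontal frequency singularity of Biot--Savart and fixes the integrability index. Picking $p\in(1,2)$ and splitting the convolution pairs $\{L^1\!\star L^p,\,L^p\!\star L^1,\,L^2\!\star L^q\}$ with $\frac1p=\frac1q+\frac12$ then allows each of the four nonlinear pieces to be estimated; Proposition~\ref{lem:linear theta} provides the linear $L^2$ bounds, while the $L^p$ bound on the nonlinear factor comes from the bootstrap ansatz. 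For $\partial_x\theta^{\mathrm{NL}}$ the extra $\partial_x$ costs a factor of $|k|$, which is absorbed by using one less weight ($\langle\cdot\rangle^2$ instead of $\langle\cdot\rangle^3$) and by the additional $\nu^{1/6}$ appearing when one redistributes $\delta$-exponents via the short-time factor $T_0=\nu^{-1/6}$.

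\smallskip

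\textbf{Main obstacle.} I expect the hardest term to be $u^{\mathrm{NL}}\cdot\nabla\theta^{\mathrm{L}}$ in the $\theta^{\mathrm{NL}}$ equation at the top regularity level. When a derivative falls on $\theta^{\mathrm{L}}$, one has a short-time loss because the linear temperature only dissipates on the enhanced scale $\nu^{-1/3}$, still much larger than $T_0$; closing this requires using the $\langle 1/D_x\rangle^{\epsilon}$ structure of the initial data to trade a horizontal frequency for integrability and then invoking the short-time remark mentioned after the main theorem (the $\delta$-cushion in the threshold). The coupled $\partial_x\theta$ source in the $\omega^{\mathrm{NL}}$ equation is also delicate because it is not subleading in the short-time regime; but since one pays only $\int_0^{T_0}dt\lesssim\nu^{-1/6}$ and $\|\partial_x\theta^{\mathrm{NL}}\|$ is controlled by $\nu^{\frac{5}{6}+3\delta}$ in the ansatz, the product $\nu^{-1/6}\cdot\nu^{\frac{5}{6}+3\delta}=\nu^{\frac{2}{3}+3\delta}$ beats $\nu^{\frac{2}{3}+2\delta}$ for small $\nu$, closing the loop.
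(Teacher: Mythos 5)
Your overall architecture (linear/nonlinear splitting, moving-frame weights $\langle D_x,D_y+tD_x\rangle^a$, bootstrap on $[0,T_0]$, $L^2$ together with $L^p$ norms, $1<p<2$) matches the paper's. However, there is a genuine gap in the treatment of the source terms $u^{\mathrm{L}}\cdot\nabla\omega^{\mathrm{L}}$ and $u^{\mathrm{L}}\cdot\nabla\theta^{\mathrm{L}}$: you estimate a time-independent bound on the forcing and then multiply by the length of the interval $T_0=\nu^{-1/6}$. That accounting does not close. For $\omega^{\mathrm{NL}}$ the forcing $u^{\mathrm{L}}\cdot\nabla\omega^{\mathrm{L}}$ has pointwise-in-time size $(\nu^{1/3+\delta})^2=\nu^{2/3+2\delta}$, so integrating over $[0,T_0]$ yields only $\nu^{1/2+2\delta}\not\lesssim\nu^{2/3+2\delta}$; for $\theta^{\mathrm{NL}}$ the same computation yields $\nu^{1/3+\delta}\cdot\nu^{2/3+2\delta}\cdot\nu^{-1/6}=\nu^{5/6+3\delta}\not\lesssim\nu^{1+3\delta}$. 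The missing ingredient is the pointwise inviscid damping of the velocity on the Fourier side,
\begin{equation*}
\frac{1}{|l|^2+|\eta|^2}\lesssim\langle t\rangle^{-2}\langle\tfrac{1}{l}\rangle^4\langle l,\eta+lt\rangle^2,
\end{equation*}
which converts two units of moving-frame regularity and the $\langle 1/D_x\rangle^4$ weight on $\omega^{\mathrm{L}}$ into a $\langle t\rangle^{-2}$ decay, making the time integral of the L--L source $O(1)$ rather than $O(T_0)$. Your ``Biot--Savart gain'' $\|u^{\mathrm{L}}\|\lesssim\|\langle\frac{1}{D_x}\rangle\omega^{\mathrm{L}}\|$ is a frequency-weight gain only and carries no time decay, so as written the bounds for the source contributions are off by the factor $\nu^{-1/6}$ and the bootstrap does not close at the stated thresholds.

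Two secondary points. First, the claim that the mixed and NL--NL pieces ``close trivially'' hides the reason the $L^p$ norms appear at all: for $u^{\mathrm{NL}}\cdot\nabla\theta^{\mathrm{L}}$ (and $u^{\mathrm{NL}}\cdot\nabla\omega^{\mathrm{L}}$) the Biot--Savart kernel produces the factor $(|l|^2+|\eta|^2)^{-1}$ with no linear datum to supply a $\langle 1/l\rangle$ weight, and near $\{l,\eta\}=\{0,0\}$ one must pair $\hat\omega^{\mathrm{NL}}\in L^{p'}_{l,\eta}$ (i.e.\ $\omega^{\mathrm{NL}}\in L^p$, $p<2$) against $(|l|+|\eta|)^{-1}\in L^p_{\mathrm{loc}}$, at the cost of a $\langle t\rangle^{1+\delta}$ growth that consumes the $\delta$-cushion; this is not a ``smaller by powers of $\nu$'' argument. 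Second, for $\partial_x\theta^{\mathrm{NL}}$ the genuinely problematic term is $\partial_x u\cdot\nabla\theta^{\mathrm{NL}}$, where the extra derivative lands on $u$ and no commutator cancellation is available; closing it requires spending the dissipation $\nu^{1/2}\|\langle\cdot\rangle^3\nabla\theta^{\mathrm{NL}}\|_{L^2_tL^2}$ from the $\theta^{\mathrm{NL}}$ energy identity, which your sketch of ``one less weight plus redistributing $\delta$-exponents'' does not supply.
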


	\begin{Cor} \label{cor2}
		Under the same assumptions as in Proposition \ref{cor}, there exists a small enough $0< \nu_1<1$ such that if $0<\nu<\nu_1$, there holds that
		\begin{equation*}\begin{aligned} 
				&\|e^{c \nu^{\frac{1}{3}} \lambda\left(D_x\right) t}\left\langle D_x\right\rangle\left\langle\frac{1}{D_x}\right\rangle^\epsilon \omega^{\mathrm{NL}}\|_{L_{T_0}^\infty L^2}  \lesssim \nu^{\frac23 + 2\delta}, \\
				&\|e^{c \nu^{\frac{1}{3}} \lambda\left(D_x\right) t}\left\langle D_x\right\rangle\left\langle\frac{1}{D_x}\right\rangle^\epsilon \left\langle D_x\right\rangle^\frac13 \theta^{\mathrm{NL}}\|_{L_{T_0}^\infty L^2}    \lesssim \nu^{\frac56 + 3\delta}.
		\end{aligned}\end{equation*}
	\end{Cor}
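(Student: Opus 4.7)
The plan is to exploit the short time scale to trivialize the exponential weight, then reduce to a pure Fourier-side estimate powered by the pair of bounds in Proposition \ref{cor}. Since $\lambda(k)\le 1$ and $t\le T_0=\nu^{-1/6}$, the prefactor satisfies $e^{c\nu^{1/3}\lambda(k)t}\le e^{c\nu^{1/6}}\le 2$ for $\nu$ small, so Corollary \ref{cor2} reduces to the pointwise-in-$t$ Fourier estimates
\[
\|\langle k\rangle\langle\tfrac{1}{k}\rangle^\epsilon \hat\omega^{\mathrm{NL}}\|_{L^2_{k,\xi}}\lesssim \nu^{\frac23+2\delta},\qquad \|\langle k\rangle^{\frac43}\langle\tfrac{1}{k}\rangle^\epsilon \hat\theta^{\mathrm{NL}}\|_{L^2_{k,\xi}}\lesssim \nu^{\frac56+3\delta},
\]
uniformly in $t\in[0,T_0]$. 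I would handle these by splitting horizontal frequencies into $|k|\ge 1$ and $|k|<1$.

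On $|k|\ge 1$ the singular factor $\langle 1/k\rangle^\epsilon$ is bounded and the polynomial weight is dominated by $\langle k,\xi+kt\rangle^3$, so Plancherel combined with the $L^2$ bound from Proposition \ref{cor} settles the $\omega^{\mathrm{NL}}$ estimate directly; for $\theta^{\mathrm{NL}}$ the extra $\langle k\rangle^{1/3}$ is absorbed by extracting one power of $|k|$ and invoking the $L^2$ bound on $\partial_x\theta^{\mathrm{NL}}$, which sharpens the rate to $\nu^{5/6+3\delta}$.

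On $|k|<1$ the polynomial weight is bounded but $\langle 1/k\rangle^\epsilon\sim |k|^{-\epsilon}$ is singular at $k=0$. Here I would convert the $L^p$ bound to $\|\langle k,\xi+kt\rangle^2\hat\omega^{\mathrm{NL}}\|_{L^{p'}_{k,\xi}}\lesssim \nu^{2/3+2\delta}$ via Hausdorff--Young, and then apply H\"older with $1/r+1/p'=1/2$:
\[
\||k|^{-\epsilon}\hat\omega^{\mathrm{NL}}\mathbf{1}_{|k|<1}\|_{L^2}\le \||k|^{-\epsilon}\langle k,\xi+kt\rangle^{-2}\mathbf{1}_{|k|<1}\|_{L^r_{k,\xi}}\,\|\langle k,\xi+kt\rangle^2\hat\omega^{\mathrm{NL}}\|_{L^{p'}_{k,\xi}}.
\]
The change of variable $\zeta=\xi+kt$ kills the $t$-dependence of the first factor and reduces it to $\int_{|k|<1}|k|^{-\epsilon r}\int_\mathbb{R}\langle k,\zeta\rangle^{-2r}\,d\zeta\,dk$, finite provided $\epsilon r<1$. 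With $r=2p/(2-p)$ this becomes $p<2/(1+2\epsilon)$, which sits inside $(1,2)$ precisely because $\epsilon<1/2$, so a suitable $p$ close to $1$ can be chosen. The same argument applied to $\theta^{\mathrm{NL}}$ with the $L^p$ bound $\|\theta^{\mathrm{NL}}\|_{L^p}\lesssim \nu^{1+3\delta}$ yields $\nu^{1+3\delta}\le \nu^{5/6+3\delta}$ with room to spare.

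The main technical obstacle is the singular horizontal-frequency weight $|k|^{-\epsilon}$ at small $k$, which is the whole reason Proposition \ref{cor} was formulated with an $L^p$ bound with $1<p<2$ alongside the $L^2$ bound: the trade-off $\epsilon<1/2\Leftrightarrow p<2/(1+2\epsilon)\in(1,2)$ is exactly what is needed to make $|k|^{-\epsilon r}$ locally integrable near $k=0$. Everything else is a direct application of Plancherel, Hausdorff--Young, H\"older, and the shift $\zeta=\xi+kt$ that renders the weighted norms $t$-independent.
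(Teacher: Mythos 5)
Your argument is correct and is essentially the paper's own proof: both rest on H\"older plus Hausdorff--Young with the exponent condition $\frac{1}{p}>\frac12+\epsilon$ (equivalently $\epsilon\,\frac{2p}{2-p}<1$) to tame the singular weight $\langle \frac{1}{k}\rangle^\epsilon$ using the $L^p$ bounds of Proposition \ref{cor}, with the $\langle D_x\rangle^{1/3}$ on $\theta^{\mathrm{NL}}$ absorbed via the $\partial_x\theta^{\mathrm{NL}}$ estimate. Your extra split into $|k|\ge 1$ and $|k|<1$ is harmless but unnecessary, since the weight $\langle\frac1k\rangle^\epsilon\langle k,\xi+kt\rangle^{-1}$ is globally in $L^{\frac{2p}{2-p}}_{k,\xi}$ and the paper applies the single H\"older estimate for all frequencies.
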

	\begin{proof}
		For some $f$, we have that
		\begin{equation*}
			\begin{aligned}
				\left\|e^{c \nu^{\frac{1}{3}} \lambda\left(D_x\right) t}\left\langle D_x\right\rangle\left\langle\frac{1}{D_x}\right\rangle^\epsilon f\right\|_{L_{T_0}^\infty L^2} &\lesssim\left\|\left\langle D_x\right\rangle\left\langle\frac{1}{D_x}\right\rangle^\epsilon f\right\|_{L_{T_0}^\infty L^2} \\
				& \lesssim \left\|\langle k, \xi+k t\rangle^2 \hat{f}_k(\xi)\right\|_{L_{T_0}^\infty L_{k, \xi}^{p^{\prime}}} \left\|\left\langle\frac{1}{k}\right\rangle^\epsilon\langle k, \xi+k t\rangle^{-1}\right\|_{L_{T_0}^\infty L_{k, \xi}^{\frac{2 p}{2-p}}} \\
				& \lesssim\left\|\left\langle D_x, D_y+t D_x\right\rangle^2 f\right\|_{L_{T_0}^\infty L^p},
			\end{aligned}
		\end{equation*}
		where we used that for small $\epsilon<\frac{1}{2}$, if we choose $1<p<2$ sufficiently close to $1$ such that $\frac{1}{p}>\epsilon+\frac{1}{2}$, then we have
		\begin{equation*}
			\left\|\left\langle\frac{1}{k}\right\rangle^\epsilon\langle k, \xi+k t\rangle^{-1}\right\|_{ L_{k, \xi}^{\frac{2 p}{2-p}}} < \infty.
		\end{equation*}
		Then, Proposition \ref{cor} completes the proof.
	\end{proof}

	\begin{Prop} \label{prop2.3}
		For initial data satisfying \eqref{eq:the initial data}, $T_0 = \nu^{-\frac16}$, there exists a small enough $0< \nu_2<1$ such that if $0<\nu<\nu_2$, it holds that
		\begin{equation*}
			\begin{aligned}
				&\| e^{c \nu^{\frac{1}{3}} \lambda(D_x) t} \langle D_x \rangle \langle \tfrac{1}{D_x} \rangle^\epsilon \omega^{\mathrm{NL}} \|_{L^\infty_{[T_0, +\infty]} L^2}  + \|e^{c \nu^{\frac{1}{3}} \lambda\left(D_x\right) t}\langle D_x\rangle\langle\frac{1}{D_x}\rangle^\epsilon \partial_x \nabla \phi^{\mathrm{NL}}\|_{L_{[T_0, +\infty]}^2 L^2} \\
				&\quad + \nu^{-\frac{1}{3} - \delta} \| e^{c \nu^{\frac{1}{3}} \lambda(D_x) t} \langle D_x \rangle \langle \tfrac{1}{D_x} \rangle^\epsilon \langle D_x \rangle^{\frac{1}{3}} \theta^{\mathrm{NL}} \|_{L^\infty_{[T_0, +\infty]} L^2} 
				\lesssim \nu^{\frac{1}{2} + 2\delta}.
			\end{aligned}
		\end{equation*}
	\end{Prop}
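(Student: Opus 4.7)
The plan is a bootstrap (continuity) argument on the interval $[T_0, T^\ast)$, started from the bounds at $t = T_0$ supplied by Corollary~\ref{cor2}. I work with the energy functional
\[
E(t) := \bigl\|e^{c\nu^{1/3}\lambda(D_x) t}\langle D_x\rangle\langle \tfrac{1}{D_x}\rangle^\epsilon \sqrt{\mathcal{M}}\,\omega^{\mathrm{NL}}(t)\bigr\|_{L^2}^2 + \nu^{-\frac23 - 2\delta}\bigl\|e^{c\nu^{1/3}\lambda(D_x) t}\langle D_x\rangle\langle \tfrac{1}{D_x}\rangle^\epsilon \langle D_x\rangle^{1/3}\sqrt{\mathcal{M}}\,\theta^{\mathrm{NL}}(t)\bigr\|_{L^2}^2,
\]
which, by~\eqref{eq:bound of M}, is equivalent to the square of the two $L^\infty_t L^2$ norms appearing in the proposition. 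Subtracting the linearized equations from~\eqref{eq:main} shows that $\omega^{\mathrm{NL}}$ and $\theta^{\mathrm{NL}}$ satisfy transport--diffusion equations with sources $\partial_x\theta^{\mathrm{NL}}-u\cdot\nabla\omega$ and $-u\cdot\nabla\theta$, respectively. The bootstrap hypothesis is $\sup_{t\in[T_0,T]}E(t)\le 4C_\ast\nu^{1+4\delta}$; Corollary~\ref{cor2} furnishes $E(T_0)\lesssim\nu^{1+4\delta}$, placing the initial datum safely inside.

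Testing each equation against the appropriate $\mathcal{M}$-weighted exponential multiplier and invoking Lemma~\ref{lem:M12} (together with the standard commutator identity for $y\partial_x$) produces an energy identity of the form
\[
\tfrac12\tfrac{d}{dt}E(t)+\mathcal{D}_{\mathrm{ED}}(t)+\mathcal{D}_1(t)+\mathcal{D}_2(t)+\mathcal{D}_3(t)=\mathcal{B}(t)+\mathcal{N}(t),
\]
where $\mathcal{D}_{\mathrm{ED}}=c\nu^{1/3}\lambda(D_x)E$ comes from time-differentiating the exponential weight, $\mathcal{D}_1$ supplies $\tfrac{\nu^{1/3}}{4}\||D_x|^{1/3}\cdot\|^2$ from $\mathcal{M}_1$, $\mathcal{D}_2$ is precisely the target $L^2_t L^2$ norm of $e^{\cdots}\langle D_x\rangle\langle\tfrac{1}{D_x}\rangle^\epsilon\partial_x\nabla\phi^{\mathrm{NL}}$ coming from $\mathcal{M}_2$, and $\mathcal{D}_3$ is the $\sqrt{\Upsilon}$ reservoir from $\mathcal{M}_3$. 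The buoyancy coupling $\mathcal{B}(t)$ produced by $\partial_x\theta^{\mathrm{NL}}$ in the vorticity equation is absorbed by Cauchy--Schwarz after splitting $\partial_x=\langle D_x\rangle^{1/3}\cdot\langle D_x\rangle^{-1/3}\partial_x$ and exploiting the $\nu^{-2/3-2\delta}$ prefactor carried by $E_\theta$; the residual loss is controlled by $\mathcal{D}_{\mathrm{ED}}$, since $\lambda(D_x)$ restores the missing power of $|k|$ at the relevant scales.

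The nonlinear term $\mathcal{N}(t)$ splits via $u=u^{\mathrm{L}}+u^{\mathrm{NL}}$ and $(\omega,\theta)=(\omega^{\mathrm{L}},\theta^{\mathrm{L}})+(\omega^{\mathrm{NL}},\theta^{\mathrm{NL}})$ into $4+4$ interaction types. Pure NL--NL pieces are absorbed by the bootstrap smallness. Transport-type pieces $u^{\mathrm{L}}\cdot\nabla(\omega^{\mathrm{NL}},\theta^{\mathrm{NL}})$ are handled by placing $u^{\mathrm{L}}$ in $L^\infty$, using the pointwise inviscid-damping bound~\eqref{eq:inviscid damping} and the $\langle t\rangle^{-2}$ decay (integrable on $[T_0,\infty)$) together with Proposition~\ref{lem:linear theta}. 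L--L pieces act as genuine forcing terms and are controlled directly by Proposition~\ref{lem:linear theta}. The delicate reaction pieces $u^{\mathrm{NL}}\cdot\nabla(\omega^{\mathrm{L}},\theta^{\mathrm{L}})$ and, most critically, the staggered term $I_4=|D_x|^{1/3}u^{\mathrm{NL}}\cdot\nabla\theta^{\mathrm{L}}$ (which only appears because $E_\theta$ carries an extra $\langle D_x\rangle^{1/3}$) are treated on the Fourier side and absorbed in the dangerous echo regime $|k-l|\lesssim|k|$ by the $\mathcal{D}_3$-reservoir. Here the tailored weight $\langle 1/l\rangle^{-1/3-\kappa}|l|^{-4/3}$ in the new $\mathcal{M}_3$ of~\eqref{eq:M} is essential: it endows $\Upsilon$ with one extra power of $|l|^{-1/3}$ compared with~\cite{LLZ2025}, which, combined with $\langle 1/l\rangle^{-2/3}|l|^{-2/3}\lesssim 1$, exactly absorbs the rogue $|l|^{1/3}$ factor.

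Integrating from $T_0$ to $t$, every nonlinear contribution carries either a prefactor $\sup E^{1/2}\le 2\sqrt{C_\ast}\,\nu^{1/2+2\delta}$ or direct smallness from Proposition~\ref{lem:linear theta}, yielding $E(t)\le E(T_0)+C_1(1+C_\ast)\nu^{1+4\delta}$; selecting $C_\ast$ large and then $\nu_2$ small closes the bootstrap, and the $L^2_t L^2$ bound for $\partial_x\nabla\phi^{\mathrm{NL}}$ falls out as the integrated $\mathcal{D}_2$ contribution. The main obstacle is precisely the staggered term $I_4$: without the redesigned $\mathcal{M}_3$, the extra $|D_x|^{1/3}$ landing on $u^{\mathrm{NL}}$ produces an $|l|^{1/3}$-loss that neither enhanced dissipation, standard dissipation, inviscid damping, nor the original $\mathcal{M}_3$ of~\cite{LLZ2025} can absorb in the small-$l$ echo regime; the weight in~\eqref{eq:M} is engineered so that $\Upsilon$ carries just enough extra $|l|^{-1/3}$ to close this single borderline estimate, and the rest of the argument is a careful bookkeeping of the by-now standard mechanisms.
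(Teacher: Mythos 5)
Your proposal follows essentially the same route as the paper: a bootstrap on $[T_0,T]$ seeded by the data at $T_0$ from Corollary~\ref{cor2}, closed via the $\mathcal{M}$-weighted energy inequalities of Propositions~\ref{lem:est of thetanl} and~\ref{lem:est of wnl}, with the $\nu^{-2/3-2\delta}$-weighted $\theta$-energy, the $\mathcal{M}_2$-generated $\partial_x\nabla\phi^{\mathrm{NL}}$ dissipation falling out of the integrated identity, and the redesigned $\mathcal{M}_3$ absorbing the staggered echo term $I_4$. One bookkeeping caveat: the term $c\nu^{1/3}\lambda(D_x)E$ produced by differentiating the exponential weight enters with the opposite sign (it is a loss, itself absorbed by the $\mathcal{M}_1$ dissipation since $\lambda(k)\le|k|^{2/3}$ and $c$ is small), so it cannot serve as the reservoir controlling the buoyancy residual --- that job is done by the $\nu^{1/3}\||D_x|^{1/3}\cdot\|_{L^2}^2$ term, exactly as in the paper.
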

	
	\begin{proof}[Proof of Theorem \ref{thm:main}]
		Denoting $\nu_0= \min\{\nu_1, \nu_2\}$ and using  Proposition \ref{lem:linear theta}, Corollary \ref{cor2} and Proposition \ref{prop2.3},  the proof is complete.
	\end{proof}

	\section{Estimates of the linear part} \label{sec.3}
	Denote $\omega^{\mathrm{L}}$ and $\theta^{\mathrm{L}}$ by a pair of solutions that solve the following linear equation:
	\begin{align} \label{eq:linear equ}
		\left\{\begin{array}{l}
			\partial_t \omega^{\mathrm{L}}-\nu \Delta \omega^{\mathrm{L}}+y \partial_x \omega^{\mathrm{L}}  = \partial_x \theta^{\mathrm{L}}  ,  \\
			\partial_t \theta^{\mathrm{L}}  -  \mu \Delta \theta^{\mathrm{L}}+y \partial_x \theta^{\mathrm{L}}  = 0  ,\\
			\left.\omega^{\mathrm{L}} \right|_{t=0}=\omega_{\text {in}}(x, y),\quad \left.\theta^{\mathrm{L}} \right|_{t=0}= \theta_{\text{in}}(x, y).
		\end{array}\right.
	\end{align}
	It is easy to check that the operator $\langle D_x, D_y + tD_x\rangle^a$, $a\in \mathbb{Z}^+$, commutes with the differential operator with variable coefficients $\partial_t+y \partial_x$ and thus
	\begin{align*} 
		\left\{\begin{array}{l}
			(\partial_t -\nu \Delta +y \partial_x) \langle D_x, D_y + tD_x\rangle^a  \omega^{\mathrm{L}}  = \partial_x \langle D_x, D_y + tD_x\rangle^a  \theta^{\mathrm{L}}  ,  \\
			(\partial_t   -  \mu \Delta +  y \partial_x ) \langle D_x, D_y + tD_x\rangle^a  \theta^{\mathrm{L}}  = 0  ,\\
			\left. \langle D_x, D_y + tD_x\rangle^a  \omega^{\mathrm{L}} \right|_{t=0}  =  \langle D_x, D_y \rangle^a  \omega_{\text {in}}(x, y),\\ \left. \langle D_x, D_y + tD_x\rangle^a  \theta^{\mathrm{L}} \right|_{t=0}= \langle D_x, D_y \rangle^a  \theta_{\text{in}}(x, y).
		\end{array}\right.
	\end{align*}
	The solution to the equations above can be represented by the Fourier methods:
	\begin{equation}
		\begin{aligned}\label{eq:solutions of the linear equ}
			\langle k, \xi + kt \rangle^a  \widehat{\theta^{\mathrm{L}} }(t, k, \xi)&  = \langle k, \xi + kt \rangle^a  \widehat{\theta_{\text {in}} }(k, \xi+k t) e^{-\mu \int_0^t|k|^2+|\xi+k(t-\tau)|^2 d \tau},\\
			\langle k, \xi + kt \rangle^a  \widehat{\omega^{\mathrm{L}} }(t, k, \xi)& = \langle k, \xi + kt \rangle^a  \widehat{\omega_{\text {in}} }(k, \xi+kt) e^{-\nu \int_0^t|k|^2+|\xi+k(t-\tau)|^2 d \tau}\\
			&\quad  + \int_0^t ik \langle k, \xi + ks \rangle^a  \widehat{\theta^{\mathrm{L}} }(s, k, \xi) e^{-\nu \int_0^{t-s} |k|^2+|\xi+k(t-\tau)|^2 d \tau} ds     .
		\end{aligned}
	\end{equation}
	When $\nu =\mu$, substituting $\eqref{eq:solutions of the linear equ}_1$ into $\eqref{eq:solutions of the linear equ}_2$ yields that
	\begin{equation}\begin{aligned} \label{eq:solutions of the linear equ2}
			\langle k, \xi + kt \rangle^a  \widehat{\omega^{\mathrm{L}} }(t, k, \xi)& = \langle k, \xi + kt \rangle^a  \widehat{\omega_{\text {in}} }(k, \xi+kt) e^{-\nu \int_0^t|k|^2+|\xi+k(t-\tau)|^2 d \tau}\\
			&\quad  +  e^{-\nu \int_0^t|k|^2+|\xi+k(t-\tau)|^2 d \tau} \int_0^t ik \langle k, \xi + ks \rangle^a   \widehat{\theta_{\text {in}} }(k, \xi+k s) ds.
	\end{aligned}\end{equation}
	
	With the solution above, we can obtain estimates at each time instant in the frequency space.
	
	\begin{Lem} \label{lem:linear Linfty}
		For initial data satisfying \eqref{eq:the initial data}, there exists a universal constant $0< c_0 <\frac{1}{12}$ such that for $2\leq p \leq \infty$ and for any $t>0$, it holds that
		\begin{equation*}\begin{aligned}
				&\|e^{c_0 \nu |k|^2 t^3}  \langle k, \xi + kt \rangle^6\langle\frac{1}{k}\rangle^4 \langle k \rangle \widehat{\theta^{\mathrm{L}} } (t)\|_{L_{k,\xi}^p} \leq \nu^{\frac23 + 2\delta} , \\
				&\|e^{c_0 \nu |k|^2 t^3}  \langle k, \xi + kt \rangle^6\langle\frac{1}{k}\rangle^4 \widehat{\omega^{\mathrm{L}} } (t)\|_{L_{k,\xi}^p}\leq C(c_0) \nu^{\frac13 + \delta}.
		\end{aligned}\end{equation*}
	\end{Lem}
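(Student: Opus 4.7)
The plan is to read off $\widehat{\omega^{\mathrm L}}$ and $\widehat{\theta^{\mathrm L}}$ from the explicit Fourier representations in \eqref{eq:solutions of the linear equ2} and perform all estimates on the Fourier side, treating the weights as multipliers. The decisive algebraic identity is
\[
\nu\int_0^t\!\bigl(|k|^2+|\xi+k(t-\tau)|^2\bigr)\,d\tau \;=\; \nu k^2 t \;+\; \nu t\bigl(\xi+\tfrac{kt}{2}\bigr)^{\!2} \;+\; \tfrac{1}{12}\,\nu k^2 t^3,
\]
which shows that, for any $c_0<\tfrac{1}{12}$, one has the pointwise bound $e^{c_0\nu k^2 t^3 - A(t,k,\xi)}\leq e^{-c\nu k^2 t^3}\,e^{-\nu t(\xi+kt/2)^2}\,e^{-\nu k^2 t}$ with $c:=\tfrac{1}{12}-c_0>0$. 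The retained Gaussian in $\xi$ (of width $\sim(\nu t)^{-1/2}$ centered at $-kt/2$) and the cubic decay in $\nu k^2 t^3$ are what will absorb all polynomial losses below.

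For $\widehat{\theta^{\mathrm L}}(t,k,\xi)=\widehat{\theta_{\mathrm{in}}}(k,\xi+kt)\,e^{-A}$ I substitute $\eta=\xi+kt$ in the $L^p_{k,\xi}$ norm (a shift preserving the measure for each fixed $k$). The pointwise bound above gives $e^{c_0\nu k^2 t^3-A}\leq 1$, so the claim reduces to
\[
\bigl\|\langle k,\eta\rangle^6\,\langle\tfrac{1}{k}\rangle^4\,\langle k\rangle\,\widehat{\theta_{\mathrm{in}}}(k,\eta)\bigr\|_{L^p_{k,\eta}}\lesssim\nu^{2/3+2\delta}.
\]
Hausdorff--Young bounds this by $\|\langle D_x,D_y\rangle^6\langle 1/D_x\rangle^4\langle D_x\rangle\theta_{\mathrm{in}}\|_{L^{p'}}$, and the $L^1\cap L^2$ hypothesis in \eqref{eq:the initial data} combined with $L^{p'}$ interpolation gives the result uniformly for $p\in[2,\infty]$.

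For $\widehat{\omega^{\mathrm L}}$ the homogeneous piece $\widehat{\omega_{\mathrm{in}}}(k,\xi+kt)e^{-A}$ is handled identically, yielding $\lesssim\nu^{1/3+\delta}$. For the forcing piece
\[
F(t,k,\xi):=ik\,e^{-A(t,k,\xi)}\!\int_0^t\!\widehat{\theta_{\mathrm{in}}}(k,\xi+ks)\,ds,
\]
my plan is: (i) apply the Peetre-type estimate $\langle k,\xi+kt\rangle^6\leq\langle k,\xi+ks\rangle^6\langle k(t-s)\rangle^6$ to transport the principal weight under the time integral; (ii) apply Minkowski in time and, for each fixed $s,k$, perform the shift $\xi\mapsto\eta=\xi+ks$ so that the moving argument of $\widehat{\theta_{\mathrm{in}}}$ matches the weight used in \eqref{eq:the initial data}, producing in the integrand the weighted function $\Theta(k,\eta):=\langle k,\eta\rangle^6\langle 1/k\rangle^4\langle k\rangle|\widehat{\theta_{\mathrm{in}}}|$ (which satisfies $\|\Theta\|_{L^p_{k,\eta}}\lesssim\nu^{2/3+2\delta}$) and a Gaussian $e^{-\nu t(\eta+k(t/2-s))^2}$; (iii) absorb the residual polynomial $\langle k(t-s)\rangle^6$ and the factor $|k|/\langle k\rangle$ against the cubic decay $e^{-c\nu k^2 t^3}$ by means of the dimensional scaling $y:=\nu^{1/3}|k|^{2/3}t$, which makes $y^m e^{-cy^3}$ uniformly bounded and produces exactly the $\nu^{-1/3-\delta}$ gain required to pair with the $\nu^{2/3+2\delta}$ initial data bound for $\theta_{\mathrm{in}}$ and yield $\nu^{1/3+\delta}$.

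The hardest step is (iii): the bound must be uniform in $t>0$, yet $\langle k(t-s)\rangle^6$ grows polynomially in $|k|t$ and the Gaussian decay in $\xi$ alone does not kill it. The subtle point is that the $|k|$ factor in $ik$ can be recast, via the change of variables $\eta=\xi+ks$, as an integration of $|\widehat{\theta_{\mathrm{in}}}(k,\eta)|$ over an interval of length $|kt|$, converting the forcing term into a $\xi$-convolution with a characteristic function to which Young's inequality then applies; optimizing the resulting expression over $k$ creates a low-frequency singularity that is controlled precisely by the weight $\langle 1/D_x\rangle^4$ in \eqref{eq:the initial data}, while the weight $\langle D_x\rangle$ there compensates the loss of $|k|$. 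Interpolation between $p=2$ (Plancherel) and $p=\infty$ (Riemann--Lebesgue) via Hausdorff--Young then covers the entire range $2\leq p\leq\infty$.
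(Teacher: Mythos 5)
Your treatment of $\widehat{\theta^{\mathrm{L}}}$ and of the homogeneous part of $\widehat{\omega^{\mathrm{L}}}$ is correct and coincides with the paper's: the identity $\nu\int_0^t(|k|^2+|\xi+k(t-\tau)|^2)\,d\tau=\nu k^2t+\tfrac{1}{12}\nu k^2t^3+\nu t|\xi+\tfrac{kt}{2}|^2$, the measure-preserving shift $\eta=\xi+kt$, and Hausdorff--Young plus $L^1\cap L^2$ interpolation are exactly the intended ingredients.

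The gap is in your step (i) for the forcing term. By first writing the unweighted Duhamel formula and then transporting the weight with Peetre's inequality $\langle k,\xi+kt\rangle^6\lesssim\langle k,\xi+ks\rangle^6\langle k(t-s)\rangle^6$, you create the loss $\langle k(t-s)\rangle^6$, and your step (iii) cannot absorb it at the claimed cost. Indeed, combined with the factor $|k|t$ coming from the length of the time integral, the worst case $|k|t\geq 1$ forces you to control $(|k|t)^7$, and the scaling $y=\nu^{1/3}|k|^{2/3}t$ gives $(|k|t)^7=\nu^{-7/3}|k|^{7/3}\,y^7$, i.e.\ a loss of $\nu^{-7/3}|k|^{7/3}$ rather than the $\nu^{-1/3}|k|^{1/3}$ your arithmetic assumes; since $\|\theta_{\mathrm{in}}\|$ only carries $\nu^{2/3+2\delta}$ and a single extra power of $\langle D_x\rangle$, this destroys the bound $\nu^{1/3+\delta}$. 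Your convolution reformulation of $ik\int_0^t(\cdots)ds$ as $\chi_{[0,|k|t]}*_\xi\widehat{\theta_{\mathrm{in}}}(k,\cdot)$ only reproduces the benign factor $|k|t$ via Young's inequality; it does nothing about $\langle k(t-s)\rangle^6$, which sits inside the integrand and is not a convolution kernel. The fix is to avoid Peetre entirely: $\langle D_x,D_y+tD_x\rangle^6$ commutes exactly with $\partial_t+y\partial_x$, so one solves the \emph{weighted} system directly (this is \eqref{eq:solutions of the linear equ}--\eqref{eq:solutions of the linear equ2}); moreover along the backward characteristic the argument of $\widehat{\theta_{\mathrm{in}}}$ is constantly $\xi+kt$, so the Duhamel term collapses to $ikt\,\langle k,\xi+kt\rangle^6\widehat{\theta_{\mathrm{in}}}(k,\xi+kt)e^{-\nu\int_0^t(\cdots)d\tau}$ with no frequency mismatch. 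Then only $|k|t=|k|^{1/3}\cdot|k|^{2/3}t$ must be absorbed, using $|k|^{2/3}t\,e^{(c_0-\frac{1}{12})\nu|k|^2t^3}\leq(\tfrac{1}{12}-c_0)^{-1/3}\nu^{-1/3}$ and $|k|^{1/3}\leq\langle k\rangle$, which is precisely what the extra $\langle D_x\rangle$ on $\theta_{\mathrm{in}}$ in \eqref{eq:the initial data} is there for.
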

	\begin{proof}
		For $2\leq p \leq \infty$, $t\geq 0$ and $c_0 <\frac{1}{12}$, we deduce from $\eqref{eq:solutions of the linear equ}_1$ and \eqref{eq:the initial data} that
		\begin{equation*}\begin{aligned}
				\|e^{c_0 \nu |k|^2 t^3} & \langle k, \xi + kt \rangle^6\langle\frac{1}{k}\rangle^4 \langle k \rangle \widehat{\theta^{\mathrm{L}} }(t, k, \xi) \|_{L_{k, \xi}^p}\\
				&\leq \| \langle k, \xi + kt \rangle^6\langle\frac{1}{k}\rangle^4 \langle k \rangle \widehat{\theta_{\text {in}} }(k, \xi+k t) \|_{L_{k, \xi}^p} 
				= \| \langle k, \xi \rangle^6\langle\frac{1}{k}\rangle^4 \langle k \rangle \widehat{\theta_{\text {in}} }(k, \xi) \|_{L_{k, \xi}^p} \\
				&\leq  \| \langle k, \xi \rangle^6\langle\frac{1}{k}\rangle^4 \langle k \rangle \widehat{\theta_{\text {in}} } \|_{L^2}^{\frac2p} \| \langle k, \xi \rangle^6\langle\frac{1}{k}\rangle^4 \langle k \rangle \widehat{\theta_{\text {in}} }\|_{L^\infty}^{1- \frac2p} \\
				&\leq \|\langle D_x, D_y\rangle^6\langle\frac{1}{D_x}\rangle^4 \langle D_x \rangle \theta_{\mathrm{in}}\|_{L^2}^{\frac2p} \|\langle D_x, D_y\rangle^6\langle\frac{1}{D_x}\rangle^4 \langle D_x \rangle \theta_{\mathrm{in}}\|_{L^1}^{1- \frac2p} 
				\leq  \nu^{\frac23 + 2\delta},
		\end{aligned}\end{equation*}
		where we used that
		\begin{equation*}\begin{aligned}
				\int_0^t|k|^2+|\xi+k(t-\tau)|^2 d \tau =|k|^2 t+\frac{|k|^2 t^3}{12}+\left|\xi+\frac{k t}{2}\right|^2 t .
		\end{aligned}\end{equation*}
		Similarly, by \eqref{eq:solutions of the linear equ2} and \eqref{eq:the initial data} again, we have
		\begin{align*}
			\|e^{c_0 \nu |k|^2 t^3}  & \langle k, \xi + kt \rangle^6\langle\frac{1}{k}\rangle^4 \widehat{\omega^{\mathrm{L}} }(t, k, \xi) \|_{L_{k, \xi}^p}\\
			&\leq \| \langle k, \xi + kt \rangle^6\langle\frac{1}{k}\rangle^4  \widehat{\omega_{\text {in}} }(k, \xi+k t) \|_{L_{k, \xi}^p} \\
			&\qquad \qquad +  \| e^{ (c_0 - \frac{1}{12} ) \nu |k|^2 t^3}    \int_0^t ik \langle k, \xi + ks \rangle^6\langle\frac{1}{k}\rangle^4  \widehat{\theta_{\text {in}} }(k, \xi+k s) ds   \|_{L_{k, \xi}^p} \\
			&= \| \langle k, \xi \rangle^6\langle\frac{1}{k}\rangle^4  \widehat{\omega_{\text {in}} }(k, \xi) \|_{L_{k, \xi}^p}    +   \|  e^{ (c_0 - \frac{1}{12} ) \nu |k|^2 t^3} ikt \langle k, \xi \rangle^6\langle\frac{1}{k}\rangle^4  \widehat{\theta_{\text {in}} }(k, \xi) \|_{L_{k, \xi}^p}\\
			&\leq   \| \langle k, \xi \rangle^6\langle\frac{1}{k}\rangle^4  \widehat{\omega_{\text {in}} } \|_{L^2}^{\frac2p} \| \langle k, \xi \rangle^6\langle\frac{1}{k}\rangle^4  \widehat{\omega_{\text {in}} } \|_{L^\infty}^{1- \frac2p}  \\
			&\qquad + (\frac{1}{12} - c_0)^{-\frac13} \nu^{-\frac13} \| \langle k, \xi \rangle^6\langle\frac{1}{k}\rangle^4  |k|^\frac13 \widehat{\theta_{\text {in}} }\|_{L^2}^{\frac2p} \| \langle k, \xi \rangle^6\langle\frac{1}{k}\rangle^4  |k|^\frac13\widehat{\theta_{\text {in}} } \|_{L^\infty}^{1- \frac2p}   \\
			& \leq C(c_0) \nu^{\frac13 + \delta},
		\end{align*}
		where we used 
		\begin{equation*}\begin{aligned}
				|k|^\frac23 t e^{ (c_0 - \frac{1}{12} ) \nu |k|^2 t^3} \leq (\frac{1}{12} - c_0)^{-\frac13} \nu^{-\frac13}.
		\end{aligned}\end{equation*}
		The proof is completed.
	\end{proof}

	Analogously, in the frequency space, one can also obtain the following lemma by using the Fourier multiplier $\mathcal{M}_1$.
	\begin{Lem} \label{lem:linear}
		For initial data satisfying \eqref{eq:the initial data} and any $m>0$, there exists a universal constant $0< c_0 < \frac{m^{p-1}}{8(m+\pi)^p}$ such that for $ 1\leq p< +\infty$ and for any $t>0$, it holds that
		\begin{equation*}\begin{aligned} 
				\|  e^{c_0 \nu^\frac13 |k|^\frac23 t} \langle k, & \xi + kt\rangle^6 \langle \frac{1}{k}\rangle^4 \langle |k|^\frac13 \rangle \widehat{\theta^{\mathrm{L}}} \|_{L_t^\infty L^{2p}} \\
				&+ \nu^\frac{1}{6p} \| e^{ c_0 \nu^\frac13 |k|^\frac23 t}    \langle k, \xi + kt\rangle^6 \langle \frac{1}{k}\rangle^4 \langle |k|^\frac13 \rangle |k|^\frac{1}{3p} \widehat{\theta^{\mathrm{L}}} \|_{L_t^{2p} L^{2p}} \lesssim \nu^{\frac{2}{3} + 2\delta} 
		\end{aligned}\end{equation*}
		and
		\begin{equation*}\begin{aligned} 
				\|  e^{c_0 \nu^\frac13 |k|^\frac23 t} \langle k, &\xi + kt\rangle^6 \langle \frac{1}{k}\rangle^4 \widehat{\omega^{\mathrm{L}}} \|_{L_t^\infty L^{2p}} \\
				&+ \nu^\frac{1}{6p} \| e^{ c_0 \nu^\frac13 |k|^\frac23 t}    \langle k, \xi + kt\rangle^6 \langle \frac{1}{k}\rangle^4  |k|^\frac{1}{3p} \widehat{\omega^{\mathrm{L}}} \|_{L_t^{2p} L^{2p}} \lesssim \nu^{\frac{1}{3} +\delta} .
		\end{aligned}\end{equation*}
	\end{Lem}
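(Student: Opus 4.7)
The plan is to start from the explicit Fourier-side formulas \eqref{eq:solutions of the linear equ}--\eqref{eq:solutions of the linear equ2}, which reduce the problem to weighted $L^q_{k,\xi}$ norms of $\widehat{\theta_{\mathrm{in}}}$ and $\widehat{\omega_{\mathrm{in}}}$. The backbone identity already used in Lemma~\ref{lem:linear Linfty}, namely $\int_0^t |k|^2 + |\xi+k(t-\tau)|^2\, d\tau = |k|^2 t + \frac{|k|^2 t^3}{12} + |\xi+\tfrac{kt}{2}|^2 t \geq \frac{|k|^2 t^3}{12}$, provides heat-type Gaussian decay. The key elementary observation, with $X := \nu^{1/3}|k|^{2/3} t$, is that $c_0 X - X^3/12 \leq -c' X + C$ for some $c', C>0$ once $c_0$ is small (this is where the quantitative threshold $c_0 < m^{p-1}/(8(m+\pi)^p)$ enters, after an optimization over a Young-type splitting parameter $m$). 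This converts the multiplier growth $e^{c_0 \nu^{1/3} |k|^{2/3} t}$ into a net decay $C\,e^{-c' \nu^{1/3} |k|^{2/3} t}$ on the enhanced-dissipation time scale.

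For the $L_t^\infty L^{2p}$ halves, I would essentially invoke Lemma~\ref{lem:linear Linfty} after noting that $e^{c_0 \nu^{1/3}|k|^{2/3}t} \leq C\,e^{c_0 \nu |k|^2 t^3}$ (from the same $X^{1/3} \leq X + 1$ trick) and $\langle |k|^{1/3}\rangle \leq \sqrt{2}\langle k\rangle$, so the $\langle D_x\rangle$ weight in \eqref{eq:the initial data} subsumes the $\langle |k|^{1/3}\rangle$ needed for $\theta^{\mathrm L}$. The genuinely new content is the space--time $L_t^{2p} L^{2p}$ estimate. After absorbing the multiplier, the problem reduces to bounding
\[
\nu^{1/(6p)} \bigl\| |k|^{1/(3p)}\, e^{-c' \nu^{1/3} |k|^{2/3} t}\, \mathrm{wt}(k, \xi+kt)\, \widehat{\theta_{\mathrm{in}}}(k, \xi+kt) \bigr\|_{L^{2p}_{t,k,\xi}}
\]
where $\mathrm{wt}(k,\eta) = \langle k,\eta\rangle^6 \langle 1/k\rangle^4 \langle |k|^{1/3}\rangle$. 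Since all three $L^{2p}$ exponents agree, I apply Fubini, change variables $\xi \to \xi - kt$ so the weight becomes time-independent, and integrate in $t$ first,
\[
\int_0^\infty |k|^{2/3}\, e^{-2p c' \nu^{1/3} |k|^{2/3} t}\, dt = \frac{1}{2pc' \nu^{1/3}},
\]
producing a factor $\nu^{-1/(6p)}$ which exactly cancels the prefactor. The remaining $L^{2p}_{k,\xi}$ norm of the weighted initial datum is then bounded via the elementary $L^2 \cap L^\infty$ interpolation $\|F\|_{L^{2p}} \leq \|F\|_{L^2}^{1/p}\|F\|_{L^\infty}^{1-1/p}$ combined with Plancherel and Hausdorff--Young ($\|\widehat{g}\|_{L^\infty} \leq \|g\|_{L^1}$), turning the $L^2 \cap L^1$ hypothesis \eqref{eq:the initial data} into the required $L^{2p}_{k,\xi}$ control.

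For $\omega^{\mathrm L}$ the homogeneous contribution is handled identically; the source term $e^{-\nu\int\cdots}\int_0^t ik\, \langle k,\xi+ks\rangle^6 \langle 1/k\rangle^4 \widehat{\theta_{\mathrm{in}}}(k,\xi+ks)\, ds$ is treated by Minkowski in $s$ combined with translation invariance of $L^{2p}_\xi$, producing a scalar factor $t|k|$. The crucial absorption step is
\[
t|k|\, e^{-c' \nu^{1/3} |k|^{2/3} t} = \nu^{-1/3}\, |k|^{1/3}\, (\nu^{1/3}|k|^{2/3} t)\, e^{-c' \nu^{1/3}|k|^{2/3} t} \lesssim \nu^{-1/3}\, |k|^{1/3},
\]
which transfers one $\nu^{-1/3}$ and one derivative $|k|^{1/3}$ onto $\theta_{\mathrm{in}}$; the derivative is absorbed by the $\langle D_x\rangle$ factor already present in \eqref{eq:the initial data}, yielding $\nu^{-1/3}\cdot \nu^{2/3+2\delta} = \nu^{1/3+2\delta} \lesssim \nu^{1/3+\delta}$, exactly the claimed bound for $\omega^{\mathrm L}$. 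The main obstacle I anticipate is quantitative: one must pick $c_0$ uniformly small in $p$ so that the multiplier growth is fully absorbed into the heat kernel with a strictly positive surplus rate $c'$, and track the $\langle 1/k\rangle^4$ weights through each translation and interpolation so that the final Fourier-side $L^{2p}$ norm matches the hypothesis \eqref{eq:the initial data} rather than producing a borderline-divergent configuration.
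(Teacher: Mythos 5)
Your proposal is correct, but it takes a genuinely different route from the paper. The paper proves this lemma by an $L^{2p}$ energy method on the Fourier side: it shifts the multiplier to $\mathcal{M}_1'=\mathcal{M}_1+m$ (so $m<\mathcal{M}_1'<m+\pi$), pairs the transport--diffusion equation for $\widehat{\theta^{\mathrm L}}$ with $2p\,|\sqrt{\mathcal{M}_1'}e^{c_0\mu^{1/3}|k|^{2/3}t}\widehat{\theta^{\mathrm L}}|^{2p-2}\mathcal{M}_1'e^{2c_0\mu^{1/3}|k|^{2/3}t}\widehat{\theta^{\mathrm L}}$, and uses $k\partial_\xi\mathcal{M}_1\geq\frac14\nu^{1/3}|k|^{2/3}-\frac12\nu|\xi|^2$ to produce the dissipation term $\mu^{1/3}\||k|^{1/(3p)}\cdots\|_{L^{2p}}^{2p}$ that both absorbs the exponential growth and, upon time integration, yields the $L_t^{2p}L^{2p}$ half for free; the source $ik\widehat{\theta^{\mathrm L}}$ in the $\omega$-equation is handled by H\"older inside the energy identity at the cost of $\nu^{-(2p-1)/3}$. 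You instead work directly from the explicit Kelvin formulas \eqref{eq:solutions of the linear equ}--\eqref{eq:solutions of the linear equ2}, absorb $e^{c_0\nu^{1/3}|k|^{2/3}t}$ pointwise into $e^{-\nu|k|^2t^3/12}$ via $X\lesssim X^3+1$, and obtain the space--time norm by Fubini, the shift $\xi\mapsto\xi-kt$, and $\int_0^\infty|k|^{2/3}e^{-2pc'\nu^{1/3}|k|^{2/3}t}\,dt=(2pc'\nu^{1/3})^{-1}$; the Duhamel term is handled by Minkowski plus $t|k|e^{-c'\nu^{1/3}|k|^{2/3}t}\lesssim\nu^{-1/3}|k|^{1/3}$, which is exactly the mechanism the paper itself uses in Lemma~\ref{lem:linear Linfty}. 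Your route is more elementary, in fact works for \emph{every} $c_0>0$ (with a $c_0$-dependent constant), and makes the $p$-uniformity transparent; the paper's energy method is less explicit here but is the template that the nonlinear estimates in Section~\ref{sec.5} must follow, and it is what motivates the multiplier $\mathcal{M}_1$. One small correction: the threshold $c_0<\frac{m^{p-1}}{8(m+\pi)^p}$ does not arise from your inequality $c_0X-X^3/12\leq-c'X+C$ (which holds for all $c_0$); it is an artifact of the paper's requirement $m^{p-1}-8c_0(m+\pi)^p>0$ in the energy identity, so in your argument no optimization over $m$ is needed and the existence claim in the lemma is satisfied trivially.
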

	\begin{proof}
		\underline{\bf Estimates of $\widehat{\theta^{\mathrm{L}}}$. }
		Denoting that $ \mathcal{M}_1' (k, \xi):= {\mathcal{M}_1} (k, \xi) + m$ and $M:= m+\pi$, it holds that $m<\mathcal{M}_1' (k, \xi)<M$. Applying the Fourier transform for $\eqref{eq:linear equ}_2$, one obtains 
		\begin{align} \label{eq:linear Fourier}
			(\pt- k\pxi) \widehat{\theta^{\mathrm{L}} }(t, k, \xi)  + \mu(k^2 + \xi^2) \widehat{\theta^{\mathrm{L}} }(t, k, \xi)   = 0.
		\end{align}
		Thanks to 
		\begin{align*}
			&\pt\lt( | \sqrt{\mathcal{M}_1'(k, \xi)} e^{c_0 \mu^\frac13 |k|^\frac23 t} \widehat{\theta^{\mathrm{L}} }|^{2p-2} \mathcal{M}_1'(k, \xi) e^{2c_0 \mu^\frac13 |k|^\frac23 t} \widehat{\theta^{\mathrm{L}} } \rt) \\
			&=c_0\mu^\frac13 |k|^\frac23 | \sqrt{\mathcal{M}_1'(k, \xi)} e^{c_0 \mu^\frac13 |k|^\frac23 t} \widehat{\theta^{\mathrm{L}} }|^{2p-2}  \mathcal{M}_1'(k, \xi) e^{2c_0 \mu^\frac13 |k|^\frac23 t} \widehat{\theta^{\mathrm{L}} }\\
			& \quad + (2p-1) \sqrt{\mathcal{M}_1'(k, \xi)} e^{c_0 \mu^\frac13 |k|^\frac23 t} \\
			&\qquad \times | \sqrt{\mathcal{M}_1'(k, \xi)} e^{c_0 \mu^\frac13 |k|^\frac23 t} \widehat{\theta^{\mathrm{L}} }|^{2p-2} \pt( \sqrt{\mathcal{M}_1'(k, \xi)} e^{c_0 \mu^\frac13 |k|^\frac23 t} \widehat{\theta^{\mathrm{L}} }),
		\end{align*}
		\begin{align*}
			&\partial_\xi \lt( | \sqrt{\mathcal{M}_1'(k, \xi)} e^{c_0 \mu^\frac13 |k|^\frac23 t} \widehat{\theta^{\mathrm{L}} }|^{2p-2} \mathcal{M}_1'(k, \xi) e^{2c_0 \mu^\frac13 |k|^\frac23 t} \widehat{\theta^{\mathrm{L}} } \rt) \\
			&=  p \partial_\xi  \mathcal{M}_1'(k, \xi, \mu)  | \sqrt{\mathcal{M}_1'(k, \xi)} e^{c_0 \mu^\frac13 |k|^\frac23 t} \widehat{\theta^{\mathrm{L}} }|^{2p-2}    e^{2c_0 \mu^\frac13 |k|^\frac23 t} \widehat{\theta^{\mathrm{L}} }\\
			& \quad + (2p-1) \mathcal{M}_1'(k, \xi)  | \sqrt{\mathcal{M}_1'(k, \xi)} e^{c_0 \mu^\frac13 |k|^\frac23 t} \widehat{\theta^{\mathrm{L}} }|^{2p-2}   e^{2c_0 \mu^\frac13 |k|^\frac23 t}    \partial_\xi\widehat{\theta^{\mathrm{L}} },
		\end{align*}
		and taking the inner product of $\eqref{eq:linear Fourier}$ with $$2p | \sqrt{\mathcal{M}_1'(k, \xi)} e^{c_0 \mu^\frac13 |k|^\frac23 t} \widehat{\theta^{\mathrm{L}} }|^{2p-2} \mathcal{M}_1'(k, \xi) e^{2c_0 \mu^\frac13 |k|^\frac23 t} \widehat{\theta^{\mathrm{L}} },$$ we have
		\begin{align*}
			&\big( \pt \widehat{\theta^{\mathrm{L}}} \big| 2p | \sqrt{\mathcal{M}_1'(k, \xi)} e^{c_0 \mu^\frac13 |k|^\frac23 t} \widehat{\theta^{\mathrm{L}} }|^{2p-2} \mathcal{M}_1'(k, \xi) e^{2c_0 \mu^\frac13 |k|^\frac23 t} \widehat{\theta^{\mathrm{L}} } \big) \\
			&= \frac{d}{dt} \big(  \widehat{\theta^{\mathrm{L}}} \big| 2p | \sqrt{\mathcal{M}_1'(k, \xi)} e^{c_0 \mu^\frac13 |k|^\frac23 t} \widehat{\theta^{\mathrm{L}} }|^{2p-2} \mathcal{M}_1'(k, \xi, \mu) e^{2c_0 \mu^\frac13 |k|^\frac23 t} \widehat{\theta^{\mathrm{L}} } \big) \\
			& \quad - 2p\big(  \widehat{\theta^{\mathrm{L}}} \big| \pt ( | \sqrt{\mathcal{M}_1'(k, \xi)} e^{c_0 \mu^\frac13 |k|^\frac23 t} \widehat{\theta^{\mathrm{L}} }|^{2p-2} \mathcal{M}_1'(k, \xi) e^{2c_0 \mu^\frac13 |k|^\frac23 t} \widehat{\theta^{\mathrm{L}} } ) \big)\\
			&= 2p \frac{d}{dt} \| | \sqrt{\mathcal{M}_1'(k, \xi)} e^{c_0 \mu^\frac13 |k|^\frac23 t} \widehat{\theta^{\mathrm{L}} }|^p \|_{L^2}^2 - 2p c_0 \mu^\frac13 \| |k|^\frac13 | \sqrt{\mathcal{M}_1'(k, \xi)} e^{c_0 \mu^\frac13 |k|^\frac23 t} \widehat{\theta^{\mathrm{L}} }|^p\|_{L^2}^2 \\
			& \quad - (2p  -1 ) \frac{d}{dt} \| | \sqrt{\mathcal{M}_1'(k, \xi)} e^{c_0 \mu^\frac13 |k|^\frac23 t} \widehat{\theta^{\mathrm{L}} }|^p \|_{L^2}^2 \\
			&=  \frac{d}{dt} \| | \sqrt{\mathcal{M}_1'(k, \xi)} e^{c_0 \mu^\frac13 |k|^\frac23 t} \widehat{\theta^{\mathrm{L}} }|^p \|_{L^2}^2  - 2p c_0 \mu^\frac13 \| |k|^\frac13 | \sqrt{\mathcal{M}_1'(k, \xi)} e^{c_0 \mu^\frac13 |k|^\frac23 t} \widehat{\theta^{\mathrm{L}} }|^p\|_{L^2}^2 ,
		\end{align*}
		\begin{align*}
			\big( k\partial_\xi  \widehat{\theta^{\mathrm{L}}} \big| &2p | \sqrt{\mathcal{M}_1'(k, \xi)} e^{c_0 \mu^\frac13 |k|^\frac23 t} \widehat{\theta^{\mathrm{L}} }|^{2p-2} \mathcal{M}_1'(k, \xi) e^{2c_0 \mu^\frac13 |k|^\frac23 t} \widehat{\theta^{\mathrm{L}} } \big) \\
			&=  - 2p \big( k \widehat{\theta^{\mathrm{L}}} \big| \partial_\xi ( | \sqrt{\mathcal{M}_1'(k, \xi)} e^{c_0 \mu^\frac13 |k|^\frac23 t} \widehat{\theta^{\mathrm{L}} }|^{2p-2} \mathcal{M}_1'(k, \xi) e^{2c_0 \mu^\frac13 |k|^\frac23 t} \widehat{\theta^{\mathrm{L}} } ) \big)\\
			&= -2p^2 \int_{\mathbb{R}^2} k\partial_\xi  \mathcal{M}_1'(k, \xi)    | \sqrt{\mathcal{M}_1'(k, \xi)} |^{2p-2} |e^{c_0 \mu^\frac13 |k|^\frac23 t} \widehat{\theta^{\mathrm{L}} }|^{2p} dkd\xi \\
			&\quad  -  2p(2p-1)\langle k \widehat{\theta^{\mathrm{L}}},   \mathcal{M}_1'(k, \xi)  | \sqrt{\mathcal{M}_1'(k, \xi)} e^{c_0 \mu^\frac13 |k|^\frac23 t} \widehat{\theta^{\mathrm{L}} }|^{2p-2}   e^{2c_0 \mu^\frac13 |k|^\frac23 t}    \partial_\xi\widehat{\theta^{\mathrm{L}} } \rangle
		\end{align*}
		and
		\begin{align*}
			&\big( \mu(k^2 + \xi^2) \widehat{\theta^{\mathrm{L}} }  \big| 2p | \sqrt{\mathcal{M}_1'(k, \xi)} e^{c_0 \mu^\frac13 |k|^\frac23 t} \widehat{\theta^{\mathrm{L}} }|^{2p-2} \mathcal{M}_1'(k, \xi) e^{2c_0 \mu^\frac13 |k|^\frac23 t} \widehat{\theta^{\mathrm{L}} } \big) \\
			&=  2p\mu \|  (k^2 + \xi^2)^\frac12 | \sqrt{\mathcal{M}_1'(k, \xi)} e^{c_0 \mu^\frac13 |k|^\frac23 t} \widehat{\theta^{\mathrm{L}} }|^{p}  \|_{L^2}^2,
		\end{align*}
		which imply
		\begin{align*}
			-\big( k\partial_\xi  \widehat{\theta^{\mathrm{L}}} \big| &2p | \sqrt{\mathcal{M}_1'(k, \xi)} e^{c_0 \mu^\frac13 |k|^\frac23 t} \widehat{\theta^{\mathrm{L}} }|^{2p-2} \mathcal{M}_1'(k, \xi) e^{2c_0 \mu^\frac13 |k|^\frac23 t} \widehat{\theta^{\mathrm{L}} } \big) \\
			&= p \int_{\mathbb{R}^2} k\partial_\xi  \mathcal{M}_1'(k, \xi) | \sqrt{\mathcal{M}_1'(k, \xi)} |^{2p-2} |e^{c_0 \mu^\frac13 |k|^\frac23 t} \widehat{\theta^{\mathrm{L}} }|^{2p} dkd\xi  \\
			&\geq  \frac{pm^{p-1} \mu^\frac13 }{4} \| |k|^\frac13 |e^{c_0 \mu^\frac13 |k|^\frac23 t} \widehat{\theta^{\mathrm{L}} }|^{p} \|_{L^2}^2  -  \frac{pM^{p-1} \mu}{2} \| |\xi| |e^{c_0 \mu^\frac13 |k|^\frac23 t} \widehat{\theta^{\mathrm{L}} }|^{p} \|_{L^2}^2
		\end{align*}
		and
		\begin{align*}
			&\frac{d}{dt} \| | \sqrt{\mathcal{M}_1'(k, \xi)} e^{c_0 \mu^\frac13 |k|^\frac23 t} \widehat{\theta^{\mathrm{L}} }|^p \|_{L^2}^2  - 2p c_0 \mu^\frac13 \| |k|^\frac13 | \sqrt{\mathcal{M}_1'(k, \xi)} e^{c_0 \mu^\frac13 |k|^\frac23 t} \widehat{\theta^{\mathrm{L}} }|^p\|_{L^2}^2 \\
			&+2p\mu \|  (k^2 + \xi^2)^\frac12 | \sqrt{\mathcal{M}_1'(k, \xi)} e^{c_0 \mu^\frac13 |k|^\frac23 t} \widehat{\theta^{\mathrm{L}} }|^{p}  \|_{L^2}^2 \\
			& + p \int_{\mathbb{R}^2} k\partial_\xi  \mathcal{M}_1'(k, \xi) | \sqrt{\mathcal{M}_1'(k, \xi)} |^{2p-2} |e^{c_0 \mu^\frac13 |k|^\frac23 t} \widehat{\theta^{\mathrm{L}} }|^{2p} dkd\xi = 0.
		\end{align*}
		Since $m<\mathcal{M}_1'(k, \xi, \mu)<M $, it holds that
		\begin{align*}
			& \frac{d}{dt} \| | \sqrt{\mathcal{M}_1'(k, \xi)} e^{c_0 \mu^\frac13 |k|^\frac23 t} \widehat{\theta^{\mathrm{L}} }|^p \|_{L^2}^2  + \frac{p\mu^\frac13}{4} (m^{p-1}  - 8c_0 M^p)  \| |k|^\frac13 | e^{c_0 \mu^\frac13 |k|^\frac23 t} \widehat{\theta^{\mathrm{L}} }|^p\|_{L^2}^2 \\
			&+ p m^p\mu \|  |k| |  e^{c_0 \mu^\frac13 |k|^\frac23 t} \widehat{\theta^{\mathrm{L}} }|^{p}  \|_{L^2}^2  + \frac{\mu p}{2}(2m^p-M^{p-1})  \| |\xi| |e^{c_0 \mu^\frac13 |k|^\frac23 t} \widehat{\theta^{\mathrm{L}} }|^{p} \|_{L^2}^2 \leq 0.
		\end{align*}
		Then, thanks to $c_0 < \frac{m^{p-1}}{8M^p} = \frac{m^{p-1}}{8(m+\pi)^p}$, we have 
		\begin{align} \label{eq:est of theta L}
			\frac{d}{dt} \| \sqrt{\mathcal{M}_1'(k, \xi)}   e^{c_0 \mu^\frac13 |k|^\frac23 t} \widehat{\theta^{\mathrm{L}} } \|_{L^{2p}}^{2p}  +  \mu^\frac13 \| |k|^\frac{1}{3p} e^{c_0 \mu^\frac13 |k|^\frac23 t} \widehat{\theta^{\mathrm{L}} }\|_{L^{2p}}^{2p} \leq 0.
		\end{align}
		Since the operator $\langle D_x, D_y + tD_x\rangle^6$ commutes with the differential operator with variable coefficients $\partial_t+y \partial_x$, we get that
		\begin{equation}\begin{aligned} \label{eq:est of linear theta L2}
				\|  &e^{c_0 \mu^\frac13 |k|^\frac23 t} \langle k, \xi + kt\rangle^6 \langle \frac{1}{k}\rangle^4 \langle |k|^\frac13 \rangle \widehat{\theta^{\mathrm{L}}} \|_{L^\infty L^{2p}}^{2p} \\
				&+ \mu^\frac13 \| e^{ c_0 \mu^\frac13 |k|^\frac23 t}    \langle k, \xi + kt\rangle^6 \langle \frac{1}{k}\rangle^4 \langle |k|^\frac13 \rangle |k|^\frac{1}{3p} \widehat{\theta^{\mathrm{L}}} \|_{L^{2p} L^{2p}}^{2p} \lesssim \|  \langle k, \xi\rangle^6 \langle \frac{1}{k}\rangle^4 \langle |k|^\frac13 \rangle \widehat{\theta_{\rm in}} \|_{L^{2p}}^{2p} . 
		\end{aligned}\end{equation}

		\underline{\bf Estimates of  $\widehat{\omega^{\mathrm{L}}}$. }
		Applying Fourier transform for $\eqref{eq:linear equ}_1$, we get that 
		\begin{equation*}\begin{aligned} \label{eq:linear Fourier for omega}
				(\pt- k\pxi) \widehat{\omega^{\mathrm{L}} }(t, k, \xi)  + \nu(k^2 + \xi^2) \widehat{\omega ^{\mathrm{L}} }(t, k, \xi)   = ik \widehat{\theta ^{\mathrm{L}}}.
		\end{aligned}\end{equation*}
		Note that (when $\nu=\mu$)
		\small{\begin{align*}
				&\big( ik  \widehat{\theta ^{\mathrm{L}}} \big| 2p | \sqrt{\mathcal{M}_1'(k, \xi)} e^{c_0 \nu^\frac13 |k|^\frac23 t} \widehat{\omega^{\mathrm{L}} }|^{2p-2} \mathcal{M}_1'(k, \xi) e^{2c_0 \nu^\frac13 |k|^\frac23 t} \widehat{\omega^{\mathrm{L}} } \big)\\
				=& \big( |k|^{\frac{p+1}{3p}} \sqrt{\mathcal{M}_1'(k, \xi) } e^{c_0 \mu^\frac13 |k|^\frac23 t}  \widehat{\theta ^{\mathrm{L}}} \big| |k|^\frac{2p-1}{3p}| \sqrt{\mathcal{M}_1'(k, \xi)} e^{c_0 \nu^\frac13 |k|^\frac23 t} \widehat{\omega^{\mathrm{L}} }|^{2p-2} \sqrt{\mathcal{M}_1'(k, \xi) }  e^{c_0 \nu^\frac13 |k|^\frac23 t} \widehat{\omega^{\mathrm{L}} } \big)\\
				\leq & \| \langle|k|^\frac13 \rangle|k|^\frac{1}{3p}  \sqrt{\mathcal{M}_1'(k, \xi)} e^{c_0 \mu^\frac13 |k|^\frac23 t} \widehat{\theta^{\mathrm{L}} }\|_{L^{2p}}  \| |k|^\frac{1}{3p}  \sqrt{\mathcal{M}_1'(k, \xi)} e^{c_0 \nu^\frac13 |k|^\frac23 t} \widehat{\omega^{\mathrm{L}} }\|_{L^{2p}}^{2p-1}.
		\end{align*}}
		Similarly, as \eqref{eq:est of theta L} we get 
		\begin{align*}
			\frac{d}{dt} \|  \sqrt{\mathcal{M}_1'(k, \xi)} e^{c_0 \nu^\frac13 |k|^\frac23 t} \widehat{\omega^{\mathrm{L}} } \|_{L^{2p}}^{2p}  +  \nu^\frac13  \| |k|^\frac{1}{3p} e^{c_0 \nu^\frac13 |k|^\frac23 t} \widehat{\omega^{\mathrm{L}} }\|_{L^{2p}}^{2p} 
			\lesssim \nu^{-\frac{2p-1}{3}} \| \langle|k|^\frac13 \rangle|k|^\frac{1}{3p}  e^{c_0 \mu^\frac13 |k|^\frac23 t} \widehat{\theta^{\mathrm{L}} }\|_{L^{2p}}^{2p},
		\end{align*}
		provided with $c_0 <  \frac{m^{p-1}}{8(m+\pi)^p}$. Integrating the above inequality and combining with \eqref{eq:est of linear theta L2}, one has
		\begin{equation}\begin{aligned} \label{eq:est of linear omega L2}
				\|  e^{c_0 \nu^\frac13 |k|^\frac23 t} \langle k, \xi + kt\rangle^6 & \langle \frac{1}{k}\rangle^4 \widehat{\omega^{\mathrm{L}}} \|_{L^\infty L^{2p}}^{2p} 
				+ \nu^\frac13 \| e^{ c_0 \nu^\frac13 |k|^\frac23 t}    \langle k, \xi + kt\rangle^6 \langle \frac{1}{k}\rangle^4 |k|^\frac{1}{3p} \widehat{\omega^{\mathrm{L}}} \|_{L^{2p} L^{2p}}^{2p} \\
				&\lesssim \|  \langle k, \xi\rangle^6 \langle \frac{1}{k}\rangle^4  \widehat{\omega_{\rm in}} \|_{L^{2p}}^{2p}  + \nu^{-\frac{2p-1}{3}} \|  \langle k, \xi\rangle^6 \langle \frac{1}{k}\rangle^4 \langle |k|^\frac13 \rangle \widehat{\theta_{\rm in}} \|_{L^{2p}}^{2p}.
		\end{aligned}\end{equation}
		By the interpolation inequality and the Hausdorff-Young inequality, it holds 
		\begin{equation}\begin{aligned} \label{eq:ini omega}
				\|  \langle k, \xi\rangle^6 \langle \frac{1}{k}\rangle^4  \widehat{\omega_{\rm in}} \|_{L^{2p}} &\leq  \|  \langle k, \xi\rangle^6 \langle \frac{1}{k}\rangle^4  \widehat{\omega_{\rm in}} \|_{L^{2}}^\frac1p \|  \langle k, \xi\rangle^6 \langle \frac{1}{k}\rangle^4  \widehat{\omega_{\rm in}} \|_{L^{\infty}}^\frac{p-1}{p} \\
				& \leq  \|  \langle D_x, D_y\rangle^6 \langle \frac{1}{D_x}\rangle^4 \omega_{\rm in} \|_{L^2}^\frac1p \|  \langle D_x, D_y\rangle^6 \langle \frac{1}{D_x}\rangle^4 \omega_{\rm in} \|_{L^1}^\frac{p-1}{p} \leq \nu^{\frac13 + \delta}
		\end{aligned}\end{equation}
		and similarly
		\begin{equation}\begin{aligned} \label{eq:ini theta}
				\|  \langle k, \xi\rangle^6 \langle \frac{1}{k}\rangle^4 \langle |k|^\frac13 \rangle  \widehat{\theta_{\rm in}} \|_{L^{2p}} \leq \nu^{\frac23 + 2\delta}.
		\end{aligned}\end{equation}
		Combining \eqref{eq:est of linear theta L2}, \eqref{eq:est of linear omega L2}, \eqref{eq:ini omega} and \eqref{eq:ini theta}, we complete the proof.
	\end{proof}
	\begin{Rem}
		In Lemma~\ref{lem:linear}, we cannot let \( p \to \infty \), as this would cause the constant \( c \) to vanish for any fixed \( m > 0 \). This issue arises from the limitation in the construction of the multiplier \( \mathcal{M}_1 \). However, in the subsequent analysis, only the case \( p = 1 \) is required. In this setting, we  choose \( m = \pi \), which yields the following Proposition \ref{lem:linear theta}.
	\end{Rem}

	\begin{proof}[Proof of Proposition \ref{lem:linear theta}]
		The proof is similar to Lemma \ref{lem:linear} and we omit it.
	\end{proof}

	\section{Energy estimates in short time scale $t\leq T_0 = \nu^{-\frac16}$} \label{sec.4}

	At the beginning of this section, due to the lack of reverse Hausdorff–Young inequalities, we provide the following estimate for the linear part.

	\begin{Lem}
		For $t\leq T_0 =\nu^{-\frac16}$ and $1 < p < 2$, if the initial data $(\omega_{\mathrm{in}}, \theta_{\mathrm{in}})$ satisfies $(\ref{eq:the initial data})$, there holds 
		\begin{equation*}\begin{aligned}
				\|\langle D_x, D_y+t D_x\rangle^2 \omega^{\mathrm{L}}\|_{L^\infty L^p} + \nu^{-\frac13 - \delta} \|\langle D_x, D_y+t D_x\rangle^2 \langle D_x\rangle \theta^{\mathrm{L}}\|_{ L^\infty L^p}  \lesssim \nu^{\frac13 +\delta}.
		\end{aligned}\end{equation*}
	\end{Lem}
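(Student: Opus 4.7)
The plan is to bypass the absence of a reverse Hausdorff--Young inequality by performing the $L^p$ estimate directly on the physical side, using that once the transport $y\partial_x$ is absorbed by a shear, the linear evolution reduces to convolution with a non-negative Gaussian of total mass one. Set
\[
\Theta(t,X,Y):=\theta^{\mathrm{L}}(t,X+tY,Y),\qquad \Omega(t,X,Y):=\omega^{\mathrm{L}}(t,X+tY,Y).
\]
The shear is volume-preserving, so it is an $L^p$-isometry, and since its dual on the Fourier side maps $(k,\xi)\mapsto(k,\xi-kt)$, the operator $\langle D_x, D_y+tD_x\rangle^2$ pulls back to the time-independent multiplier $\langle\partial_X,\partial_Y\rangle^2$ acting on the sheared function. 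Using $\nu=\mu$ and the explicit Fourier formulas \eqref{eq:solutions of the linear equ}--\eqref{eq:solutions of the linear equ2},
\[
\hat\Theta(t,k,\xi)=e^{-\nu Q_t(k,\xi)}\widehat{\theta_{\mathrm{in}}}(k,\xi),\qquad \hat\Omega(t,k,\xi)=e^{-\nu Q_t(k,\xi)}\bigl(\widehat{\omega_{\mathrm{in}}}(k,\xi)+ikt\,\widehat{\theta_{\mathrm{in}}}(k,\xi)\bigr),
\]
where $Q_t(k,\xi)=t|k|^2+t|\xi-kt/2|^2+k^2t^3/12$ is a positive-definite quadratic form for $t>0$. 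Hence $e^{-\nu Q_t}$ is the Fourier transform of a centered anisotropic Gaussian density $G_t\geq 0$ with $\|G_t\|_{L^1}=1$, and $\Theta(t)=G_t*\theta_{\mathrm{in}}$, $\Omega(t)=G_t*\omega_{\mathrm{in}}+t\,G_t*\partial_x\theta_{\mathrm{in}}$.

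Next I will apply Young's inequality $\|G_t*f\|_{L^p}\leq\|G_t\|_{L^1}\|f\|_{L^p}=\|f\|_{L^p}$, commute the constant-coefficient multiplier $\langle\partial_X,\partial_Y\rangle^2$ past the convolution, and undo the shear to obtain
\[
\|\langle D_x, D_y+tD_x\rangle^2\langle D_x\rangle\theta^{\mathrm{L}}\|_{L^p}\leq\|\langle D_x,D_y\rangle^2\langle D_x\rangle\theta_{\mathrm{in}}\|_{L^p},
\]
\[
\|\langle D_x, D_y+tD_x\rangle^2\omega^{\mathrm{L}}\|_{L^p}\leq\|\langle D_x,D_y\rangle^2\omega_{\mathrm{in}}\|_{L^p}+t\,\|\langle D_x,D_y\rangle^2\langle D_x\rangle\theta_{\mathrm{in}}\|_{L^p}.
\]
For the right-hand sides, the $L^p$-boundedness of the smoothing multiplier $\langle D_x,D_y\rangle^{-4}\langle 1/D_x\rangle^{-4}$ (a composition of a 2D Bessel potential of order $4$ with the bounded 1D multiplier $|D_x|^4/\langle D_x\rangle^4=1-2(1+D_x^2)^{-1}+(1+D_x^2)^{-2}$) together with the interpolation $L^1\cap L^2\subset L^p$ for $1<p<2$ transfer the hypothesis \eqref{eq:the initial data} into
\[
\|\langle D_x,D_y\rangle^2\omega_{\mathrm{in}}\|_{L^p}\lesssim\nu^{1/3+\delta},\qquad \|\langle D_x,D_y\rangle^2\langle D_x\rangle\theta_{\mathrm{in}}\|_{L^p}\lesssim\nu^{2/3+2\delta}.
\]

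Combining, the $\theta^{\mathrm{L}}$ piece contributes $\nu^{-1/3-\delta}\cdot\nu^{2/3+2\delta}=\nu^{1/3+\delta}$, while the $\omega^{\mathrm{L}}$ piece is bounded by $\nu^{1/3+\delta}+t\nu^{2/3+2\delta}\lesssim \nu^{1/3+\delta}$ once we restrict to $t\leq T_0=\nu^{-1/6}$, since then $t\nu^{2/3+2\delta}\leq\nu^{1/2+2\delta}$. The only delicate point -- essentially bookkeeping -- is ensuring that the extra $t$-factor coming from the Duhamel coupling $ikt\,\widehat{\theta_{\mathrm{in}}}$ in $\hat\Omega$ does not inflate the estimate beyond the threshold; this is guaranteed precisely by the short-time window combined with the additional $\nu^{1/3+\delta}$ smallness of $\theta_{\mathrm{in}}$ relative to $\omega_{\mathrm{in}}$ in \eqref{eq:the initial data}. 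Apart from this, everything reduces to the positivity of the heat kernel after the shear and Young's convolution inequality; none of the ghost-weight multipliers from Section~\ref{Sec.2} is needed for this lemma.
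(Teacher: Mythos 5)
Your argument is correct, and it reaches the two key intermediate inequalities of the lemma by a genuinely different route from the paper. The paper proves them by an $L^p$ energy estimate: it pairs the equation for $\langle D_x,D_y+tD_x\rangle^2\langle D_x\rangle\theta^{\mathrm L}$ with $|\cdot|^{p-2}(\cdot)$, uses that the transport term is antisymmetric and that $-\nu\Delta$ has a favorable sign in $L^p$, and obtains $\frac{d}{dt}\|\langle D_x,D_y+tD_x\rangle^2\langle D_x\rangle\theta^{\mathrm L}\|_{L^p}\le 0$ together with $\frac{d}{dt}\|\langle D_x,D_y+tD_x\rangle^2\omega^{\mathrm L}\|_{L^p}\lesssim\|\langle D_x,D_y+tD_x\rangle^2\langle D_x\rangle\theta^{\mathrm L}\|_{L^p}$, which it then integrates over $t\le\nu^{-1/6}$. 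You instead pass to the sheared frame, observe that the Kelvin solution is convolution against a positive Gaussian of unit mass (positive-definiteness of $Q_t$), and invoke Young's inequality; for $\omega^{\mathrm L}$ your closed-form coupling term $ikt\,\widehat{\theta_{\mathrm{in}}}$ (valid since $\nu=\mu$) reproduces exactly the $t$-growth that the paper recovers by integrating its differential inequality, and your treatment of the initial data ($L^p$-boundedness of $\langle D_x,D_y\rangle^{-4}\langle 1/D_x\rangle^{-4}$ plus $L^1\cap L^2\hookrightarrow L^p$) is sound. What each approach buys: the energy method is more robust (it would survive lower-order perturbations of the equation and does not require the explicit solution), while your kernel argument is more transparent about \emph{why} the $L^p$ norm is non-increasing and gives the sharp constant $1$ in the contraction. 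Two small points you leave implicit but which are standard: $\partial_x\langle D_x\rangle^{-1}$ is an $L^p$-bounded (Mikhlin) multiplier, needed to replace $\partial_x\theta_{\mathrm{in}}$ by $\langle D_x\rangle\theta_{\mathrm{in}}$; and the exponent comparison $\nu^{1/2+2\delta}\lesssim\nu^{1/3+\delta}$ for small $\nu$, which you do state. No gap.
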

	\begin{proof}
		Applying $\langle D_x, D_y+t D_x\rangle^2\langle D_{x}\rangle$ to $\eqref{eq:linear equ}_2$ and taking the inner product of the resulting equation with $\left|\langle D_x, D_y+t D_x\rangle^2 \langle D_x\rangle  \theta^{\mathrm{L}}\right|^{p-2}\langle D_x, D_y+t D_x\rangle^2 \langle D_x\rangle \theta^{\mathrm{L}}$, we find that
		$$
		\begin{aligned}
			\frac{d}{d t}\|\langle D_x, D_y+t D_x\rangle^2 \langle D_x\rangle \theta^{\mathrm{L}}\|_{L^p} \leq 0,
		\end{aligned}
		$$
		which implies
		\begin{equation*}\begin{aligned}
				\|\langle D_x, D_y+t D_x\rangle^2 \langle D_x\rangle \theta^{\mathrm{L}}\|_{ L^\infty L^p} \leq \|\langle D_x, D_y\rangle^2\langle D_x\rangle\theta_{in}\|_{L^p} \leq \nu^{\frac{2}{3} + 2\delta}.
		\end{aligned}\end{equation*}
		Similarly, we have 
		\begin{equation*}\begin{aligned}
				\frac{d}{d t}\|\langle D_x, D_y+t D_x\rangle^2 \omega^{\mathrm{L}}\|_{L^p} \lesssim \| \langle D_x, D_y+t D_x\rangle^2 \langle D_x\rangle \theta^{\mathrm{L}} \|_{L^p},
		\end{aligned}\end{equation*}
		from which, we deduce for $t\leq \nu^{-\frac16}$ that
		\begin{equation*}\begin{aligned}
				\|\langle D_x, D_y+t D_x\rangle^2 \omega^{\mathrm{L}}\|_{L^\infty L^p} \lesssim  \|\langle D_x, D_y\rangle^2 \omega_{in}\|_{ L^p} + \nu^{-\frac16} \| \langle D_x, D_y+t D_x\rangle^2 \langle D_x\rangle \theta^{\mathrm{L}} \|_{L^\infty L^p} \lesssim \nu^{\frac13 +\delta}.
		\end{aligned}\end{equation*}
	\end{proof}
	Denote that $\omega^{\mathrm{NL}}=\omega-\omega^{\mathrm{L}}$ and $\theta^{\mathrm{NL}}=\theta-\theta^{\mathrm{L}}$, which solve:
	\begin{align} \label{eq:nonlinear part}
		\left\{\begin{array}{l}
			\partial_t \omega^{\mathrm{NL}}-\nu \Delta \omega^{\mathrm{NL}}+y \partial_x \omega^{\mathrm{NL}}+\left(u^{\mathrm{L}}+u^{\mathrm{NL}}\right) \cdot \nabla\left(\omega^{\mathrm{L}}+\omega^{\mathrm{NL}}\right)=\px \theta^{\mathrm{NL}},  \\
			\partial_t \theta^{\mathrm{NL}}-\nu \Delta \theta^{\mathrm{NL}}+y \partial_x \theta^{\mathrm{NL}}+\left(u^{\mathrm{L}}+u^{\mathrm{NL}}\right) \cdot \nabla\left(\theta^{\mathrm{L}}+\theta^{\mathrm{NL}}\right)=0, \\
			u^{\mathrm{L}}=\left(-\partial_y, \partial_x\right)(-\Delta)^{-1} \omega^{\mathrm{L}}, \quad u^{\mathrm{NL}}=\left(-\partial_y, \partial_x\right)(-\Delta)^{-1} \omega^{\mathrm{NL}}, \\
			\left.\omega^{\mathrm{NL}}\right|_{t=0}=0,\quad \left.\theta^{\mathrm{NL}}\right|_{t=0}=0.
		\end{array}\right.
	\end{align}

	Note that, in short time scale $t\leq T_0 = \nu^{-\frac16}$, the  Fourier multipliers is not necessary  for the energy estimates, since the upper bound of $t$ is sufficient for our stability threshold, i.e.,  
	\begin{equation*}
		\begin{aligned}
			\|\langle D_x, D_y+t D_x\rangle^3 \omega^{\mathrm{NL}}\|_{L^\infty L^2}  \lesssim  \  \nu^{-\frac16} \|\langle D_x, D_y+t D_x\rangle^3 \px \theta^{\mathrm{NL}}\|_{L^\infty L^2} + \text{``other terms''.}
		\end{aligned}
	\end{equation*}
    Next we prove it in details.
	
	\subsection{$L^2$-estimate.}
	\begin{Lem} \label{eq:est of theta NL short time}
		For $1<p<2$, it holds
		$$
		\begin{aligned}
			& \frac{d}{d t}\|\langle D_x, D_y+t D_x\rangle^3 \theta^{\mathrm{NL}}\|_{L^2}^2 + \nu \|\langle D_x, D_y+t D_x\rangle^3 \nabla \theta^{\mathrm{NL}}\|_{L^2}^2 \\
			&\lesssim\|\langle D_x, D_y+t D_x\rangle^3 \theta^{\mathrm{NL}}\|_{L^2} \\
			&\quad \times  \left[\langle t\rangle^{-2} \|\langle D_x, D_y+t D_x\rangle^6\langle\frac{1}{D_x}\rangle^4 \omega^{\mathrm{L}}\|_{L^2}
			\|\langle D_x, D_y+t D_x\rangle^6\langle\frac{1}{D_x}\rangle^4 \theta^{\mathrm{L}}\|_{L^2} \rt.\\
			&\lt.\quad \quad +\langle t\rangle^{1+\delta}  \left(\|\omega^{\mathrm{NL}}\|_{L^p}+\|\langle D_x, D_y+t D_x\rangle^3 \omega^{\mathrm{NL}}\|_{L^2}\right) 
			\|\langle D_x, D_y+t D_x\rangle^6\langle\frac{1}{D_x}\rangle^4 \theta^{\mathrm{L}}\|_{L^2} \rt.  \\
			& \left.\quad \quad  + \langle t\rangle^{1+\delta}\|\langle D_x, D_y+t D_x\rangle^3 \theta^{\mathrm{NL}}\|_{L^2} \rt. \\
			&\lt.   \qquad \quad \times \left(\|\langle D_x, D_y+t D_x\rangle^6\langle\frac{1}{D_x}\rangle^4 \omega^{\mathrm{L}}\|_{L^2}  + \|\langle D_x, D_y+t D_x\rangle^3 \omega^{\mathrm{NL}}\|_{L^2} \right) \right].
		\end{aligned}
		$$
	\end{Lem}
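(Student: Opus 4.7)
The plan is to derive a weighted energy identity for $\theta^{\mathrm{NL}}$ using the Fourier multiplier $\langle D_x, D_y + tD_x\rangle^3$, exploiting that this operator commutes with the transport generator $\partial_t + y\partial_x$. Setting $g := \langle D_x, D_y + tD_x\rangle^3 \theta^{\mathrm{NL}}$ and applying this multiplier to $(\ref{eq:nonlinear part})_2$, one obtains
\[
(\partial_t + y\partial_x - \nu\Delta) g = -\langle D_x, D_y + tD_x\rangle^3 \bigl[(u^{\mathrm{L}} + u^{\mathrm{NL}}) \cdot \nabla (\theta^{\mathrm{L}} + \theta^{\mathrm{NL}})\bigr].
\]
Taking the $L^2$ inner product with $g$, the transport term $(y\partial_x g \mid g)$ vanishes after integration by parts in $x$, while the Laplacian contributes the dissipation $\nu\|\nabla g\|_{L^2}^2$ on the left-hand side. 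Cauchy--Schwarz then reduces the problem to bounding $\|g\|_{L^2}$ times the $L^2$-norm of each of the four bilinear pieces $u^{\sharp} \cdot \nabla \theta^{\flat}$ with $\sharp, \flat \in \{\mathrm{L}, \mathrm{NL}\}$, after the weight $\langle D_x, D_y + tD_x\rangle^3$ has been redistributed onto the factors.

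For the $\mathrm{L}\times\mathrm{L}$ interaction, the key input is the inviscid damping \eqref{eq:inviscid damping} applied to $u^{\mathrm{L}} = \nabla^{\perp}\phi^{\mathrm{L}}$: in Fourier variables this yields a decay factor $\langle t\rangle^{-2}$ together with the $\langle 1/k\rangle^4$ weight absorbed into the norm of $\omega^{\mathrm{L}}$, and pairing with $\theta^{\mathrm{L}}$ in its weighted $L^2$-norm recovers the first summand on the right-hand side. For the two interactions involving $\nabla\theta^{\mathrm{NL}}$, one commutes the multiplier $\langle D_x, D_y + tD_x\rangle^3$ past the transport velocity; the pointwise bound on $u^{\sharp}$ then reduces, via Biot--Savart and Hausdorff--Young in Fourier, to either $\|\langle D_x, D_y + tD_x\rangle^6 \langle 1/D_x\rangle^4 \omega^{\mathrm{L}}\|_{L^2}$ (when $\sharp = \mathrm{L}$) or $\|\langle D_x, D_y + tD_x\rangle^3 \omega^{\mathrm{NL}}\|_{L^2}$ (when $\sharp = \mathrm{NL}$), at the cost of a polynomial loss $\langle t\rangle^{1+\delta}$ coming from moving the shifted-frequency weight through the nonlinearity.

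The main obstacle will be the residual term $u^{\mathrm{NL}} \cdot \nabla \theta^{\mathrm{L}}$, for which no inviscid damping is available on $u^{\mathrm{NL}}$. Here my strategy is to pass to the Fourier side and apply Young's convolution inequality to the product of the Biot--Savart symbol $(i\xi, -ik)/(|k|^2 + |\xi|^2)\,\hat{\omega}^{\mathrm{NL}}$ and $\widehat{\nabla \theta^{\mathrm{L}}}$; the singular Biot--Savart factor near zero frequency is precisely what forces the $L^p$-norm of $\omega^{\mathrm{NL}}$ (with $1<p<2$) to enter, as is sketched in the introduction, while the complementary high-frequency piece is absorbed into the weighted $L^2$-norm. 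Redistributing the weight $\langle k, \xi + kt\rangle^3$ between $u^{\mathrm{NL}}$ and $\theta^{\mathrm{L}}$ is what produces the $\langle t\rangle^{1+\delta}$ prefactor. Collecting all four contributions and factoring $\|g\|_{L^2}$ yields the claimed inequality; the $\langle t\rangle^{1+\delta}$ growth is benign on the short interval $t\le T_0 = \nu^{-1/6}$, which is exactly how this bound will feed into Proposition~\ref{cor}.
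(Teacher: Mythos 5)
Your proposal follows essentially the same route as the paper: a weighted energy identity for $g=\langle D_x,D_y+tD_x\rangle^3\theta^{\mathrm{NL}}$, the pointwise inviscid damping bound $\frac{1}{|l|^2+|\eta|^2}\lesssim\langle t\rangle^{-2}\langle\frac{1}{l}\rangle^4\langle l,\eta+lt\rangle^2$ for the $\mathrm{L}\times\mathrm{L}$ term, a low-/high-frequency split in which the $L^p$-norm of $\omega^{\mathrm{NL}}$ ($1<p<2$) absorbs the Biot--Savart singularity in $u^{\mathrm{NL}}\cdot\nabla\theta^{\mathrm{L}}$, and a commutator reduction for $u\cdot\nabla\theta^{\mathrm{NL}}$. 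One caution: your opening claim that Cauchy--Schwarz reduces everything to $L^2$-norms of the bilinear pieces cannot apply to $u\cdot\nabla\theta^{\mathrm{NL}}$ (no $\nabla\theta^{\mathrm{NL}}$ norm appears on the right-hand side of the lemma); there the cancellation $(u\cdot\nabla g\mid g)=0$ from incompressibility is what legitimizes your commutator step, exactly as in the paper, where the interpolation $|l|+|\eta+lt|\leq\langle l,\eta+lt\rangle^{1-\delta}\langle t\rangle^{\delta}(|l|+|\eta|)^{\delta}$ near the frequency origin is the actual source of the $\langle t\rangle^{1+\delta}$ factor.
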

	\begin{proof}
		Taking the inner product of $\eqref{eq:nonlinear part}_2$ with $\langle D_x, D_y+t D_x\rangle^6  \theta^{\mathrm{NL}}$, we get that
		\begin{equation}\begin{aligned} \label{eq:energy est of non theta}
				&  \frac12 \frac{d}{ d t}\|\langle D_x, D_y+t D_x\rangle^3 \theta^{\mathrm{NL}}\|_{L^2}^2+\nu\|\langle D_x, D_y+t D_x\rangle^3 \nabla \theta^{\mathrm{NL}}\|_{L^2}^2 \\
				& \quad=-\Re\big(\langle D_x, D_y+t D_x\rangle^3\left[(u^{\mathrm{L}}+u^{\mathrm{NL}}) \cdot \nabla(\theta^{\mathrm{L}}  +  \theta^{\mathrm{NL}})\right]  \big| \langle D_x, D_y+t D_x\rangle^3 \theta^{\mathrm{NL}}\big) .
		\end{aligned}\end{equation}
		
		\underline{\bf Estimate of $u^{\mathrm{L}} \cdot \nabla \theta^{\mathrm{L}}$.}
		First, for the source term $u^{\mathrm{L}} \cdot \nabla \theta^{\mathrm{L}}$, we write by the Fourier transform that
		\begin{equation*}\begin{aligned}
				&\left|\big(\langle D_x, D_y+t D_x\rangle^3(u^{\mathrm{L}} \cdot \nabla \theta^{\mathrm{L}})  \big| \langle D_x, D_y+t D_x\rangle^3 \theta^{\mathrm{NL}}\big)\right|  \\
				&=\left|\int_{\mathbb{R}^4}\langle k, \xi+k t\rangle^6 \hat{\theta}_k^{\mathrm{NL}}(\xi) \frac{\eta(k-l)-l(\xi-\eta)}{|l|^2+|\eta|^2} \hat{\omega}_l^{\mathrm{L}}(\eta) \hat{\theta}_{k-l}^{\mathrm{L}}(\xi-\eta) d k d l d \xi d \eta\right|.
		\end{aligned}\end{equation*}
		Using
		\begin{equation}
			\begin{aligned} \label{eq:0}
				\abs{\eta(k-l)-l(\xi-\eta)} & = \abs{  (\eta+l t)(k-l)-l(\xi-\eta+(k-l) t)  } \\
				& \leq\langle l, \eta+l t\rangle\langle k-l, \xi-\eta+(k-l) t\rangle,
			\end{aligned}
		\end{equation}
		the point-wise inviscid damping estimate
		\begin{equation} \label{eq:the point-wise inviscid damping estimate}
			\frac{1}{|l|^2+|\eta|^2} \lesssim \frac{\langle l, \eta+l t\rangle^2}{|l|^4\langle t\rangle^2} \lesssim\langle t\rangle^{-2}\langle\frac{1}{l}\rangle^4\langle l, \eta+l t\rangle^2,
		\end{equation}
		and
		\begin{equation} \label{eq:temp.1}
			\langle k, \xi+k t\rangle^3 \lesssim\langle l, \eta+l t\rangle^3\langle k-l, \xi-\eta+(k-l) t\rangle^3
		\end{equation}
		one has 
		\begin{equation}\begin{aligned} \label{eq:est of source term}
				& \quad\left|\left(\langle D_x, D_y+t D_x\rangle^3\left(u^{\mathrm{L}} \cdot \nabla \theta^{\mathrm{L}}\right) \mid\langle D_x, D_y+t D_x\rangle^3 \theta^{\mathrm{NL}}\right)\right| \\
				& \lesssim\langle t\rangle^{-2}\|\langle k, \xi+k t\rangle^3 \hat{\theta}_k^{\mathrm{NL}}(\xi)\|_{L_{k, \xi}^2}\|\langle l, \eta+l t\rangle^6\langle\frac{1}{l}\rangle^4 \hat{\omega}_l^{\mathrm{L}}(\eta)\|_{L_{l, \eta}^2} \\
				& \quad \times\|\langle k-l, \xi-\eta+(k-l) t\rangle^{-2}\|_{L_{k-l, \xi - \eta}^2}\|\langle k-l, \xi-\eta+(k-l) t\rangle^6 \hat{\theta}_{k-l}^{\mathrm{L}}(\xi-\eta)\|_{L_{k-l, \xi - \eta}^2} \\
				& \lesssim\langle t\rangle^{-2}\|\langle D_x, D_y+t D_x\rangle^3 \theta^{\mathrm{NL}}\|_{L^2}\|\langle D_x, D_y+t D_x\rangle^6\langle\frac{1}{D_x}\rangle^4 \omega^{\mathrm{L}}\|_{L^2}\|\langle D_x, D_y+t D_x\rangle^6 \theta^{\mathrm{L}}\|_{L^2} .
		\end{aligned}\end{equation}

		\underline{\bf Estimate of $u^{\mathrm{NL}} \cdot \nabla \theta^{\mathrm{L}}$.}
		Applying the Fourier transform to the reaction term $u^{\mathrm{NL}} \cdot \nabla \theta^{\mathrm{L}}$, we have
		$$
		\begin{aligned}
			& \left|\left(\langle D_x, D_y+t D_x\rangle^3(u^{\mathrm{NL}} \cdot \nabla \theta^{\mathrm{L}}) \big| \langle D_x, D_y+t D_x\rangle^3 \theta^{\mathrm{NL}}\right)\right| \\
			= & \left|\int_{\mathbb{R}^4}\langle k, \xi+k t\rangle^6 \hat{\theta}_k^{\mathrm{NL}}(\xi) \frac{\eta(k-l)-l(\xi-\eta)}{|l|^2+|\eta|^2} \hat{\omega}_l^{\mathrm{NL}}(\eta) \hat{\theta}_{k-l}^{\mathrm{L}}(\xi-\eta) d k d l d \xi d \eta\right|.
		\end{aligned}
		$$
		Then, we divide the estimate into two cases.\\
		$\bullet$~Case 1: $|l|^2+|\eta|^2 \geq 1$. In this case, one can use \eqref{eq:temp.1} and
		\begin{align} \label{eq:est of k-l + xi - eta}
			|k-l|+|\xi-\eta| \leq|k-l|+|\xi-\eta + (k-l) t|+|k-l| t \lesssim\langle t\rangle\langle k-l, \xi-\eta+(k-l) t\rangle
		\end{align}
		to prove
		$$
		\begin{aligned}
			& \quad\left|\int_{|l|^2+|\eta|^2 \geq 1}\langle k, \xi+k t\rangle^6 \hat{\theta}_k^{\mathrm{NL}}(\xi) \frac{\eta(k-l)-l(\xi-\eta)}{|l|^2+|\eta|^2} \hat{\omega}_l^{\mathrm{NL}}(\eta) \hat{\theta}_{k-l}^{\mathrm{L}}(\xi-\eta) d k d l d \xi d \eta\right| \\
			& \lesssim\|\langle k, \xi+k t\rangle^3 \hat{\theta}_k^{\mathrm{NL}}(\xi)\|_{L_{k, \xi}^2}\|\langle l, \eta+l t\rangle^3 \hat{\omega}_l^{\mathrm{NL}}(\eta)\|_{L_{l, \eta}^2}\langle t\rangle\|\langle k-l, \xi-\eta+(k-l) t\rangle^{-2}\|_{L_{k-l, \xi-\eta}^2} \\
			& \quad \times\|\langle k-l, \xi-\eta+(k-l) t\rangle^6 \hat{\theta}_{k-l}^{\mathrm{L}}(\xi-\eta)\|_{L_{k-l, \xi-\eta}^2} \\
			& \lesssim\langle t\rangle   \|\langle D_x, D_y+t D_x\rangle^3 \theta^{\mathrm{NL}}\|_{L^2}  \|\langle D_x, D_y+t D_x\rangle^3 \omega^{\mathrm{NL}}\|_{L^2}     \|\langle D_x, D_y+t D_x\rangle^6 \theta^{\mathrm{L}}\|_{L^2}.
		\end{aligned}
		$$
		$\bullet$~Case 2: $|l|^2+|\eta|^2<1$. To overcome the singularity near $\{l, \eta\}=\{0,0\}$, we make use of \begin{align} \label{eq:est of l+eta l t}
			|l|+|\eta+l t| \leq  \langle l, \eta+l t\rangle^{1-\delta}\langle t\rangle^\delta(|l|+|\eta|)^\delta
		\end{align}
		to rewrite \eqref{eq:temp.1} as 
		$$
		\langle k, \xi+k t\rangle^3 \lesssim\langle k-l, \xi-\eta+(k-l) t\rangle^3 +  \langle l, \eta+l t\rangle^{3-\delta}\langle t\rangle^\delta(|l|+|\eta|)^\delta
		$$
		which, combined with \eqref{eq:est of k-l + xi - eta}, implies
		\begin{equation} \label{eq:temp0.1}
		\begin{aligned}
			&\left|\int_{|l|^2+|\eta|^2<1}\langle k, \xi+k t\rangle^6 \hat{\theta}_k^{\mathrm{NL}}(\xi) \frac{\eta(k-l)-l(\xi-\eta)}{|l|^2+|\eta|^2} \hat{\omega}_l^{\mathrm{NL}}(\eta) \hat{\theta}_{k-l}^{\mathrm{L}}(\xi-\eta) d k d l d \xi d \eta\right| \\
			& \lesssim\langle t\rangle\|\langle k, \xi+k t\rangle^3 \hat{\theta}_k^{\mathrm{NL}}(\xi)\|_{L_{k, \xi}^2}\left[\|\hat{\omega}_l^{\mathrm{NL}}(\eta)\|_{L_{l, \eta}^{p^{\prime}}}\|(|l|+|\eta|)^{-1}  \left. \chi\right|_{|l|^2+|\eta|^2<1}\|_{L_{l, \eta}^p}\right. \\
			& \quad \times\|\langle k-l, \xi-\eta+(k-l) t\rangle^4 \hat{\theta}_{k-l}^{\mathrm{L}}(\xi-\eta)\|_{L_{k-l, \xi-\eta}^2}+\langle t\rangle^\delta\|\langle l, \eta+l t\rangle^{3-\delta} \hat{\omega}_l^{\mathrm{NL}}(\eta)\|_{L_{l, \eta}^2} \\
			& \quad \left.\times\|(|l|+|\eta|)^{\delta-1} \left. \chi\right|_{|l|^2+|\eta|^2<1}\|_{L_{l, \eta}^2}\|\langle k-l, \xi-\eta+(k-l) t\rangle \hat{\theta}_{k-l}^{\mathrm{L}}(\xi-\eta)\|_{L_{k-l, \xi-\eta}^2}\right] \\
			& \lesssim\langle t\rangle^{1+\delta}\|\langle D_x, D_y+t D_x\rangle^3 \theta^{\mathrm{NL}}\|_{L^2} \|\langle D_x, D_y+t D_x\rangle^4 \theta^{\mathrm{L}}\|_{L^2}\\
			& \quad \times \left(\|\omega^{\mathrm{NL}}\|_{L^p}+\|\langle D_x, D_y+t D_x\rangle^3 \omega^{\mathrm{NL}}\|_{L^2}\right).
		\end{aligned}   
		\end{equation}
		Combining the two cases above, we arrive at
		\begin{equation}\begin{aligned} \label{eq:est of reaction term}
				& \left|\left(\langle D_x, D_y+t D_x\rangle^3\left(u^{\mathrm{NL}} \cdot \nabla \theta^{\mathrm{L}}\right) \mid\langle D_x, D_y+t D_x\rangle^3 \theta^{\mathrm{NL}}\right)\right| \\
				& \lesssim\langle t\rangle^{1+\delta}\|\langle D_x, D_y+t D_x\rangle^3 \theta^{\mathrm{NL}}\|_{L^2} \|\langle D_x, D_y+t D_x\rangle^6 \theta^{\mathrm{L}}\|_{L^2}\\
				& \quad \times \left(\|\omega^{\mathrm{NL}}\|_{L^p}+\|\langle D_x, D_y+t D_x\rangle^3 \omega^{\mathrm{NL}}\|_{L^2}\right).
		\end{aligned}\end{equation}
		
		\underline{\bf Estimate of $u \cdot \nabla \theta^{\mathrm{NL}}$.} For this term, we employ the classical commutator estimate:
		\begin{align} \label{eq:commutator estimate}
			|\langle k, \xi+k t\rangle^3-\langle k-l, &\xi-\eta+(k-l) t\rangle^3| \no \\
			&\lesssim(|l|+|\eta+l t|)\left(\langle l, \eta+l t\rangle^2+\langle k-l, \xi-\eta+(k-l) t\rangle^2\right).
		\end{align}
		Thanks to
		$$
		\left(u \cdot \nabla\langle D_x, D_y+t D_x\rangle^3 \theta^{\mathrm{NL}}  \big| \langle D_x, D_y+t D_x\rangle^3 \theta^{\mathrm{NL}}\right)=0,
		$$
		we get that
		$$
		\begin{aligned}
			& \left|\left(\langle D_x, D_y+t D_x\rangle^3(u \cdot \nabla \theta^{\mathrm{NL}}) \big| \langle D_x, D_y+t D_x\rangle^3 \theta^{\mathrm{NL}}\right)\right| \\
			= & \lt| \int_{\mathbb{R}^4}\langle k, \xi+k t\rangle^3 \hat{\theta}_k^{\mathrm{NL}}(\xi)\left(\langle k, \xi+k t\rangle^3-\langle k-l, \xi-\eta+(k-l) t\rangle^3\right) \rt. \\
			& \left.\times \frac{\eta(k-l)-l(\xi-\eta)}{|l|^2+|\eta|^2} \hat{\omega}_l(\eta) \hat{\theta}_{k-l}^{\mathrm{NL}}(\xi-\eta) d k d l d \xi d \eta \right| .
		\end{aligned}
		$$
		We then divide the estimate into two cases as above.\\
		$\bullet$~Case 1: $|l|^2+|\eta|^2 \geq 1$.
		By \eqref{eq:est of k-l + xi - eta} and \eqref{eq:commutator estimate}, it holds that
		\begin{equation}
			\begin{aligned} \label{eq:temp.2}
				& \lt|\int_{|l|+|\eta| \geq 1}\langle k, \xi+k t\rangle^3 \hat{\theta}_k^{\mathrm{NL}}(\xi)\left(\langle k, \xi+k t\rangle^3-\langle k-l, \xi-\eta+(k-l) t\rangle^3\right) \rt. \\
				& \left.\quad \times \frac{\eta(k-l)-l(\xi-\eta)}{|l|^2+|\eta|^2} \hat{\omega}_l(\eta) \hat{\theta}_{k-l}^{\mathrm{NL}}(\xi-\eta) d k d l d \xi d \eta \right| \\
				& \lesssim\|\langle k, \xi+k t\rangle^3 \hat{\theta}_k^{\mathrm{NL}}(\xi)\|_{L_{k, t}^2}\|\langle l, \eta+l t\rangle^{3} \hat{\omega}_l(\eta)\|_{L_{l, \eta}^2} \\
				& \quad \times\left(\|\langle l, \eta+l t\rangle^{-2}\|_{L_{l, \eta}^2}+\|\langle k-l, \xi-\eta+(k-l) t\rangle^{-2}\|_{L_{k-l, \xi-\eta}^2}\right) \\
				& \quad \times\langle t\rangle\|\langle k-l, \xi-\eta+(k-l) t\rangle^{3} \hat{\theta}_{k-l}^{\mathrm{NL}}(\xi-\eta)\|_{L_{k-l, \xi-\eta}^2} \\
				& \lesssim\langle t\rangle\|\langle D_x, D_y+t D_x\rangle^3 \theta^{\mathrm{NL}}\|_{L^2}^2\|\langle D_x, D_y+t D_x\rangle^3 \omega\|_{L^2} .
			\end{aligned}
		\end{equation}
		$\bullet$~Case 2: $|l|^2+|\eta|^2<1$. Similarly, to overcome the singularity, we put \eqref{eq:commutator estimate} and \eqref{eq:est of l+eta l t} together to obtain that
		\begin{equation*}\begin{aligned}
				|\langle k, \xi+k t\rangle^3- &\langle k-l, \xi-\eta+(k-l) t\rangle^3|  \\
				& \lesssim \langle l, \eta+l t\rangle^{1-\delta}\langle t\rangle^\delta(|l|+|\eta|)^\delta   \left(\langle l, \eta+l t\rangle^2+\langle k-l, \xi-\eta+(k-l) t\rangle^2\right),
		\end{aligned}\end{equation*}
		which along with \eqref{eq:est of k-l + xi - eta} shows that,
		\begin{equation}
			\begin{aligned} \label{eq:temp.3}
				&  \lt| \int_{|l|+|\eta|<1}\langle k, \xi+k t\rangle^3 \hat{\theta}_k^{\mathrm{NL}}(\xi)\left(\langle k, \xi+k t\rangle^3-\langle k-l, \xi-\eta+(k-l) t\rangle^3\right)\rt. \\
				& \left.\quad \times \frac{\eta(k-l)-l(\xi-\eta)}{|l|^2+|\eta|^2} \hat{\omega}_l(\eta) \hat{\theta}_{k-l}^{\mathrm{NL}}(\xi-\eta) d k d l d \xi d \eta \right| \\
				& \lesssim\|\langle k, \xi+k t\rangle^3 \hat{\theta}_k^{\mathrm{NL}}(\xi)\|_{L_{k, \xi}^2}   \langle t\rangle^\delta      \|( | l | + | \eta | ) ^ { \delta - 1 } \lt. \chi \rt|_{ | l|+|\eta|<1} \|_{L_{l, \eta}^2} \lt(   \|\langle l, \eta+l t\rangle^{3-\delta} \hat{\omega}_l(\eta)\|_{L_{l, \eta}^2}   \rt. \\
				& \lt. \quad \times\langle t\rangle\|\langle k-l, \xi-\eta+(k-l) t\rangle \hat{\theta}_{k-l}^{\mathrm{NL}}(\xi-\eta)\|_{L_{k-l, \xi-\eta}^2}  +  \|\langle l, \eta+l t\rangle^{1-\delta} \hat{\omega}_l(\eta)\|_{L_{l, \eta}^2} \rt.\\
				&\quad \lt.  \times\langle t\rangle\|\langle k-l, \xi-\eta+(k-l) t\rangle^3 \hat{\theta}_{k-l}^{\mathrm{NL}}(\xi-\eta)\|_{L_{k-l, \xi-\eta}^2} \rt)\\
				& \lesssim\langle t\rangle^{1+\delta}\|\langle D_x, D_y+t D_x\rangle^3 \theta^{\mathrm{NL}}\|_{L^2}^2\|\langle D_x, D_y+t D_x\rangle^3 \omega\|_{L^2}.
			\end{aligned}
		\end{equation}
		
		The combination of \eqref{eq:temp.2} and \eqref{eq:temp.3} yields
		\begin{equation}\begin{aligned} \label{eq:est of commutator term}
				& \left|\left(\langle D_x, D_y+t D_x\rangle^3\left(u \cdot \nabla \theta^{\mathrm{NL}}\right) \mid\langle D_x, D_y+t D_x\rangle^3 \theta^{\mathrm{NL}}\right)\right| \\
				\lesssim & \langle t\rangle^{1+\delta}\|\langle D_x, D_y+t D_x\rangle^3 \theta^{\mathrm{NL}}\|_{L^2}^2\|\langle D_x, D_y+t D_x\rangle^3 \omega\|_{L^2}.
		\end{aligned}\end{equation}

		Summing up \eqref{eq:energy est of non theta}, \eqref{eq:est of source term}, \eqref{eq:est of reaction term} and  \eqref{eq:est of commutator term}, we finish the proof.
	\end{proof}
	
	For $\omega^{\mathrm{NL}}$, a similar argument as in the above lemma yields the following.
	\begin{Lem} \label{lem:est of omega L2}
		For $1<p<2$, it holds that
		\begin{equation*}
			\begin{aligned}
				&\frac{d}{d t}\|\langle D_x, D_y +t D_x\rangle^3 \omega^{\mathrm{NL}}\|_{L^2} \\
				&\lesssim\langle t\rangle^{-2} \|\langle D_x, D_y+t D_x\rangle^6\langle\frac{1}{D_x}\rangle^4 \omega^{\mathrm{L}}\|_{L^2}^2    +     \|\langle D_x, D_y+t D_x\rangle^3 \px \theta^{\mathrm{NL}}\|_{L^2}    \\
				&\quad +\langle t\rangle^{1+\delta}( \|\langle D_x, D_y+t D_x\rangle^6\langle\frac{1}{D_x}\rangle^4 \omega^{\mathrm{L}}\|_{L^2} +\|\langle D_x, D_y+t D_x\rangle^3 \omega^{\mathrm{NL}}\|_{L^2}) \\
				&\quad \quad \times(\|\omega^{\mathrm{NL}}\|_{L^p}+\|\langle D_x, D_y+t D_x\rangle^3 \omega^{\mathrm{NL}}\|_{L^2}) .
			\end{aligned}
		\end{equation*}
	\end{Lem}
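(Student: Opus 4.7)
The plan is to mirror the strategy of Lemma~\ref{eq:est of theta NL short time} almost verbatim, since the equation for $\omega^{\mathrm{NL}}$ in \eqref{eq:nonlinear part} has the same transport--dissipation structure as the one for $\theta^{\mathrm{NL}}$, with only one new feature on the right-hand side: the buoyancy source $\partial_x \theta^{\mathrm{NL}}$. Concretely, I would pair the $\omega^{\mathrm{NL}}$ equation with $\langle D_x, D_y+tD_x\rangle^6 \omega^{\mathrm{NL}}$ in $L^2$. Since $\partial_t+y\partial_x$ commutes with $\langle D_x, D_y+tD_x\rangle^a$, the transport term reproduces $\frac12 \frac{d}{dt}\|\langle D_x, D_y+tD_x\rangle^3 \omega^{\mathrm{NL}}\|_{L^2}^2$ plus the dissipation $\nu \|\langle D_x, D_y+tD_x\rangle^3 \nabla\omega^{\mathrm{NL}}\|_{L^2}^2$, leaving the nonlinear fluxes and $\partial_x\theta^{\mathrm{NL}}$ on the right.

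The four pieces $(u^{\mathrm{L}}+u^{\mathrm{NL}})\cdot\nabla(\omega^{\mathrm{L}}+\omega^{\mathrm{NL}})$ are treated as in the previous lemma. For the source $u^{\mathrm{L}}\cdot\nabla\omega^{\mathrm{L}}$, use \eqref{eq:0}, the pointwise inviscid damping bound \eqref{eq:the point-wise inviscid damping estimate}, and \eqref{eq:temp.1} to land on $\langle t\rangle^{-2}\|\langle D_x, D_y+tD_x\rangle^3\omega^{\mathrm{NL}}\|_{L^2}\|\langle D_x, D_y+tD_x\rangle^6\langle\frac{1}{D_x}\rangle^4\omega^{\mathrm{L}}\|_{L^2}^2$, the $\langle \frac{1}{D_x}\rangle^4$ coming from the inviscid damping weight applied to the middle factor. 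For the reaction $u^{\mathrm{NL}}\cdot\nabla\omega^{\mathrm{L}}$, split the convolution into $|l|^2+|\eta|^2\geq 1$ and $|l|^2+|\eta|^2<1$; in the low-frequency regime use \eqref{eq:est of l+eta l t} exactly as in \eqref{eq:temp0.1} to trade the singular factor $(|l|+|\eta|)^{-1}$ for $\langle t\rangle^\delta (|l|+|\eta|)^{\delta-1}$ integrable against an $L^p$ norm of $\omega^{\mathrm{NL}}$. For the commutator $u\cdot\nabla\omega^{\mathrm{NL}}$, exploit the transport cancellation and the symbol commutator \eqref{eq:commutator estimate}, again splitting the frequency convolution as in \eqref{eq:temp.2}--\eqref{eq:temp.3}. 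Finally, for the buoyancy $\partial_x \theta^{\mathrm{NL}}$, a single Cauchy--Schwarz gives
\[
\bigl|\bigl(\langle D_x,D_y+tD_x\rangle^3 \partial_x\theta^{\mathrm{NL}}\,\bigl|\,\langle D_x,D_y+tD_x\rangle^3 \omega^{\mathrm{NL}}\bigr)\bigr|\le \|\langle D_x,D_y+tD_x\rangle^3 \partial_x\theta^{\mathrm{NL}}\|_{L^2}\|\langle D_x,D_y+tD_x\rangle^3 \omega^{\mathrm{NL}}\|_{L^2}.
\]

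After summing the four contributions and writing $\omega=\omega^{\mathrm{L}}+\omega^{\mathrm{NL}}$ inside the commutator estimate, every term on the right carries a factor of $\|\langle D_x, D_y+tD_x\rangle^3\omega^{\mathrm{NL}}\|_{L^2}$. I divide through once by this factor, turning $\tfrac{d}{dt}\|\cdot\|_{L^2}^2$ into $2\|\cdot\|_{L^2}\tfrac{d}{dt}\|\cdot\|_{L^2}$ and producing exactly the stated inequality for $\tfrac{d}{dt}\|\langle D_x, D_y+tD_x\rangle^3 \omega^{\mathrm{NL}}\|_{L^2}$. I do not need to retain the dissipation on the right.

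I expect no genuinely new obstacle: the derivation is structurally identical to Lemma~\ref{eq:est of theta NL short time} under the substitution $\theta^{\mathrm{NL}}\rightsquigarrow\omega^{\mathrm{NL}}$, $\theta^{\mathrm{L}}\rightsquigarrow\omega^{\mathrm{L}}$, with the extra buoyancy term absorbed trivially by Cauchy--Schwarz. The only bookkeeping to watch is the distribution of the six derivatives from $\langle D_x, D_y+tD_x\rangle^6$ in the frequency convolutions, which is done exactly as before so as to leave $\langle k-l,\xi-\eta+(k-l)t\rangle^{-2}$ in $L^2_{k-l,\xi-\eta}$ and still save one weighted copy of each of the remaining factors. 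The only minor subtlety worth flagging is that the commutator piece now contributes $\|\langle D_x, D_y+tD_x\rangle^3\omega\|_{L^2}$ rather than $\|\langle D_x, D_y+tD_x\rangle^3\theta^{\mathrm{NL}}\|_{L^2}$; splitting $\omega=\omega^{\mathrm{L}}+\omega^{\mathrm{NL}}$ after dividing by the $\omega^{\mathrm{NL}}$ norm produces precisely the composite factor $\|\langle D_x, D_y+tD_x\rangle^6\langle \tfrac{1}{D_x}\rangle^4\omega^{\mathrm{L}}\|_{L^2}+\|\langle D_x, D_y+tD_x\rangle^3\omega^{\mathrm{NL}}\|_{L^2}$ appearing in the statement.
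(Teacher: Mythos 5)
Your proposal is correct and coincides with the paper's intended argument: the paper omits the proof of this lemma entirely, stating only that ``a similar argument as in the above lemma yields the following,'' and your write-up is exactly that adaptation of Lemma~\ref{eq:est of theta NL short time} (same source/reaction/commutator decomposition, same low-frequency splitting via \eqref{eq:est of l+eta l t}, buoyancy absorbed by Cauchy--Schwarz, then division by $\|\langle D_x, D_y+tD_x\rangle^3\omega^{\mathrm{NL}}\|_{L^2}$ to pass from the derivative of the squared norm to that of the norm).
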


	For $\px \theta^{\mathrm{NL}}$, the most troublesome term is $\px u \cdot \nabla \theta^{\mathrm{NL}}$, due to the presence of an additional derivative falling on $u$ and the lack of derivative on $\theta^{\mathrm{NL}}$. However, for the short-time scale estimate, it suffices to work with 
	$
	\nu^{1/2 }\|\langle D_x, D_y+t D_x\rangle^3 \nabla \theta^{\mathrm{NL}}\|_{L^2}.
	$
	\begin{Lem} \label{lem:est of pxtheta L2}
		For $1<p<2$, it holds that
		\begin{equation*}\begin{aligned}
				&\frac{d}{d t}\|\langle D_x, D_y+t D_x\rangle^3 \px \theta^{\mathrm{NL}}\|_{L^2}^2+\nu\|\langle D_x, D_y+t D_x\rangle^3 \nabla \px \theta^{\mathrm{NL}}\|_{L^2}^2 \\
				&\lesssim \|\langle D_x, D_y+t D_x\rangle^3 \px \theta^{\mathrm{NL}}\|_{L^2} \bigg[ \|\langle D_x, D_y+t D_x\rangle^6  \langle\frac{1}{l}\rangle^4  \omega^{\mathrm{L}}\|_{L^2} \\
				& \quad  \times \lt( \langle t \rangle^{-2} \|\langle D_x, D_y+t D_x\rangle^6  \langle\frac{1}{l}\rangle^4 \langle \px\rangle \theta^{\mathrm{L}}\|_{L^2}  + \langle t \rangle^{1+ \delta} \|\langle D_x, D_y+t D_x\rangle^3 \px \theta^{\mathrm{NL}}\|_{L^2}  \rt.\\
				&\lt. \quad \quad +   \|\langle D_x, D_y+t D_x\rangle^3 \theta^{\mathrm{NL}}\|_{L^2}  +  \langle t \rangle^{-1} \|\langle D_x, D_y+t D_x\rangle^3 \nabla \theta^{\mathrm{NL}}\|_{L^2} \rt)   \\
				& \quad + \langle t \rangle^{1+ \delta}  \|\langle D_x, D_y+t D_x\rangle^6  \langle\frac{1}{l}\rangle^4 \langle \px\rangle \theta^{\mathrm{L}}\|_{L^2}  \lt(  \|\omega^{\mathrm{NL}}\|_{L^p}   + \|\langle D_x, D_y+t D_x\rangle^3 \omega^{\mathrm{NL}}\|_{L^2} \rt)  \\
				&\quad  +   \|\langle D_x, D_y+t D_x\rangle^3 \omega^{\mathrm{NL}}\|_{L^2} \lt( \langle t \rangle^{1+ \delta}  \|\langle D_x, D_y+t D_x\rangle^3 \px \theta^{\mathrm{NL}}\|_{L^2}   \rt. \\
				&\lt. \qquad +  \langle t \rangle   \|\langle D_x, D_y+t D_x\rangle^3 \theta^{\mathrm{NL}}\|_{L^2} +  \|\langle D_x, D_y+t D_x\rangle^3 \nabla \theta^{\mathrm{NL}}\|_{L^2} \rt)\bigg] .
		\end{aligned}\end{equation*}
	\end{Lem}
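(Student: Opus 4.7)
The plan is to mirror Lemma~\ref{eq:est of theta NL short time}, inserting an extra horizontal derivative throughout. Since $\partial_x$ commutes with both $\partial_t + y\partial_x$ and the weight $\langle D_x, D_y + tD_x\rangle^3$, I would apply the operator $\langle D_x, D_y + tD_x\rangle^3 \partial_x$ to $\eqref{eq:nonlinear part}_2$ and pair in $L^2$ with $\langle D_x, D_y + tD_x\rangle^3 \partial_x \theta^{\mathrm{NL}}$. After integrating by parts in the $y\partial_x$ transport (which contributes nothing since $\partial_x y=0$) and expanding the $\partial_x$ in front of the nonlinearity, we arrive at
\begin{equation*}
\tfrac12\tfrac{d}{dt}\|\langle D_x, D_y + tD_x\rangle^3 \partial_x \theta^{\mathrm{NL}}\|_{L^2}^2 + \nu\|\langle D_x, D_y + tD_x\rangle^3 \nabla \partial_x \theta^{\mathrm{NL}}\|_{L^2}^2 = -\Re\,\mathcal{N},
\end{equation*}
where $\mathcal{N}$ is the $L^2$ pairing of $\langle D_x, D_y + tD_x\rangle^3 \partial_x\big[(u^{\mathrm{L}} + u^{\mathrm{NL}})\cdot\nabla(\theta^{\mathrm{L}} + \theta^{\mathrm{NL}})\big]$ with $\langle D_x, D_y + tD_x\rangle^3 \partial_x\theta^{\mathrm{NL}}$.

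Each of the four sub-bilinear contributions in $\mathcal{N}$ is handled by copying the Fourier argument of Lemma~\ref{eq:est of theta NL short time}, except that the integrand now carries an additional factor of $k^2$. I would split this factor as $k^2 = k\cdot[(k-l) + l]$, with one copy of $k$ reassembled into $\partial_x\theta^{\mathrm{NL}}$ on the left-hand norm, and the remaining factor distributed either onto $\theta^{\mathrm{L}}$ (supplying the $\langle D_x\rangle\theta^{\mathrm{L}}$ contributions appearing in the statement) or onto $\omega^{\mathrm{L}}$. In the $\omega^{\mathrm{L}}$-case the identity $|l|\langle \tfrac{1}{l}\rangle = \langle l\rangle$ gives
\begin{equation*}
|l|\,\langle \tfrac{1}{l}\rangle^4 \lesssim \langle l,\eta+lt\rangle\,\langle \tfrac{1}{l}\rangle^3,
\end{equation*}
so one factor of the existing weight $\langle l,\eta+lt\rangle^6\langle \tfrac{1}{l}\rangle^4$ absorbs the extra horizontal derivative, and the right-hand side of the statement needs no upgraded $\omega^{\mathrm{L}}$-norm.

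With this bookkeeping in place, the source term $\partial_x(u^{\mathrm{L}}\cdot\nabla\theta^{\mathrm{L}})$ produces the $\langle t\rangle^{-2}$ contribution exactly as in \eqref{eq:est of source term}, via the kernel bound \eqref{eq:0}, the point-wise inviscid damping \eqref{eq:the point-wise inviscid damping estimate}, and the Leibniz-type inequality \eqref{eq:temp.1}. The reaction term $\partial_x(u^{\mathrm{NL}}\cdot\nabla\theta^{\mathrm{L}})$ and the transport pieces $\partial_x(u\cdot\nabla\theta^{\mathrm{NL}})$ are handled by splitting the frequency integrals into $|l|^2+|\eta|^2\geq 1$ versus $|l|^2+|\eta|^2<1$: in the low-frequency regime the trade \eqref{eq:est of l+eta l t} together with the commutator estimate \eqref{eq:commutator estimate} produce the $\langle t\rangle^{1+\delta}$ factors in the statement, while in the high regime \eqref{eq:est of k-l + xi - eta} suffices.

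The main obstacle, highlighted in the remark preceding the statement, is the term $\partial_x u \cdot \nabla\theta^{\mathrm{NL}}$: the extra derivative lands on $u$ while $\theta^{\mathrm{NL}}$ carries none. I would not try to charge the stray derivative to any weight on $\theta^{\mathrm{NL}}$; instead I would simply accept the factor $\|\langle D_x, D_y + tD_x\rangle^3 \nabla \theta^{\mathrm{NL}}\|_{L^2}$ on the right-hand side, with the appropriate $\langle t\rangle^{-1}$ or $\langle t\rangle^{1+\delta}$ prefactor coming from inviscid damping or from the low-frequency split. This accounts for the two $\|\nabla\theta^{\mathrm{NL}}\|_{L^2}$-terms appearing in the statement; on the short-time window $t\leq T_0 = \nu^{-1/6}$ they will ultimately be absorbed by the dissipation $\nu\|\langle\cdot\rangle^3 \nabla\theta^{\mathrm{NL}}\|_{L^2}^2$ from Lemma~\ref{eq:est of theta NL short time} via Cauchy--Schwarz in time. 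Collecting the four contributions then yields the stated differential inequality.
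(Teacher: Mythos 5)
Your proposal is correct and follows essentially the same route as the paper: the same weighted energy identity, the same Leibniz splitting of $\partial_x$ with the extra horizontal frequency absorbed by the $\langle \frac{1}{D_x}\rangle^4$ weight on $\omega^{\mathrm{L}}$ (resp.\ by $\langle D_x\rangle\theta^{\mathrm{L}}$), the same low-frequency split with the $\delta$-trade for the singular reaction term, and the same decision to retain $\|\langle D_x,D_y+tD_x\rangle^3\nabla\theta^{\mathrm{NL}}\|_{L^2}$ for $\partial_x u\cdot\nabla\theta^{\mathrm{NL}}$ (where no commutator structure is available) and absorb it later through the dissipation. The only detail you leave implicit is that for $\partial_x u^{\mathrm{NL}}$ the extra derivative is handled by the elementary Riesz-type bound $|l|(|l|+|\eta|)(|l|^2+|\eta|^2)^{-1}\lesssim 1$, since no $\langle\frac{1}{l}\rangle$ weight is available on $\omega^{\mathrm{NL}}$; this is exactly the paper's \eqref{eq:temp.4}.
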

	\begin{proof}
		Taking the inner product of $\eqref{eq:nonlinear part}_2$ with $\langle D_x, D_y+t D_x\rangle^6 \px^2 \theta^{\mathrm{NL}}$ gives that
		\begin{equation}\begin{aligned} \label{eq:1}
				&  \frac12 \frac{d}{ d t}\|\langle D_x, D_y+t D_x\rangle^3 \px \theta^{\mathrm{NL}}\|_{L^2}^2+\nu\|\langle D_x, D_y+t D_x\rangle^3 \nabla \px \theta^{\mathrm{NL}}\|_{L^2}^2 \\
				& \quad=-\Re\left(\langle D_x, D_y+t D_x\rangle^3 \px \left[\left(u^{\mathrm{L}}+u^{\mathrm{NL}}\right) \cdot \nabla\left(\theta^{\mathrm{L}}  +  \theta^{\mathrm{NL}}\right)\right] \mid\langle D_x, D_y+t D_x\rangle^3 \px \theta^{\mathrm{NL}}\right) .
		\end{aligned}\end{equation}

		\underline{\bf Estimate of $u \cdot \nabla \px \theta$.} Adapting the argument in Lemma \ref{eq:est of theta NL short time}, we derive
		\begin{equation}
			\begin{aligned} \label{eq:2}
				& \left|\left(\langle D_x, D_y+t D_x\rangle^3(u \cdot \nabla \px\theta )  \big| \langle D_x, D_y+t D_x\rangle^3 \px\theta^{\mathrm{NL}}\right)\right| \\
				&\lesssim\|\langle D_x, D_y+t D_x\rangle^3 \px \theta^{\mathrm{NL}}\|_{L^2} \\
				&\quad \times  \left[\langle t\rangle^{-2} \|\langle D_x, D_y+t D_x\rangle^6\langle\frac{1}{D_x}\rangle^4 \omega^{\mathrm{L}}\|_{L^2}
				\|\langle D_x, D_y+t D_x\rangle^6\langle\frac{1}{D_x}\rangle^4 \px \theta^{\mathrm{L}}\|_{L^2} \rt.\\
				&\lt.\quad \quad +\langle t\rangle^{1+\delta}  \left(\|\omega^{\mathrm{NL}}\|_{L^p}+\|\langle D_x, D_y+t D_x\rangle^3 \omega^{\mathrm{NL}}\|_{L^2}\right) 
				\|\langle D_x, D_y+t D_x\rangle^6\langle\frac{1}{D_x}\rangle^4 \px  \theta^{\mathrm{L}}\|_{L^2} \rt.  \\
				& \left.\quad \quad  + \langle t\rangle^{1+\delta}\|\langle D_x, D_y+t D_x\rangle^3 \px \theta^{\mathrm{NL}}\|_{L^2} \rt. \\
				&\lt.   \qquad \quad \times \left(\|\langle D_x, D_y+t D_x\rangle^6\langle\frac{1}{D_x}\rangle^4 \omega^{\mathrm{L}}\|_{L^2}  + \|\langle D_x, D_y+t D_x\rangle^3 \omega^{\mathrm{NL}}\|_{L^2} \right) \right].
			\end{aligned}
		\end{equation}

		\underline{\bf Estimate of $\px u^{\mathrm{L}} \cdot \nabla  \theta^{\mathrm{L}}$.} Thanks to the $\langle \frac{1}{D_x}\rangle$ derivative in the initial data, the additional $\px$ derivative on $u$ can be absorbed.
		Using
		$$
		\frac{|l|}{|l|^2+|\eta|^2}  \lesssim \frac{\langle l, \eta+l t\rangle^2}{|l|^3\langle t\rangle^2} \lesssim\langle t\rangle^{-2}\langle\frac{1}{l}\rangle^3\langle l, \eta+l t\rangle^2,
		$$
		we get similarly as \eqref{eq:est of source term} that
		\begin{align} \label{eq:3}
			&\no \left|\left(\langle D_x, D_y+t D_x\rangle^3( \px u^{\mathrm{L}} \cdot \nabla \theta^{\mathrm{L}}) \big| \langle D_x, D_y+t D_x\rangle^3 \px\theta^{\mathrm{NL}}\right)\right| \\
			&\no \lesssim\langle t\rangle^{-2}\|\langle D_x, D_y+t D_x\rangle^3 \px\theta^{\mathrm{NL}}\|_{L^2}\|\langle D_x, D_y+t D_x\rangle^6\langle\frac{1}{D_x}\rangle^4 \omega^{\mathrm{L}}\|_{L^2} \\
			& \quad \times\|\langle D_x, D_y+t D_x\rangle^6 \theta^{\mathrm{L}}\|_{L^2} .
		\end{align}
		
		\underline{\bf Estimate of $\px u^{\mathrm{NL}} \cdot \nabla \theta^{\mathrm{L}}$.} For $\px u^{\mathrm{NL}}$, the classical Riesz transform theory allows us to bypass any concern about singularities (see \eqref{eq:temp.4}).
		Writing
		$$
		\begin{aligned}
			& \left|\left(\langle D_x, D_y+t D_x\rangle^3( \px u^{\mathrm{NL}} \cdot \nabla \theta^{\mathrm{L}}) \big| \langle D_x, D_y+t D_x\rangle^3 \px \theta^{\mathrm{NL}}\right)\right| \\
			= & \left|\int_{\mathbb{R}^4}\langle k, \xi+k t\rangle^6 k \hat{\theta}_k^{\mathrm{NL}}(\xi) \frac{\eta(k-l)-l(\xi-\eta)}{|l|^2+|\eta|^2} l\hat{\omega}_l^{\mathrm{NL}}(\eta) \hat{\theta}_{k-l}^{\mathrm{L}}(\xi-\eta) d k d l d \xi d \eta\right|
		\end{aligned}
		$$
		and using \eqref{eq:temp.1}, \eqref{eq:est of k-l + xi - eta} and 
		\begin{equation} \label{eq:temp.4}
			\frac{|l|(|\eta| + |l|)}{l^2   +  \eta^2}  \lesssim 1,
		\end{equation}
		one gets
		\begin{equation}\begin{aligned} \label{eq:4}
				& \left|\left(\langle D_x, D_y+t D_x\rangle^3( \px u^{\mathrm{NL}} \cdot \nabla \theta^{\mathrm{L}}) \big| \langle D_x, D_y+t D_x\rangle^3 \px \theta^{\mathrm{NL}}\right)\right| \\
				& \lesssim\|\langle k, \xi+k t\rangle^3 k\hat{\theta}_k^{\mathrm{NL}}(\xi)\|_{L_{k, \xi}^2}\|\langle l, \eta+l t\rangle^3 \hat{\omega}_l^{\mathrm{NL}}(\eta)\|_{L_{l, \eta}^2}\langle t\rangle\|\langle k-l, \xi-\eta+(k-l) t\rangle^{-2}\|_{L_{k-l, \xi-\eta}^2} \\
				& \quad \times\|\langle k-l, \xi-\eta+(k-l) t\rangle^6 \hat{\theta}_{k-l}^{\mathrm{L}}(\xi-\eta)\|_{L_{k-l, \xi-\eta}^2} \\
				& \lesssim\langle t\rangle   \|\langle D_x, D_y+t D_x\rangle^3 \px \theta^{\mathrm{NL}}\|_{L^2}  \|\langle D_x, D_y+t D_x\rangle^3 \omega^{\mathrm{NL}}\|_{L^2}     \|\langle D_x, D_y+t D_x\rangle^6 \theta^{\mathrm{L}}\|_{L^2}.
		\end{aligned}\end{equation}
		
		\underline{\bf Estimate of $\px u^{\mathrm{L}} \cdot \nabla \theta^{\mathrm{NL}}$.} This term is challenging, since one cannot apply commutator estimates and is forced to work with the less favorable term $\nabla \theta^{\mathrm{NL}}$.
		Using
		\begin{align*} \label{eq:temp1.0}
			\langle k, \xi+k t\rangle^3 \lesssim\langle k-l, \xi-\eta+(k-l) t\rangle^3  +  \langle l, \eta+l t\rangle^3
		\end{align*}
		we have
		\begin{equation}\begin{aligned} \label{eq:temp1}
				& \left|\left(\langle D_x, D_y+t D_x\rangle^3( \px u^{\mathrm{L}} \cdot \nabla \theta^{\mathrm{NL}}) \big| \langle D_x, D_y+t D_x\rangle^3 \px \theta^{\mathrm{NL}}\right)\right| \\
				&=  \left|\int_{\mathbb{R}^4}\langle k, \xi+k t\rangle^6 k \hat{\theta}_k^{\mathrm{NL}}(\xi) \frac{\eta(k-l)-l(\xi-\eta)}{|l|^2+|\eta|^2} l\hat{\omega}_l^{\mathrm{L}}(\eta) \hat{\theta}_{k-l}^{\mathrm{NL}}(\xi-\eta) d k d l d \xi d \eta\right|. \\
				&\lesssim  \int_{\mathbb{R}^4}\langle k, \xi+k t\rangle^3 \lt|k \hat{\theta}_k^{\mathrm{NL}}(\xi)  \frac{\eta(k-l)-l(\xi-\eta)}{|l|^2+|\eta|^2} l \rt| \langle l, \eta+l t\rangle^3 \lt| \hat{\omega}_l^{\mathrm{L}}(\eta)   \hat{\theta}_{k-l}^{\mathrm{NL}}(\xi-\eta) \rt| d k d l d \xi d \eta \\
				&\quad + \int_{\mathbb{R}^4}\langle k, \xi+k t\rangle^3 \lt| k \hat{\theta}_k^{\mathrm{NL}}(\xi) \frac{\eta(k-l)-l(\xi-\eta)}{|l|^2+|\eta|^2} l\hat{\omega}_l^{\mathrm{L}}(\eta) \rt| \\
				& \qquad \times  \langle k-l, \xi-\eta+(k-l) t\rangle^3 \lt| \hat{\theta}_{k-l}^{\mathrm{NL}}(\xi-\eta) \rt|  d k d l d \xi d \eta
				=:     I_{k-l} +I_{l}.
		\end{aligned}\end{equation}
		For $I_{k-l}$, by \eqref{eq:est of k-l + xi - eta} and
		\begin{equation}\begin{aligned} \label{eq:temp1.5}
				\frac{|l|(|l| + |\eta|)}{l^2   +  \eta^2}  \lesssim \frac{\langle l, \eta+l t\rangle}{(|l|^2+|\eta|^2)^\frac12} \lesssim \frac{\langle l, \eta+l t\rangle^2}{|l|^2\langle t\rangle} \lesssim\langle t\rangle^{-1}\langle\frac{1}{l}\rangle^2\langle l, \eta+l t\rangle^2,
		\end{aligned}\end{equation}
		we get that
		\begin{equation}\begin{aligned} \label{eq:temp2}
				I_{k-l}^1 \lesssim & \|\langle k, \xi+k t\rangle^3 k\hat{\theta}_k^{\mathrm{NL}}(\xi)\|_{L_{k, \xi}^2}\|\langle l, \eta+l t\rangle^5 \langle\frac{1}{l}\rangle^2 \hat{\omega}_l^{\mathrm{L}}(\eta)\|_{L_{l, \eta}^2}  \|\langle k-l, \xi-\eta+(k-l) t\rangle^{-2}\|_{L_{k-l, \xi-\eta}^2} \\
				& \quad \times\|\langle k-l, \xi-\eta+(k-l) t\rangle^3 \hat{\theta}_{k-l}^{\mathrm{NL}}(\xi-\eta)\|_{L_{k-l, \xi-\eta}^2} \\
				\lesssim &   \|\langle D_x, D_y+t D_x\rangle^3 \px \theta^{\mathrm{NL}}\|_{L^2}  \|\langle D_x, D_y+t D_x\rangle^6  \langle\frac{1}{l}\rangle^4  \omega^{\mathrm{L}}\|_{L^2}     \|\langle D_x, D_y+t D_x\rangle^3 \theta^{\mathrm{NL}}\|_{L^2}.
		\end{aligned}\end{equation}
		For $I_{l}$, by \eqref{eq:temp1.5}, we infer that
		\begin{equation}\begin{aligned} \label{eq:temp3}
				I_{l}^1 \lesssim & \|\langle k, \xi+k t\rangle^3 k\hat{\theta}_k^{\mathrm{NL}}(\xi)\|_{L_{k, \xi}^2}   \langle t \rangle^{-1} \|\langle l, \eta+l t\rangle^4 \langle\frac{1}{l}\rangle^2 \hat{\omega}_l^{\mathrm{L}}(\eta)\|_{L_{l, \eta}^2}  \|\langle l, \eta + l t\rangle^{-2}\|_{L_{l, \eta}^2} \\
				& \quad \times\|\langle k-l, \xi-\eta+(k-l) t\rangle^3 \left(|k-l|+|\xi-\eta|\right) \hat{\theta}_{k-l}^{\mathrm{NL}}(\xi-\eta)\|_{L_{k-l, \xi-\eta}^2} \\
				\lesssim & \langle t \rangle^{-1}  \|\langle D_x, D_y+t D_x\rangle^3 \px \theta^{\mathrm{NL}}\|_{L^2}  \|\langle D_x, D_y+t D_x\rangle^6  \langle\frac{1}{l}\rangle^4  \omega^{\mathrm{L}}\|_{L^2}     \|\langle D_x, D_y+t D_x\rangle^3 \nabla \theta^{\mathrm{NL}}\|_{L^2}.
		\end{aligned}\end{equation}
		From \eqref{eq:temp1}, \eqref{eq:temp2} and \eqref{eq:temp3}, we get that
		\begin{align} \label{eq:5}
			&\no\left|\left(\langle D_x, D_y+t D_x\rangle^3( \px u^{\mathrm{L}} \cdot \nabla \theta^{\mathrm{NL}}) \big| \langle D_x, D_y+t D_x\rangle^3 \px \theta^{\mathrm{NL}}\right)\right|\\
			&\no\lesssim \|\langle D_x, D_y+t D_x\rangle^3 \px \theta^{\mathrm{NL}}\|_{L^2}  \|\langle D_x, D_y+t D_x\rangle^6  \langle\frac{1}{l}\rangle^4  \omega^{\mathrm{L}}\|_{L^2}  \\
			&\quad \times   \left(\|\langle D_x, D_y+t D_x\rangle^3 \theta^{\mathrm{NL}}\|_{L^2}  + \langle t \rangle^{-1} \|\langle D_x, D_y+t D_x\rangle^3 \nabla \theta^{\mathrm{NL}}\|_{L^2} \right).
		\end{align}
		
		\underline{\bf Estimate of $\px u^{\mathrm{NL}} \cdot \nabla \theta^{\mathrm{NL}}$.}
		Similarly as \eqref{eq:temp1}, we infer that
		\begin{equation}\begin{aligned} \label{eq:temp4}
				& \left|\left(\langle D_x, D_y+t D_x\rangle^3\left( \px u^{\mathrm{NL}} \cdot \nabla \theta^{\mathrm{NL}}\right) \mid\langle D_x, D_y+t D_x\rangle^3 \px \theta^{\mathrm{NL}}\right)\right| \\
				&=  \left|\int_{\mathbb{R}^4}\langle k, \xi+k t\rangle^6 k \hat{\theta}_k^{\mathrm{NL}}(\xi) \frac{\eta(k-l)-l(\xi-\eta)}{|l|^2+|\eta|^2} l\hat{\omega}_l^{\mathrm{NL}}(\eta) \hat{\theta}_{k-l}^{\mathrm{NL}}(\xi-\eta) d k d l d \xi d \eta\right|. \\
				&\lesssim  \int_{\mathbb{R}^4}\left|\langle k, \xi+k t\rangle^3 k \hat{\theta}_k^{\mathrm{NL}}(\xi) \frac{\eta(k-l)-l(\xi-\eta)}{|l|^2+|\eta|^2} l \langle l, \eta+l t\rangle^3 \hat{\omega}_l^{\mathrm{NL}}(\eta)   \hat{\theta}_{k-l}^{\mathrm{NL}}(\xi-\eta) \right|d k d l d \xi d \eta \\
				& \quad + \int_{\mathbb{R}^4}\left|\langle k, \xi+k t\rangle^3 k \hat{\theta}_k^{\mathrm{NL}}(\xi) \frac{\eta(k-l)-l(\xi-\eta)}{|l|^2+|\eta|^2} l\hat{\omega}_l^{\mathrm{NL}}(\eta) \rt.\\
				&\lt. \qquad  \times  \langle k-l, \xi-\eta+(k-l) t\rangle^3 \hat{\theta}_{k-l}^{\mathrm{NL}}(\xi-\eta)\right| d k d l d \xi d \eta  =:   I_{k-l}' +I_{l}'.
		\end{aligned}\end{equation}
		For $I_{k-l}'$, using \eqref{eq:temp.4} and \eqref{eq:est of k-l + xi - eta},
		we get that
		\begin{equation}\begin{aligned}  \label{eq:temp5}
				I_{k-l}' \lesssim & \|\langle k, \xi+k t\rangle^3 k\hat{\theta}_k^{\mathrm{NL}}(\xi)\|_{L_{k, \xi}^2}\|\langle l, \eta+l t\rangle^3  \hat{\omega}_l^{\mathrm{NL}}(\eta)\|_{L_{l, \eta}^2}  \|\langle k-l, \xi-\eta+(k-l) t\rangle^{-2}\|_{L_{k-l, \xi-\eta}^2} \\
				& \quad \times \langle t\rangle \|\langle k-l, \xi-\eta+(k-l) t\rangle^3 \hat{\theta}_{k-l}^{\mathrm{NL}}(\xi-\eta)\|_{L_{k-l, \xi-\eta}^2} \\
				\lesssim & \langle t\rangle  \|\langle D_x, D_y+t D_x\rangle^3 \px \theta^{\mathrm{NL}}\|_{L^2}  \|\langle D_x, D_y+t D_x\rangle^3   \omega^{\mathrm{NL}}\|_{L^2}     \|\langle D_x, D_y+t D_x\rangle^3 \theta^{\mathrm{NL}}\|_{L^2}.
		\end{aligned}\end{equation}
		For $I_{l}'$, by \eqref{eq:temp.4} again, there holds that
		\begin{equation}\begin{aligned} \label{eq:temp6}
				I_{l}' \lesssim & \|\langle k, \xi+k t\rangle^3 k\hat{\theta}_k^{\mathrm{NL}}(\xi)\|_{L_{k, \xi}^2}  \|\langle l, \eta+l t\rangle^2 \hat{\omega}_l^{\mathrm{NL}}(\eta)\|_{L_{l, \eta}^2}  \|\langle l, \eta + l t\rangle^{-2}\|_{L_{l, \eta}^2} \\
				& \quad \times\|\langle k-l, \xi-\eta+(k-l) t\rangle^3 \left(|k-l|+|\xi-\eta|\right) \hat{\theta}_{k-l}^{\mathrm{NL}}(\xi-\eta)\|_{L_{k-l, \xi-\eta}^2} \\
				\lesssim &   \|\langle D_x, D_y+t D_x\rangle^3 \px \theta^{\mathrm{NL}}\|_{L^2}  \|\langle D_x, D_y+t D_x\rangle^3   \omega^{\mathrm{NL}}\|_{L^2}     \|\langle D_x, D_y+t D_x\rangle^3 \nabla \theta^{\mathrm{NL}}\|_{L^2}.
		\end{aligned}\end{equation}
		Collecting \eqref{eq:temp4}, \eqref{eq:temp5} and \eqref{eq:temp6}, we infer that
		\begin{align} \label{eq:6}
			&\no\left|\left(\langle D_x, D_y+t D_x\rangle^3( \px u^{\mathrm{NL}} \cdot \nabla \theta^{\mathrm{NL}}) \big| \langle D_x, D_y+t D_x\rangle^3 \px \theta^{\mathrm{NL}}\right)\right| \\
			&\no\lesssim \|\langle D_x, D_y+t D_x\rangle^3 \px \theta^{\mathrm{NL}}\|_{L^2}  \|\langle D_x, D_y+t D_x\rangle^3  \omega^{\mathrm{NL}}\|_{L^2}  \\
			&\quad \times   \left(  \langle t \rangle \|\langle D_x, D_y+t D_x\rangle^3 \theta^{\mathrm{NL}}\|_{L^2}  +  \|\langle D_x, D_y+t D_x\rangle^3 \nabla \theta^{\mathrm{NL}}\|_{L^2} \right).
		\end{align}
		
		At last, we complete the proof by summing up \eqref{eq:1}, \eqref{eq:2}, \eqref{eq:3}, \eqref{eq:4}, \eqref{eq:5} and \eqref{eq:6}.
	\end{proof}
	
	\subsection{$L^p$-estimate and proof of Proposition \ref{cor}.}
	\begin{Lem} \label{lem:temp1}
		For any $1<p<2$, it holds that
		$$
		\begin{aligned}
			& \frac{d}{d t}\|\langle D_x, D_y+t D_x\rangle^2 \theta^{\mathrm{NL}}\|_{L^p} \\
			&\lesssim   \langle t\rangle^{-2}  \|\langle D_x, D_y+t D_x\rangle^6 \langle\frac{1}{D_x}\rangle^4 \omega^{\mathrm{L}}\|_{L^2}  \|\langle D_x, D_y+t D_x\rangle^6 \langle\frac{1}{D_x}\rangle^4 \theta^{\mathrm{L}}\|_{L^2} \\
			& \quad+\langle t\rangle\left[ \|\langle D_x, D_y+t D_x\rangle^2 \omega^{\mathrm{NL}}\|_{L^p}\|\langle D_x, D_y+t D_x\rangle^6 \langle\frac{1}{D_x}\rangle^4 \theta^{\mathrm{L}}\|_{L^2}\right. \\
			& \left.\qquad \quad+(\|\langle D_x, D_y+t D_x\rangle^5 \langle\frac{1}{D_x}\rangle^4 \omega^{\mathrm{L}}\|_{L^p}+\|\langle D_x, D_y+t D_x\rangle^2 \omega^{\mathrm{NL}}\|_{L^p})\rt. \\
			&\lt. \qquad \qquad\times \|\langle D_x, D_y+t D_x\rangle^3 \theta^{\mathrm{NL}}\|_{L^2} \right].
		\end{aligned}
		$$
	\end{Lem}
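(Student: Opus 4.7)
The plan is to set $\Theta := \langle D_x, D_y + t D_x\rangle^2 \theta^{\mathrm{NL}}$ and derive an evolution inequality for $\|\Theta\|_{L^p}$. Since $\langle D_x, D_y + t D_x\rangle^a$ commutes with the transport operator $\partial_t + y\partial_x$, applying it to $\eqref{eq:nonlinear part}_2$ yields
$$
\partial_t \Theta - \nu \Delta \Theta + y\partial_x \Theta = -\langle D_x, D_y + t D_x\rangle^2\bigl[(u^{\mathrm{L}} + u^{\mathrm{NL}}) \cdot \nabla(\theta^{\mathrm{L}} + \theta^{\mathrm{NL}})\bigr].
$$
Pairing this with $|\Theta|^{p-2}\Theta$ and integrating, the transport term vanishes by the divergence-free condition, the dissipation contributes a nonpositive term, and Hölder's inequality gives
$$
\frac{d}{dt}\|\Theta\|_{L^p} \lesssim \bigl\|\langle D_x, D_y + tD_x\rangle^2\bigl[(u^{\mathrm{L}}+u^{\mathrm{NL}})\cdot\nabla(\theta^{\mathrm{L}}+\theta^{\mathrm{NL}})\bigr]\bigr\|_{L^p}.
$$

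Next, I would estimate each of the four nonlinear pieces by duality: write $\|N\|_{L^p} = \sup_{\|\phi\|_{L^{p'}}=1}|(N,\phi)|$ and pass to Fourier side, then bound the resulting quadrilinear integral by distributing the weight $\langle k,\xi+kt\rangle^2$ via \eqref{eq:temp.1}, the kernel bound \eqref{eq:0}, and Young's convolution inequality. For the source $u^{\mathrm{L}}\cdot\nabla\theta^{\mathrm{L}}$, the pointwise inviscid damping \eqref{eq:the point-wise inviscid damping estimate} provides the $\langle t\rangle^{-2}$ factor and the $\langle 1/D_x\rangle^4$ weight absorbs the $|l|^{-2}$ singularity, leaving a clean $L^2\times L^2$ pairing mirroring the estimate \eqref{eq:est of source term}. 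For the reaction $u^{\mathrm{NL}}\cdot\nabla\theta^{\mathrm{L}}$, the factor $|k-l|+|\xi-\eta|$ is traded for $\langle t\rangle\langle k-l,\xi-\eta+(k-l)t\rangle$ via \eqref{eq:est of k-l + xi - eta}, and the weights are allocated so that $\widehat{\omega^{\mathrm{NL}}}$ is integrated in $L^{p'}_{l,\eta}$ while $\widehat{\theta^{\mathrm{L}}}$ is in $L^2_{k-l,\xi-\eta}$, matching the product $\|\omega^{\mathrm{NL}}\|_{L^p}\|\theta^{\mathrm{L}}\|_{L^2}$ after Hausdorff--Young. For the transport terms $u\cdot\nabla\theta^{\mathrm{NL}}$, the natural cancellation $(u\cdot\nabla)\Theta$ reduces the integrand to a commutator controlled by \eqref{eq:commutator estimate}, after which the same scheme applies with the roles of $\omega$ and $\theta$ interchanged (so $\omega^{\mathrm{L}}$ or $\omega^{\mathrm{NL}}$ enters in $L^p$ and $\theta^{\mathrm{NL}}$ in $L^2$).

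The main obstacle will be the asymmetry forced by $p<2$: since Hausdorff--Young only gives $\|\hat f\|_{L^{p'}}\le \|f\|_{L^p}$ in the wrong direction for bounding $\|N\|_{L^p}$ directly by its Fourier transform, I must always place one factor on the $L^{p'}$-side of Young's convolution (which becomes an $L^p$-norm in physical space) while the complementary factor must be in $L^2$ (via Plancherel). This dictates the precise form of the bound: the $L^p$-norm necessarily falls on the factor carrying the dispersive weight $\langle 1/D_x\rangle^4$ (to control singularities from $\Delta^{-1}$ in recovering $u$) while the $L^2$-norm falls on the remaining factor. The powers of $\langle t\rangle$ track the cost of rearranging the weights $\langle k,\xi+kt\rangle$ across frequencies, and the $\langle 1/D_x\rangle^4$ weight ensures the inviscid damping singularity is integrable. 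Summing the four contributions produces exactly the terms displayed in the statement.
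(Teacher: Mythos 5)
Your first step is exactly the paper's: pairing the equation for $\Theta=\langle D_x,D_y+tD_x\rangle^2\theta^{\mathrm{NL}}$ with $|\Theta|^{p-2}\Theta$ and using H\"older to reduce to $\frac{d}{dt}\|\Theta\|_{L^p}\lesssim\|\langle D_x,D_y+tD_x\rangle^2(u\cdot\nabla\theta)\|_{L^p}$. The gap is in how you then propose to estimate this $L^p$ norm. Writing $\|N\|_{L^p}=\sup_{\|\phi\|_{L^{p'}}=1}|(N,\phi)|$ and ``passing to the Fourier side'' does not work for $1<p<2$: since $p'>2$, the Fourier transform of the test function $\phi$ is not controlled in any Lebesgue norm by $\|\phi\|_{L^{p'}}$ (Hausdorff--Young only maps $L^q$ with $q\le 2$), so the quadrilinear frequency integral you would obtain cannot be closed against $\|\phi\|_{L^{p'}}$. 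Nor does the exponent bookkeeping close without the test function: placing $\widehat{\omega^{\mathrm{NL}}}$ in $L^{p'}_{l,\eta}$ and $\widehat{\theta^{\mathrm{L}}}$ in $L^2$ in Young's convolution requires $1/r=1/p'+1/2-1=1/2-1/p<0$, and even an $L^{p'}$ bound on $\hat N$ would not yield $\|N\|_{L^p}$ --- there is no reverse Hausdorff--Young inequality, which is precisely the obstruction the paper flags at the start of Section 4. Your ``clean $L^2\times L^2$ pairing'' for the source term likewise produces an $L^1$ bound on the product, not an $L^p$ bound.

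The paper's actual route avoids Fourier-transforming the product altogether: it applies H\"older in physical space first, $\|fg\|_{L^p}\le\|f\|_{L^{2p/(2-p)}}\|g\|_{L^2}$, and only then treats each factor separately --- the velocity factor via the Biot--Savart bound \eqref{eq:Biot-Savart law}, $\|\nabla^\perp(-\Delta)^{-1}f\|_{L^{2p/(2-p)}}\lesssim\|f\|_{L^p}$, for the nonlinear pieces, or via inviscid damping in $L^2$ for $u^{\mathrm{L}}$ in the source term, and the remaining $L^{2p/(2-p)}$ factor via \eqref{eq:trick}, which is Hausdorff--Young applied in the legitimate direction (exponent $2p/(3p-2)\le 2$) at the cost of two extra derivatives in $L^2$. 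This is also why no commutator cancellation is needed for $u\cdot\nabla\theta^{\mathrm{NL}}$ here: the full gradient is simply placed on $\theta^{\mathrm{NL}}$ and absorbed into $\langle t\rangle\|\langle D_x,D_y+tD_x\rangle^3\theta^{\mathrm{NL}}\|_{L^2}$ via \eqref{eq:est of k-l + xi - eta}. You identify the right obstruction, but the machinery you invoke to resolve it does not apply; reordering the argument --- H\"older in physical space first, Fourier tools on individual factors second --- is what makes the proof go through.
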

	\begin{proof}
		Applying $\langle D_x, D_y+t D_x\rangle^2$ to $\eqref{eq:nonlinear part}_2$ and taking the inner product of the resulting equation with $\left|\langle D_x, D_y+t D_x\rangle^2 \theta^{\mathrm{NL}}\right|^{p-2}\langle D_x, D_y+t D_x\rangle^2 \theta^{\mathrm{NL}}$, we find that
		$$
		\begin{aligned}
			&  \frac1p \frac{d}{ d t}\|\langle D_x, D_y+t D_x\rangle^2 \theta^{\mathrm{NL}}\|_{L^p}^p \\
			\leq & -\Re\lt(\langle D_x, D_y+t D_x\rangle^2\lt(u \cdot \nabla \theta\rt)   \Big| \lt|\langle D_x, D_y+t D_x\rangle^2 \theta^{\mathrm{NL}}\rt|^{p-2}\langle D_x, D_y+t D_x\rangle^2 \theta^{\mathrm{NL}}\right).
		\end{aligned}
		$$
		By Hölder inequality, we have
		$$
		\begin{aligned}
			& \left|  \Re\lt(\langle D_x, D_y+t D_x\rangle^2\lt(u \cdot \nabla \theta\rt)   \Big| \lt|\langle D_x, D_y+t D_x\rangle^2 \theta^{\mathrm{NL}}\rt|^{p-2}\langle D_x, D_y+t D_x\rangle^2 \theta^{\mathrm{NL}}\right)  \right| \\
			\leq & \|\langle D_x, D_y+t D_x\rangle^2(u \cdot \nabla \theta)\|_{L^p}\|\langle D_x, D_y+t D_x\rangle^2 \theta^{\mathrm{NL}}\|_{L^p}^{p-1}.
		\end{aligned}
		$$
		and therefore
		$$
		\frac{d}{d t}\|\langle D_x, D_y+t D_x\rangle^2 \theta^{\mathrm{NL}}\|_{L^p} \leq\|\langle D_x, D_y+t D_x\rangle^2(u \cdot \nabla \theta)\|_{L^p}.
		$$

		\underline{\bf Estimate of $u^{\mathrm{L}} \cdot \nabla \theta^{\mathrm{L}}$.}
		For the source term $u^{\mathrm{L}} \cdot \nabla \theta^{\mathrm{L}}$, one gets
		$$
		\begin{aligned}
			& \|\langle D_x, D_y+t D_x\rangle^2\left(u^{\mathrm{L}} \cdot \nabla \theta^{\mathrm{L}}\right)\|_{L^p} \\
			\lesssim &  \|\langle D_x, D_y+t D_x\rangle^3 \Delta^{-1} \omega^{\mathrm{L}}\|_{L^2}\|\langle D_x, D_y+t D_x\rangle^3 \theta^{\mathrm{L}}\|_{L^{\frac{2 p}{2-p}}} \\
			\lesssim & \langle t\rangle^{-2} \|\langle D_x, D_y+t D_x\rangle^5\langle\frac{1}{D_x}\rangle^4 \omega^{\mathrm{L}}\|_{L^2}\|\langle D_x, D_y+t D_x\rangle^5 \theta^{\mathrm{L}}\|_{L^2},
		\end{aligned}
		$$
		where we used
		\begin{align} \label{eq:trick0}
			\nabla^\perp f \cdot \nabla g = \left(\begin{array}{c}
				-(\py + t\px)f \\
				\px f
			\end{array}\right) \cdot \left(\begin{array}{c}
				\px g \\
				(\py + t\px) g
			\end{array}\right)
		\end{align}
		and
		\begin{align} \label{eq:trick}
			\begin{aligned}
				& \|\langle D_x, D_y+t D_x\rangle^3 f\|_{L^{\frac{2 p}{2-p}}} \lesssim\|\langle k, \xi+k t\rangle^3 \hat{f}_k(\xi)\|_{L_{k, \xi}^{\frac{2 p}{3 p-2}}} \\
				& \lesssim  \|\langle k, \xi+k t\rangle^5 \hat{f}_k(\xi)\|_{L_{k, \xi}^2}\|\langle k, \xi+k t\rangle^{-2}\|_{L_{k, \xi}^{p^{\prime}}} \lesssim\|\langle D_x, D_y+t D_x\rangle^5 f\|_{L^2},
			\end{aligned}
		\end{align}
		for $1<p<2$ and some sufficiently smooth $f$ and $g$.
		
		\underline{\bf Estimate of $u^{\mathrm{NL}} \cdot \nabla \theta^{\mathrm{L}}$ and $u \cdot \nabla \theta^{\mathrm{NL}}$.}
		Using \eqref{eq:est of k-l + xi - eta} and the following classical estimate of Biot-Savart law:
		\begin{equation} \label{eq:Biot-Savart law}
			\|\nabla^{\perp}(-\Delta)^{-1} f\|_{L^{\frac{2 p}{2-p}}} \lesssim\|f\|_{L^p},
		\end{equation}
		we have that
		$$
		\begin{aligned}
			& \|\langle D_x, D_y+t D_x\rangle^2\left(u^{\mathrm{NL}} \cdot \nabla \theta^{\mathrm{L}}  +u \cdot \nabla \theta^{\mathrm{NL}}\right)\|_{L^p} \\
			& \lesssim\|\langle D_x, D_y+t D_x\rangle^2 u^{\mathrm{NL}}\|_{L^{\frac{2 p}{2-p}}}\|\langle D_x, D_y+t D_x\rangle^2 \nabla \theta^{\mathrm{L}}\|_{L^2} \\
			& \quad+\|\langle D_x, D_y+t D_x\rangle^2 u\|_{L^{\frac{2 p}{2-p}}}\|\langle D_x, D_y+t D_x\rangle^2 \nabla \theta^{\mathrm{NL}}\|_{L^2} \\
			& \lesssim\langle t\rangle\left[ \|\langle D_x, D_y+t D_x\rangle^2 \omega^{\mathrm{NL}}\|_{L^p}\|\langle D_x, D_y+t D_x\rangle^3 \theta^{\mathrm{L}}\|_{L^2}\right. \\
			& \left.\quad+(\|\langle D_x, D_y+t D_x\rangle^2 \omega^{\mathrm{L}}\|_{L^p}+\|\langle D_x, D_y+t D_x\rangle^2 \omega^{\mathrm{NL}}\|_{L^p})\|\langle D_x, D_y+t D_x\rangle^3 \theta^{\mathrm{NL}}\|_{L^2}\right] .
		\end{aligned}
		$$
		
		Adding all inequalities above together completes the proof.
	\end{proof}
	Similarly, for $\omega$ and $\px \theta$, we can use the analogous methods and state the estimates as follows.
	\begin{Lem} \label{lem:temp2}
		For any $1<p<2$, if the initial data satisfies \eqref{eq:the initial data}, then one has
		$$
		\begin{aligned}
			& \frac{d}{d t}\|\langle D_x, D_y+t D_x\rangle^2 \omega^{\mathrm{NL}}\|_{L^p} \\
			&\lesssim \langle t\rangle^{-2} \|\langle D_x, D_y+t D_x\rangle^6 \langle\frac{1}{D_x}\rangle^4 \omega^{\mathrm{L}}\|_{L^2}^2 +   \|\langle D_x, D_y+t D_x\rangle^2 \px \theta^{\mathrm{NL}} \|_{L^p}  \\
			& \quad+\langle t\rangle\left[ \|\langle D_x, D_y+t D_x\rangle^2 \omega^{\mathrm{NL}}\|_{L^p}\|\langle D_x, D_y+t D_x\rangle^6 \langle\frac{1}{D_x}\rangle^4 \omega^{\mathrm{L}}\|_{L^2}\right. \\
			& \left.\quad\quad \quad+\left(\|\langle D_x, D_y+t D_x\rangle^5 \langle\frac{1}{D_x}\rangle^4 \omega^{\mathrm{L}}\|_{L^p}+\|\langle D_x, D_y+t D_x\rangle^2 \omega^{\mathrm{NL}}\|_{L^p}\right) \rt. \\
			& \lt.\qquad\qquad\qquad\times\|\langle D_x, D_y+t D_x\rangle^3 \omega^{\mathrm{NL}}\|_{L^2}\right].
		\end{aligned}
		$$
	\end{Lem}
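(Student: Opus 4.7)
The plan is to mimic the proof of Lemma~\ref{lem:temp1}, replacing $\theta^{\mathrm{NL}}$ by $\omega^{\mathrm{NL}}$ on the energy side and picking up the additional buoyancy contribution from $\partial_x\theta^{\mathrm{NL}}$. Concretely, I would apply $\langle D_x, D_y + tD_x\rangle^2$ to the equation $\eqref{eq:nonlinear part}_1$ (using that this operator commutes with $\partial_t + y\partial_x$), take the $L^p$-pairing against the usual $|\cdot|^{p-2}(\cdot)$ duality element $|\langle D_x, D_y+tD_x\rangle^2\omega^{\mathrm{NL}}|^{p-2}\langle D_x, D_y+tD_x\rangle^2\omega^{\mathrm{NL}}$, and discard the nonnegative viscosity term $-\nu\Delta$. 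Hölder's inequality on the right-hand side then yields
\[
\tfrac{d}{dt}\|\langle D_x, D_y+tD_x\rangle^2\omega^{\mathrm{NL}}\|_{L^p} \;\lesssim\; \|\langle D_x, D_y+tD_x\rangle^2(u\cdot\nabla\omega)\|_{L^p} + \|\langle D_x, D_y+tD_x\rangle^2\partial_x\theta^{\mathrm{NL}}\|_{L^p},
\]
which already contributes the clean $\|\langle D_x, D_y+tD_x\rangle^2\partial_x\theta^{\mathrm{NL}}\|_{L^p}$ term that distinguishes this statement from Lemma~\ref{lem:temp1}.

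Next I would split the nonlinearity $u\cdot\nabla\omega = u^{\mathrm{L}}\cdot\nabla\omega^{\mathrm{L}} + u^{\mathrm{NL}}\cdot\nabla\omega^{\mathrm{L}} + u\cdot\nabla\omega^{\mathrm{NL}}$ and repeat, term-by-term, the three estimates from Lemma~\ref{lem:temp1}. For the source $u^{\mathrm{L}}\cdot\nabla\omega^{\mathrm{L}}$, using the identity \eqref{eq:trick0} to swap a derivative onto the multiplier $\langle D_x, D_y+tD_x\rangle$, pairing $\nabla^\perp\Delta^{-1}\omega^{\mathrm{L}}$ in $L^2$ against $\nabla\omega^{\mathrm{L}}$ in $L^{\frac{2p}{2-p}}$, invoking the pointwise inviscid damping \eqref{eq:the point-wise inviscid damping estimate} to extract $\langle t\rangle^{-2}\langle \tfrac{1}{D_x}\rangle^4$, and applying the Sobolev-type embedding \eqref{eq:trick} yields the $\langle t\rangle^{-2}\|\langle D_x, D_y+tD_x\rangle^6\langle\tfrac{1}{D_x}\rangle^4\omega^{\mathrm{L}}\|_{L^2}^2$ contribution. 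For $u^{\mathrm{NL}}\cdot\nabla\omega^{\mathrm{L}}$ and $u\cdot\nabla\omega^{\mathrm{NL}}$, Hölder in $L^{\frac{2p}{2-p}}\cdot L^2$ combined with the Biot-Savart bound \eqref{eq:Biot-Savart law} and the growth estimate \eqref{eq:est of k-l + xi - eta} (which produces the factor $\langle t\rangle$) leads to the $\langle t\rangle[\cdots]$ terms in the stated bound.

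The only real adjustment compared to Lemma~\ref{lem:temp1} is bookkeeping: in one factor we need the $L^p$-norm of the low-frequency part of $\omega^{\mathrm{L}}$, and in the other we need the $L^2$-norm of the high-frequency $\omega^{\mathrm{NL}}$, matching the structure
\[
\bigl(\|\langle D_x, D_y+tD_x\rangle^5\langle\tfrac{1}{D_x}\rangle^4\omega^{\mathrm{L}}\|_{L^p} + \|\langle D_x, D_y+tD_x\rangle^2\omega^{\mathrm{NL}}\|_{L^p}\bigr)\|\langle D_x, D_y+tD_x\rangle^3\omega^{\mathrm{NL}}\|_{L^2}
\]
as well as the mixed term $\|\langle D_x, D_y+tD_x\rangle^2\omega^{\mathrm{NL}}\|_{L^p}\|\langle D_x, D_y+tD_x\rangle^6\langle\tfrac{1}{D_x}\rangle^4\omega^{\mathrm{L}}\|_{L^2}$. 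The main (minor) obstacle is ensuring that the $\langle\tfrac{1}{D_x}\rangle^4$ weight, needed to absorb the inviscid-damping factor $|l|^{-4}$ from $\Delta^{-1}$ at low horizontal frequency, attaches to the correct factor in each split; the assumption \eqref{eq:the initial data} on the initial data was precisely designed to allow this. Summing the four contributions and comparing with the statement completes the argument.
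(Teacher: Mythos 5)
Your proposal is correct and follows exactly the route the paper intends: the paper itself omits the proof of this lemma, stating only that it follows by "analogous methods" to Lemma~\ref{lem:temp1}, and your argument is precisely that adaptation — the same $L^p$ duality pairing, the same three-way splitting of the nonlinearity with \eqref{eq:trick0}, \eqref{eq:trick}, \eqref{eq:the point-wise inviscid damping estimate}, \eqref{eq:Biot-Savart law} and \eqref{eq:est of k-l + xi - eta}, plus the extra $\|\langle D_x, D_y+tD_x\rangle^2\partial_x\theta^{\mathrm{NL}}\|_{L^p}$ term from the buoyancy forcing in $\eqref{eq:nonlinear part}_1$.
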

	
	\begin{Lem} \label{lem:temp3}
		For any $1<p<2$, if the initial data satisfies \eqref{eq:the initial data}, then one has
		$$
		\begin{aligned}
			& \frac{d}{d t}\|\langle D_x, D_y + t D_x\rangle^2 \px \theta^{\mathrm{NL}}\|_{L^p}  \\ &\lesssim \langle t\rangle^{-2} \|\langle D_x, D_y+t D_x\rangle^6\langle\frac{1}{D_x}\rangle^4 \omega^{\mathrm{L}}\|_{L^2}                 \|\langle D_x, D_y+t D_x\rangle^6\langle\frac{1}{D_x}\rangle^4 \theta^{\mathrm{L}}\|_{L^2} \\
			&\quad +\langle t \rangle\left[  \|\langle D_x, D_y+t D_x\rangle^6\langle\frac{1}{D_x}\rangle^4 \theta^{\mathrm{L}}\|_{L^2} \rt.\\
			&\lt. \qquad \qquad  \times \left(\|\langle D_x, D_y+t D_x\rangle^2 \omega^{\mathrm{NL}}\|_{L^p}  + \|\langle D_x, D_y+t D_x\rangle^3 \omega^{\mathrm{NL}}\|_{L^2} \right)  \rt.\\
			&\lt.\quad\quad \quad  + (   \|\langle D_x, D_y+t D_x\rangle^5 \langle\frac{1}{D_x}\rangle^4 \omega^{\mathrm{L}}\|_{L^p} +  \|\langle D_x, D_y+t D_x\rangle^6\langle\frac{1}{D_x}\rangle^4 \omega^{\mathrm{L}}\|_{L^2} \rt. \\
			&\lt. \quad\quad \qquad + \|\langle D_x, D_y+t D_x\rangle^3 \omega^{\mathrm{NL}}\|_{L^2} + \|\langle D_x, D_y+t D_x\rangle^2 \omega^{\mathrm{NL}}\|_{L^p}  ) \rt.\\
			&\lt. \qquad \qquad \times \|\langle D_x, D_y+t D_x\rangle^3 \langle\px\rangle \theta^{\mathrm{NL}}\|_{L^2}  \rt] .
		\end{aligned}
		$$
	\end{Lem}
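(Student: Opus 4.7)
The plan is to follow the blueprint of Lemmas \ref{lem:temp1} and \ref{lem:temp2}, now propagating one extra horizontal derivative. First I apply $\langle D_x, D_y+tD_x\rangle^2\partial_x$ to $\eqref{eq:nonlinear part}_2$ and take the $L^p$ duality pairing with $|\langle D_x, D_y+tD_x\rangle^2\partial_x\theta^{\mathrm{NL}}|^{p-2}\langle D_x, D_y+tD_x\rangle^2\partial_x\theta^{\mathrm{NL}}$. Because $\langle D_x, D_y+tD_x\rangle^2\partial_x$ commutes with $\partial_t+y\partial_x$ and the dissipation gives a nonnegative contribution, Hölder's inequality reduces the problem to proving
\begin{equation*}
\|\langle D_x, D_y+tD_x\rangle^2\partial_x[(u^{\mathrm{L}}+u^{\mathrm{NL}})\cdot\nabla(\theta^{\mathrm{L}}+\theta^{\mathrm{NL}})]\|_{L^p}\lesssim (\text{RHS of the lemma}).
\end{equation*}
I split the nonlinearity into the four pieces $u^{\mathrm{L}}\cdot\nabla\theta^{\mathrm{L}}$, $u^{\mathrm{NL}}\cdot\nabla\theta^{\mathrm{L}}$, $u^{\mathrm{L}}\cdot\nabla\theta^{\mathrm{NL}}$, $u^{\mathrm{NL}}\cdot\nabla\theta^{\mathrm{NL}}$, and use the representation \eqref{eq:trick0} to trade the $\partial_x$ derivative for either a factor on $u$ or on $\theta$.

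For the \textbf{source term} $\partial_x(u^{\mathrm{L}}\cdot\nabla\theta^{\mathrm{L}})$ I factor out one $\nabla^\perp\Delta^{-1}$ on $\omega^{\mathrm{L}}$; the pointwise inviscid damping estimate \eqref{eq:the point-wise inviscid damping estimate} yields the $\langle t\rangle^{-2}$ decay, and the extra $\partial_x$ is absorbed by the $\langle\tfrac{1}{D_x}\rangle^4$ weight attached to $\omega^{\mathrm{L}}$, leading to the $\langle t\rangle^{-2}\|\cdots\omega^{\mathrm{L}}\|_{L^2}\|\cdots\theta^{\mathrm{L}}\|_{L^2}$ contribution. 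For the \textbf{reaction term} $\partial_x(u^{\mathrm{NL}}\cdot\nabla\theta^{\mathrm{L}})$ (as well as $\partial_x(u^{\mathrm{NL}}\cdot\nabla\theta^{\mathrm{NL}})$), the extra $\partial_x$ on $u^{\mathrm{NL}}=\nabla^\perp\Delta^{-1}\omega^{\mathrm{NL}}$ reduces to a Riesz transform, which is $L^{\frac{2p}{2-p}}$-bounded after applying the Biot--Savart bound \eqref{eq:Biot-Savart law}; the embedding \eqref{eq:trick} then converts $L^2$-based weights into $L^p$-based ones and produces the $\langle t\rangle$ factor, yielding the terms containing $\|\langle\cdots\rangle^2\omega^{\mathrm{NL}}\|_{L^p}$ and $\|\langle\cdots\rangle^3\omega^{\mathrm{NL}}\|_{L^2}$ paired against $\|\cdots\langle\tfrac{1}{D_x}\rangle^4\theta^{\mathrm{L}}\|_{L^2}$ or $\|\langle\cdots\rangle^3\langle\partial_x\rangle\theta^{\mathrm{NL}}\|_{L^2}$.

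The \textbf{main obstacle} is the mixed term $\partial_x u^{\mathrm{L}}\cdot\nabla\theta^{\mathrm{NL}}$: one cannot move the $\partial_x$ onto $\theta^{\mathrm{NL}}$ via a commutator, and the naive bound would lose inviscid damping. Here I crucially use the weight $\langle\tfrac{1}{D_x}\rangle^4$ on the linear vorticity to write $\partial_x\nabla^\perp\Delta^{-1}\omega^{\mathrm{L}}$ with two powers of $\langle\tfrac{1}{D_x}\rangle$ still to spare, which via \eqref{eq:trick} (in its $L^p$ form) is estimated in $L^{\frac{2p}{2-p}}$ by $\|\langle D_x, D_y+tD_x\rangle^5\langle\tfrac{1}{D_x}\rangle^4\omega^{\mathrm{L}}\|_{L^p}$, and then pair it with $\|\langle D_x, D_y+tD_x\rangle^3\langle\partial_x\rangle\theta^{\mathrm{NL}}\|_{L^2}$ (the $\langle\partial_x\rangle$ encoding the $\nabla\theta^{\mathrm{NL}}$). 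The factor $\langle t\rangle$ comes from \eqref{eq:est of k-l + xi - eta}.

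After collecting the four estimates and dropping the $\|\langle D_x, D_y+tD_x\rangle^2\partial_x\theta^{\mathrm{NL}}\|_{L^p}^{p-1}$ prefactor from the duality pairing, the desired differential inequality follows. The remaining step is purely bookkeeping: verifying that every weight that appears on the right-hand side of the claim matches the weights that naturally emerge from the Biot--Savart estimate, the inviscid damping \eqref{eq:the point-wise inviscid damping estimate}, and the embedding \eqref{eq:trick}.
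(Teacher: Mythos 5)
Your proposal follows essentially the same route as the paper: reduce via the $L^p$ duality pairing to bounding $\|\langle D_x,D_y+tD_x\rangle^2\partial_x(u\cdot\nabla\theta)\|_{L^p}$, split $\partial_x$ by Leibniz onto $u$ or $\theta$, use the pointwise inviscid damping with the $\langle\frac{1}{D_x}\rangle^4$ weight for the $\partial_x u^{\mathrm{L}}\cdot\nabla\theta^{\mathrm{L}}$ source, the Riesz-transform/Biot--Savart bound \eqref{eq:Biot-Savart law} with \eqref{eq:temp.4} for the $\partial_x u^{\mathrm{NL}}$ pieces, and the $L^{\frac{2p}{2-p}}$ estimate of $\partial_x u^{\mathrm{L}}$ paired with $\nabla\theta^{\mathrm{NL}}$ in $L^2$ for the mixed term. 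This matches the paper's proof of Lemma~\ref{lem:temp3}, so the argument is correct as proposed.
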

	\begin{proof}
		By $\eqref{eq:nonlinear part}_2$, we have
		$$
		\begin{aligned}
			& \frac1p \frac{d}{ d t}\|\langle D_x, D_y+t D_x\rangle^2 \px \theta^{\mathrm{NL}}\|_{L^p}^p \\
			\leq & -\Re\lt(\langle D_x, D_y+t D_x\rangle^2 \px \lt(u \cdot \nabla \theta\rt)   \Big| \lt|\langle D_x, D_y+t D_x\rangle^2  \px \theta^{\mathrm{NL}}\rt|^{p-2}\langle D_x, D_y+t D_x\rangle^2 \px \theta^{\mathrm{NL}}\right),
		\end{aligned}
		$$
		which implies
		$$
		\frac{d}{d t}\|\langle D_x, D_y+t D_x\rangle^2 \px \theta^{\mathrm{NL}}\|_{L^p} \lesssim \|\langle D_x, D_y+t D_x\rangle^2 \px (u \cdot \nabla \theta)\|_{L^p}.
		$$
		
		\underline{\bf Estimate of $u \cdot \nabla \px \theta$.} Following a similar reasoning to that in Lemma \ref{lem:temp1}, with appropriate adjustments, we arrive at
		$$
		\begin{aligned}
			& \|\langle D_x, D_y+t D_x\rangle^2\left(u  \cdot \nabla \px \theta \right)\|_{L^p} \\
			&\lesssim   \langle t\rangle^{-2}  \|\langle D_x, D_y+t D_x\rangle^6 \langle\frac{1}{D_x}\rangle^4 \omega^{\mathrm{L}}\|_{L^2}  \|\langle D_x, D_y+t D_x\rangle^6 \langle\frac{1}{D_x}\rangle^4  \theta^{\mathrm{L}}\|_{L^2} \\
			& \quad+\langle t\rangle\left[ \|\langle D_x, D_y+t D_x\rangle^2 \omega^{\mathrm{NL}}\|_{L^p}\|\langle D_x, D_y+t D_x\rangle^6 \langle\frac{1}{D_x}\rangle^4  \theta^{\mathrm{L}}\|_{L^2}\right. \\
			& \left.\qquad \quad+(\|\langle D_x, D_y+t D_x\rangle^5 \langle\frac{1}{D_x}\rangle^4 \omega^{\mathrm{L}}\|_{L^p}+\|\langle D_x, D_y+t D_x\rangle^2 \omega^{\mathrm{NL}}\|_{L^p})\rt. \\
			&\lt. \qquad \qquad\times \|\langle D_x, D_y+t D_x\rangle^3\px \theta^{\mathrm{NL}}\|_{L^2} \right].
		\end{aligned}
		$$
		
		\underline{\bf Estimate of $ \px u^{\mathrm{L}} \cdot \nabla \theta^{\mathrm{L}}$.}
		By \eqref{eq:trick0} and \eqref{eq:trick}, one has
		$$
		\begin{aligned}
			& \|\langle D_x, D_y+t D_x\rangle^2\left( \px u^{\mathrm{L}} \cdot \nabla  \theta^{\mathrm{L}}\right)\|_{L^p} \\
			&\lesssim  \|\langle D_x, D_y+t D_x\rangle^4 \Delta^{-1} \omega^{\mathrm{L}}\|_{L^2}\|\langle D_x, D_y+t D_x\rangle^3  \theta^{\mathrm{L}}\|_{L^{\frac{2 p}{2-p}}} \\
			&\lesssim  \langle t\rangle^{-2}\|\langle D_x, D_y+t D_x\rangle^6 \langle\frac{1}{D_x}\rangle^4 \omega^{\mathrm{L}}\|_{L^2}\|\langle D_x, D_y+t D_x\rangle^5  \theta^{\mathrm{L}}\|_{L^2}.
		\end{aligned}
		$$
		
		\underline{\bf Estimate of $ \px u^{\mathrm{NL}} \cdot \nabla \theta $ and $\px u^{\mathrm{L}} \cdot \nabla \theta ^{\mathrm{NL}} $.}
		Thank to \eqref{eq:trick}, \eqref{eq:Biot-Savart law}, \eqref{eq:est of k-l + xi - eta} and \eqref{eq:temp.4}, we have
		$$
		\begin{aligned}
			& \|\langle D_x, D_y+t D_x\rangle^2\left( \px u^{\mathrm{NL}} \cdot \nabla  \theta^{\mathrm{L}}+ \px u^{\mathrm{L}} \cdot \nabla   \theta^{\mathrm{NL}}\right)\|_{L^p} \\
			& \lesssim\|\langle D_x, D_y+t D_x\rangle^2 \px u^{\mathrm{NL}}\|_{L^2}\|\langle D_x, D_y+t D_x\rangle^2 \nabla  \theta^{\mathrm{L}}\|_{L^\frac{2 p}{2-p}} \\
			& \quad+\|\langle D_x, D_y+t D_x\rangle^2 \px u^{\mathrm{L}}\|_{L^{\frac{2 p}{2-p}}}\|\langle D_x, D_y+t D_x\rangle^2 \nabla  \theta^{\mathrm{NL}}\|_{L^2} \\
			& \lesssim  \langle t\rangle \left(  \|\langle D_x, D_y+t D_x\rangle^2 \omega^{\mathrm{NL}}\|_{L^2}\|\langle D_x, D_y+t D_x\rangle^5 \theta^{\mathrm{L}}\|_{L^2}\right. \\
			& \left.\quad+ \|\langle D_x, D_y+t D_x\rangle^3 \omega^{\mathrm{L}}\|_{L^p}   \|\langle D_x, D_y+t D_x\rangle^3  \theta^{\mathrm{NL}}\|_{L^2}\right) 
		\end{aligned}
		$$
		and
		$$
		\begin{aligned}
			& \|\langle D_x, D_y+t D_x\rangle^2\left( \px u^{\mathrm{NL}} \cdot \nabla  \theta^{\mathrm{NL}} \right)\|_{L^p} \\
			& \lesssim\|\langle D_x, D_y+t D_x\rangle \px u^{\mathrm{NL}}\|_{L^\frac{2 p}{2-p}}\|\langle D_x, D_y+t D_x\rangle^2 \nabla  \theta^{\mathrm{NL}}\|_{L^2} \\
			& \quad+\|\langle D_x, D_y+t D_x\rangle^2 \px u^{\mathrm{NL}}\|_{L^2}\| \nabla  \theta^{\mathrm{NL}}\|_{L^\frac{2 p}{2-p}} \\
			& \lesssim  \langle t\rangle (  \|\langle D_x, D_y+t D_x\rangle^2 \omega^{\mathrm{NL}}\|_{L^p}+ \|\langle D_x, D_y+t D_x\rangle^2 \omega^{\mathrm{NL}}\|_{L^2} ) \|\langle D_x, D_y+t D_x\rangle^3 \theta^{\mathrm{NL}}\|_{L^2}.
		\end{aligned}
		$$
		
		In all, the proof is completed.
	\end{proof}

	To conclude this section, we present the proof of Proposition \ref{cor}.
	
	\begin{proof}[Proof of Proposition \ref{cor}] For $t\leq T_0=\nu^{-\frac16}$, we get by Lemma \ref{lem:est of omega L2} that
		\begin{equation*}
			\begin{aligned}
				& \|\langle D_x, D_y+t D_x\rangle^3 \omega^{\mathrm{NL}}\|_{L^\infty L^2}\\ &\lesssim  \nu^{\frac23 + 2\delta}  +    \nu^{-\frac16} \|\langle D_x, D_y+t D_x\rangle^3 \px \theta^{\mathrm{NL}}\|_{L^\infty L^2}    \\
				& \quad + \nu^{-\frac13 -\frac\delta6 }(  \nu^{\frac13 + \delta} +\|\langle D_x, D_y+t D_x\rangle^3 \omega^{\mathrm{NL}}\|_{L^\infty L^2}) \\
				&\qquad \times 
				(\|\omega^{\mathrm{NL}}\|_{L^\infty L^p}+\|\langle D_x, D_y+t D_x\rangle^3 \omega^{\mathrm{NL}}\|_{L^\infty L^2}) ,
			\end{aligned}
		\end{equation*}
		by Lemma \ref{eq:est of theta NL short time} that
		$$
		\begin{aligned}
			& \|\langle D_x, D_y+t D_x\rangle^3 \theta^{\mathrm{NL}}\|_{L^\infty L^2}^2 + \nu \|\langle D_x, D_y+t D_x\rangle^3 \nabla \theta^{\mathrm{NL}}\|_{L^2 L^2}^2 \\ &\lesssim\|\langle D_x, D_y+t D_x\rangle^3 \theta^{\mathrm{NL}}\|_{L^\infty L^2} \lt[ \nu^{1 + 3\delta} \rt. \\
			& \lt. \qquad
			+ \nu^{\frac13 + \frac{11}{6}\delta} (\|\omega^{\mathrm{NL}}\|_{L^\infty L^p}+\|\langle D_x, D_y+t D_x\rangle^3 \omega^{\mathrm{NL}}\|_{L^\infty L^2}) 
			\rt.  \\
			& \left.\quad \quad  + \nu^{-\frac13 -\frac\delta6 } \|\langle D_x, D_y+t D_x\rangle^3 \theta^{\mathrm{NL}}\|_{L^\infty L^2}(\nu^{\frac13 + \delta}  + \|\langle D_x, D_y+t D_x\rangle^3 \omega^{\mathrm{NL}}\|_{L^\infty L^2} ) \rt],
		\end{aligned}
		$$
		by Lemma \ref{lem:est of pxtheta L2} that
		\begin{equation*}\begin{aligned} 
				&  \|\langle D_x, D_y+t D_x\rangle^3 \px \theta^{\mathrm{NL}}\|_{L^\infty L^2} \\
				&\lesssim  \nu^{\frac13 + \delta} \lt( \nu^{\frac23 + 2\delta} + \nu^{-\frac13 -\frac\delta6 } \|\langle D_x, D_y+t D_x\rangle^3 \px \theta^{\mathrm{NL}}\|_{L^\infty L^2}  \rt.\\
				&\lt. \qquad \quad +   \nu^{-\frac16}\|\langle D_x, D_y+t D_x\rangle^3 \theta^{\mathrm{NL}}\|_{L^\infty L^2}  +   \|\langle D_x, D_y+t D_x\rangle^3 \nabla \theta^{\mathrm{NL}}\|_{L^2 L^2} \rt)   \\
				& \quad + \nu^{\frac13 + \frac{11}{6}\delta}  \lt(  \|\omega^{\mathrm{NL}}\|_{L^\infty L^p}   + \|\langle D_x, D_y+t D_x\rangle^3 \omega^{\mathrm{NL}}\|_{L^\infty L^2} \rt)  \\
				& \quad  +   \|\langle D_x, D_y+t D_x\rangle^3 \omega^{\mathrm{NL}}\|_{L^\infty L^2} \lt( \nu^{-\frac13 -\frac\delta6 } \|\langle D_x, D_y+t D_x\rangle^3 \px \theta^{\mathrm{NL}}\|_{L^\infty L^2}   \rt. \\
				&\lt.\qquad \qquad \quad + \nu^{-\frac13}  \|\langle D_x, D_y+t D_x\rangle^3 \theta^{\mathrm{NL}}\|_{L^\infty L^2} + \nu^{-\frac{1}{12}} \|\langle D_x, D_y+t D_x\rangle^3 \nabla \theta^{\mathrm{NL}}\|_{L^2 L^2} \rt) ,
		\end{aligned}\end{equation*}
		by Lemma \ref{lem:temp2} that
		$$
		\begin{aligned}
			&\|\langle D_x, D_y+t D_x\rangle^2 \omega^{\mathrm{NL}}\|_{L^\infty L^p} \\
			&\lesssim  \nu^{\frac23 + 2\delta} +   \nu^{-\frac16}\|\langle D_x, D_y+t D_x\rangle^2 \px \theta^{\mathrm{NL}} \|_{L^\infty L^p}  \\
			& \quad+\nu^{-\frac13} \left[   \nu^{\frac13+ \delta}  \|\langle D_x, D_y+t D_x\rangle^2 \omega^{\mathrm{NL}}\|_{L^\infty L^p} \right. \\
			& \left.\qquad \qquad +(\nu^{\frac13+ \delta} +\|\langle D_x, D_y+t D_x\rangle^2 \omega^{\mathrm{NL}}\|_{L^\infty L^p})\|\langle D_x, D_y+t D_x\rangle^3 \omega^{\mathrm{NL}}\|_{L^\infty L^2}\right]\\
			&\quad +\nu^{-\frac16}\|\langle D_x, D_y+t D_x\rangle^3 \partial_{x}\theta^{\mathrm{NL}}\|_{L^\infty L^p},
		\end{aligned}
		$$
		by Lemma \ref{lem:temp1} that
		$$
		\begin{aligned}
			&\|\langle D_x, D_y+t D_x\rangle^2 \theta^{\mathrm{NL}}\|_{L^\infty L^p} \\
			& \lesssim  \nu^{1 + 3\delta} 
			+  \nu^{-\frac13}  \left[   \nu^{\frac23 + 2 \delta}\|\langle D_x, D_y+t D_x\rangle^2 \omega^{\mathrm{NL}}\|_{L^\infty L^p}  \right. \\
			& \left. \qquad \qquad \quad+(\nu^{\frac13 + \delta}+\|\langle D_x, D_y+t D_x\rangle^2 \omega^{\mathrm{NL}}\|_{L^\infty L^p})\|\langle D_x, D_y+t D_x\rangle^3 \theta^{\mathrm{NL}}\|_{L^\infty L^2}\right],
		\end{aligned}
		$$
		and by Lemma \ref{lem:temp3} that
		$$
		\begin{aligned}
			& \|\langle D_x, D_y + t D_x\rangle^2 \px \theta^{\mathrm{NL}}\|_{L^\infty L^p}  \\ &\lesssim \nu^{1 + 3\delta}  +  \nu^{-\frac13}\lt[  \nu^{\frac23 + 2\delta} (\|\langle D_x, D_y+t D_x\rangle^2 \omega^{\mathrm{NL}}\|_{L^\infty L^p}  + \|\langle D_x, D_y+t D_x\rangle^3 \omega^{\mathrm{NL}}\|_{L^\infty L^2} )  \rt.\\
			&\lt. \qquad \qquad + (   \nu^{\frac13 +\delta} + \|\langle D_x, D_y+t D_x\rangle^3 \omega^{\mathrm{NL}}\|_{L^\infty L^2} + \|\langle D_x, D_y+t D_x\rangle^2 \omega^{\mathrm{NL}}\|_{L^\infty L^p}  ) \rt. \\
			&\lt. \qquad \qquad \quad\quad \times \|\langle D_x, D_y+t D_x\rangle^3 \langle\px\rangle \theta^{\mathrm{NL}}\|_{L^\infty L^2}  \rt] .
		\end{aligned}
		$$
		By bootstrap arguments, there exist a small enough $0< \nu_1<1$ such that if $0<\nu<\nu_1$, \eqref{eq:temp7} holds. The proof is completed.
	\end{proof}

	\section{Estimate of nonlinear part in long time scale $t\geq  T_0 = \nu^{-\frac16}$} \label{sec.5}
	In the last section, we will do energy estimates with the multiplier $\langle\frac{1}{D_x}\rangle^\epsilon$ and the Fourier multiplier $\mathcal{M}$ we construct in Section \ref{Sec.2}. The multiplier $\langle\frac{1}{D_x}\rangle^\epsilon$ helps to handle singularities, while the multiplier $\mathcal{M}$ is devoted to extracting the stabilizing mechanisms and dealing with echo cascades.
	
	\subsection{Estimate of $\theta^{\mathrm{NL}}$}
	\begin{Prop} \label{lem:est of thetanl}
		There exist $\frac{1-\delta}{2}<\epsilon<\frac{1}{2}$, $0<\kappa<\frac{2\epsilon}{1-\delta}-1$ and $c<c(\delta,\epsilon)$ small enough such that if the initial data $(w_{in}, \theta_{in})$ satisfies \eqref{eq:the initial data}, then for any $  T \geq T_{0} = \nu^{-\frac{1}{6}} $, it holds that
		\footnotesize{\begin{align*} 
				& \| e^{c \nu^{\frac{1}{3}} \lambda\left(D_x\right) t}\langle D_x\rangle\langle\frac{1}{D_x}\rangle^\epsilon \langle D_x \rangle^\frac13  \theta^{\mathrm{NL}}\|_{L_{[T_0, T]}^\infty L^2}^2+\nu\|e^{c \nu^{\frac{1}{3}} \lambda\left(D_x\right) t}\langle D_x\rangle\langle\frac{1}{D_x}\rangle^\epsilon \nabla \langle D_x \rangle^\frac13  \theta^{\mathrm{NL}}\|_{L_{[T_0, T]}^2 L^2}^2 \\
				& \quad+ \nu^{\frac{1}{3}}\|e^{c \nu^{\frac{1}{3}} \lambda\left(D_x\right) t}\langle D_x\rangle\langle\frac{1}{D_x}\rangle^\epsilon\left|D_x\right|^{\frac{1}{3}}  \langle D_x \rangle ^\frac13 \theta^{\mathrm{NL}}\|_{L_{[T_0, T]}^2 L^2}^2  
				\\
				& \quad+\|e^{c \nu^{\frac{1}{3}} \lambda\left(D_x\right) t}\langle D_x\rangle\langle\frac{1}{D_x}\rangle^\epsilon \sqrt{\Upsilon\left(t, D_x, D_y\right)} \langle D_x \rangle^\frac13  \theta^{\mathrm{NL}}\|_{L_{[T_0, T]}^2 L^2}^2 \\
				\lesssim &  \| e^{c \nu^{\frac{1}{3}} \lambda\left(D_x\right) t}\langle D_x\rangle\langle\frac{1}{D_x}\rangle^\epsilon \langle D_x \rangle^\frac13  \theta^{\mathrm{NL}}(T_0)\|_{L^2}^2 \\	&  +\nu^\frac16 \|e^{c \nu^{\frac{1}{3}} \lambda\left(D_x\right) t}\langle D_x\rangle  \langle \frac{1}{D_x}\rangle^\epsilon \langle D_x \rangle^\frac13  \theta^{\mathrm{NL}}\|_{L_{[T_0, T]}^\infty L^2}    \|e^{c \nu^{\frac{1}{3}} \lambda\left(D_x\right) t}\langle D_x, D_y + tD_x\rangle^5\langle\frac{1}{D_x}\rangle^4 \omega^{\mathrm{L}} \|_{L_{[T_0, T]}^\infty L^2} \\
				&\quad \times  \|e^{c \nu^{\frac{1}{3}} \lambda\left(D_x\right) t}\langle D_x, D_y + tD_x\rangle^5\langle\frac{1}{D_x}\rangle^4 \langle D_x \rangle^\frac13  \theta^{\mathrm{L}} \|_{L_{[T_0, T]}^\infty L^2} \\
				&+ \nu^\frac14 \|e^{c \nu^{\frac{1}{3}} \lambda\left(D_x\right) t}\langle D_x\rangle  \langle \frac{1}{D_x}\rangle^\epsilon \langle D_x \rangle^\frac13  \theta^{\mathrm{NL}}\|_{L_{[T_0, T]}^\infty L^2}    \|e^{c \nu^{\frac{1}{3}} \lambda\left(D_x\right) t}\langle D_x, D_y + tD_x\rangle^5\langle\frac{1}{D_x}\rangle^4 \omega^{\mathrm{L}} \|_{L_{[T_0, T]}^\infty L^2} \\
				&\quad \times \|e^{c \nu^{\frac{1}{3}} \lambda\left(D_x\right) t} \nabla \langle D_x \rangle^\frac13  \theta^{\mathrm{NL}} \|_{L_{[T_0, T]}^2 L^2} \\
				& +    \|e^{c \nu^{\frac{1}{3}} \lambda\left(D_x\right) t}\langle D_x, D_y+t D_x\rangle^5\langle\frac{1}{D_x}\rangle^4 \omega^{\mathrm{L}}\|_{L_{[T_0, T]}^\infty L^2} \\
				& \quad \times ( \nu^\frac1{12} \|e^{c \nu^{\frac{1}{3}} \lambda\left(D_x\right) t}\langle D_x\rangle\langle\frac{1}{D_x}\rangle^\epsilon \langle D_x \rangle^\frac13 \theta^{\mathrm{NL}}\|_{L_{[T_0, T]}^\infty L^2}  \| e^{c \nu^{\frac{1}{3}} \lambda\left(D_x\right) t} \langle D_x \rangle \langle\frac{1}{D_x}\rangle^\epsilon
				\left|D_x\right|^{\frac{1}{3}}  \langle D_x \rangle ^\frac13 \theta^{\mathrm{NL}} \|_{L_{[T_0, T]}^2 L^2} \\
				& \qquad + \nu^\frac16  \|e^{c \nu^{\frac{1}{3}} \lambda\left(D_x\right) t}\langle D_x\rangle\langle\frac{1}{D_x}\rangle^\epsilon\left|D_x\right|^{\frac{1}{3}}  \langle D_x \rangle^\frac13  \theta^{\mathrm{NL}}\|_{L_{[T_0, T]}^2 L^2}^{\frac{3}{2}}   \|e^{c \nu^{\frac{1}{3}} \lambda\left(D_x\right) t}\langle D_x\rangle\langle\frac{1}{D_x}\rangle^\epsilon \nabla \langle D_x \rangle^\frac13  \theta^{\mathrm{NL}}\|_{L_{[T_0, T]}^2 L^2}^{\frac{1}{2}})\\
				&+ ( \|e^{c \nu^{\frac{1}{3}} \lambda\left(D_x\right) t}\langle D_x\rangle  \langle\frac{1}{D_x}\rangle^\epsilon \langle D_x \rangle^\frac13  \theta^{\mathrm{NL}}\|_{L_{[T_0, T]}^\infty L^2}\|e^{c \nu^{\frac{1}{3}} \lambda\left(D_x\right) t}\langle D_x\rangle \langle\frac{1}{D_x}\rangle^\epsilon \partial_x \nabla \phi^{\mathrm{NL}}\|_{L_{[T_0, T]}^2 L^2}  \\
				& \quad + \|e^{c \nu^{\frac{1}{3}} \lambda\left(D_x\right) t}\langle D_x\rangle    \langle\frac{1}{D_x}\rangle^\epsilon    \left|D_x\right|^{\frac{1}{3}} \langle D_x \rangle^\frac13  \theta^{\mathrm{NL}}\|_{L_{[T_0, T]}^2 L^2}\|e^{c \nu^{\frac{1}{3}} \lambda\left(D_x\right) t}\langle D_x\rangle\langle\frac{1}{D_x}\rangle^\epsilon \omega^{\mathrm{NL}}\|_{L_{[T_0, T]}^\infty L^2} ) \\
				& \qquad \times\|e^{c \nu^{\frac{1}{3}} \lambda\left(D_x\right) t}\langle D_x, D_y+t D_x\rangle^5\langle\frac{1}{D_x}\rangle^4\left|D_x\right|^{\frac{1}{3}} \langle D_x \rangle^\frac13  \theta^{\mathrm{L}}\|_{L_{[T_0, T]}^2 L^2} \\
				&  + \nu^{-\frac{1}{3}-\frac{\delta}{6}}\|e^{c \nu^{\frac{1}{3}} \lambda\left(D_x\right)}\langle D_x\rangle\langle\frac{1}{D_x}\rangle^\epsilon \partial_x \nabla \phi^{\mathrm{NL}}\|_{L_{[T_0, T]}^2 L^2}\|e^{c_0 \nu\left|D_x\right|^2 t^3}\langle D_x, D_y+t D_x\rangle^5\langle\frac{1}{D_x}\rangle^4 \theta^{\mathrm{L}}\|_{L_{[T_0, T]}^\infty (L^1 \cap L^2)} \\
				&\quad  \times(\|e^{c \nu^{\frac{1}{3}} \lambda\left(D_x\right) t}\langle D_x\rangle\langle\frac{1}{D_x}\rangle^\epsilon \sqrt{\Upsilon} \langle D_x \rangle^\frac13  \theta^{\mathrm{NL}}\|_{L_{[T_0, T]}^2 L^2}+\nu^{\frac{1}{6}}\|e^{c \nu^{\frac{1}{3}} \lambda\left(D_x\right) t}\langle D_x\rangle\langle\frac{1}{D_x}\rangle^\epsilon |D_x|^\frac13 \langle D_x \rangle^{\frac{1}{3}} \theta^{\mathrm{NL}}\|_{L_{[T_0, T]}^2 L^2}) \\
				&+\|e^{c \nu^{\frac{1}{3}} \lambda\left(D_x\right) t}\langle D_x\rangle    \langle \frac{1}{D_x}\rangle^\epsilon  \left|D_x\right|^{\frac{1}{3}}\langle D_x \rangle^\frac13 \theta^{\mathrm{NL}}\|_{L_{[T_0, T]}^2 L^2}^\frac32 \|e^{c \nu^{\frac{1}{3}} \lambda\left(D_x\right) t}\langle D_x\rangle\langle\frac{1}{D_x}\rangle^\epsilon \omega^{\mathrm{NL}}\|_{L_{[T_0, T]}^\infty L^2} \\
				& \quad \times\|e^{c \nu^{\frac{1}{3}} \lambda\left(D_x\right) t}\langle D_x\rangle\langle\frac{1}{D_x}\rangle^\epsilon \nabla  \langle D_x \rangle^{\frac{1}{3}} \theta^{\mathrm{NL}}\|_{L_{[T_0, T]}^2 L^2}^\frac12 \\
				&+ \|e^{c \nu^{\frac{1}{3}} \lambda\left(D_x\right) t}\langle D_x\rangle    \langle \frac{1}{D_x}\rangle^\epsilon  \langle D_x \rangle^{\frac{1}{3}} \theta^{\mathrm{NL}}\|_{L_{[T_0, T]}^\infty L^2} \|e^{c \nu^{\frac{1}{3}} \lambda\left(D_x\right) t}\langle D_x\rangle\langle\frac{1}{D_x}\rangle^\epsilon \px \nabla \phi^{\mathrm{NL}}\|_{L_{[T_0, T]}^2 L^2} \\
				& \quad \times\|e^{c \nu^{\frac{1}{3}} \lambda\left(D_x\right) t}\langle D_x\rangle\langle\frac{1}{D_x}\rangle^\epsilon \nabla \langle D_x \rangle^{\frac{1}{3}} \theta^{\mathrm{NL}}\|_{L_{[T_0, T]}^2 L^2} \\
				&+\|e^{c \nu^{\frac{1}{3}} \lambda\left(D_x\right) t}\langle D_x\rangle    \langle \frac{1}{D_x}\rangle^\epsilon  \left|D_x\right|^{\frac{1}{3}} \langle D_x \rangle^{\frac{1}{3}} \theta^{\mathrm{NL}}\|_{L_{[T_0, T]}^2 L^2}^2 \|e^{c \nu^{\frac{1}{3}} \lambda\left(D_x\right) t}\langle D_x\rangle\langle\frac{1}{D_x}\rangle^\epsilon \omega^{\mathrm{NL}}\|_{L_{[T_0, T]}^\infty L^2}  
				.
		\end{align*} }
	\end{Prop}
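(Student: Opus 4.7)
The plan is to run a weighted $L^2$ energy estimate on equation \eqref{eq:nonlinear part}$_2$. Introduce the composite symbol
$$W(t,D_x,D_y) := \mathcal{M}(t,D_x,D_y)\,e^{2c\nu^{1/3}\lambda(D_x)t}\,\langle D_x\rangle^2\,\langle\tfrac{1}{D_x}\rangle^{2\epsilon}\,\langle D_x\rangle^{2/3},$$
test the equation against $W\theta^{\mathrm{NL}}$, and use the structural identity
$$2\Re\bigl((\partial_t+y\partial_x)f\,\big|\,\mathcal{M}f\bigr)=\tfrac{d}{dt}\|\sqrt{\mathcal{M}}\,f\|_{L^2}^2+\int_{\mathbb{R}^2}(-\partial_t+k\partial_\xi)\mathcal{M}(t,k,\xi)\,|\hat{f}|^2\,dk\,d\xi$$
applied with $f:=e^{c\nu^{1/3}\lambda(D_x)t}\langle D_x\rangle\langle 1/D_x\rangle^\epsilon\langle D_x\rangle^{1/3}\theta^{\mathrm{NL}}$. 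Combined with Lemma~\ref{lem:M12}, the positive contribution of $(-\partial_t+k\partial_\xi)\mathcal{M}$ dominates the viscous loss induced by the exponential factor $e^{c\nu^{1/3}\lambda t}$ (provided $c$ is sufficiently small in terms of $\delta,\epsilon$), and after integration on $[T_0,T]$ it produces exactly the four positive left-hand quantities of the proposition: the energy $\|W^{1/2}\theta^{\mathrm{NL}}\|_{L^\infty L^2}^2$, the full dissipation $\nu\|W^{1/2}\nabla\theta^{\mathrm{NL}}\|_{L^2L^2}^2$, the enhanced dissipation $\nu^{1/3}\||D_x|^{1/3}W^{1/2}\theta^{\mathrm{NL}}\|_{L^2L^2}^2$, and the ghost-weight term $\|\sqrt{\Upsilon}\,W^{1/2}\theta^{\mathrm{NL}}\|_{L^2L^2}^2$. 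The remaining task is to bound the nonlinear source
$$-\Re\bigl(W\bigl[(u^L+u^{NL})\cdot\nabla(\theta^L+\theta^{NL})\bigr]\,\big|\,\theta^{\mathrm{NL}}\bigr)$$
by the right-hand side of the proposition.

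I would split the bilinear interactions as in Section~\ref{sec.4}. For the pure source $u^L\cdot\nabla\theta^L$, the pointwise inviscid damping \eqref{eq:the point-wise inviscid damping estimate} extracts a factor $\langle t\rangle^{-2}$, and integrating on $[T_0,T]$ together with $T_0=\nu^{-1/6}$ yields the $\nu^{1/6}$ and $\nu^{1/4}$ prefactors attached to the linear norms of $\omega^L,\theta^L$ via Proposition~\ref{lem:linear theta}. The pure reaction $u^{NL}\cdot\nabla\theta^L$ is handled by peeling off one factor of $\sqrt{\Upsilon}$ (using the explicit form \eqref{eq:m3}); Cauchy--Schwarz in $(l,\eta)$ then leaves a factor of $\hat\omega^{NL}$ which is rewritten via Biot--Savart as $\partial_x\nabla\phi^{NL}$, producing the inviscid-damping term that is controlled by the $\mathcal{M}_2$-generated quantity on the left-hand side. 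The transport terms $u^L\cdot\nabla\theta^{NL}$ and $u^{NL}\cdot\nabla\theta^{NL}$ exploit the null cancellation $\bigl(u\cdot\nabla(W^{1/2}\theta^{NL})\,\big|\,W^{1/2}\theta^{NL}\bigr)=0$ combined with a commutator bound analogous to \eqref{eq:commutator estimate}; the resulting pieces are dominated by products of the enhanced-dissipation/gradient norms of $\theta^{NL}$ and the linear/nonlinear norms of $\omega$. The $u^{NL}\cdot\nabla\theta^{NL}$ term further requires the trilinear Gagliardo--Nirenberg-type interpolation $\|\theta^{NL}\|_{L^4_tL^\infty}\lesssim\||D_x|^{1/3}\theta^{NL}\|_{L^2L^2}^{3/4}\|\nabla\theta^{NL}\|_{L^2L^2}^{1/4}$ embedded in the weighted setting, which is what produces the fractional exponents $3/2$ and $1/2$ on the enhanced-dissipation and gradient norms in the last few terms of the proposition.

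The main obstacle is the staggered reaction $|D_x|^{1/3}u^{NL}\cdot\nabla\theta^L$ identified in Step~VI of the remark following Theorem~\ref{thm:main}: applying $|D_x|^{1/3}$ to $u^{NL}\cdot\nabla\theta^L$ and using $|k|^{1/3}\lesssim|l|^{1/3}+|k-l|^{1/3}$ produces a piece with an unabsorbed $|l|^{1/3}$ on $\hat\omega^{NL}$, for which the multiplier $\mathcal{M}_3$ of \cite{LLZ2025} is insufficient. Here I invoke the redesigned $\mathcal{M}_3$ in \eqref{eq:M}, whose kernel carries the factor $\langle 1/l\rangle^{-1/3-\kappa}|l|^{-4/3}$: in the regime $|k-l|<|k|$ the elementary bound $\langle 1/l\rangle^{-2/3}|l|^{-2/3}\lesssim 1$ lets $\sqrt{\Upsilon}$ absorb the parasitic $|l|^{1/3}$, while the constraints $\frac{1-\delta}{2}<\epsilon<\frac{1}{2}$ and $0<\kappa<\frac{2\epsilon}{1-\delta}-1$ ensure $\mathcal{M}_3$ remains uniformly bounded (via \eqref{eq:bound of M}) and that $\langle 1/k\rangle^\epsilon$ stays integrable against the residual kernel as in the computation of Corollary~\ref{cor2}. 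After all interactions have been bounded this way, summing the contributions, integrating on $[T_0,T]$, and absorbing the quadratic terms in the four positive left-hand norms by Young's inequality (choosing $c$ small enough) yields the claimed multilinear inequality.
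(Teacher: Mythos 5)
Your overall architecture coincides with the paper's: testing against $\mathcal{M}\,e^{2c\nu^{1/3}\lambda(D_x)t}\langle D_x\rangle^2\langle\frac{1}{D_x}\rangle^{2\epsilon}\langle D_x\rangle^{2/3}\theta^{\mathrm{NL}}$, extracting the four left-hand quantities from Lemma~\ref{lem:M12}, splitting the extra $\langle D_x\rangle^{1/3}$ between the two factors of each bilinear term, using $\langle t\rangle^{-2}$ inviscid damping for the $u^{\mathrm{L}}$-interactions, isolating the real reaction via $tl(k-l)$ and absorbing it with the redesigned $\mathcal{M}_3$ of \eqref{eq:M}, and converting $\hat\omega^{\mathrm{NL}}_l/(|l|^2+|\eta|^2)^{1/2}$ into $\partial_x\nabla\phi^{\mathrm{NL}}$. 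However, your treatment of the transport terms is a genuine gap. You propose to handle $u^{\mathrm{L}}\cdot\nabla\theta^{\mathrm{NL}}$ and $u^{\mathrm{NL}}\cdot\nabla\theta^{\mathrm{NL}}$ by the cancellation $\bigl(u\cdot\nabla(W^{1/2}\theta^{\mathrm{NL}})\,\big|\,W^{1/2}\theta^{\mathrm{NL}}\bigr)=0$ plus a commutator bound "analogous to \eqref{eq:commutator estimate}". That commutator estimate is stated for the smooth polynomial symbol $\langle k,\xi+kt\rangle^3$ and is used only in the short-time section, where the resulting $\langle t\rangle^{1+\delta}$ loss is paid by $t\le\nu^{-1/6}$. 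In the long-time setting your weight contains $\langle\frac{1}{k}\rangle^\epsilon$, which is not Lipschitz at $k=0$, and $e^{c\nu^{1/3}\lambda(k)t}$, whose symbol difference carries a factor $c\nu^{1/3}t$ that is unbounded on $[T_0,\infty)$; worse, the commutator gain from \eqref{eq:commutator estimate} must be paid by weights $\langle l,\eta+lt\rangle^N$ on $\hat\omega^{\mathrm{NL}}$ (or $\langle k-l,\xi-\eta+(k-l)t\rangle^N$ on $\hat\theta^{\mathrm{NL}}$) which the long-time bootstrap does not control — only $\langle l\rangle\langle\frac{1}{l}\rangle^\epsilon\hat\omega^{\mathrm{NL}}$ and $\langle k\rangle^{4/3}\langle\frac{1}{k}\rangle^\epsilon\hat\theta^{\mathrm{NL}}$ are available. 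The paper does not use the cancellation here at all: it estimates these terms directly, separating $u_1\partial_x$ from $u_2\partial_y$ (Steps III and VII, with the regions $|k-l|\le 2|k|$, $|k|>2|k-l|$, $2|k|<|k-l|$), distributing the weight via inequalities such as $\langle k\rangle\langle\frac1k\rangle^\epsilon\lesssim\langle k-l\rangle\langle\frac{1}{k-l}\rangle^\epsilon\langle l,\eta+lt\rangle$, and paying with $\nabla\theta^{\mathrm{NL}}$ in $L^2_tL^2$ (absorbed by the dissipation) and $\partial_x\nabla\phi^{\mathrm{NL}}$ in $L^2_tL^2$ (absorbed by the $\mathcal{M}_2$ ghost-weight term). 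You need this direct route; the commutator route does not close.

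A second, smaller defect: the interpolation $\|\theta^{\mathrm{NL}}\|_{L^4_tL^\infty}\lesssim\||D_x|^{1/3}\theta^{\mathrm{NL}}\|^{3/4}\|\nabla\theta^{\mathrm{NL}}\|^{1/4}$ is not scale-invariant in two dimensions and is false as stated. The exponents $\frac32$ and $\frac12$ in the conclusion come instead from the elementary frequency-side bound $\||D_x|^{2/3}f\|_{L^2}\le\||D_x|^{1/3}f\|_{L^2}^{1/2}\|\partial_x f\|_{L^2}^{1/2}$ applied inside the weighted norms, as in \eqref{eq:temp17}; no $L^\infty$ embedding is involved.
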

	
	\begin{Rem}
For the choice of the parameters such as $\epsilon, \kappa$ and $\alpha$ which appears below, one can find them in \cite{LLZ2025}.
	\end{Rem}

	\begin{proof}
		Taking the inner product of $\eqref{eq:nonlinear part}_2$ with $\mathcal{M} e^{2 c \nu^{\frac{1}{3}} \lambda\left(D_x\right) t}\langle D_x\rangle^2\langle\frac{1}{D_x}\rangle^{2 \epsilon} \langle D_x \rangle^\frac23  \theta^{\mathrm{NL}}$, we infer that
		\small{	$$
			\begin{aligned}
				\frac{d}{d t} \| \sqrt{\mathcal{M}} & e^{c \nu^{\frac{1}{3}} \lambda\left(D_x\right) t}\langle D_x\rangle \langle\frac{1}{D_x}\rangle^\epsilon \langle D_x \rangle^\frac13  \theta^{\mathrm{NL}}\|_{L^2}^2-2 c \nu^{\frac{1}{3}}\| \sqrt{\mathcal{M}} e^{c \nu^{\frac{1}{3}} \lambda\left(D_x\right) t}\langle D_x\rangle\langle\frac{1}{D_x}\rangle^\epsilon \sqrt{\lambda\left(D_x\right)} \langle D_x \rangle^\frac13  \theta^{\mathrm{NL}} \|_{L^2}^2 \\
				& +2 \nu\|\sqrt{\mathcal{M}} e^{c \nu^{\frac{1}{3}} \lambda\left(D_x\right) t}\langle D_x\rangle\langle\frac{1}{D_x}\rangle^\epsilon \nabla \langle D_x \rangle^\frac13  \theta^{\mathrm{NL}}\|_{L^2}^2 \\
				& +\int_{\mathbb{R}^2}\left(-\partial_t+k \partial_{\xi}\right) \mathcal{M}(k, \xi) e^{2 c \nu^{\frac{1}{3}} \lambda(k) t}\langle k\rangle^2\langle\frac{1}{k}\rangle^{2 \epsilon} \langle k \rangle^\frac23 \left|\hat{\theta}_k^{\mathrm{NL}}(\xi)\right|^2 d k d \xi \\
				= & -2 \Re\left((u^{\mathrm{L}}+u^{\mathrm{NL}}) \cdot \nabla(\theta^{\mathrm{L}}+\theta^{\mathrm{NL}}) \big| \mathcal{M} e^{2 c \nu^{\frac{1}{3}} \lambda\left(D_x\right) t}\langle D_x\rangle^2\langle\frac{1}{D_x}\rangle^{2 \epsilon}  \langle D_x \rangle^\frac23  \theta^{\mathrm{NL}}\right).
			\end{aligned}
			$$}
		Thanks to \eqref{eq:bound of M}, \eqref{eq:m3} and \eqref{eq:m1m2}, we use the fact that $\lambda(k) \leq|k|^{\frac{2}{3}}$ to deduce that there exist some $0<c<C(C_\kappa)$ such that
		$$
		\begin{aligned}
			& \frac{d}{d t}\|\sqrt{\mathcal{M}} e^{c \nu^{\frac{1}{3}} \lambda\left(D_x\right) t}\langle D_x\rangle\langle\frac{1}{D_x}\rangle^\epsilon \langle D_x \rangle^\frac13  \theta^{\mathrm{NL}}\|_{L^2}^2+\nu\|e^{c \nu^{\frac{1}{3}} \lambda\left(D_x\right) t}\langle D_x\rangle\langle\frac{1}{D_x}\rangle^\epsilon \nabla \langle D_x \rangle^\frac13  \theta^{\mathrm{NL}}\|_{L^2}^2 \\
			& \quad+\frac{\nu^{\frac{1}{3}}}{16}\|e^{c \nu^{\frac{1}{3}} \lambda\left(D_x\right) t}\langle D_x\rangle\langle\frac{1}{D_x}\rangle^\epsilon\left|D_x\right|^{\frac{1}{3}}  \langle D_x \rangle ^\frac13 \theta^{\mathrm{NL}}\|_{L^2}^2  
			\\
			& \quad+\|e^{c \nu^{\frac{1}{3}} \lambda\left(D_x\right) t}\langle D_x\rangle\langle\frac{1}{D_x}\rangle^\epsilon \sqrt{\Upsilon\left(t, D_x, D_y\right)} \langle D_x \rangle^\frac13  \theta^{\mathrm{NL}}\|_{L^2}^2 \\
			& \leq 2\left|\Re\left((u^{\mathrm{L}}+u^{\mathrm{NL}}) \cdot \nabla(\theta^{\mathrm{L}}+\theta^{\mathrm{NL}}) \big| \mathcal{M} e^{2 c \nu^{\frac{1}{3}} \lambda\left(D_x\right) t}\langle D_x\rangle^2\langle\frac{1}{D_x}\rangle^{2 \epsilon}  \langle D_x \rangle^\frac23   \theta^{\mathrm{NL}}\right)\right|.
		\end{aligned}
		$$
		Since 
		\begin{align*}
			(fg| \langle D_x \rangle^\frac23 h) &\lesssim   \big((1+|k|^2)^\frac16 \widehat{fg}| (1+|k|^2)^\frac16 \hat{h} \big) \\
			&\lesssim \int_{\mathbb{R}^2}  |k-l|^\frac13|\hat{f}(k-l) \hat{g}(l)| (1+|k|^2)^\frac16 |\hat{h}(k)| dkdl \\
			&\quad  +  \int_{\mathbb{R}^2} (1 + |l|^2)^\frac16|\hat{f}(k-l) \hat{g}(l)| (1+|k|^2)^\frac16 |\hat{h}(k)| dkdl
		\end{align*}
        holds for some $f,g$ and $h$, we formally decompose $\langle D_x \rangle^\frac13 \big[(u^{\mathrm{L}}+u^{\mathrm{NL}}) \cdot \nabla(\theta^{\mathrm{L}}+\theta^{\mathrm{NL}}) \big] $ into eight terms and proceed to estimate them one by one, which does not affect the upper bound estimates above.

		\underline{\bf Step I: Estimate of  $u^{\mathrm{L}} \cdot  \nabla \langle D_x \rangle^\frac13 \theta^{\mathrm{L}}$.}
		Applying the Fourier transformation, there holds that
		\begin{equation}\begin{aligned} \label{eq:temp8}
				& \left|\left(u^{\mathrm{L}} \cdot \nabla \langle D_x \rangle^\frac13 \theta^{\mathrm{L}} \big| \mathcal{M} e^{2 c \nu^{\frac{1}{3}} \lambda(D_x) t}\langle D_x\rangle^2\langle\frac{1}{D_x}\rangle^{2 \epsilon} \langle D_x \rangle^\frac13 \theta^{\mathrm{NL}}\right)\right| \\
				= & \left|\int_{\mathbb{R}^4} \mathcal{M}(t, k, \xi) e^{2 c \nu^{\frac{1}{3}} \lambda(k) t}\langle k\rangle^2\langle\frac{1}{k}\rangle^{2 \epsilon} \langle k \rangle^\frac13  \hat{\theta }_k^{\mathrm{NL}}(\xi) \langle k-l \rangle^\frac13 \rt. \\
				& \lt.\qquad \times \frac{\eta(k-l)-l(\xi-\eta)}{|l|^2+|\eta|^2} \hat{\omega}_l^{\mathrm{L}}(\eta) \hat{\theta }_{k-l}^{\mathrm{L}}(\xi-\eta) d k d l d \xi d \eta\right|.
		\end{aligned}\end{equation}
		We then consider two cases for the frequency $k$: high and low frequencies.\\
		$\bullet$~Case 1: $|k| \geq 1$.
		Using \eqref{eq:0}, \eqref{eq:the point-wise inviscid damping estimate} and 
		\begin{equation*} 
			\langle k \rangle \langle \frac1k \rangle^\epsilon \lesssim \langle l, \eta+l t\rangle\langle k-l, \xi-\eta+(k-l) t\rangle,
		\end{equation*}
		we get that
		\begin{equation} \label{eq:temp9}
			\begin{aligned}
				&  \left|\int_{|k| \geq  1} \mathcal{M}(t, k, \xi) e^{2 c \nu^{\frac{1}{3}} \lambda(k) t}\langle k\rangle^2\langle\frac{1}{k}\rangle^{2 \epsilon} \langle k \rangle^\frac13  \hat{\theta }_k^{\mathrm{NL}}(\xi) \langle k-l \rangle^\frac13 \rt. \\
				& \lt. \qquad\qquad\qquad\qquad  \times \frac{\eta(k-l)-l(\xi-\eta)}{|l|^2+|\eta|^2} \hat{\omega}_l^{\mathrm{L}}(\eta) \hat{\theta }_{k-l}^{\mathrm{L}}(\xi-\eta) d k d l d \xi d \eta\right| \\
				& \lesssim\|e^{c \nu^{\frac{1}{3}} \lambda(k) t}\langle k\rangle \langle \frac1k \rangle^\epsilon \langle k \rangle^\frac13  \hat{\theta}_k^{\mathrm{NL}}(\xi)\|_{L_{k, \xi}^2}\langle t\rangle^{-2}\|e^{c \nu^{\frac{1}{3}} \lambda(l) t}\langle l, \eta+l t\rangle^4\langle\frac{1}{l}\rangle^4 \hat{\omega}_l^{\mathrm{L}}(\eta)\|_{L_{l, \eta}^2} \\
				& \quad \times\|\langle k-l, \xi-\eta+(k-l) t\rangle^{-2}\|_{L_{k-l, \xi-\eta}^2} \\
				& \quad \times\|e^{c \nu^{\frac{1}{3}} \lambda(k-l) t}\langle k-l, \xi-\eta+(k-l) t\rangle^4 \langle k-l \rangle^\frac13  \hat{\theta}_{k-l}^{\mathrm{L}}(\xi-\eta)\|_{L_{k-l, \xi-\eta}^2} \\
				& \lesssim\langle t\rangle^{-2} \|e^{c \nu^{\frac{1}{3}} \lambda\left(D_x\right) t}\langle D_x\rangle  \langle \frac{1}{D_x}\rangle^\epsilon \langle D_x \rangle^\frac13  \theta^{\mathrm{NL}}\|_{L^2}    \|e^{c \nu^{\frac{1}{3}} \lambda\left(D_x\right) t}\langle D_x, D_y + tD_x\rangle^4\langle\frac{1}{D_x}\rangle^4 \omega^{\mathrm{L}} \|_{L^2} \\
				&\quad \times \|e^{c \nu^{\frac{1}{3}} \lambda\left(D_x\right) t}\langle D_x, D_y + tD_x\rangle^4\langle\frac{1}{D_x}\rangle^4 \langle D_x \rangle^\frac13   \theta^{\mathrm{L}} \|_{L^2}.
			\end{aligned}
		\end{equation}
		$\bullet$~Case 2:  $|k| < 1$. By \eqref{eq:0}, \eqref{eq:the point-wise inviscid damping estimate} and 
		\begin{equation*} 
			\langle k \rangle \lesssim \langle l, \eta+l t\rangle\langle k-l, \xi-\eta+(k-l) t\rangle,
		\end{equation*}
		we have that
		\begin{align}  \label{eq:temp10}
			&\no \left|\int_{|k| < 1}  \mathcal{M}(t, k, \xi) e^{2 c \nu^{\frac{1}{3}} \lambda(k) t}\langle k\rangle^2\langle\frac{1}{k}\rangle^{2 \epsilon} \langle k \rangle^\frac13  \hat{\theta }_k^{\mathrm{NL}}(\xi) \langle k-l \rangle^\frac13 \rt. \\
			&\no \lt. \qquad\qquad\qquad\qquad\qquad\qquad  \times \frac{\eta(k-l)-l(\xi-\eta)}{|l|^2+|\eta|^2} \hat{\omega}_l^{\mathrm{L}}(\eta) \hat{\theta }_{k-l}^{\mathrm{L}}(\xi-\eta) d k d l d \xi d \eta\right| \\
			& \no\lesssim \| \langle\frac{1}{k}\rangle^{ \epsilon} \|_{L_k^2 ([-1, 1])}\|e^{c \nu^{\frac{1}{3}} \lambda(k) t}\langle k\rangle \langle \frac1k \rangle^\epsilon \langle k \rangle^\frac13  \hat{\theta}_k^{\mathrm{NL}}(\xi)\|_{L_{k, \xi}^2}\langle t\rangle^{-2}\|e^{c \nu^{\frac{1}{3}} \lambda(l) t}\langle l, \eta+l t\rangle^4\langle\frac{1}{l}\rangle^4 \hat{\omega}_l^{\mathrm{L}}(\eta)\|_{L_{l, \eta}^2} \\
			&\no \quad \times\|\langle k-l, \xi-\eta+(k-l) t\rangle^{-1}\|_{L_{ \xi-\eta}^2} \\
			&\no \quad \times\|e^{c \nu^{\frac{1}{3}} \lambda(k-l) t}\langle k-l, \xi-\eta+(k-l) t\rangle^3 \langle k-l \rangle^\frac13  \hat{\theta}_{k-l}^{\mathrm{L}}(\xi-\eta)\|_{L_{k-l, \xi-\eta}^2} \\
			&\no \lesssim\langle t\rangle^{-2} \|e^{c \nu^{\frac{1}{3}} \lambda\left(D_x\right) t}\langle D_x\rangle  \langle \frac{1}{D_x}\rangle^\epsilon \langle D_x \rangle^\frac13  \theta^{\mathrm{NL}}\|_{L^2}    \|e^{c \nu^{\frac{1}{3}} \lambda\left(D_x\right) t}\langle D_x, D_y + tD_x\rangle^4\langle\frac{1}{D_x}\rangle^4 \omega^{\mathrm{L}} \|_{L^2} \\
			&\quad \times \|e^{c \nu^{\frac{1}{3}} \lambda\left(D_x\right) t}\langle D_x, D_y + tD_x\rangle^4\langle\frac{1}{D_x}\rangle^4 \langle D_x \rangle^\frac13  \theta^{\mathrm{L}} \|_{L^2}.
		\end{align}
		Hence, \eqref{eq:temp8}, \eqref{eq:temp9} and \eqref{eq:temp10} gives
		\begin{equation}\begin{aligned} \label{eq:sum1}
				&\left|\left(u^{\mathrm{L}} \cdot \nabla \langle D_x \rangle^\frac13 \theta^{\mathrm{L}} \big| \mathcal{M} e^{2 c \nu^{\frac{1}{3}} \lambda\left(D_x\right) t}\langle D_x\rangle^2\langle\frac{1}{D_x}\rangle^{2 \epsilon} \langle D_x \rangle^\frac13 \theta^{\mathrm{NL}}\right)\right| \\
				\lesssim	& \langle t\rangle^{-2} \|e^{c \nu^{\frac{1}{3}} \lambda\left(D_x\right) t}\langle D_x\rangle  \langle \frac{1}{D_x}\rangle^\epsilon \langle D_x \rangle^\frac13  \theta^{\mathrm{NL}}\|_{L^2}    \|e^{c \nu^{\frac{1}{3}} \lambda\left(D_x\right) t}\langle D_x, D_y + tD_x\rangle^4\langle\frac{1}{D_x}\rangle^4 \omega^{\mathrm{L}} \|_{L^2} \\
				&\quad \times \|e^{c \nu^{\frac{1}{3}} \lambda\left(D_x\right) t}\langle D_x, D_y + tD_x\rangle^4\langle\frac{1}{D_x}\rangle^4 \langle D_x \rangle^\frac13  \theta^{\mathrm{L}} \|_{L^2}.
		\end{aligned}\end{equation}

		\underline{\bf Step II: Estimate of $ |D_x|^\frac13 u^{\mathrm{L}} \cdot  \nabla  \theta^{\mathrm{L}}$.} 
		Write
		\begin{equation*}\begin{aligned} \label{eq:temp11}
				& \left|\left( |D_x|^\frac13 u^{\mathrm{L}} \cdot \nabla  \theta^{\mathrm{L}} \big| D_x\rangle^2\langle\frac{1}{D_x}\rangle^{2 \epsilon} |D_x|^\frac13 \theta^{\mathrm{NL}}\right)\right| \\
				= & \left|\int_{\mathbb{R}^4} \mathcal{M}(t, k, \xi) e^{2 c \nu^{\frac{1}{3}} \lambda(k) t}\langle k\rangle^2\langle\frac{1}{k}\rangle^{2 \epsilon} |k|^\frac13  \hat{\theta }_k^{\mathrm{NL}}(\xi) |l|^\frac13 \rt. \\
				& \lt. \quad \times  \frac{\eta(k-l)-l(\xi-\eta)}{|l|^2+|\eta|^2} \hat{\omega}_l^{\mathrm{L}}(\eta) \hat{\theta }_{k-l}^{\mathrm{L}}(\xi-\eta) d k d l d \xi d \eta\right|.
		\end{aligned}\end{equation*}
		\eqref{eq:the point-wise inviscid damping estimate} is substituted by 
		\begin{equation*} 
			\frac{|l|^\frac13}{|l|^2+|\eta|^2} \lesssim \frac{\langle l, \eta+l t\rangle^2}{|l|^{\frac{11}{3}}\langle t\rangle^2} \lesssim\langle t\rangle^{-2}\langle\frac{1}{l}\rangle^4\langle l, \eta+l t\rangle^2.
		\end{equation*}
		Then, similarly as \eqref{eq:sum1}, we have
		\begin{equation*}\begin{aligned} \label{eq:sum2}
				&\left|\left( |D_x|^\frac13 u^{\mathrm{L}} \cdot \nabla  \theta^{\mathrm{L}} \left\lvert\, \mathcal{M} e^{2 c \nu^{\frac{1}{3}} \lambda\left(D_x\right) t}\langle D_x\rangle^2\langle\frac{1}{D_x}\rangle^{2 \epsilon} |D_x|^\frac13 \theta^{\mathrm{NL}}\right.\right)\right| \\
				& \lesssim\langle t\rangle^{-2} \|e^{c \nu^{\frac{1}{3}} \lambda\left(D_x\right) t}\langle D_x\rangle  \langle \frac{1}{D_x}\rangle^\epsilon |D_x|^\frac13  \theta^{\mathrm{NL}}\|_{L^2}    \|e^{c \nu^{\frac{1}{3}} \lambda\left(D_x\right) t}\langle D_x, D_y + tD_x\rangle^4 \langle\frac{1}{D_x}\rangle^4 \omega^{\mathrm{L}} \|_{L^2} \\
				&\quad \times \|e^{c \nu^{\frac{1}{3}} \lambda\left(D_x\right) t}\langle D_x, D_y + tD_x\rangle^4\langle\frac{1}{D_x}\rangle^4   \theta^{\mathrm{L}} \|_{L^2}.
		\end{aligned}\end{equation*}

		\underline{\bf Step III: Estimate of $  u^{\mathrm{L}} \cdot  \nabla \langle D_x \rangle^\frac13 \theta^{\mathrm{NL}}$.} Due to the different decay behaviors of $u_1^{\mathrm{L}} \px    $ and $u_2^{\mathrm{L}} \py   $, we consider them separately.
		
		\underline{ Step III.1: Estimate of $u_2^{\mathrm{L}} \py \langle D_x \rangle^\frac13 \theta^{\mathrm{NL}}  $.} 
		The Fourier transform shows that
		\begin{equation}\begin{aligned} \label{eq:temp14}
				& \left|\left(  u_2^{\mathrm{L}} \py \langle D_x \rangle^\frac13 \theta^{\mathrm{NL}} \left\lvert\, \mathcal{M} e^{2 c \nu^{\frac{1}{3}} \lambda\left(D_x\right) t}\langle D_x\rangle^2\langle\frac{1}{D_x}\rangle^{2 \epsilon} \langle D_x \rangle^\frac13 \theta^{\mathrm{NL}}\right.\right)\right| \\
				= & \left|\int_{\mathbb{R}^4} \mathcal{M}(t, k, \xi) e^{2 c \nu^{\frac{1}{3}} \lambda(k) t}\langle k\rangle^2\langle\frac{1}{k}\rangle^{2 \epsilon} \langle k \rangle^\frac13  \hat{\theta }_k^{\mathrm{NL}}(\xi) \langle k-l \rangle^\frac13 \frac{l(\xi-\eta)}{|l|^2+|\eta|^2} \hat{\omega}_l^{\mathrm{L}}(\eta) \hat{\theta }_{k-l}^{\mathrm{NL}}(\xi-\eta) d k d l d \xi d \eta\right|.
		\end{aligned}\end{equation}
		$\bullet$~Case 1:   $|k| \geq  1$. From $\langle \frac1k \rangle^\epsilon\langle k \rangle \lesssim \langle k- l \rangle \langle  l, \eta + lt \rangle $ and 
		\begin{equation} \label{eq:trick2}
			l(|l|^2+|\eta|^2)^{-1}   \lesssim\langle t\rangle^{-2}\langle\frac{1}{l}\rangle^3 \langle l, \eta+l t\rangle^2,
		\end{equation}
		there holds
		\begin{equation} \label{eq:temp14.5}
			\begin{aligned}
				& \left|\int_{|k| \geq 1} \mathcal{M}(t, k, \xi) e^{2 c \nu^{\frac{1}{3}} \lambda(k) t}\langle k\rangle^2\langle\frac{1}{k}\rangle^{2 \epsilon} \langle k \rangle^\frac13  \hat{\theta }_k^{\mathrm{NL}}(\xi) \langle k-l \rangle^\frac13 \frac{l(\xi-\eta)}{|l|^2+|\eta|^2} \hat{\omega}_l^{\mathrm{L}}(\eta) \hat{\theta }_{k-l}^{\mathrm{NL}}(\xi-\eta) d k d l d \xi d \eta\right| \\
				& \lesssim\|e^{c \nu^{\frac{1}{3}} \lambda(k) t}\langle k\rangle\langle\frac{1}{k}\rangle^\epsilon \langle k \rangle^\frac13  \hat{\theta}_k^{\mathrm{NL}}(\xi)\|_{L_{k, \xi}^2}\langle t\rangle^{-2}\|\langle l, \eta+l t\rangle^{-2}\|_{L_{l, \eta}^2}\|e^{c \nu^{\frac{1}{3}} \lambda(l) t}\langle l, \eta+l t\rangle^5\langle\frac{1}{l}\rangle^3 \hat{\omega}_l^{\mathrm{L}}(\eta)\|_{L_{l, \eta}^2} \\
				& \quad \times\|e^{c \nu^{\frac{1}{3}} \lambda(k-l) t}\langle k-l\rangle(\xi-\eta) \langle k-l \rangle^\frac13 \hat{\theta}_{k-l}^{\mathrm{NL}}(\xi-\eta)\|_{L_{k-l, \xi-\eta}^2} \\
				& \lesssim\langle t\rangle^{-2}\|e^{c \nu^{\frac{1}{3}} \lambda\left(D_x\right) t}\langle D_x\rangle\langle\frac{1}{D_x}\rangle^\epsilon \langle D_x \rangle^\frac13  \theta^{\mathrm{NL}}\|_{L^2}\|e^{c \nu^{\frac{1}{3}} \lambda\left(D_x\right) t}\langle D_x, D_y+t D_x\rangle^5\langle\frac{1}{D_x}\rangle^3 \omega^{\mathrm{L}}\|_{L^2} \\
				& \quad \times\|e^{c \nu^{\frac{1}{3}} \lambda\left(D_x\right) t}\langle D_x\rangle \partial_y   \langle D_x \rangle^\frac13 \theta^{\mathrm{NL}}\|_{L^2}.
			\end{aligned}
		\end{equation}
		$\bullet$~Case 2:   $|k| < 1$. According to \eqref{eq:trick2} and $\langle k \rangle^2  \lesssim 1$, we arrive
		\begin{equation} \label{eq:temp15}
			\begin{aligned}
				& \left|\int_{|k| < 1}  \mathcal{M}(t, k, \xi) e^{2 c \nu^{\frac{1}{3}} \lambda(k) t}\langle k\rangle^2\langle\frac{1}{k}\rangle^{2 \epsilon} \langle k \rangle^\frac13  \hat{\theta }_k^{\mathrm{NL}}(\xi) \langle k-l \rangle^\frac13 \frac{l(\xi-\eta)}{|l|^2+|\eta|^2} \hat{\omega}_l^{\mathrm{L}}(\eta) \hat{\theta }_{k-l}^{\mathrm{NL}}(\xi-\eta)   d k d l d \xi d \eta\right| \\
				& \lesssim \| \langle\frac{1}{k}\rangle^{ \epsilon} \|_{L_k^2 ([-1, 1])}\|e^{c \nu^{\frac{1}{3}} \lambda(k) t}\langle k\rangle \langle \frac1k \rangle^\epsilon \langle k \rangle^\frac13  \hat{\theta}_k^{\mathrm{NL}}(\xi)\|_{L_{k, \xi}^2}\langle t\rangle^{-2}\|e^{c \nu^{\frac{1}{3}} \lambda(l) t}\langle l, \eta+l t\rangle^3\langle\frac{1}{l}\rangle^3 \hat{\omega}_l^{\mathrm{L}}(\eta)\|_{L_{l, \eta}^2} \\
				& \quad \times\|\langle \eta + lt\rangle^{-1}\|_{L_{ \eta}^2} 
				\|e^{c \nu^{\frac{1}{3}} \lambda(k-l) t}  |\xi - \eta| \langle k-l \rangle^\frac13  \hat{\theta}_{k-l}^{\mathrm{NL}}(\xi-\eta)\|_{L_{k-l, \xi-\eta}^2} \\
				& \lesssim\langle t\rangle^{-2} \|e^{c \nu^{\frac{1}{3}} \lambda\left(D_x\right) t}\langle D_x\rangle  \langle \frac{1}{D_x}\rangle^\epsilon \langle D_x \rangle^\frac13  \theta^{\mathrm{NL}}\|_{L^2}    \|e^{c \nu^{\frac{1}{3}} \lambda\left(D_x\right) t}\langle D_x, D_y + tD_x\rangle^3\langle\frac{1}{D_x}\rangle^3 \omega^{\mathrm{L}} \|_{L^2} \\
				&\quad \times \|e^{c \nu^{\frac{1}{3}} \lambda\left(D_x\right) t} \py \langle D_x \rangle^\frac13  \theta^{\mathrm{NL}} \|_{L^2}.
			\end{aligned}
		\end{equation}
		Adding \eqref{eq:temp14}, \eqref{eq:temp14.5} and \eqref{eq:temp15} together, we infer that
		\begin{equation}\begin{aligned} \label{eq:temp15.5}
				&\left|\left(  u_2^{\mathrm{L}} \py \langle D_x \rangle^\frac13 \theta^{\mathrm{NL}} \big| \mathcal{M} e^{2 c \nu^{\frac{1}{3}} \lambda\left(D_x\right) t}\langle D_x\rangle^2\langle\frac{1}{D_x}\rangle^{2 \epsilon} \langle D_x \rangle^\frac13 \theta^{\mathrm{NL}}\right)\right|\\
				&\lesssim \langle t\rangle^{-2} \|e^{c \nu^{\frac{1}{3}} \lambda\left(D_x\right) t}\langle D_x\rangle  \langle \frac{1}{D_x}\rangle^\epsilon \langle D_x \rangle^\frac13  \theta^{\mathrm{NL}}\|_{L^2}    \|e^{c \nu^{\frac{1}{3}} \lambda\left(D_x\right) t}\langle D_x, D_y + tD_x\rangle^ 5 \langle\frac{1}{D_x}\rangle^3 \omega^{\mathrm{L}} \|_{L^2} \\
				&\quad \times \|e^{c \nu^{\frac{1}{3}} \lambda\left(D_x\right) t} \py \langle D_x \rangle^\frac13  \theta^{\mathrm{NL}} \|_{L^2}.
		\end{aligned}\end{equation}
		
		\underline{ Step III.2: Estimate of $u_1^{\mathrm{L}} \px \langle D_x \rangle^\frac13 \theta^{\mathrm{NL}}  $.} 
		Note that
		\begin{equation}\begin{aligned} \label{eq:temp16}
				& \left|\left(  u_1^{\mathrm{L}} \px \langle D_x \rangle^\frac13 \theta^{\mathrm{NL}} \big| \mathcal{M} e^{2 c \nu^{\frac{1}{3}} \lambda\left(D_x\right) t}\langle D_x\rangle^2\langle\frac{1}{D_x}\rangle^{2 \epsilon} \langle D_x \rangle^\frac13 \theta^{\mathrm{NL}}\right)\right| \\
				= & \left|\int_{\mathbb{R}^4} \mathcal{M}(t, k, \xi) e^{2 c \nu^{\frac{1}{3}} \lambda(k) t}\langle k\rangle^2\langle\frac{1}{k}\rangle^{2 \epsilon} \langle k \rangle^\frac13  \hat{\theta }_k^{\mathrm{NL}}(\xi) \langle k-l \rangle^\frac13 \frac{\eta(k-l)}{|l|^2+|\eta|^2} \hat{\omega}_l^{\mathrm{L}}(\eta) \hat{\theta }_{k-l}^{\mathrm{NL}}(\xi-\eta) d k d l d \xi d \eta\right|.
		\end{aligned}\end{equation}
		We decompose the integration domain into two parts: the \emph{$k$-near-diagonal region}, where $|k - l| \leq 2|k|$, and the \emph{$k$-off-diagonal region}, where $|k - l| > 2|k|$. \\
		$\bullet$~Case 1:  $|k-l|\leq 2|k|$. Using $\langle k \rangle \langle \frac1k\rangle^\epsilon \lesssim \langle k- l\rangle \langle  l, \eta + lt\rangle \langle \frac1{k-l}\rangle^\epsilon$, $|k-l|^\frac13 \lesssim |k|^\frac13 $ and
		\begin{equation} \label{eq:trick3}
			\eta  (|l|^2+|\eta|^2)^{-1}   \lesssim\langle t\rangle^{-1}\langle\frac{1}{l}\rangle^2 \langle l, \eta+l t\rangle,
		\end{equation}
		one has
		\begin{align} \label{eq:temp17}
			&\no \left|\int_{|k-l| \leq 2|k|} \mathcal{M}(t, k, \xi) e^{2 c \nu^{\frac{1}{3}} \lambda(k) t}\langle k\rangle^2\langle\frac{1}{k}\rangle^{2 \epsilon} \langle k \rangle^\frac13  \hat{\theta }_k^{\mathrm{NL}}(\xi) \langle k-l \rangle^\frac13 \frac{\eta(k-l)}{|l|^2+|\eta|^2} \hat{\omega}_l^{\mathrm{L}}(\eta) \hat{\theta }_{k-l}^{\mathrm{NL}}(\xi-\eta) d k d l d \xi d \eta\right| \\
			&\no \lesssim\|e^{c \nu^{\frac{1}{3}} \lambda(k) t}\langle k\rangle\langle\frac{1}{k}\rangle^\epsilon|k|^{\frac{1}{3}} \langle k \rangle^\frac13 \hat{\theta}_k^{\mathrm{NL}}(\xi)\|_{L_{k, \xi}^2}\langle t\rangle^{-1}\|\langle l, \eta+l t\rangle^{-2}\|_{L_{l, \eta}^2}\|e^{c \nu^{\frac{1}{3}} \lambda(l) t}\langle l, \eta+l t\rangle^4\langle\frac{1}{l}\rangle^2 \hat{\omega}_l^{\mathrm{L}}(\eta)\|_{L_{l, \eta}^2} \\
			&\no \quad \times\|e^{c \nu^{\frac{1}{3}} \lambda(k-l) t}\langle k-l\rangle\langle\frac{1}{k-l}\rangle^\epsilon|k-l|^\frac23 \langle k-l \rangle^\frac13 \hat{\theta}_{k-l}^{\mathrm{NL}}(\xi-\eta)\|_{L_{k-l, \xi-\eta}^2} \\
			&\no \lesssim\|e^{c \nu^{\frac{1}{3}} \lambda(k) t}\langle k\rangle\langle\frac{1}{k}\rangle^\epsilon|k|^{\frac{1}{3}}  \langle k \rangle^\frac13 \hat{\theta}_k^{\mathrm{NL}}(\xi)\|_{L_{k, \xi}^2}\langle t\rangle^{-1}\|\langle l, \eta+l t\rangle^{-2}\|_{L_{l, \eta}^2}\|e^{c \nu^{\frac{1}{3}} \lambda(l) t}\langle l, \eta+l t\rangle^4\langle\frac{1}{l}\rangle^2 \hat{\omega}_l^{\mathrm{L}}(\eta)\|_{L_{l, \eta}^2} \\
			&\no \quad \times\|e^{c \nu^{\frac{1}{3}} \lambda(k-l) t}\langle k-l\rangle\langle\frac{1}{k-l}\rangle^\epsilon|k-l|^\frac13  \langle k-l \rangle^\frac13 \hat{\theta}_{k-l}^{\mathrm{NL}}(\xi-\eta)\|_{L_{k-l, \xi-\eta}^2}^\frac12 \\
			&\no\quad \times \|e^{c \nu^{\frac{1}{3}} \lambda(k-l) t}\langle k-l\rangle\langle\frac{1}{k-l}\rangle^\epsilon|k-l| \langle k-l \rangle^\frac13 \hat{\theta}_{k-l}^{\mathrm{NL}}(\xi-\eta)\|_{L_{k-l, \xi-\eta}^2}^\frac12  \\
			&\no \lesssim \langle t\rangle^{-1} \|e^{c \nu^{\frac{1}{3}} \lambda\left(D_x\right) t}\langle D_x, D_y+t D_x\rangle^4\langle\frac{1}{D_x}\rangle^2 \omega^{\mathrm{L}}\|_{L^2}  \|e^{c \nu^{\frac{1}{3}} \lambda\left(D_x\right) t}\langle D_x\rangle\langle\frac{1}{D_x}\rangle^\epsilon\left|D_x\right|^{\frac{1}{3}} \langle D_x \rangle^\frac13 \theta^{\mathrm{NL}}\|_{L^2}^{\frac{3}{2}} \\
			& \quad \times  \|e^{c \nu^{\frac{1}{3}} \lambda\left(D_x\right) t}\langle D_x\rangle\langle\frac{1}{D_x}\rangle^\epsilon \px \langle D_x \rangle^\frac13  \theta^{\mathrm{NL}}\|_{L^2}^{\frac{1}{2}} .
		\end{align}
		$\bullet$~Case 2:  $2|k|<|k-l|$. In this case, it follows from $|l| \sim |k-l|$ that $\langle k\rangle \lesssim\langle l\rangle$. By $\langle k \rangle^2 \lesssim \langle  l, \eta + lt\rangle \langle k-l \rangle$,  $|k-l| \lesssim \langle  l, \eta + lt\rangle |k-l|^\frac13 $ and \eqref{eq:trick3}, we have
		\begin{equation}\begin{aligned} \label{eq:temp18}
				&\left|\int_{2|k|<|k-l|} \mathcal{M}(t, k, \xi) e^{2 c \nu^{\frac{1}{3}} \lambda(k) t}\langle k\rangle^2\langle\frac{1}{k}\rangle^{2 \epsilon} \langle k \rangle^\frac13  \hat{\theta }_k^{\mathrm{NL}}(\xi) \langle k-l \rangle^\frac13 \frac{\eta(k-l)}{|l|^2+|\eta|^2} \hat{\omega}_l^{\mathrm{L}}(\eta) \hat{\theta }_{k-l}^{\mathrm{NL}}(\xi-\eta) d k d l d \xi d \eta\right| \\
				&\lesssim\|\langle k\rangle^{-1}\langle\frac{1}{k}\rangle^\epsilon\|_{L_k^2}\|e^{c \nu^{\frac{1}{3}} \lambda(k) t}\langle k\rangle\langle\frac{1}{k}\rangle^\epsilon \langle k \rangle^\frac13 \hat{\theta}_k^{\mathrm{NL}}(\xi)\|_{L_{k, \xi}^2}\langle t\rangle^{-1}\|e^{c \nu^{\frac{1}{3}} \lambda(l) t}\langle l, \eta+l t\rangle^4\langle\frac{1}{l}\rangle^2 \hat{\omega}_l^{\mathrm{L}}(\eta)\|_{L_{l, \eta}^2} \\
				&\quad \times\|\langle\eta+l t\rangle^{-1}\|_{L_\eta^2}\|e^{c \nu^{\frac{1}{3}} \lambda(k-l) t} \langle k-l \rangle   |k-l|^{\frac{1}{3}} \langle k-l \rangle^\frac13  \hat{\theta}_{k-l}^{\mathrm{NL}}(\xi-\eta)\|_{L_{k-l, \xi-\eta}^2} \\
				& \lesssim \langle t\rangle^{-1}\|e^{c \nu^{\frac{1}{3}} \lambda\left(D_x\right) t}\langle D_x\rangle\langle\frac{1}{D_x}\rangle^\epsilon \langle D_x \rangle^\frac13 \theta^{\mathrm{NL}}\|_{L^2}\|e^{c \nu^{\frac{1}{3}} \lambda\left(D_x\right) t}\langle D_x, D_y+t D_x\rangle^4\langle\frac{1}{D_x}\rangle^2 \omega^{\mathrm{L}}\|_{L^2} \\
				& \quad \times  \| e^{c \nu^{\frac{1}{3}} \lambda\left(D_x\right) t} \langle D_x \rangle
				\left|D_x\right|^{\frac{1}{3}}  \langle D_x \rangle^\frac13  \theta^{\mathrm{NL}} \|_{L^2} .
		\end{aligned}\end{equation}
		It follows from \eqref{eq:temp16}, \eqref{eq:temp17} and \eqref{eq:temp18} that
		\begin{equation*}\begin{aligned}
				&\left|\left(  u_1^{\mathrm{L}} \px \langle D_x \rangle^\frac13 \theta^{\mathrm{NL}} \big| \mathcal{M} e^{2 c \nu^{\frac{1}{3}} \lambda\left(D_x\right) t}\langle D_x\rangle^2\langle\frac{1}{D_x}\rangle^{2 \epsilon} \langle D_x \rangle^\frac13 \theta^{\mathrm{NL}}\right)\right|\\
				&\lesssim \langle t\rangle^{-1}     \|e^{c \nu^{\frac{1}{3}} \lambda\left(D_x\right) t}\langle D_x, D_y+t D_x\rangle^4\langle\frac{1}{D_x}\rangle^2 \omega^{\mathrm{L}}\|_{L^2} \\
				& \quad \times (\|e^{c \nu^{\frac{1}{3}} \lambda\left(D_x\right) t}\langle D_x\rangle\langle\frac{1}{D_x}\rangle^\epsilon \langle D_x \rangle^\frac13 \theta^{\mathrm{NL}}\|_{L^2}  \| e^{c \nu^{\frac{1}{3}} \lambda\left(D_x\right) t} \langle D_x \rangle
				\left|D_x\right|^{\frac{1}{3}} \langle D_x \rangle^\frac13 \theta^{\mathrm{NL}} \|_{L^2} \\
				&\qquad + \|e^{c \nu^{\frac{1}{3}} \lambda\left(D_x\right) t}\langle D_x\rangle\langle\frac{1}{D_x}\rangle^\epsilon\left|D_x\right|^{\frac{1}{3}}  \langle D_x \rangle^\frac13 \theta^{\mathrm{NL}}\|_{L^2}^{\frac{3}{2}}   \|e^{c \nu^{\frac{1}{3}} \lambda\left(D_x\right) t}\langle D_x\rangle\langle\frac{1}{D_x}\rangle^\epsilon \px \langle D_x \rangle ^\frac13  \theta^{\mathrm{NL}}\|_{L^2}^{\frac{1}{2}}),
		\end{aligned}\end{equation*}
		which along with \eqref{eq:temp15.5} implies
		\begin{equation}\begin{aligned} \label{eq:sum3}
				&\left|\left(  u^{\mathrm{L}} \cdot\nabla \langle D_x \rangle^\frac13 \theta^{\mathrm{NL}} \big| \mathcal{M} e^{2 c \nu^{\frac{1}{3}} \lambda\left(D_x\right) t}\langle D_x\rangle^2\langle\frac{1}{D_x}\rangle^{2 \epsilon} \langle D_x \rangle^\frac13 \theta^{\mathrm{NL}}\right)\right| \\
				&\lesssim \langle t\rangle^{-2} \|e^{c \nu^{\frac{1}{3}} \lambda\left(D_x\right) t}\langle D_x\rangle  \langle \frac{1}{D_x}\rangle^\epsilon \langle D_x \rangle^\frac13  \theta^{\mathrm{NL}}\|_{L^2}    \|e^{c \nu^{\frac{1}{3}} \lambda\left(D_x\right) t}\langle D_x, D_y + tD_x\rangle^ 5 \langle\frac{1}{D_x}\rangle^3 \omega^{\mathrm{L}} \|_{L^2} \\
				&\quad \times \|e^{c \nu^{\frac{1}{3}} \lambda\left(D_x\right) t} \py \langle D_x \rangle^\frac13  \theta^{\mathrm{NL}} \|_{L^2} \\
				&\quad +  \langle t\rangle^{-1}     \|e^{c \nu^{\frac{1}{3}} \lambda\left(D_x\right) t}\langle D_x, D_y+t D_x\rangle^4\langle\frac{1}{D_x}\rangle^2 \omega^{\mathrm{L}}\|_{L^2} \\
				& \quad \times (\|e^{c \nu^{\frac{1}{3}} \lambda\left(D_x\right) t}\langle D_x\rangle\langle\frac{1}{D_x}\rangle^\epsilon \langle D_x \rangle^\frac13 \theta^{\mathrm{NL}}\|_{L^2}  \| e^{c \nu^{\frac{1}{3}} \lambda\left(D_x\right) t} \langle D_x \rangle
				\left|D_x\right|^{\frac{1}{3}}  \langle D_x \rangle ^\frac13 \theta^{\mathrm{NL}} \|_{L^2} \\
				&\qquad + \|e^{c \nu^{\frac{1}{3}} \lambda\left(D_x\right) t}\langle D_x\rangle\langle\frac{1}{D_x}\rangle^\epsilon\left|D_x\right|^{\frac{1}{3}}  \langle D_x \rangle^\frac13  \theta^{\mathrm{NL}}\|_{L^2}^{\frac{3}{2}}   \|e^{c \nu^{\frac{1}{3}} \lambda\left(D_x\right) t}\langle D_x\rangle\langle\frac{1}{D_x}\rangle^\epsilon \px \langle D_x \rangle^\frac13  \theta^{\mathrm{NL}}\|_{L^2}^{\frac{1}{2}}).
		\end{aligned}\end{equation}

		\underline{\bf Step IV: Estimate of $ |D_x|^\frac13 u^{\mathrm{L}} \cdot  \nabla  \theta^{\mathrm{NL}}$.} 
		Writing \eqref{eq:trick2} and \eqref{eq:trick3} as $|l|^\frac43(|l|^2+|\eta|^2)^{-1}   \lesssim\langle t\rangle^{-2}\langle\frac{1}{l}\rangle^3 \langle l, \eta+l t\rangle^2$ and $ |l|^\frac13 \eta  (|l|^2+|\eta|^2)^{-1}   \lesssim\langle t\rangle^{-1}\langle\frac{1}{l}\rangle^2 \langle l, \eta+l t\rangle$, respectively, 
		we infer similarly as \eqref{eq:sum3} that
		\begin{equation*}\begin{aligned} \label{eq:sum4}
				&\left|\left( |D_x|^\frac13  u^{\mathrm{L}} \cdot\nabla \theta^{\mathrm{NL}} \big| \mathcal{M} e^{2 c \nu^{\frac{1}{3}} \lambda\left(D_x\right) t}\langle D_x\rangle^2\langle\frac{1}{D_x}\rangle^{2 \epsilon} |D_x|^\frac13 \theta^{\mathrm{NL}} \right)\right| \\
				&\lesssim \langle t\rangle^{-2} \|e^{c \nu^{\frac{1}{3}} \lambda\left(D_x\right) t}\langle D_x\rangle  \langle \frac{1}{D_x}\rangle^\epsilon |D_x|^\frac13  \theta^{\mathrm{NL}}\|_{L^2}    \|e^{c \nu^{\frac{1}{3}} \lambda\left(D_x\right) t}\langle D_x, D_y + tD_x\rangle^ 5 \langle\frac{1}{D_x}\rangle^3 \omega^{\mathrm{L}} \|_{L^2} \\
				&\quad \times \|e^{c \nu^{\frac{1}{3}} \lambda\left(D_x\right) t} \py  \theta^{\mathrm{NL}} \|_{L^2} \\
				&\quad +  \langle t\rangle^{-1}     \|e^{c \nu^{\frac{1}{3}} \lambda\left(D_x\right) t}\langle D_x, D_y+t D_x\rangle^4\langle\frac{1}{D_x}\rangle^2 \omega^{\mathrm{L}}\|_{L^2} \\
				& \quad \times \left(\|e^{c \nu^{\frac{1}{3}} \lambda\left(D_x\right) t}\langle D_x\rangle\langle\frac{1}{D_x}\rangle^\epsilon |D_x|^\frac13 \theta^{\mathrm{NL}}\|_{L^2} ^2  + \|e^{c \nu^{\frac{1}{3}} \lambda\left(D_x\right) t}\langle D_x\rangle\langle\frac{1}{D_x}\rangle^\epsilon\left|D_x\right|^{\frac{1}{3}} \theta^{\mathrm{NL}}\|_{L^2} \rt.\\
				&\qquad \quad \lt.\times \|e^{c \nu^{\frac{1}{3}} \lambda\left(D_x\right) t}\langle D_x\rangle\langle\frac{1}{D_x}\rangle^\epsilon\left|D_x\right|^{\frac{2}{3}} \theta^{\mathrm{NL}}\|_{L^2}^{\frac{1}{2}}   \|e^{c \nu^{\frac{1}{3}} \lambda\left(D_x\right) t}\langle D_x\rangle\langle\frac{1}{D_x}\rangle^\epsilon \px |D_x|^\frac13  \theta^{\mathrm{NL}}\|_{L^2}^{\frac{1}{2}}\right).
		\end{aligned}\end{equation*}

		\underline{\bf Step V: Estimate of $  u^{\mathrm{NL}} \cdot  \nabla \langle D_x \rangle^\frac13 \theta^{\mathrm{L}}$.} For this term, we decompose it into two parts as follows:
		\begin{equation}\begin{aligned} \label{eq:I1I2}
				& \left|\left(u^{\mathrm{NL}} \cdot \nabla \langle D_x \rangle^\frac13 \theta^{\mathrm{L}} \big| \mathcal{M} e^{2 c \nu^{\frac{1}{3}} \lambda\left(D_x\right) t}\langle D_x\rangle^2\langle\frac{1}{D_x}\rangle^{2 \epsilon} \langle D_x \rangle^\frac13 \theta^{\mathrm{NL}} \right)\right| \\
				= & \left|\int_{\mathbb{R}^4} \mathcal{M}(t, k, \xi) e^{2 c \nu^{\frac{1}{3}} \lambda(k) t}\langle k\rangle^2\langle\frac{1}{k}\rangle^{2 \epsilon} \langle k \rangle^\frac13\hat{\theta}_k^{\mathrm{NL}}(\xi) \rt. \\
				&\lt. \times \langle k-l \rangle^\frac13  \frac{\eta(k-l)-l(\xi-\eta)}{|l|^2+|\eta|^2} \hat{\omega}_l^{\mathrm{NL}}(\eta) \hat{\theta}_{k-l}^{\mathrm{L}}(\xi-\eta) d k d l d \xi d \eta\right| \\
				\leq & I_1+I_2,
		\end{aligned}\end{equation}
		where $I_1$ and $I_2$ are defined by
		$$
		\begin{gathered}
			I_1 :=\bigg| \int_{\mathbb{R}^4} \mathcal{M}(t, k, \xi) e^{2 c \nu^{\frac{1}{3}} \lambda(k) t}\langle k\rangle^2\langle\frac{1}{k}\rangle^{2 \epsilon} \langle k \rangle^\frac13\hat{\theta}_k^{\mathrm{NL}}(\xi) \langle k-l \rangle^\frac13  \frac{\eta(k-l)-l(\xi-\eta+t(k-l))}{|l|^2+|\eta|^2} \hat{\omega}_l^{\mathrm{NL}}(\eta) \\
			\times \hat{\theta}_{k-l}^{\mathrm{L}}(\xi-\eta) d k d l d \xi d \eta \bigg|, \\
			I_2 :=\left|\int_{\mathbb{R}^4} \mathcal{M}(t, k, \xi) e^{2 c \nu^{\frac{1}{3}} \lambda(k) t}\langle k\rangle^2\langle\frac{1}{k}\rangle^{2 \epsilon} \langle k \rangle^\frac13\hat{\theta}_k^{\mathrm{NL}}(\xi) \langle k-l \rangle^\frac13  \frac{t l(k-l)}{|l|^2+|\eta|^2} \hat{\omega}_l^{\mathrm{NL}}(\eta) \hat{\theta}_{k-l}^{\mathrm{L}}(\xi-\eta) d k d l d \xi d \eta\right| .
		\end{gathered}
		$$ 
		
		\underline{ Step V.1: Estimate of $I_1$.}       
		We classify the analysis into three cases according to the magnitude of the frequency $k$ and the proximity between $k$ and $l$. \\
		$\bullet$~Case 1:   $|k-l| \leq 2|l|$ and $|k|>1$.  It follows from $\langle \frac1k \rangle^{2\epsilon} \lesssim 1$, 
		\begin{align}
			&\langle k \rangle \lesssim \langle k-l, \xi-\eta+(k-l) t\rangle \langle l \rangle, \label{eq:trick4}\\
			& |k-l|+|\xi-\eta+t(k-l)| \leq\langle k-l, \xi-\eta+(k-l) t\rangle, \label{eq:trick5}\\
			& |l|^{-1} \lesssim |k-l|^{-1} \lesssim \langle\frac{1}{k-l}\rangle^2 |k-l|^{\frac{1}{3}} \label{eq:trick6},
		\end{align}
		that
		\begin{align} \label{eq:temp27}
			\no& \left| \int_{|k-l| \leq 2|l|,\,|k|>1} \mathcal{M}(t, k, \xi) e^{2 c \nu^{\frac{1}{3}} \lambda(k) t}\langle k\rangle^2\langle\frac{1}{k}\rangle^{2 \epsilon} \langle k \rangle^\frac13\hat{\theta}_k^{\mathrm{NL}}(\xi) \langle k-l \rangle^\frac13  \frac{\eta(k-l)-l(\xi-\eta+t(k-l))}{|l|^2+|\eta|^2}\right.\\\no
			& \lt. \times \hat{\omega}_l^{\mathrm{NL}}(\eta) \hat{\theta}_{k-l}^{\mathrm{L}}(\xi-\eta) d k d l d \xi d \eta \rt| \\\nonumber
			& \lesssim\|e^{c \nu^{\frac{1}{3}} \lambda(k) t}\langle k\rangle \langle k \rangle^\frac13  \hat{\theta}_k^{\mathrm{NL}}(\xi)\|_{L_{k, \xi}^2}\|e^{c \nu^{\frac{1}{3}} \lambda(l) t}\langle l\rangle \frac{l}{|l|+|\eta|} \hat{\omega}_l^{\mathrm{NL}}(\eta)\|_{L_{l, \eta}^2}\|\langle k-l, \xi-\eta+(k-l) t\rangle^{-2}\|_{L_{k-l, \xi-\eta}^2} \\\no
			& \quad \times\|e^{c \nu^{\frac{1}{3}} \lambda(k-l) t}\langle k-l, \xi-\eta+(k-l) t\rangle^4\langle\frac{1}{k-l}\rangle^2|k-l|^{\frac{1}{3}} \langle k-l \rangle^\frac13 \hat{\theta}_{k-l}^{\mathrm{L}}(\xi-\eta)\|_{L_{k-l, \xi-\eta}^2} \\\no
			& \lesssim   \|e^{c \nu^{\frac{1}{3}} \lambda\left(D_x\right) t}\langle D_x\rangle  \langle D_x \rangle^\frac13  \theta^{\mathrm{NL}}\|_{L^2}\|e^{c \nu^{\frac{1}{3}} \lambda\left(D_x\right) t}\langle D_x\rangle \partial_x \nabla \phi^{\mathrm{NL}}\|_{L^2} \\
			& \quad \times\|e^{c \nu^{\frac{1}{3}} \lambda\left(D_x\right) t}\langle D_x, D_y+t D_x\rangle^4\langle\frac{1}{D_x}\rangle^2\left|D_x\right|^{\frac{1}{3}}  \langle D_x \rangle^\frac13  \theta^{\mathrm{L}}\|_{L^2}.
		\end{align}
		$\bullet$~Case 2:   $|k-l| \leq 2|l|$ and $|k|\leq 1$. Using \eqref{eq:trick4}, \eqref{eq:trick5} and \eqref{eq:trick6} again, we have 
		\begin{align} \label{eq:temp28}
			&\no \left| \int_{|k-l| \leq 2|l|,|k| \leq 1} \mathcal{M}(t, k, \xi) e^{2 c \nu^{\frac{1}{3}} \lambda(k) t}\langle k\rangle^2\langle\frac{1}{k}\rangle^{2 \epsilon} \langle k \rangle^\frac13\hat{\theta}_k^{\mathrm{NL}}(\xi) \langle k-l \rangle^\frac13  \frac{\eta(k-l)-l(\xi-\eta+t(k-l))}{|l|^2+|\eta|^2}\right. \\
			\no& \lt.  \times \hat{\omega}_l^{\mathrm{NL}}(\eta) \hat{\theta}_{k-l}^{\mathrm{L}}(\xi-\eta) d k d l d \xi d \eta \rt| \\
			&\no \lesssim\|\langle\frac{1}{k}\rangle^\epsilon\|_{L_k^2([-1,1])}\|e^{c \nu^{\frac{1}{3}} \lambda(k) t}\langle k\rangle\langle\frac{1}{k}\rangle^\epsilon \langle k \rangle^\frac13  \hat{\theta}_k^{\mathrm{NL}}(\xi)\|_{L_{k, \xi}^2}\|e^{c \nu^{\frac{1}{3}} \lambda(l) t}\langle l\rangle \frac{l}{|l|+|\eta|} \hat{\omega}_l^{\mathrm{NL}}(\eta)\|_{L_{l, \eta}^2} \\
			&\no \quad\times\|\langle\xi-\eta+t(k-l)\rangle^{-1}\|_{L_{\xi-\eta}^2} \\
			&\no \quad\times\|e^{c \nu^{\frac{1}{3}} \lambda(k-l) t}\langle k-l, \xi-\eta+(k-l) t\rangle^3\langle\frac{1}{k-l}\rangle^2|k-l|^{\frac{1}{3}} \langle k-l \rangle ^\frac13 \hat{\theta}_{k-l}^{\mathrm{L}}(\xi-\eta)\|_{L_{k-l, \xi-\eta}^2} \\
			&\no \lesssim\|e^{c \nu^{\frac{1}{3}} \lambda\left(D_x\right) t}\langle D_x\rangle\langle\frac{1}{D_x}\rangle^\epsilon \langle D_x \rangle^\frac13 \theta^{\mathrm{NL}}\|_{L^2}\|e^{c \nu^{\frac{1}{3}} \lambda\left(D_x\right) t}\langle D_x\rangle \partial_x \nabla \phi^{\mathrm{NL}}\|_{L^2} \\
			& \quad \times\|e^{c \nu^{\frac{1}{3}} \lambda\left(D_x\right) t}\langle D_x, D_y+t D_x\rangle^3\langle\frac{1}{D_x}\rangle^2\left|D_x\right|^{\frac{1}{3}}  \langle D_x \rangle^\frac13 \theta^{\mathrm{L}}\|_{L^2}.
		\end{align}
		$\bullet$~Case 3:   $|k-l| > 2|l|$. In this \emph{$l$-near-diagonal region}, it follows that $|k| \sim |k-l|$. Then, by \eqref{eq:trick5}, $\langle k\rangle \lesssim \langle k-l, \xi-\eta+(k-l) t\rangle$, $\langle\frac{1}{k}\rangle^{2 \epsilon}\lesssim \langle\frac{1}{k - l}\rangle^{2\epsilon} \lesssim |k|^\frac13 |k-l|^\frac13 \langle\frac{1}{k - l}\rangle^{2} $, we obtain 
		\begin{align} \label{eq:temp29}
			\no& \left\lvert\, \int_{2|l|<|k-l|} \mathcal{M}(t, k, \xi) e^{2 c \nu^{\frac{1}{3}} \lambda(k) t}\langle k\rangle^2\langle\frac{1}{k}\rangle^{2 \epsilon} \langle k \rangle^\frac13\hat{\theta}_k^{\mathrm{NL}}(\xi) \langle k-l \rangle^\frac13  \frac{\eta(k-l)-l(\xi-\eta+t(k-l))}{|l|^2+|\eta|^2}\right.\no \\\no
			& \times \hat{\omega}_l^{\mathrm{NL}}(\eta) \hat{\theta}_{k-l}^{\mathrm{L}}(\xi-\eta) d k d l d \xi d \eta \mid \\\no
			& \lesssim\|e^{c \nu^{\frac{1}{3}} \lambda(k) t}\langle k\rangle|k|^{\frac{1}{3}} \langle k \rangle^\frac13  \hat{\theta}_k^{\mathrm{NL}}(\xi)\|_{L_{k, \xi}^2}\|e^{c \nu^{\frac{1}{3}} \lambda(l) t}\langle l\rangle\langle\frac{1}{l}\rangle^\epsilon \hat{\omega}_l^{\mathrm{NL}}(\eta)\|_{L_{l, \eta}^2}\|\langle l\rangle^{-1}\langle\frac{1}{l}\rangle^{-\epsilon}\| \frac{1}{|l|+|\eta|}\|_{L_\eta^2}\|_{L_l^2} \\\no
			& \quad \times\|e^{c \nu^{\frac{1}{3}} \lambda(k-l) t}\langle k-l, \xi-\eta+(k-l) t\rangle^2\langle\frac{1}{k-l}\rangle^2|k-l|^{\frac{1}{3}} \langle k-l \rangle^\frac13 \hat{\theta}_{k-l}^{\mathrm{L}}(\xi-\eta)\|_{L_{k-l, \xi-\eta}^2} \\\no
			& \lesssim\|e^{c \nu^{\frac{1}{3}} \lambda\left(D_x\right) t}\langle D_x\rangle\left|D_x\right|^{\frac{1}{3}} \langle D_x \rangle^\frac13  \theta^{\mathrm{NL}}\|_{L^2}\|e^{c \nu^{\frac{1}{3}} \lambda\left(D_x\right) t}\langle D_x\rangle\langle\frac{1}{D_x}\rangle^\epsilon \omega^{\mathrm{NL}}\|_{L^2} \\
			& \quad \times\|e^{c \nu^{\frac{1}{3}} \lambda\left(D_x\right) t}\langle D_x, D_y+t D_x\rangle^2\langle\frac{1}{D_x}\rangle^2\left|D_x\right|^{\frac{1}{3}} \langle D_x \rangle^\frac13  \theta^{\mathrm{L}}\|_{L^2},
		\end{align}
		where we used that for $\epsilon>0$,
		$$
		\|\langle l\rangle^{-1}\langle\frac{1}{l}\rangle^{-\epsilon}\| \frac{1}{|l|+|\eta|}\|_{L_\eta^2}\|_{L_l^2} \lesssim\|\langle l\rangle^{-1}\langle\frac{1}{l}\rangle^{-\epsilon}|l|^{-\frac{1}{2}}\|_{L_l^2} \lesssim 1 .
		$$
		Combining \eqref{eq:temp27}, \eqref{eq:temp28} and \eqref{eq:temp29}, it holds that
		\begin{equation}\begin{aligned}  \label{eq:est of I1}
				I_1 &\lesssim  \lt( \|e^{c \nu^{\frac{1}{3}} \lambda\left(D_x\right) t}\langle D_x\rangle  \langle\frac{1}{D_x}\rangle^\epsilon \langle D_x \rangle^\frac13  \theta^{\mathrm{NL}}\|_{L^2}\|e^{c \nu^{\frac{1}{3}} \lambda\left(D_x\right) t}\langle D_x\rangle \partial_x \nabla \phi^{\mathrm{NL}}\|_{L^2} \rt. \\
				&\lt. \qquad + \|e^{c \nu^{\frac{1}{3}} \lambda\left(D_x\right) t}\langle D_x\rangle\left|D_x\right|^{\frac{1}{3}} \langle D_x \rangle^\frac13  \theta^{\mathrm{NL}}\|_{L^2}\|e^{c \nu^{\frac{1}{3}} \lambda\left(D_x\right) t}\langle D_x\rangle\langle\frac{1}{D_x}\rangle^\epsilon \omega^{\mathrm{NL}}\|_{L^2} \rt) \\
				& \quad \times\|e^{c \nu^{\frac{1}{3}} \lambda\left(D_x\right) t}\langle D_x, D_y+t D_x\rangle^4\langle\frac{1}{D_x}\rangle^2\left|D_x\right|^{\frac{1}{3}} \langle D_x \rangle^\frac13  \theta^{\mathrm{L}}\|_{L^2}
		\end{aligned}\end{equation}
		
	    \underline{ Step V.2: Estimate of $I_2$.}  
		For the real reaction term $I_2$, we will employ the multiplier $\mathcal{M}_3$ to address this challenge. \\
		$\bullet$~Case 1:   $|k-l| \leq 2|k|$.  From \eqref{eq:trick4} and $$\langle\frac{1}{k}\rangle^\epsilon \lesssim\langle\frac{1}{k-l}\rangle^\epsilon  \lesssim |k|^{\frac13 \delta} \langle \frac{1}{k-l} \rangle,$$ there holds that
		\small{	\begin{equation}\begin{aligned} \label{eq:im}
					& \left|\int_{|k-l| \leq 2|k|}  \mathcal{M}(t, k, \xi) e^{2 c \nu^{\frac{1}{3}} \lambda(k) t}\langle k\rangle^2\langle\frac{1}{k}\rangle^{2 \epsilon} \langle k \rangle^\frac13\hat{\theta}_k^{\mathrm{NL}}(\xi) \langle k-l \rangle^\frac13  \frac{t l(k-l)}{|l|^2+|\eta|^2} \hat{\omega}_l^{\mathrm{NL}}(\eta) \hat{\theta}_{k-l}^{\mathrm{L}}(\xi-\eta)   d k d l d \xi d \eta\right| \\
					& \lesssim\|e^{c \nu^{\frac{1}{3}} \lambda(l) t}\langle l\rangle\langle\frac{1}{l}\rangle^\epsilon \frac{l}{\sqrt{|l|^2+|\eta|^2}} \hat{\omega}_l^{\mathrm{NL}}(\eta)\|_{L_{l, \eta}^2} \\
					& \quad \times\|t e^{c \nu^{\frac{1}{3}} \lambda(k-l) t}\langle k-l, \xi-\eta+(k-l) t\rangle^3\langle\frac{1}{k-l}\rangle  \langle k-l \rangle^\frac13 \hat{\theta}_{k-l}^{\mathrm{L}}(\xi-\eta)\|_{L_{k-l, \xi-\eta}^2} \\
					& \quad \times\left(\int_{\mathbb{R}^4} e^{2 c \nu^{\frac{1}{3}} \lambda(k) t}\langle k\rangle^2\langle\frac{1}{k}\rangle^{2 \epsilon}|k |^{\frac{2}{3}\delta}  \frac{\langle\frac{1}{l}\rangle^{-2 \epsilon}}{\left(|l|^2+|\eta|^2\right)\langle k-l, \xi-\eta+(k-l) t\rangle^2}\left|\langle k \rangle^\frac13 \hat{\theta}_k^{\mathrm{NL}}(\xi)\right|^2 d k d l d \xi d \eta\right)^{\frac{1}{2}} .
		\end{aligned}\end{equation} }
		Then, on the one hand, it follows that
		$$
		\begin{aligned}
			& \|t e^{c \nu^{\frac{1}{3}} \lambda(k-l) t}\langle k-l, \xi-\eta+(k-l) t\rangle^3\langle\frac{1}{k-l}\rangle \langle k-l \rangle^\frac13  \hat{\theta}_{k-l}^{\mathrm{L}}(\xi-\eta)\|_{L_{k-l, \xi-\eta}^2} \\
			&\lesssim  \nu^{-\frac{1}{3}}\|c \nu^{\frac{1}{3}}|k-l|^{\frac{2}{3}} t e^{c \nu^{\frac{1}{3}}|k-l|^\frac23  t}\langle k-l, \xi-\eta+(k-l) t\rangle^3\langle\frac{1}{k-l}\rangle^2 \hat{\theta}_{k-l}^{\mathrm{L}}(\xi-\eta)\|_{L_{k-l, \xi-\eta}^2} \\
			&\lesssim \nu^{-\frac{1}{3}}\|e^{c_0 \nu|k-l|^2 t^3}\langle k-l, \xi-\eta+(k-l) t\rangle^3\langle\frac{1}{k-l}\rangle^2 \hat{\theta}_{k-l}^{\mathrm{L}}(\xi-\eta)\|_{L_{k-l, \xi-\eta}^2}
		\end{aligned}
		$$
		On the other hand, it follows from the convolution properties of the Poisson kernel (e.g., see  (2-12) in \cite{WZ2023}) and the trick in \cite{LLZ2025} that
		$$
		\begin{aligned}
			& \int_{\mathbb{R}^2} \frac{\langle\frac{1}{l}\rangle^{-2 \epsilon}}{\left(|l|^2+|\eta|^2\right)\langle k-l, \xi-\eta+(k-l) t\rangle^2} d \eta d l \\
			&\lesssim  \int_{\mathbb{R}}\langle\frac{1}{l}\rangle^{-2 \epsilon} \frac{1}{|l|} \frac{1+|l|+|k-l|}{(1+|k-l|)\left(\langle l, k-l\rangle^2+|\xi+t(k-l)|^2\right)} d l \\
			&\lesssim  \left(\int_{\mathbb{R}}\langle\frac{1}{l}\rangle^{-\frac{1}{3}-\kappa} \frac{1}{|l|^{\frac{1}{3}}}\frac{1+|l|+|k-l|}{(1+|k-l|)\left(\langle l, k-l\rangle^2+|\xi+t(k-l)|^2\right)} d l\right)^{1-\delta}\left(\int_{\mathbb{R}}\langle\frac{1}{l}\rangle^{-\alpha} \frac{1}{|l|}\langle l\rangle^{-1} d l\right)^\delta \\
			&\lesssim  \Upsilon(t, k, \xi)^{1-\delta},
		\end{aligned}
		$$
		where $\alpha=\frac{2 \epsilon-(1-\delta)(1+\kappa)}{\delta}$ satisfies
		$(1-\delta)(-1-\kappa)+\delta(-\alpha)=-2 \epsilon$ 
		and one can choose $0<\kappa<\frac{2 \epsilon}{1-\delta}-1$ so that $\alpha>0$, when $\epsilon>\frac{1-\delta}{2}$. According to the two estimates above, we arrive at
		$$
		\begin{aligned}
			& \left|\int_{|k-l| \leq 2|k|} \mathcal{M}(t, k, \xi) e^{2 c \nu^{\frac{1}{3}} \lambda(k) t}\langle k\rangle^2\langle\frac{1}{k}\rangle^{2 \epsilon} \langle k \rangle^\frac13\hat{\theta}_k^{\mathrm{NL}}(\xi) \langle k-l \rangle^\frac13  \frac{t l(k-l)}{|l|^2+|\eta|^2} \hat{\omega}_l^{\mathrm{NL}}(\eta) \hat{\theta}_{k-l}^{\mathrm{L}}(\xi-\eta) d k d l d \xi d \eta\right| \\
			& \begin{aligned}
				& \lesssim\|e^{c \nu^{\frac{1}{3}} \lambda(l) t}\langle l\rangle\langle\frac{1}{l}\rangle^\epsilon \frac{l}{\sqrt{|l|^2+|\eta|^2}} \hat{\omega}_l^{\mathrm{NL}}(\eta)\|_{L_{l, \eta}^2} \\
				&\quad \times \nu^{-\frac{1}{3}}\|e^{c_0 \nu|k-l|^2 t^3}\langle k-l, \xi-\eta+(k-l) t\rangle^3\langle\frac{1}{k-l}\rangle^2    \hat{\theta}_{k-l}^{\mathrm{L}}(\xi-\eta)\|_{L_{k-l, \epsilon-\eta}^2} \\
				& \quad \times\|e^{c \nu^{\frac{1}{3}} \lambda(k) t}\langle k\rangle\langle\frac{1}{k}\rangle^\epsilon|k|^{\frac{\delta}{3}} \Upsilon(t, k, \xi)^{\frac{1-\delta}{2}} \langle k \rangle^\frac13 \hat{\theta}_k^{\mathrm{NL}}(\xi)\|_{L_{k, \epsilon}^2} \\
				& \lesssim \nu^{-\frac{1}{3} }\|e^{c \nu^{\frac{1}{3}} \lambda\left(D_x\right)}\langle D_x\rangle\langle\frac{1}{D_x}\rangle^\epsilon \partial_x \nabla \phi^{\mathrm{NL}}\|_{L^2}\|e^{c_0 \nu\left|D_x\right|^2 t^3}\langle D_x, D_y+t D_x\rangle^3\langle\frac{1}{D_x}\rangle^2 \omega^{\mathrm{L}}\|_{L^2} \\
				& \quad \times\left( \|e^{c \nu^{\frac{1}{3}} \lambda\left(D_x\right) t}\langle D_x\rangle\langle\frac{1}{D_x}\rangle^\epsilon\left|D_x\right|^{\frac{1}{3}}\langle D_x \rangle^\frac{1}{3} \theta^{\mathrm{NL}}\|_{L^2}\right)^\delta \| e^{c \nu^\frac13 \lambda\left(D_x\right) t} 
				\langle D_x\rangle\langle\frac{1}{D_x}\rangle^{\varepsilon} \sqrt{\Upsilon } \langle D_x \rangle^\frac13 \theta^{\mathrm{NL}} \|_{L^2}^{1-\delta} .
			\end{aligned}
		\end{aligned}
		$$
		$\bullet$~Case 2:   $|k-l| > 2|k|$. In such a case, thanks to 
		\begin{align}
			&\langle k \rangle^2 \lesssim \langle k-l, \xi-\eta+(k-l) t\rangle \langle l \rangle , \label{eq:trick7}\\
			& 1 \lesssim \langle \frac{1}{k-l} \rangle \langle \frac1l \rangle^{-\frac{1+ \kappa}{2} + \epsilon},  \label{eq:trick8}
		\end{align}
		it holds 
		\begin{eqnarray*}
			&& \left|\int_{2|k|<|k-l|}\mathcal{M}(t, k, \xi) e^{2 c \nu^{\frac{1}{3}} \lambda(k) t}\langle k\rangle^2\langle\frac{1}{k}\rangle^{2 \epsilon} \langle k \rangle^\frac13\hat{\theta}_k^{\mathrm{NL}}(\xi)\rt.\\
			&&\lt. \qquad\qquad  \qquad \times  \langle k-l \rangle^\frac13  \frac{t l(k-l)}{|l|^2+|\eta|^2} \hat{\omega}_l^{\mathrm{NL}}(\eta) \hat{\theta}_{k-l}^{\mathrm{L}}(\xi-\eta) d k d l d \xi d \eta\right| \\
			& \lesssim&\|\langle k\rangle^{-1}\langle\frac{1}{k}\rangle^\epsilon\|_{L_k^2}\|e^{c \nu^{\frac{1}{3}} \lambda(l) t}\langle l\rangle\langle\frac{1}{l}\rangle^\epsilon \frac{l}{\sqrt{|l|^2+|\eta|^2}} \hat{\omega}_l^{\mathrm{NL}}(\eta)\|_{L_{l, \eta}^2}\|\langle\xi-\eta+t(k-l)\rangle^{-1}\|_{L_{\xi}^2} \\
			&& \times\|t e^{c \nu^{\frac{1}{3}} \lambda(k-l) t}\langle k-l, \xi-\eta+(k-l) t\rangle^4\langle\frac{1}{k-l}\rangle \langle k-l \rangle^\frac13 \hat{\theta}_{k-l}^{\mathrm{L}}(\xi-\eta)\|_{L_{k-l, \xi-\eta}^{\infty}} \\
			&& \times\left(\int_{\mathbb{R}^4} e^{2 c \nu^3 \lambda(k) t}\langle k\rangle^2\langle\frac{1}{k}\rangle^{2 \epsilon} \frac{\langle\frac{1}{l}\rangle^{-1-\kappa}}{\left(|l|^2+|\eta|^2\right)\langle k-l, \xi-\eta+(k-l) t\rangle^2}\left| \langle k \rangle^\frac13 \hat{\theta}_k^{\mathrm{NL}}(\xi)\right|^2 d k d l d \xi d \eta\right)^{\frac{1}{2}} \\
			& \lesssim&\|e^{c \nu^{\frac{1}{3}} \lambda(l) t}\langle l\rangle\langle\frac{1}{l}\rangle^\epsilon \frac{l}{\sqrt{|l|^2+|\eta|^2}}   \hat{\omega}_l^{\mathrm{NL}}(\eta)\|_{L_{l, \eta}^2}  \|e^{c \nu^{\frac{1}{3}} \lambda(k) t}\langle k\rangle\langle\frac{1}{k}\rangle^\epsilon \sqrt{\Upsilon(t, k, \xi)} \langle k \rangle^\frac13  \hat{\theta}_k^{\mathrm{NL}}(\xi)\|_{L_{k, \epsilon}^2} \\
			&& \times \nu^{-\frac{1}{3}}\|e^{c_0 \nu|k-l|^2 t^3}\langle k-l, \xi-\eta+(k-l) t\rangle^4\langle\frac{1}{k-l}\rangle^2  \hat{\theta}_{k-l}^{\mathrm{L}}(\xi-\eta)\|_{L_{k-l, \xi-\eta}^{\infty}} \\
			& \lesssim& \nu^{-\frac{1}{3}}\|e^{c \nu^{\frac{1}{3}} \lambda\left(D_x\right)}\langle D_x\rangle\langle\frac{1}{D_x}\rangle^\epsilon \partial_x \nabla \phi^{\mathrm{NL}}\|_{L^2}\|e^{c_0 \nu\left|D_x\right|^2 t^3}\langle D_x, D_y+t D_x\rangle^4\langle\frac{1}{D_x}\rangle^2 \theta^{\mathrm{L}}\|_{L^1} \\
			&& \times\|e^{c \nu^{\frac{1}{3}} \lambda\left(D_x\right) t}\langle D_x\rangle\langle\frac{1}{D_x}\rangle^\epsilon \sqrt{\Upsilon} \langle D_x \rangle^\frac13  \theta^{\mathrm{NL}}\|_{L^2} .
		\end{eqnarray*} 

		Combining the above two cases, we deduce that
		$$
		\begin{aligned}
			& I_2 \lesssim \nu^{-\frac{1}{3}-\frac{\delta}{6}}\|e^{c \nu^{\frac{1}{3}} \lambda\left(D_x\right)}\langle D_x\rangle\langle\frac{1}{D_x}\rangle^\epsilon \partial_x \nabla \phi^{\mathrm{NL}}\|_{L^2}\|e^{c_0 \nu\left|D_x\right|^2 t^3}\langle D_x, D_y+t D_x\rangle^4\langle\frac{1}{D_x}\rangle^2 \theta^{\mathrm{L}}\|_{L^1 \cap L^2} \\
			& \times\left(\|e^{c \nu^{\frac{1}{3}} \lambda\left(D_x\right) t}\langle D_x\rangle\langle\frac{1}{D_x}\rangle^\epsilon \sqrt{\Upsilon} \langle D_x \rangle^\frac13  \theta^{\mathrm{NL}}\|_{L^2}+\nu^{\frac{1}{6}}\|e^{c \nu^{\frac{1}{3}} \lambda\left(D_x\right) t}\langle D_x\rangle\langle\frac{1}{D_x}\rangle^\epsilon |D_x|^\frac13 \langle D_x \rangle^{\frac{1}{3}} \theta^{\mathrm{NL}}\|_{L^2}\right),
		\end{aligned}
		$$
		which, combined \eqref{eq:est of I1} and \eqref{eq:I1I2}, gives that
		\begin{equation*}\begin{aligned}
				&\left|\left(u^{\mathrm{NL}} \cdot \nabla \langle D_x \rangle^\frac13 \theta^{\mathrm{L}} \big| \mathcal{M} e^{2 c \nu^{\frac{1}{3}} \lambda\left(D_x\right) t}\langle D_x\rangle^2\langle\frac{1}{D_x}\rangle^{2 \epsilon} \langle D_x \rangle^\frac13 \theta^{\mathrm{NL}}\right)\right| \\
				\lesssim &  \lt( \|e^{c \nu^{\frac{1}{3}} \lambda\left(D_x\right) t}\langle D_x\rangle  \langle\frac{1}{D_x}\rangle^\epsilon \langle D_x \rangle^\frac13  \theta^{\mathrm{NL}}\|_{L^2}\|e^{c \nu^{\frac{1}{3}} \lambda\left(D_x\right) t}\langle D_x\rangle \partial_x \nabla \phi^{\mathrm{NL}}\|_{L^2} \rt. \\
				&\lt. \quad + \|e^{c \nu^{\frac{1}{3}} \lambda\left(D_x\right) t}\langle D_x\rangle\left|D_x\right|^{\frac{1}{3}} \langle D_x \rangle^\frac13  \theta^{\mathrm{NL}}\|_{L^2}\|e^{c \nu^{\frac{1}{3}} \lambda\left(D_x\right) t}\langle D_x\rangle\langle\frac{1}{D_x}\rangle^\epsilon \omega^{\mathrm{NL}}\|_{L^2} \rt) \\
				&  \times\|e^{c \nu^{\frac{1}{3}} \lambda\left(D_x\right) t}\langle D_x, D_y+t D_x\rangle^4\langle\frac{1}{D_x}\rangle^2\left|D_x\right|^{\frac{1}{3}} \langle D_x \rangle^\frac13  \theta^{\mathrm{L}}\|_{L^2} \\
				&  + \nu^{-\frac{1}{3}-\frac{\delta}{6}}\|e^{c \nu^{\frac{1}{3}} \lambda\left(D_x\right)}\langle D_x\rangle\langle\frac{1}{D_x}\rangle^\epsilon \partial_x \nabla \phi^{\mathrm{NL}}\|_{L^2}\|e^{c_0 \nu\left|D_x\right|^2 t^3}\langle D_x, D_y+t D_x\rangle^4\langle\frac{1}{D_x}\rangle^2 \theta^{\mathrm{L}}\|_{L^1 \cap L^2} \\
				&\quad  \times\left(\|e^{c \nu^{\frac{1}{3}} \lambda\left(D_x\right) t}\langle D_x\rangle\langle\frac{1}{D_x}\rangle^\epsilon \sqrt{\Upsilon} \langle D_x \rangle^\frac13  \theta^{\mathrm{NL}}\|_{L^2}+\nu^{\frac{1}{6}}\|e^{c \nu^{\frac{1}{3}} \lambda\left(D_x\right) t}\langle D_x\rangle\langle\frac{1}{D_x}\rangle^\epsilon |D_x|^\frac13 \langle D_x \rangle^{\frac{1}{3}} \theta^{\mathrm{NL}}\|_{L^2}\right).
		\end{aligned}\end{equation*}

		\underline{\bf Step VI: Estimate of $  |D_x|^\frac13 u^{\mathrm{NL}} \cdot  \nabla  \theta^{\mathrm{L}}$.} We divide this term into two parts as follows:
		$$
		\begin{aligned}
			& \left|\left( |D_x|^\frac13 u^{\mathrm{NL}} \cdot \nabla  \theta^{\mathrm{L}} \big| \mathcal{M} e^{2 c \nu^{\frac{1}{3}} \lambda\left(D_x\right) t}\langle D_x\rangle^2\langle\frac{1}{D_x}\rangle^{2 \epsilon} |D_x|^\frac13 \theta^{\mathrm{NL}}\right)\right| \\
			= & \left|\int_{\mathbb{R}^4} \mathcal{M}(t, k, \xi) e^{2 c \nu^{\frac{1}{3}} \lambda(k) t}\langle k\rangle^2\langle\frac{1}{k}\rangle^{2 \epsilon} |k|^\frac13\hat{\theta}_k^{\mathrm{NL}}(\xi) |l|^\frac13  \frac{\eta(k-l)-l(\xi-\eta)}{|l|^2+|\eta|^2} \hat{\omega}_l^{\mathrm{NL}}(\eta) \hat{\theta}_{k-l}^{\mathrm{L}}(\xi-\eta) d k d l d \xi d \eta\right| \\
			\leq & I_3+I_4,
		\end{aligned}
		$$
		where
		$$
		\begin{gathered}
			I_3 :=\left\lvert\, \int_{\mathbb{R}^4} \mathcal{M}(t, k, \xi) e^{2 c \nu^{\frac{1}{3}} \lambda(k) t}\langle k\rangle^2\langle\frac{1}{k}\rangle^{2 \epsilon} |k|^\frac13\hat{\theta}_k^{\mathrm{NL}}(\xi) |l|^\frac13  \frac{\eta(k-l)-l(\xi-\eta+t(k-l))}{|l|^2+|\eta|^2} \hat{\omega}_l^{\mathrm{NL}}(\eta)\right. \\
			\times \hat{\theta}_{k-l}^{\mathrm{L}}(\xi-\eta) d k d l d \xi d \eta \mid \\
			I_4 :=\left|\int_{\mathbb{R}^4} \mathcal{M}(t, k, \xi) e^{2 c \nu^{\frac{1}{3}} \lambda(k) t}\langle k\rangle^2\langle\frac{1}{k}\rangle^{2 \epsilon} |k|^\frac13\hat{\theta}_k^{\mathrm{NL}}(\xi) |l|^\frac13  \frac{t l(k-l)}{|l|^2+|\eta|^2} \hat{\omega}_l^{\mathrm{NL}}(\eta) \hat{\theta}_{k-l}^{\mathrm{L}}(\xi-\eta) d k d l d \xi d \eta\right| .
		\end{gathered}
		$$  
		
		\underline{ StepVI.1: Estimate of $  I_3$.}      
		For $I_3$, it is similar as $I_1$. \\
		$\bullet$~Case 1: $|k-l| \leq 2|l|$. For $|k|>1$,  it follows from \eqref{eq:trick4}, $\langle \frac1k \rangle^{2\epsilon} \lesssim 1$ and $|l|^{-\frac23} \lesssim |k-l|^{-\frac23} \lesssim \langle\frac{1}{k-l}\rangle^2 |k-l|^{\frac{2}{3}} $ that
		\begin{align} \label{eq:temp30}
			& \left| \int_{|k-l| \leq 2|l|,\,|k|>1} \mathcal{M}(t, k, \xi) e^{2 c \nu^{\frac{1}{3}} \lambda(k) t}\langle k\rangle^2\langle\frac{1}{k}\rangle^{2 \epsilon} |k|^\frac13\hat{\theta}_k^{\mathrm{NL}}(\xi) |l|^\frac13  \frac{\eta(k-l)-l(\xi-\eta+t(k-l))}{|l|^2+|\eta|^2}\right. \no\\
			& \lt. \times \hat{\omega}_l^{\mathrm{NL}}(\eta) \hat{\theta}_{k-l}^{\mathrm{L}}(\xi-\eta) d k d l d \xi d \eta \rt| \no \\
			& \lesssim\|e^{c \nu^{\frac{1}{3}} \lambda(k) t}\langle k\rangle |k|^\frac13  \hat{\theta}_k^{\mathrm{NL}}(\xi)\|_{L_{k, \xi}^2}\|e^{c \nu^{\frac{1}{3}} \lambda(l) t}\langle l\rangle \frac{l}{|l|+|\eta|} \hat{\omega}_l^{\mathrm{NL}}(\eta)\|_{L_{l, \eta}^2}\|\langle k-l, \xi-\eta+(k-l) t\rangle^{-2}\|_{L_{k-l, \xi-\eta}^2} \no\\
			& \quad \times\|e^{c \nu^{\frac{1}{3}} \lambda(k-l) t}\langle k-l, \xi-\eta+(k-l) t\rangle^4\langle\frac{1}{k-l}\rangle^2|k-l|^{\frac{2}{3}} \hat{\theta}_{k-l}^{\mathrm{L}}(\xi-\eta)\|_{L_{k-l, \xi-\eta}^2} \no\\
			& \lesssim\|e^{c \nu^{\frac{1}{3}} \lambda\left(D_x\right) t}\langle D_x\rangle  |D_x|^\frac13  \theta^{\mathrm{NL}}\|_{L^2}\|e^{c \nu^{\frac{1}{3}} \lambda\left(D_x\right) t}\langle D_x\rangle \partial_x \nabla \phi^{\mathrm{NL}}\|_{L^2} \no \\
			& \quad \times\|e^{c \nu^{\frac{1}{3}} \lambda\left(D_x\right) t}\langle D_x, D_y+t D_x\rangle^4\langle\frac{1}{D_x}\rangle^2\left|D_x\right|^{\frac{2}{3}} \theta^{\mathrm{L}}\|_{L^2}
		\end{align}
		and similarly for $|k| \leq 1$,
		\begin{align} \label{eq:temp31}
			& \left\lvert\, \int_{|k-l| \leq 2|l|,|k| \leq 1} \mathcal{M}(t, k, \xi) e^{2 c \nu^{\frac{1}{3}} \lambda(k) t}\langle k\rangle^2\langle\frac{1}{k}\rangle^{2 \epsilon} |k|^\frac13\hat{\theta}_k^{\mathrm{NL}}(\xi) |l|^\frac13  \frac{\eta(k-l)-l(\xi-\eta+t(k-l))}{|l|^2+|\eta|^2}\right. \no\\
			& \times \hat{\omega}_l^{\mathrm{NL}}(\eta) \hat{\theta}_{k-l}^{\mathrm{L}}(\xi-\eta) d k d l d \xi d \eta \mid \no \\
			& \lesssim\|\langle\frac{1}{k}\rangle^\epsilon\|_{L_k^2([-1,1])}\|e^{c \nu^{\frac{1}{3}} \lambda(k) t}\langle k\rangle\langle\frac{1}{k}\rangle^\epsilon |k|^\frac13  \hat{\theta}_k^{\mathrm{NL}}(\xi)\|_{L_{k, \xi}^2}\|e^{c \nu^{\frac{1}{3}} \lambda(l) t}\langle l\rangle \frac{l}{|l|+|\eta|} \hat{\omega}_l^{\mathrm{NL}}(\eta)\|_{L_{l, \eta}^2} \no\\
			& \quad \times\|\langle\xi-\eta+t(k-l)\rangle^{-1}\|_{L_{\xi-\eta}^2} \no\\
			& \times\|e^{c \nu^{\frac{1}{3}} \lambda(k-l) t}\langle k-l, \xi-\eta+(k-l) t\rangle^3\langle\frac{1}{k-l}\rangle^2|k-l|^{\frac{2}{3}} \hat{\theta}_{k-l}^{\mathrm{L}}(\xi-\eta)\|_{L_{k-l, \xi-\eta}^2} \no\\
			& \lesssim\|e^{c \nu^{\frac{1}{3}} \lambda\left(D_x\right) t}\langle D_x\rangle\langle\frac{1}{D_x}\rangle^\epsilon |D_x|^\frac13 \theta^{\mathrm{NL}}\|_{L^2}\|e^{c \nu^{\frac{1}{3}} \lambda\left(D_x\right) t}\langle D_x\rangle \partial_x \nabla \phi^{\mathrm{NL}}\|_{L^2} \no\\
			& \quad \times\|e^{c \nu^{\frac{1}{3}} \lambda\left(D_x\right) t}\langle D_x, D_y+t D_x\rangle^3\langle\frac{1}{D_x}\rangle^2\left|D_x\right|^{\frac{2}{3}} \theta^{\mathrm{L}}\|_{L^2}.
		\end{align}

		$\bullet$~Case 2: $|k-l| > 2|l|$. From $\langle k\rangle \lesssim \langle k-l, \xi-\eta+(k-l) t\rangle$, 
		$$
		|l|^\frac13 \langle\frac{1}{k}\rangle^{2 \epsilon}\lesssim |k-l|^\frac13 \langle\frac{1}{k - l}\rangle^{2\epsilon} \lesssim |k|^\frac13 |k-l|^\frac23 \langle\frac{1}{k - l}\rangle^{2} 
		$$ 
		and \eqref{eq:trick5}, we have
		\begin{align} \label{eq:temp32}
			& \left\lvert\, \int_{2|l|<|k-l|} \mathcal{M}(t, k, \xi) e^{2 c \nu^{\frac{1}{3}} \lambda(k) t}\langle k\rangle^2\langle\frac{1}{k}\rangle^{2 \epsilon} |k|^\frac13\hat{\theta}_k^{\mathrm{NL}}(\xi) |l|^\frac13  \frac{\eta(k-l)-l(\xi-\eta+t(k-l))}{|l|^2+|\eta|^2}\right.\no \\
			& \times \hat{\omega}_l^{\mathrm{NL}}(\eta) \hat{\theta}_{k-l}^{\mathrm{L}}(\xi-\eta) d k d l d \xi d \eta \mid \no\\
			& \lesssim\|e^{c \nu^{\frac{1}{3}} \lambda(k) t}\langle k\rangle|k|^{\frac{2}{3}} \hat{\theta}_k^{\mathrm{NL}}(\xi)\|_{L_{k, \xi}^2}\|e^{c \nu^{\frac{1}{3}} \lambda(l) t}\langle l\rangle\langle\frac{1}{l}\rangle^\epsilon \hat{\omega}_l^{\mathrm{NL}}(\eta)\|_{L_{l, \eta}^2}\|\langle l\rangle^{-1}\langle\frac{1}{l}\rangle^{-\epsilon}\| \frac{1}{|l|+|\eta|}\|_{L_\eta^2}\|_{L_l^2} \no\\
			& \quad \times\|e^{c \nu^{\frac{1}{3}} \lambda(k-l) t}\langle k-l, \xi-\eta+(k-l) t\rangle^2\langle\frac{1}{k-l}\rangle^2|k-l|^{\frac{2}{3}} \hat{\theta}_{k-l}^{\mathrm{L}}(\xi-\eta)\|_{L_{k-l, \xi-\eta}^2} \no\\
			& \lesssim\|e^{c \nu^{\frac{2}{3}} \lambda\left(D_x\right) t}\langle D_x\rangle\left|D_x\right|^{\frac{2}{3}} \theta^{\mathrm{NL}}\|_{L^2}\|e^{c \nu^{\frac{1}{3}} \lambda\left(D_x\right) t}\langle D_x\rangle\langle\frac{1}{D_x}\rangle^\epsilon \omega^{\mathrm{NL}}\|_{L^2} \no\\
			& \quad \times\|e^{c \nu^{\frac{1}{3}} \lambda\left(D_x\right) t}\langle D_x, D_y+t D_x\rangle^2\langle\frac{1}{D_x}\rangle^2\left|D_x\right|^{\frac{2}{3}} \theta^{\mathrm{L}}\|_{L^2}.
		\end{align}
		Combining \eqref{eq:temp30}, \eqref{eq:temp31} with \eqref{eq:temp32}, we get that
		\begin{equation}\begin{aligned} \label{eq:est of I3}
				I_3 \lesssim& \lt( \|e^{c \nu^{\frac{1}{3}} \lambda\left(D_x\right) t}\langle D_x\rangle\langle\frac{1}{D_x}\rangle^\epsilon |D_x|^\frac13 \theta^{\mathrm{NL}}\|_{L^2}\|e^{c \nu^{\frac{1}{3}} \lambda\left(D_x\right) t}\langle D_x\rangle \partial_x \nabla \phi^{\mathrm{NL}}\|_{L^2} \rt. \\
				& \lt. \quad +\|e^{c \nu^{\frac{2}{3}} \lambda\left(D_x\right) t}\langle D_x\rangle\left|D_x\right|^{\frac{2}{3}} \theta^{\mathrm{NL}}\|_{L^2}\|e^{c \nu^{\frac{1}{3}} \lambda\left(D_x\right) t}\langle D_x\rangle\langle\frac{1}{D_x}\rangle^\epsilon \omega^{\mathrm{NL}}\|_{L^2} \rt)\\
				& \times\|e^{c \nu^{\frac{1}{3}} \lambda\left(D_x\right) t}\langle D_x, D_y+t D_x\rangle^4\langle\frac{1}{D_x}\rangle^2\left|D_x\right|^{\frac{2}{3}} \theta^{\mathrm{L}}\|_{L^2}.
		\end{aligned}\end{equation}
	
	    \underline{ StepVI.2: Estimate of $  I_4$.} 
		Now, we turn to $I_4$.\\
		$\bullet$~Case 1: $|k-l| \leq 2|k|$. From \eqref{eq:trick4} and $\langle\frac{1}{k}\rangle^\epsilon \lesssim\langle\frac{1}{k-l}\rangle^\epsilon  \lesssim  \langle \frac{1}{k-l} \rangle$, it holds that
		\begin{equation}\begin{aligned} \label{eq:im2}
				& \left|\int_{|k-l| \leq 2|k|}  \mathcal{M}(t, k, \xi) e^{2 c \nu^{\frac{1}{3}} \lambda(k) t}\langle k\rangle^2\langle\frac{1}{k}\rangle^{2 \epsilon} |k|^\frac13\hat{\theta}_k^{\mathrm{NL}}(\xi) |l|^\frac13  \frac{t l(k-l)}{|l|^2+|\eta|^2} \hat{\omega}_l^{\mathrm{NL}}(\eta) \hat{\theta}_{k-l}^{\mathrm{L}}(\xi-\eta)   d k d l d \xi d \eta\right| \\
				& \lesssim\|e^{c \nu^{\frac{1}{3}} \lambda(l) t}\langle l\rangle\langle\frac{1}{l}\rangle^\epsilon \frac{l}{\sqrt{|l|^2+|\eta|^2}} \hat{\omega}_l^{\mathrm{NL}}(\eta)\|_{L_{l, \eta}^2} \\
				& \quad \times\|t e^{c \nu^{\frac{1}{3}} \lambda(k-l) t}\langle k-l, \xi-\eta+(k-l) t\rangle^3\langle\frac{1}{k-l}\rangle   \hat{\theta}_{k-l}^{\mathrm{L}}(\xi-\eta)\|_{L_{k-l, \xi-\eta}^2} \\
				& \quad \times\left(\int_{\mathbb{R}^4} e^{2 c \nu^{\frac{1}{3}} \lambda(k) t}\langle k\rangle^2\langle\frac{1}{k}\rangle^{2 \epsilon}  \frac{\langle\frac{1}{l}\rangle^{-\frac13 - \kappa}  |l|^\frac23 }{\left(|l|^2+|\eta|^2\right)\langle k-l, \xi-\eta+(k-l) t\rangle^2}\left||k|^\frac13 \hat{\theta}_k^{\mathrm{NL}}(\xi)\right|^2 d k d l d \xi d \eta\right)^{\frac{1}{2}} .
		\end{aligned}\end{equation}
		Hence, noting that
		$$
		\begin{aligned}
			& \|t e^{c \nu^{\frac{1}{3}} \lambda(k-l) t}\langle k-l, \xi-\eta+(k-l) t\rangle^3\langle\frac{1}{k-l}\rangle  \hat{\theta}_{k-l}^{\mathrm{L}}(\xi-\eta)\|_{L_{k-l, \xi-\eta}^2} \\
			\lesssim & \nu^{-\frac{1}{3}}\|e^{c_0 \nu|k-l|^2 t^3}\langle k-l, \xi-\eta+(k-l) t\rangle^3\langle\frac{1}{k-l}\rangle^2 \hat{\theta}_{k-l}^{\mathrm{L}}(\xi-\eta)\|_{L_{k-l, \xi-\eta}^2}
		\end{aligned}
		$$
		and
		$$
		\begin{aligned}
			& \int_{\mathbb{R}^2} \frac{\langle\frac{1}{l}\rangle^{-\frac13 - \kappa}  |l|^\frac23 }{\left(|l|^2+|\eta|^2\right)\langle k-l, \xi-\eta+(k-l) t\rangle^2} d \eta d l \\
			\lesssim & \int_{\mathbb{R}}\langle\frac{1}{l}\rangle^{-\frac13 - \kappa} \frac{1}{|l|^\frac13 } \frac{1+|l|+|k-l|}{(1+|k-l|)\left(\langle l, k-l\rangle^2+|\xi+t(k-l)|^2\right)} d l 
			\lesssim \Upsilon(t, k, \xi),
		\end{aligned}
		$$
		we deduce that 
		\begin{align*}
			& \left|\int_{|k-l| \leq 2|k|} \mathcal{M}(t, k, \xi) e^{2 c \nu^{\frac{1}{3}} \lambda(k) t}\langle k\rangle^2\langle\frac{1}{k}\rangle^{2 \epsilon} |k|^\frac13\hat{\theta}_k^{\mathrm{NL}}(\xi) |l|^\frac13  \frac{t l(k-l)}{|l|^2+|\eta|^2} \hat{\omega}_l^{\mathrm{NL}}(\eta) \hat{\theta}_{k-l}^{\mathrm{L}}(\xi-\eta) d k d l d \xi d \eta\right| \\
			& \lesssim \nu^{-\frac{1}{3} }\|e^{c \nu^{\frac{1}{3}} \lambda\left(D_x\right)}\langle D_x\rangle\langle\frac{1}{D_x}\rangle^\epsilon \partial_x \nabla \phi^{\mathrm{NL}}\|_{L^2}\|e^{c_0 \nu\left|D_x\right|^2 t^3}\langle D_x, D_y+t D_x\rangle^3\langle\frac{1}{D_x}\rangle^2 \theta^{\mathrm{L}}\|_{L^2} \\
			& \quad \times \| e^{c \nu^\frac13 \lambda\left(D_x\right) t} 
			\langle D_x\rangle\langle\frac{1}{D_x}\rangle^{\varepsilon} \sqrt{\Upsilon } |D_x|^\frac13 \theta^{\mathrm{NL}} \|_{L^2} .
		\end{align*}
		$\bullet$~Case 2:  $2|k|<|k-l|$.  \eqref{eq:trick7}, \eqref{eq:trick8} and $|l|^\frac13 \lesssim |k-l|^\frac13 $ implies
		\begin{align*}
			& \left|\int_{2|k|<|k-l|}\mathcal{M}(t, k, \xi) e^{2 c \nu^{\frac{1}{3}} \lambda(k) t}\langle k\rangle^2\langle\frac{1}{k}\rangle^{2 \epsilon} |k|^\frac13\hat{\theta}_k^{\mathrm{NL}}(\xi) |l|^\frac13  \frac{t l(k-l)}{|l|^2+|\eta|^2} \hat{\omega}_l^{\mathrm{NL}}(\eta) \hat{\theta}_{k-l}^{\mathrm{L}}(\xi-\eta) d k d l d \xi d \eta\right| \\
			& \lesssim\|\langle k\rangle^{-1}\langle\frac{1}{k}\rangle^\epsilon\|_{L_k^2}\|e^{c \nu^{\frac{1}{3}} \lambda(l) t}\langle l\rangle\langle\frac{1}{l}\rangle^\epsilon \frac{l}{\sqrt{|l|^2+|\eta|^2}} \hat{\omega}_l^{\mathrm{NL}}(\eta)\|_{L_{l, \eta}^2}\|\langle\xi-\eta+t(k-l)\rangle^{-1}\|_{L_{\xi}^2} \\
			& \times\|t e^{c \nu^{\frac{1}{3}} \lambda(k-l) t}\langle k-l, \xi-\eta+(k-l) t\rangle^4\langle\frac{1}{k-l}\rangle |k-l|^\frac13 \hat{\theta}_{k-l}^{\mathrm{L}}(\xi-\eta)\|_{L_{k-l, \xi-\eta}^{\infty}} \\
			& \times\left(\int_{\mathbb{R}^4} e^{2 c \nu^3 \lambda(k) t}\langle k\rangle^2\langle\frac{1}{k}\rangle^{2 \epsilon} \frac{\langle\frac{1}{l}\rangle^{-1-\kappa}}{\left(|l|^2+|\eta|^2\right)\langle k-l, \xi-\eta+(k-l) t\rangle^2}\left| |k|^\frac13 \hat{\theta}_k^{\mathrm{NL}}(\xi)\right|^2 d k d l d \xi d \eta\right)^{\frac{1}{2}} \\
			& \lesssim\|e^{c \nu^{\frac{1}{3}} \lambda(l) t}\langle l\rangle\langle\frac{1}{l}\rangle^\epsilon \frac{l}{\sqrt{|l|^2+|\eta|^2}} \hat{\omega}_l^{\mathrm{NL}}(\eta)\|_{L_{l, \eta}^2} \\
			& \times \nu^{-\frac{1}{3}}\|e^{c_0 \nu|k-l|^2 t^3}\langle k-l, \xi-\eta+(k-l) t\rangle^4\langle\frac{1}{k-l}\rangle^2  \hat{\theta}_{k-l}^{\mathrm{L}}(\xi-\eta)\|_{L_{k-l, \xi-\eta}^{\infty}} \\
			& \times\|e^{c \nu^{\frac{1}{3}} \lambda(k) t}\langle k\rangle\langle\frac{1}{k}\rangle^\epsilon \sqrt{\Upsilon(t, k, \xi)} |k|^\frac13  \hat{\theta}_k^{\mathrm{NL}}(\xi)\|_{L_{k, \epsilon}^2} \\
			& \lesssim \nu^{-\frac{1}{3}}\|e^{c \nu^{\frac{1}{3}} \lambda\left(D_x\right)}\langle D_x\rangle\langle\frac{1}{D_x}\rangle^\epsilon \partial_x \nabla \phi^{\mathrm{NL}}\|_{L^2}\|e^{c_0 \nu\left|D_x\right|^2 t^3}\langle D_x, D_y+t D_x\rangle^4\langle\frac{1}{D_x}\rangle^2 \theta^{\mathrm{L}}\|_{L^1} \\
			& \times\|e^{c \nu^{\frac{1}{3}} \lambda\left(D_x\right) t}\langle D_x\rangle\langle\frac{1}{D_x}\rangle^\epsilon \sqrt{\Upsilon} |D_x|^\frac13  \theta^{\mathrm{NL}}\|_{L^2} .
		\end{align*}
		
		Adding the above two cases together, we get that
		$$
		\begin{aligned}
			I_4 &\lesssim \nu^{-\frac{1}{3}-\frac{\delta}{6}}\|e^{c \nu^{\frac{1}{3}} \lambda\left(D_x\right)}\langle D_x\rangle\langle\frac{1}{D_x}\rangle^\epsilon \partial_x \nabla \phi^{\mathrm{NL}}\|_{L^2}\|e^{c_0 \nu\left|D_x\right|^2 t^3}\langle D_x, D_y+t D_x\rangle^4\langle\frac{1}{D_x}\rangle^2 \theta^{\mathrm{L}}\|_{L^1 \cap L^2} \\
			&\quad  \times\left(\|e^{c \nu^{\frac{1}{3}} \lambda\left(D_x\right) t}\langle D_x\rangle\langle\frac{1}{D_x}\rangle^\epsilon \sqrt{\Upsilon} |D_x|^\frac13  \theta^{\mathrm{NL}}\|_{L^2}+\nu^{\frac{1}{6}}\|e^{c \nu^{\frac{1}{3}} \lambda\left(D_x\right) t}\langle D_x\rangle\langle\frac{1}{D_x}\rangle^\epsilon\left|D_x\right|^{\frac{2}{3}} \theta^{\mathrm{NL}}\|_{L^2}\right), 
		\end{aligned}
		$$
		which along with \eqref{eq:est of I3} implies
		\begin{align*}
			&\left|\left( |D_x|^\frac13 u^{\mathrm{NL}} \cdot \nabla  \theta^{\mathrm{L}} \left\lvert\, \mathcal{M} e^{2 c \nu^{\frac{1}{3}} \lambda\left(D_x\right) t}\langle D_x\rangle^2\langle\frac{1}{D_x}\rangle^{2 \epsilon} |D_x|^\frac13 \theta^{\mathrm{NL}}\right.\right)\right|\\
			\lesssim & \lt( \|e^{c \nu^{\frac{1}{3}} \lambda\left(D_x\right) t}\langle D_x\rangle\langle\frac{1}{D_x}\rangle^\epsilon |D_x|^\frac13 \theta^{\mathrm{NL}}\|_{L^2}\|e^{c \nu^{\frac{1}{3}} \lambda\left(D_x\right) t}\langle D_x\rangle \partial_x \nabla \phi^{\mathrm{NL}}\|_{L^2} \rt. \\
			& \lt. \quad +\|e^{c \nu^{\frac{2}{3}} \lambda\left(D_x\right) t}\langle D_x\rangle\left|D_x\right|^{\frac{2}{3}} \theta^{\mathrm{NL}}\|_{L^2}\|e^{c \nu^{\frac{1}{3}} \lambda\left(D_x\right) t}\langle D_x\rangle\langle\frac{1}{D_x}\rangle^\epsilon \omega^{\mathrm{NL}}\|_{L^2} \rt)\\
			& \times\|e^{c \nu^{\frac{1}{3}} \lambda\left(D_x\right) t}\langle D_x, D_y+t D_x\rangle^4\langle\frac{1}{D_x}\rangle^2\left|D_x\right|^{\frac{2}{3}} \theta^{\mathrm{L}}\|_{L^2} \\
			& + \nu^{-\frac{1}{3}-\frac{\delta}{6}}\|e^{c \nu^{\frac{1}{3}} \lambda\left(D_x\right)}\langle D_x\rangle\langle\frac{1}{D_x}\rangle^\epsilon \partial_x \nabla \phi^{\mathrm{NL}}\|_{L^2}\|e^{c_0 \nu\left|D_x\right|^2 t^3}\langle D_x, D_y+t D_x\rangle^4\langle\frac{1}{D_x}\rangle^2 \theta^{\mathrm{L}}\|_{L^1 \cap L^2} \\
			& \quad \times\left(\|e^{c \nu^{\frac{1}{3}} \lambda\left(D_x\right) t}\langle D_x\rangle\langle\frac{1}{D_x}\rangle^\epsilon \sqrt{\Upsilon} |D_x|^\frac13  \theta^{\mathrm{NL}}\|_{L^2}+\nu^{\frac{1}{6}}\|e^{c \nu^{\frac{1}{3}} \lambda\left(D_x\right) t}\langle D_x\rangle\langle\frac{1}{D_x}\rangle^\epsilon\left|D_x\right|^{\frac{2}{3}} \theta^{\mathrm{NL}}\|_{L^2}\right).
		\end{align*}

		\underline{\bf Step VII: Estimate of $   u^{\mathrm{NL}} \cdot  \nabla \langle D_x \rangle^\frac13 \theta^{\mathrm{NL}}$.} Due to the different decay behaviors of $u_1^{\mathrm{NL}} \px    $ and $u_2^{\mathrm{NL}} \py   $, our consideration is divided into two parts as follows
		$$
		\begin{aligned}
			& \left|\left(  u^{\mathrm{NL}} \cdot \nabla \langle D_x \rangle^\frac13 \theta^{\mathrm{NL}} \big| \mathcal{M} e^{2 c \nu^{\frac{1}{3}} \lambda\left(D_x\right) t}\langle D_x\rangle^2\langle\frac{1}{D_x}\rangle^{2 \epsilon} \langle D_x \rangle^\frac13 \theta^{\mathrm{NL}} \right)\right| \\
			= & \left|\int_{\mathbb{R}^4} \mathcal{M}(t, k, \xi) e^{2 c \nu^{\frac{1}{3}} \lambda(k) t}\langle k\rangle^2\langle\frac{1}{k}\rangle^{2 \epsilon} \langle k \rangle^\frac13\hat{\theta}_k^{\mathrm{NL}}(\xi) \rt.\\
			&\lt.\quad \times \langle k-l \rangle^\frac13  \frac{\eta(k-l)-l(\xi-\eta)}{|l|^2+|\eta|^2} \hat{\omega}_l^{\mathrm{NL}}(\eta) \hat{\theta}_{k-l}^{\mathrm{NL}}(\xi-\eta) d k d l d \xi d \eta\right| \\
			\leq & I_5+I_6,
		\end{aligned}
		$$ 
		where
		$$
		\begin{gathered}
			I_5 :=\left| \int_{\mathbb{R}^4} \mathcal{M}(t, k, \xi) e^{2 c \nu^{\frac{1}{3}} \lambda(k) t}\langle k\rangle^2\langle\frac{1}{k}\rangle^{2 \epsilon} \langle k \rangle^\frac13\hat{\theta}_k^{\mathrm{NL}}(\xi) \langle k-l \rangle^\frac13  \frac{\eta(k-l) }{|l|^2+|\eta|^2} \hat{\omega}_l^{\mathrm{NL}}(\eta)\right. \\ \lt.
			\times \hat{\theta}_{k-l}^{\mathrm{NL}}(\xi-\eta) d k d l d \xi d \eta \rt| \\
			I_6 : =\left|\int_{\mathbb{R}^4} \mathcal{M}(t, k, \xi) e^{2 c \nu^{\frac{1}{3}} \lambda(k) t}\langle k\rangle^2\langle\frac{1}{k}\rangle^{2 \epsilon} \langle k \rangle^\frac13\hat{\theta}_k^{\mathrm{NL}}(\xi) \langle k-l \rangle^\frac13  \frac{ l(\xi - \eta )}{|l|^2+|\eta|^2} \hat{\omega}_l^{\mathrm{NL}}(\eta) \hat{\theta}_{k-l}^{\mathrm{NL}}(\xi-\eta) d k d l d \xi d \eta\right| .
		\end{gathered}
		$$  
		
		\underline{ Step VII.1: Estimate of $ I_5$.}   
		For $I_5$, we divide the integration domain into three parts according to the relationship between $|k|$ and $|k - l|$, namely the near-resonant region ($\frac{|k-l|}{2} \leq |k| \leq 2|k-l|$), the high–low interaction region ($|k| > 2|k-l| $), and the low–high interaction region ($2|k| < |k-l| $). \\
		$\bullet$~Case 1:   $\frac{|k-l|}{2} \leq |k| \leq 2|k-l|$. Using $\langle k \rangle \lesssim \langle k-l \rangle$ and $\langle \frac1k \rangle^\epsilon \lesssim \langle \frac1{k-l}\rangle^\epsilon$,  we infer that
		\begin{align} \label{eq:temp33}
			& \left| \int_{\frac{|k-l|}{2} \leq |k| \leq 2|k-l|} \mathcal{M}(t, k, \xi) e^{2 c \nu^{\frac{1}{3}} \lambda(k) t}\langle k\rangle^2\langle\frac{1}{k}\rangle^{2 \epsilon} \langle k \rangle^\frac13\hat{\theta}_k^{\mathrm{NL}}(\xi) \langle k-l \rangle^\frac13  \frac{\eta(k-l)}{|l|^2+|\eta|^2}\right. \no\\
			&\lt. \times \hat{\omega}_l^{\mathrm{NL}}(\eta) \hat{\theta}_{k-l}^{\mathrm{NL}}(\xi-\eta) d k d l d \xi d \eta \rt| \no\\
			& \no\lesssim\|e^{c \nu^{\frac{1}{3}} \lambda(k) t}  \langle k \rangle \langle \frac1k \rangle^\epsilon |k|^\frac13\langle k \rangle^\frac13 \hat{\theta}_k^{\mathrm{NL}}(\xi)\|_{L_{k, \xi}^2}\|e^{c \nu^{\frac{1}{3}} \lambda(l) t}\langle l\rangle\langle\frac{1}{l}\rangle^\epsilon \hat{\omega}_l^{\mathrm{NL}}(\eta)\|_{L_{l, \eta}^2}\|\langle l\rangle^{-1}\langle\frac{1}{l}\rangle^{-\epsilon}\| \frac{1}{|l|+|\eta|}\|_{L_\eta^2}\|_{L_l^2} \\
			&\no \quad \times\|e^{c \nu^{\frac{1}{3}} \lambda(k-l) t}\langle k-l \rangle \langle \frac{1}{k-l} \rangle^\epsilon |k-l|^\frac23 \langle k-l \rangle^\frac13 \hat{\theta}_{k-l}^{\mathrm{NL}}(\xi-\eta)\|_{L_{k-l, \xi-\eta}^2} \\
			&\no \lesssim\|e^{c \nu^{\frac{1}{3}} \lambda\left(D_x\right) t}\langle D_x\rangle    \langle \frac{1}{D_x}\rangle^\epsilon  \left|D_x\right|^{\frac{1}{3}}\langle D_x \rangle^\frac13 \theta^{\mathrm{NL}}\|_{L^2}^\frac32 \|e^{c \nu^{\frac{1}{3}} \lambda\left(D_x\right) t}\langle D_x\rangle\langle\frac{1}{D_x}\rangle^\epsilon \omega^{\mathrm{NL}}\|_{L^2} \\
			& \quad \times\|e^{c \nu^{\frac{1}{3}} \lambda\left(D_x\right) t}\langle D_x\rangle\langle\frac{1}{D_x}\rangle^\epsilon \px \langle D_x \rangle^{\frac{1}{3}} \theta^{\mathrm{NL}}\|_{L^2}^\frac12.
		\end{align}
		$\bullet$~Case 2:   $|k| > 2|k-l| $. By $ |k|\sim |l|$ and $|k-l|\lesssim |k| \lesssim |l|$,  we obtain that
		\begin{align} \label{eq:temp34}
			& \no\left| \int_{|k| > 2|k-l| } \mathcal{M}(t, k, \xi) e^{2 c \nu^{\frac{1}{3}} \lambda(k) t}\langle k\rangle^2\langle\frac{1}{k}\rangle^{2 \epsilon} \langle k \rangle^\frac13\hat{\theta}_k^{\mathrm{NL}}(\xi) \langle k-l \rangle^\frac13  \frac{\eta(k-l)}{|l|^2+|\eta|^2}\right. \\
			&\lt. \no\times \hat{\omega}_l^{\mathrm{NL}}(\eta) \hat{\theta}_{k-l}^{\mathrm{NL}}(\xi-\eta) d k d l d \xi d \eta \rt| \\
			&\no \lesssim\|e^{c \nu^{\frac{1}{3}} \lambda(k) t}  \langle k \rangle \langle \frac1k \rangle^\epsilon \langle k \rangle^\frac13  \hat{\theta}_k^{\mathrm{NL}}(\xi)\|_{L_{k, \xi}^2}\|e^{c \nu^{\frac{1}{3}} \lambda(l) t}\langle l\rangle\langle\frac{1}{l}\rangle^\epsilon   \frac{l\eta}{|l|^2  + |\eta|^2} \hat{\omega}_l^{\mathrm{NL}}(\eta)\|_{L_{l, \eta}^2}         \\
			& \no\quad \times \|\langle k- l\rangle^{-1}\langle\frac{1}{k-l}\rangle^{-\epsilon}   \| \frac{1}{|k-l| + |\xi - \eta|} \|_{L_{\xi - \eta}^2}\|_{L_{k-l}^2}  \\
			& \no\quad \times  \|e^{c \nu^{\frac{1}{3}} \lambda(k-l) t}\langle k-l \rangle  \langle \frac{1}{k-l} \rangle^\epsilon   \sqrt{|k-l|^2 + |\xi - \eta|^2 }\langle k-l \rangle ^\frac13  \hat{\theta}_{k-l}^{\mathrm{NL}}(\xi-\eta)\|_{L_{k-l, \xi-\eta}^2} \\
			&\no \lesssim\|e^{c \nu^{\frac{1}{3}} \lambda\left(D_x\right) t}\langle D_x\rangle    \langle \frac{1}{D_x}\rangle^\epsilon  \langle D_x \rangle^{\frac{1}{3}} \theta^{\mathrm{NL}}\|_{L^2} \|e^{c \nu^{\frac{1}{3}} \lambda\left(D_x\right) t}\langle D_x\rangle\langle\frac{1}{D_x}\rangle^\epsilon \px \nabla \phi^{\mathrm{NL}}\|_{L^2} \\
			& \quad \times\|e^{c \nu^{\frac{1}{3}} \lambda\left(D_x\right) t}\langle D_x\rangle\langle\frac{1}{D_x}\rangle^\epsilon \nabla \langle D_x \rangle^{\frac{1}{3}} \theta^{\mathrm{NL}}\|_{L^2} .
		\end{align}
		$\bullet$~Case 3:  $2|k| < |k-l| $. From $ \langle k \rangle^2 \lesssim \langle l \rangle \langle k-l \rangle$ and $$1 \lesssim \langle \frac{1}{k-l} \rangle^\frac12 |k-l|^{\frac12} \lesssim \langle\frac{1}{l}\rangle^\epsilon\langle\frac{1}{k-l}\rangle^\epsilon\left(1+\langle\frac{1}{k}\rangle^{\frac{1}{2}-2 \epsilon}\right)|k-l|^{\frac12},$$  we get
		\begin{align} \label{eq:temp35}
			& \no \left| \int_{2|k| <|k-l| } \mathcal{M}(t, k, \xi) e^{2 c \nu^{\frac{1}{3}} \lambda(k) t}\langle k\rangle^2\langle\frac{1}{k}\rangle^{2 \epsilon} \langle k \rangle^\frac13\hat{\theta}_k^{\mathrm{NL}}(\xi) \langle k-l \rangle^\frac13  \frac{\eta(k-l)}{|l|^2+|\eta|^2}\right. \\
			& \lt. \no\times \hat{\omega}_l^{\mathrm{NL}}(\eta) \hat{\theta}_{k-l}^{\mathrm{NL}}(\xi-\eta) d k d l d \xi d \eta \rt| \\
			&\no \lesssim\|e^{c \nu^{\frac{1}{3}} \lambda(k) t}  \langle k \rangle \langle \frac1k \rangle^\epsilon \langle k \rangle^\frac13  \hat{\theta}_k^{\mathrm{NL}}(\xi)\|_{L_{k, \xi}^2}\|e^{c \nu^{\frac{1}{3}} \lambda(l) t}\langle l\rangle\langle\frac{1}{l}\rangle^\epsilon   \frac{l\eta}{|l|^2  + |\eta|^2} \hat{\omega}_l^{\mathrm{NL}}(\eta)\|_{L_{l, \eta}^2}         \\
			& \no\quad \times \|\langle k\rangle^{-1}   (\langle \frac1k \rangle^\epsilon  + \langle \frac1k \rangle^{\frac12-\epsilon}) |k-l|^\frac12   \| \frac{1}{|k-l| + |\xi - \eta|} \|_{L_{\xi - \eta}^2} \|_{L_{k}^2}  \\
			& \no\quad \times  \|e^{c \nu^{\frac{1}{3}} \lambda(k-l) t}\langle k-l \rangle  \langle \frac{1}{k-l} \rangle^\epsilon   \sqrt{|k-l|^2 + |\xi - \eta|^2 }\langle k-l \rangle ^\frac13  \hat{\theta}_{k-l}^{\mathrm{NL}}(\xi-\eta)\|_{L_{k-l, \xi-\eta}^2} \\
			& \no\lesssim\|e^{c \nu^{\frac{1}{3}} \lambda\left(D_x\right) t}\langle D_x\rangle    \langle \frac{1}{D_x}\rangle^\epsilon  \langle D_x \rangle^{\frac{1}{3}} \theta^{\mathrm{NL}}\|_{L^2} \|e^{c \nu^{\frac{1}{3}} \lambda\left(D_x\right) t}\langle D_x\rangle\langle\frac{1}{D_x}\rangle^\epsilon \px \nabla \phi^{\mathrm{NL}}\|_{L^2} \\
			& \quad \times\|e^{c \nu^{\frac{1}{3}} \lambda\left(D_x\right) t}\langle D_x\rangle\langle\frac{1}{D_x}\rangle^\epsilon \nabla \langle D_x \rangle^{\frac{1}{3}} \theta^{\mathrm{NL}}\|_{L^2} .
		\end{align}
		
		\underline{ Step VII.2: Estimate of $ I_6$.}  
		We analyze $I_6$ by similarly dividing it into three cases.\\
		$\bullet$~Case 1:  $\frac{|k-l|}{2} \leq |k| \leq 2|k-l|$. It follows from 
		\begin{equation} \label{eq:trick9}
			\langle k \rangle \langle \frac1k \rangle^\epsilon\lesssim \langle k-l \rangle\langle \frac1{k-l}\rangle^\epsilon
		\end{equation}
		that
		\begin{align} \label{eq:temp36}
			& \no\left| \int_{\frac{|k-l|}{2} \leq |k| \leq 2|k-l|} \mathcal{M}(t, k, \xi) e^{2 c \nu^{\frac{1}{3}} \lambda(k) t}\langle k\rangle^2\langle\frac{1}{k}\rangle^{2 \epsilon} \langle k \rangle^\frac13\hat{\theta}_k^{\mathrm{NL}}(\xi) \langle k-l \rangle^\frac13  \frac{l(\xi - \eta)}{|l|^2+|\eta|^2}\right. \\
			&\lt. \no \times \hat{\omega}_l^{\mathrm{NL}}(\eta) \hat{\theta}_{k-l}^{\mathrm{NL}}(\xi-\eta) d k d l d \xi d \eta \rt| \\
			& \no\lesssim\|e^{c \nu^{\frac{1}{3}} \lambda(k) t}  \langle k \rangle \langle \frac1k \rangle^\epsilon \langle k \rangle^\frac13  \hat{\theta}_k^{\mathrm{NL}}(\xi)\|_{L_{k, \xi}^2}   \|e^{c \nu^{\frac{1}{3}} \lambda(l) t}\langle l\rangle\langle\frac{1}{l}\rangle^\epsilon \frac l{|l| + |\eta|} \hat{\omega}_l^{\mathrm{NL}}(\eta)\|_{L_{l, \eta}^2}     \\
			&\no \quad \times \|\langle l\rangle^{-1}\langle\frac{1}{l}\rangle^{-\epsilon}\| \frac{1}{|l|+|\eta|}\|_{L_\eta^2}\|_{L_l^2} \|e^{c \nu^{\frac{1}{3}} \lambda(k-l) t}\langle k-l \rangle \langle \frac{1}{k-l} \rangle^\epsilon \langle k-l \rangle ^ \frac13 (\xi - \eta) \hat{\theta}_{k-l}^{\mathrm{NL}}(\xi-\eta)\|_{L_{k-l, \xi-\eta}^2} \\
			&\no \lesssim\|e^{c \nu^{\frac{1}{3}} \lambda\left(D_x\right) t}\langle D_x\rangle    \langle \frac{1}{D_x}\rangle^\epsilon  \langle D_x \rangle^{\frac{1}{3}} \theta^{\mathrm{NL}}\|_{L^2} \|e^{c \nu^{\frac{1}{3}} \lambda\left(D_x\right) t}\langle D_x\rangle\langle\frac{1}{D_x}\rangle^\epsilon\px \nabla \phi^{\mathrm{NL}}\|_{L^2} \\
			& \quad \times\|e^{c \nu^{\frac{1}{3}} \lambda\left(D_x\right) t}\langle D_x\rangle\langle\frac{1}{D_x}\rangle^\epsilon \py \langle D_x \rangle^{\frac{1}{3}} \theta^{\mathrm{NL}}\|_{L^2}.
		\end{align}
		$\bullet$~Case 2:   $|k| > 2|k-l| $. Thanks to $|k-l|\lesssim |k| \sim |l|$,  we get that
		\begin{align} \label{eq:temp37}
			&\no \left| \int_{|k| > 2|k-l| } \mathcal{M}(t, k, \xi) e^{2 c \nu^{\frac{1}{3}} \lambda(k) t}\langle k\rangle^2\langle\frac{1}{k}\rangle^{2 \epsilon} \langle k \rangle^\frac13\hat{\theta}_k^{\mathrm{NL}}(\xi) \langle k-l \rangle^\frac13  \frac{l(\xi - \eta)}{|l|^2+|\eta|^2}\right. \\
			&\lt. \no \times \hat{\omega}_l^{\mathrm{NL}}(\eta) \hat{\theta}_{k-l}^{\mathrm{NL}}(\xi-\eta) d k d l d \xi d \eta \rt| \\
			&\no \lesssim\|e^{c \nu^{\frac{1}{3}} \lambda(k) t}  \langle k \rangle \langle \frac1k \rangle^\epsilon \langle k \rangle^\frac13  \hat{\theta}_k^{\mathrm{NL}}(\xi)\|_{L_{k, \xi}^2}\|e^{c \nu^{\frac{1}{3}} \lambda(l) t}\langle l\rangle\langle\frac{1}{l}\rangle^\epsilon   \frac{|l|(|l|+|\eta|)}{|l|^2  + |\eta|^2} \hat{\omega}_l^{\mathrm{NL}}(\eta)\|_{L_{l, \eta}^2}         \\
			&\no \quad \times \|\langle k- l\rangle^{-1}\langle\frac{1}{k-l}\rangle^{-\epsilon}   \| \frac{1}{|l| + | \eta|} \|_{L_{\eta}^2}\|_{L_{k-l}^2}  \\
			&\no \quad \times  \|e^{c \nu^{\frac{1}{3}} \lambda(k-l) t}\langle k-l \rangle  \langle \frac{1}{k-l} \rangle^\epsilon   |\xi - \eta|\langle k-l \rangle ^\frac13  \hat{\theta}_{k-l}^{\mathrm{NL}}(\xi-\eta)\|_{L_{k-l, \xi-\eta}^2} \\
			&\no \lesssim\|e^{c \nu^{\frac{1}{3}} \lambda\left(D_x\right) t}\langle D_x\rangle    \langle \frac{1}{D_x}\rangle^\epsilon  \langle D_x \rangle^{\frac{1}{3}} \theta^{\mathrm{NL}}\|_{L^2} \|e^{c \nu^{\frac{1}{3}} \lambda\left(D_x\right) t}\langle D_x\rangle\langle\frac{1}{D_x}\rangle^\epsilon \px \nabla \phi^{\mathrm{NL}}\|_{L^2} \\
			& \quad \times\|e^{c \nu^{\frac{1}{3}} \lambda\left(D_x\right) t}\langle D_x\rangle\langle\frac{1}{D_x}\rangle^\epsilon \py \langle D_x \rangle^{\frac{1}{3}} \theta^{\mathrm{NL}}\|_{L^2} .
		\end{align}
		$\bullet$~Case 3:   $2|k| < |k-l| $. Noting that $ \langle k \rangle^2 \lesssim \langle l \rangle \langle k-l \rangle$, $|k-l|^\frac12 |l|^{-\frac12} \lesssim 1$ and 
		\begin{equation} \label{eq:trick10}
			1 \lesssim \langle \frac{1}{k-l} \rangle^\frac12 |k-l|^{\frac12} \lesssim \langle\frac{1}{l}\rangle^\epsilon\langle\frac{1}{k-l}\rangle^\epsilon\left(1+\langle\frac{1}{k}\rangle^{\frac{1}{2}-2 \epsilon}\right)|k-l|^{\frac12},
		\end{equation} 
		we have that
		\begin{align} \label{eq:temp38}
			&\no \left| \int_{2|k| <|k-l| } \mathcal{M}(t, k, \xi) e^{2 c \nu^{\frac{1}{3}} \lambda(k) t}\langle k\rangle^2\langle\frac{1}{k}\rangle^{2 \epsilon} \langle k \rangle^\frac13\hat{\theta}_k^{\mathrm{NL}}(\xi) \langle k-l \rangle^\frac13  \frac{l(\xi - \eta)}{|l|^2+|\eta|^2}\right. \\
			&\lt. \no \times \hat{\omega}_l^{\mathrm{NL}}(\eta) \hat{\theta}_{k-l}^{\mathrm{NL}}(\xi-\eta) d k d l d \xi d \eta \rt| \\
			&\no \lesssim\|e^{c \nu^{\frac{1}{3}} \lambda(k) t}  \langle k \rangle \langle \frac1k \rangle^\epsilon \langle k \rangle^\frac13  \hat{\theta}_k^{\mathrm{NL}}(\xi)\|_{L_{k, \xi}^2}\|e^{c \nu^{\frac{1}{3}} \lambda(l) t}\langle l\rangle\langle\frac{1}{l}\rangle^\epsilon   \frac{|l|(|l| + |\eta|)}{|l|^2  + |\eta|^2} \hat{\omega}_l^{\mathrm{NL}}(\eta)\|_{L_{l, \eta}^2}         \\
			&\no \quad \times \|\langle k\rangle^{-1}   (\langle \frac1k \rangle^\epsilon  + \langle \frac1k \rangle^{\frac12-\epsilon}) |k-l|^\frac12   \| \frac{1}{|l| + |\eta|} \|_{L_{ \eta}^2} \|_{L_{k}^2}  \\
			&\no \quad \times  \|e^{c \nu^{\frac{1}{3}} \lambda(k-l) t}\langle k-l \rangle  \langle \frac{1}{k-l} \rangle^\epsilon    |\xi - \eta|\langle k-l \rangle ^\frac13  \hat{\theta}_{k-l}^{\mathrm{NL}}(\xi-\eta)\|_{L_{k-l, \xi-\eta}^2} \\
			&\no \lesssim\|e^{c \nu^{\frac{1}{3}} \lambda\left(D_x\right) t}\langle D_x\rangle    \langle \frac{1}{D_x}\rangle^\epsilon  \langle D_x \rangle^{\frac{1}{3}} \theta^{\mathrm{NL}}\|_{L^2} \|e^{c \nu^{\frac{1}{3}} \lambda\left(D_x\right) t}\langle D_x\rangle\langle\frac{1}{D_x}\rangle^\epsilon \px \nabla \phi^{\mathrm{NL}}\|_{L^2} \\
			& \quad \times\|e^{c \nu^{\frac{1}{3}} \lambda\left(D_x\right) t}\langle D_x\rangle\langle\frac{1}{D_x}\rangle^\epsilon \py \langle D_x \rangle^{\frac{1}{3}} \theta^{\mathrm{NL}}\|_{L^2} .
		\end{align}

		Adding \eqref{eq:temp33}, \eqref{eq:temp34}, \eqref{eq:temp35}, \eqref{eq:temp36}, \eqref{eq:temp37} and \eqref{eq:temp38} together, it holds that
		\begin{equation*}\begin{aligned}
				&\left|\left(  u^{\mathrm{NL}} \cdot \nabla \langle D_x \rangle^\frac13 \theta^{\mathrm{NL}} \big| \mathcal{M} e^{2 c \nu^{\frac{1}{3}} \lambda\left(D_x\right) t}\langle D_x\rangle^2\langle\frac{1}{D_x}\rangle^{2 \epsilon} \langle D_x \rangle^\frac13 \theta^{\mathrm{NL}}\right)\right| \\
				\lesssim& \|e^{c \nu^{\frac{1}{3}} \lambda\left(D_x\right) t}\langle D_x\rangle    \langle \frac{1}{D_x}\rangle^\epsilon  \left|D_x\right|^{\frac{1}{3}}\langle D_x \rangle^\frac13 \theta^{\mathrm{NL}}\|_{L^2}^\frac32 \|e^{c \nu^{\frac{1}{3}} \lambda\left(D_x\right) t}\langle D_x\rangle\langle\frac{1}{D_x}\rangle^\epsilon \omega^{\mathrm{NL}}\|_{L^2} \\
				& \quad \times\|e^{c \nu^{\frac{1}{3}} \lambda\left(D_x\right) t}\langle D_x\rangle\langle\frac{1}{D_x}\rangle^\epsilon \nabla  \langle D_x \rangle^{\frac{1}{3}} \theta^{\mathrm{NL}}\|_{L^2}^\frac12 \\
				&+ \|e^{c \nu^{\frac{1}{3}} \lambda\left(D_x\right) t}\langle D_x\rangle    \langle \frac{1}{D_x}\rangle^\epsilon  \langle D_x \rangle^{\frac{1}{3}} \theta^{\mathrm{NL}}\|_{L^2} \|e^{c \nu^{\frac{1}{3}} \lambda\left(D_x\right) t}\langle D_x\rangle\langle\frac{1}{D_x}\rangle^\epsilon \px \nabla \phi^{\mathrm{NL}}\|_{L^2} \\
				& \quad \times\|e^{c \nu^{\frac{1}{3}} \lambda\left(D_x\right) t}\langle D_x\rangle\langle\frac{1}{D_x}\rangle^\epsilon \nabla \langle D_x \rangle^{\frac{1}{3}} \theta^{\mathrm{NL}}\|_{L^2}.
		\end{aligned}\end{equation*}

		\underline{\bf Step VIII: Estimate of $ |D_x|^\frac13  u^{\mathrm{NL}} \cdot  \nabla  \theta^{\mathrm{NL}}$.} Note that
		$$
		\begin{aligned}
			& \left|\left( |D_x|^\frac13 u^{\mathrm{NL}} \cdot \nabla \theta^{\mathrm{NL}} \big| \mathcal{M} e^{2 c \nu^{\frac{1}{3}} \lambda\left(D_x\right) t}\langle D_x\rangle^2\langle\frac{1}{D_x}\rangle^{2 \epsilon} |D_x|^\frac13 \theta^{\mathrm{NL}}\right)\right| \\
			= & \left|\int_{\mathbb{R}^4} \mathcal{M}(t, k, \xi) e^{2 c \nu^{\frac{1}{3}} \lambda(k) t}\langle k\rangle^2\langle\frac{1}{k}\rangle^{2 \epsilon} |k|^\frac13\hat{\theta}_k^{\mathrm{NL}}(\xi) |l|^\frac13  \frac{\eta(k-l)-l(\xi-\eta)}{|l|^2+|\eta|^2} \hat{\omega}_l^{\mathrm{NL}}(\eta) \hat{\theta}_{k-l}^{\mathrm{NL}}(\xi-\eta) d k d l d \xi d \eta\right| \\
			\leq & I_7 + I_8,
		\end{aligned}
		$$
		where
		$$
		\begin{gathered}
			I_7:=\left| \int_{\mathbb{R}^4} \mathcal{M}(t, k, \xi) e^{2 c \nu^{\frac{1}{3}} \lambda(k) t}\langle k\rangle^2\langle\frac{1}{k}\rangle^{2 \epsilon} |k|^\frac13\hat{\theta}_k^{\mathrm{NL}}(\xi) |l|^\frac13  \frac{\eta(k-l) }{|l|^2+|\eta|^2} \hat{\omega}_l^{\mathrm{NL}}(\eta)\right. \\ \lt.
			\times \hat{\theta}_{k-l}^{\mathrm{NL}}(\xi-\eta) d k d l d \xi d \eta \rt| \\
			I_8:=\left|\int_{\mathbb{R}^4} \mathcal{M}(t, k, \xi) e^{2 c \nu^{\frac{1}{3}} \lambda(k) t}\langle k\rangle^2\langle\frac{1}{k}\rangle^{2 \epsilon} |k|^\frac13\hat{\theta}_k^{\mathrm{NL}}(\xi) |l|^\frac13  \frac{ l(\xi - \eta )}{|l|^2+|\eta|^2} \hat{\omega}_l^{\mathrm{NL}}(\eta) \hat{\theta}_{k-l}^{\mathrm{NL}}(\xi-\eta) d k d l d \xi d \eta\right| .
		\end{gathered}
		$$  
		
		\underline{ Step VIII.1: Estimate of $I_7$.}    
		Now, we estimate $I_7$.\\
		$\bullet$~Case 1:   $\frac{|k-l|}{2} \leq |k| \leq 2|k-l|$. \eqref{eq:trick9} gives
		\begin{align*}
			& \bigg| \int_{\frac{|k-l|}{2} \leq |k| \leq 2|k-l|} \mathcal{M}(t, k, \xi) e^{2 c \nu^{\frac{1}{3}} \lambda(k) t}\langle k\rangle^2\langle\frac{1}{k}\rangle^{2 \epsilon} |k|^\frac13\hat{\theta}_k^{\mathrm{NL}}(\xi) |l|^\frac13  \frac{\eta(k-l)}{|l|^2+|\eta|^2}  \\
			& \times \hat{\omega}_l^{\mathrm{NL}}(\eta) \hat{\theta}_{k-l}^{\mathrm{NL}}(\xi-\eta) d k d l d \xi d \eta \bigg| \\
			& \lesssim\|e^{c \nu^{\frac{1}{3}} \lambda(k) t}  \langle k \rangle \langle \frac1k \rangle^\epsilon |k|^\frac23  \hat{\theta}_k^{\mathrm{NL}}(\xi)\|_{L_{k, \xi}^2}\|e^{c \nu^{\frac{1}{3}} \lambda(l) t}\langle l\rangle\langle\frac{1}{l}\rangle^\epsilon \hat{\omega}_l^{\mathrm{NL}}(\eta)\|_{L_{l, \eta}^2}  \\
			& \quad \times   \||l|^\frac13 \langle l\rangle^{-1}\langle\frac{1}{l}\rangle^{-\epsilon}\| \frac{1}{|l|+|\eta|}\|_{L_\eta^2}\|_{L_l^2} \|e^{c \nu^{\frac{1}{3}} \lambda(k-l) t}\langle k-l \rangle \langle \frac{1}{k-l} \rangle^\epsilon |k-l|^{\frac23}  \hat{\theta}_{k-l}^{\mathrm{NL}}(\xi-\eta)\|_{L_{k-l, \xi-\eta}^2} \\
			& \lesssim\|e^{c \nu^{\frac{1}{3}} \lambda\left(D_x\right) t}\langle D_x\rangle    \langle \frac{1}{D_x}\rangle^\epsilon  \left|D_x\right|^{\frac{2}{3}} \theta^{\mathrm{NL}}\|_{L^2}^2 \|e^{c \nu^{\frac{1}{3}} \lambda\left(D_x\right) t}\langle D_x\rangle\langle\frac{1}{D_x}\rangle^\epsilon \omega^{\mathrm{NL}}\|_{L^2} .
		\end{align*}
		$\bullet$~Case 2:   $|k| > 2|k-l| $. Making use of $ |k|\sim |l|$ and $|k-l|\lesssim  |l|^\frac23 |k-l|^\frac13 $,  we obtain that
		\begin{align*}
			& \bigg| \int_{|k| > 2|k-l| } \mathcal{M}(t, k, \xi) e^{2 c \nu^{\frac{1}{3}} \lambda(k) t}\langle k\rangle^2\langle\frac{1}{k}\rangle^{2 \epsilon} |k|^\frac13\hat{\theta}_k^{\mathrm{NL}}(\xi) |l|^\frac13  \frac{\eta(k-l)}{|l|^2+|\eta|^2} \\
			& \times \hat{\omega}_l^{\mathrm{NL}}(\eta) \hat{\theta}_{k-l}^{\mathrm{NL}}(\xi-\eta) d k d l d \xi d \eta \bigg| \\
			& \lesssim\|e^{c \nu^{\frac{1}{3}} \lambda(k) t}  \langle k \rangle \langle \frac1k \rangle^\epsilon |k|^\frac13  \hat{\theta}_k^{\mathrm{NL}}(\xi)\|_{L_{k, \xi}^2}\|e^{c \nu^{\frac{1}{3}} \lambda(l) t}\langle l\rangle\langle\frac{1}{l}\rangle^\epsilon   \frac{|l|\eta}{|l|^2  + |\eta|^2} \hat{\omega}_l^{\mathrm{NL}}(\eta)\|_{L_{l, \eta}^2}         \\
			& \quad \times \||k-l|^\frac13 \langle k- l\rangle^{-1}\langle\frac{1}{k-l}\rangle^{-\epsilon}   \| \frac{1}{|k-l| + |\xi - \eta|} \|_{L_{\xi - \eta}^2}\|_{L_{k-l}^2}  \\
			& \quad \times  \|e^{c \nu^{\frac{1}{3}} \lambda(k-l) t}\langle k-l \rangle  \langle \frac{1}{k-l} \rangle^\epsilon   \sqrt{|k-l|^2 + |\xi - \eta|^2 } \hat{\theta}_{k-l}^{\mathrm{NL}}(\xi-\eta)\|_{L_{k-l, \xi-\eta}^2} \\
			& \lesssim\|e^{c \nu^{\frac{1}{3}} \lambda\left(D_x\right) t}\langle D_x\rangle    \langle \frac{1}{D_x}\rangle^\epsilon  \left|D_x\right|^{\frac{1}{3}} \theta^{\mathrm{NL}}\|_{L^2} \|e^{c \nu^{\frac{1}{3}} \lambda\left(D_x\right) t}\langle D_x\rangle\langle\frac{1}{D_x}\rangle^\epsilon \px \nabla \phi^{\mathrm{NL}}\|_{L^2} \\
			& \quad \times\|e^{c \nu^{\frac{1}{3}} \lambda\left(D_x\right) t}\langle D_x\rangle\langle\frac{1}{D_x}\rangle^\epsilon \nabla  \theta^{\mathrm{NL}}\|_{L^2} .
		\end{align*}
		$\bullet$~Case 3:   $2|k| < |k-l| $. We deduce from $ \langle k \rangle^2 \lesssim \langle l \rangle \langle k-l \rangle$, $|l|^\frac13 \sim |k-l|^\frac13 $ and \eqref{eq:trick10} that
		\begin{align*}
			& \bigg| \int_{2|k| <|k-l| } \mathcal{M}(t, k, \xi) e^{2 c \nu^{\frac{1}{3}} \lambda(k) t}\langle k\rangle^2\langle\frac{1}{k}\rangle^{2 \epsilon} |k|^\frac13\hat{\theta}_k^{\mathrm{NL}}(\xi) |l|^\frac13  \frac{\eta(k-l)}{|l|^2+|\eta|^2} \\
			& \times \hat{\omega}_l^{\mathrm{NL}}(\eta) \hat{\theta}_{k-l}^{\mathrm{NL}}(\xi-\eta) d k d l d \xi d \eta \bigg| \\
			& \lesssim\|e^{c \nu^{\frac{1}{3}} \lambda(k) t}  \langle k \rangle \langle \frac1k \rangle^\epsilon |k|^\frac13  \hat{\theta}_k^{\mathrm{NL}}(\xi)\|_{L_{k, \xi}^2}\|e^{c \nu^{\frac{1}{3}} \lambda(l) t}\langle l\rangle\langle\frac{1}{l}\rangle^\epsilon   \frac{l\eta}{|l|^2  + |\eta|^2} \hat{\omega}_l^{\mathrm{NL}}(\eta)\|_{L_{l, \eta}^2}         \\
			& \quad \times \|\langle k\rangle^{-1}   (\langle \frac1k \rangle^\epsilon  + \langle \frac1k \rangle^{\frac12-\epsilon}) |k-l|^\frac12   \| \frac{1}{|k-l| + |\xi - \eta|} \|_{L_{\xi - \eta}^2} \|_{L_{k}^2}  \\
			& \quad \times  \|e^{c \nu^{\frac{1}{3}} \lambda(k-l) t}\langle k-l \rangle  \langle \frac{1}{k-l} \rangle^\epsilon   \sqrt{|k-l|^2 + |\xi - \eta|^2 }|k-l| ^\frac13  \hat{\theta}_{k-l}^{\mathrm{NL}}(\xi-\eta)\|_{L_{k-l, \xi-\eta}^2} \\
			& \lesssim\|e^{c \nu^{\frac{1}{3}} \lambda\left(D_x\right) t}\langle D_x\rangle    \langle \frac{1}{D_x}\rangle^\epsilon  \left|D_x\right|^{\frac{1}{3}} \theta^{\mathrm{NL}}\|_{L^2} \|e^{c \nu^{\frac{1}{3}} \lambda\left(D_x\right) t}\langle D_x\rangle\langle\frac{1}{D_x}\rangle^\epsilon \px \nabla \phi^{\mathrm{NL}}\|_{L^2} \\
			& \quad \times\|e^{c \nu^{\frac{1}{3}} \lambda\left(D_x\right) t}\langle D_x\rangle\langle\frac{1}{D_x}\rangle^\epsilon \nabla \left|D_x\right|^{\frac{1}{3}} \theta^{\mathrm{NL}}\|_{L^2} .
		\end{align*}
		
		\underline{ Step VIII.2: Estimate of $I_8$.} 
		Then,  we consider $I_8$.\\
		$\bullet$~Case 1:   $\frac{|k-l|}{2} \leq |k| \leq 2|k-l|$. \eqref{eq:trick9} shows
		\begin{align*}
			& \bigg| \int_{\frac{|k-l|}{2} \leq |k| \leq 2|k-l|} \mathcal{M}(t, k, \xi) e^{2 c \nu^{\frac{1}{3}} \lambda(k) t}\langle k\rangle^2\langle\frac{1}{k}\rangle^{2 \epsilon} |k|^\frac13\hat{\theta}_k^{\mathrm{NL}}(\xi) |l|^\frac13  \frac{l(\xi - \eta)}{|l|^2+|\eta|^2}  \\
			& \times \hat{\omega}_l^{\mathrm{NL}}(\eta) \hat{\theta}_{k-l}^{\mathrm{NL}}(\xi-\eta) d k d l d \xi d \eta \bigg| \\
			& \lesssim\|e^{c \nu^{\frac{1}{3}} \lambda(k) t}  \langle k \rangle \langle \frac1k \rangle^\epsilon |k|^\frac13  \hat{\theta}_k^{\mathrm{NL}}(\xi)\|_{L_{k, \xi}^2}   \|e^{c \nu^{\frac{1}{3}} \lambda(l) t}\langle l\rangle\langle\frac{1}{l}\rangle^\epsilon \frac l{|l| + |\eta|} \hat{\omega}_l^{\mathrm{NL}}(\eta)\|_{L_{l, \eta}^2}     \\
			& \quad \times \||l|^\frac13 \langle l\rangle^{-1}\langle\frac{1}{l}\rangle^{-\epsilon}\| \frac{1}{|l|+|\eta|}\|_{L_\eta^2}\|_{L_l^2} \|e^{c \nu^{\frac{1}{3}} \lambda(k-l) t}\langle k-l \rangle \langle \frac{1}{k-l} \rangle^\epsilon  (\xi - \eta) \hat{\theta}_{k-l}^{\mathrm{NL}}(\xi-\eta)\|_{L_{k-l, \xi-\eta}^2} \\
			& \lesssim\|e^{c \nu^{\frac{1}{3}} \lambda\left(D_x\right) t}\langle D_x\rangle    \langle \frac{1}{D_x}\rangle^\epsilon  \left|D_x\right|^{\frac{1}{3}} \theta^{\mathrm{NL}}\|_{L^2} \|e^{c \nu^{\frac{1}{3}} \lambda\left(D_x\right) t}\langle D_x\rangle\langle\frac{1}{D_x}\rangle^\epsilon\px \nabla \phi^{\mathrm{NL}}\|_{L^2} \\
			& \quad \times\|e^{c \nu^{\frac{1}{3}} \lambda\left(D_x\right) t}\langle D_x\rangle\langle\frac{1}{D_x}\rangle^\epsilon \py   \theta^{\mathrm{NL}}\|_{L^2}.
		\end{align*}
		$\bullet$~Case 2:   $|k| > 2|k-l| $. Due to $ |k|\sim |l|$ and $|l|^\frac13 \lesssim |l|^\frac12 |k-l|^{-\frac16}$,  we have that
		\begin{align*}
			& \bigg| \int_{|k| > 2|k-l| } \mathcal{M}(t, k, \xi) e^{2 c \nu^{\frac{1}{3}} \lambda(k) t}\langle k\rangle^2\langle\frac{1}{k}\rangle^{2 \epsilon} |k|^\frac13\hat{\theta}_k^{\mathrm{NL}}(\xi) |l|^\frac13  \frac{l(\xi - \eta)}{|l|^2+|\eta|^2}  \\
			& \times \hat{\omega}_l^{\mathrm{NL}}(\eta) \hat{\theta}_{k-l}^{\mathrm{NL}}(\xi-\eta) d k d l d \xi d \eta \bigg|\\
			& \lesssim\|e^{c \nu^{\frac{1}{3}} \lambda(k) t}  \langle k \rangle \langle \frac1k \rangle^\epsilon |k|^\frac13  \hat{\theta}_k^{\mathrm{NL}}(\xi)\|_{L_{k, \xi}^2}\|e^{c \nu^{\frac{1}{3}} \lambda(l) t}\langle l\rangle\langle\frac{1}{l}\rangle^\epsilon   \frac{|l|(|l|+|\eta|)}{|l|^2  + |\eta|^2} \hat{\omega}_l^{\mathrm{NL}}(\eta)\|_{L_{l, \eta}^2}         \\
			& \quad \times \||l|^\frac12 |k-l|^{-\frac16}\langle k- l\rangle^{-1}\langle\frac{1}{k-l}\rangle^{-\epsilon}   \| \frac{1}{|l| + | \eta|} \|_{L_{\eta}^2}\|_{L_{k-l}^2}  \\
			& \quad \times  \|e^{c \nu^{\frac{1}{3}} \lambda(k-l) t}\langle k-l \rangle  \langle \frac{1}{k-l} \rangle^\epsilon   |\xi - \eta|    \hat{\theta}_{k-l}^{\mathrm{NL}}(\xi-\eta)\|_{L_{k-l, \xi-\eta}^2} \\
			& \lesssim\|e^{c \nu^{\frac{1}{3}} \lambda\left(D_x\right) t}\langle D_x\rangle    \langle \frac{1}{D_x}\rangle^\epsilon  \left|D_x\right|^{\frac{1}{3}} \theta^{\mathrm{NL}}\|_{L^2} \|e^{c \nu^{\frac{1}{3}} \lambda\left(D_x\right) t}\langle D_x\rangle\langle\frac{1}{D_x}\rangle^\epsilon \px \nabla \phi^{\mathrm{NL}}\|_{L^2} \\
			& \quad \times\|e^{c \nu^{\frac{1}{3}} \lambda\left(D_x\right) t}\langle D_x\rangle\langle\frac{1}{D_x}\rangle^\epsilon \py  \theta^{\mathrm{NL}}\|_{L^2},
		\end{align*}
		where we use that $|l|^\frac12 \| (|l|+|\eta|)^{-1}\|_{L_\eta^2} \lesssim 1$. \\
		$\bullet$~Case 3:   $2|k| < |k-l| $. From $ \langle k \rangle^2 \lesssim \langle l \rangle \langle k-l \rangle$, $|k-l|^\frac12 |l|^{-\frac12} \lesssim 1$, $|l|^\frac13 \sim |k-l|^\frac13$ and \eqref{eq:trick10},  we get that
		\begin{align*}
			& \bigg| \int_{2|k| <|k-l| } \mathcal{M}(t, k, \xi) e^{2 c \nu^{\frac{1}{3}} \lambda(k) t}\langle k\rangle^2\langle\frac{1}{k}\rangle^{2 \epsilon} |k|^\frac13\hat{\theta}_k^{\mathrm{NL}}(\xi) |l|^\frac13  \frac{l(\xi - \eta)}{|l|^2+|\eta|^2}  \\
			& \times \hat{\omega}_l^{\mathrm{NL}}(\eta) \hat{\theta}_{k-l}^{\mathrm{NL}}(\xi-\eta) d k d l d \xi d \eta \bigg| \\
			& \lesssim\|e^{c \nu^{\frac{1}{3}} \lambda(k) t}  \langle k \rangle \langle \frac1k \rangle^\epsilon |k|^\frac13  \hat{\theta}_k^{\mathrm{NL}}(\xi)\|_{L_{k, \xi}^2}\|e^{c \nu^{\frac{1}{3}} \lambda(l) t}\langle l\rangle\langle\frac{1}{l}\rangle^\epsilon   \frac{|l|(|l| + |\eta|)}{|l|^2  + |\eta|^2} \hat{\omega}_l^{\mathrm{NL}}(\eta)\|_{L_{l, \eta}^2}         \\
			& \quad \times \|\langle k\rangle^{-1}   (\langle \frac1k \rangle^\epsilon  + \langle \frac1k \rangle^{\frac12-\epsilon}) |k-l|^\frac12   \| \frac{1}{|l| + |\eta|} \|_{L_{ \eta}^2} \|_{L_{k}^2}  \\
			& \quad \times  \|e^{c \nu^{\frac{1}{3}} \lambda(k-l) t}\langle k-l \rangle  \langle \frac{1}{k-l} \rangle^\epsilon    |\xi - \eta||k-l| ^\frac13  \hat{\theta}_{k-l}^{\mathrm{NL}}(\xi-\eta)\|_{L_{k-l, \xi-\eta}^2} \\
			& \lesssim\|e^{c \nu^{\frac{1}{3}} \lambda\left(D_x\right) t}\langle D_x\rangle    \langle \frac{1}{D_x}\rangle^\epsilon  \left|D_x\right|^{\frac{1}{3}} \theta^{\mathrm{NL}}\|_{L^2} \|e^{c \nu^{\frac{1}{3}} \lambda\left(D_x\right) t}\langle D_x\rangle\langle\frac{1}{D_x}\rangle^\epsilon \px \nabla \phi^{\mathrm{NL}}\|_{L^2} \\
			& \quad \times\|e^{c \nu^{\frac{1}{3}} \lambda\left(D_x\right) t}\langle D_x\rangle\langle\frac{1}{D_x}\rangle^\epsilon \py \left|D_x\right|^{\frac{1}{3}} \theta^{\mathrm{NL}}\|_{L^2} .
		\end{align*}
		Combining the estimates of $I_7$ and $I_8$ shows that
		\begin{equation*}\begin{aligned}
				&\left|\left( |D_x|^\frac13 u^{\mathrm{NL}} \cdot \nabla \theta^{\mathrm{NL}} \big| \mathcal{M} e^{2 c \nu^{\frac{1}{3}} \lambda\left(D_x\right) t}\langle D_x\rangle^2\langle\frac{1}{D_x}\rangle^{2 \epsilon} |D_x|^\frac13 \theta^{\mathrm{NL}} \right)\right| \\
				\lesssim& \|e^{c \nu^{\frac{1}{3}} \lambda\left(D_x\right) t}\langle D_x\rangle    \langle \frac{1}{D_x}\rangle^\epsilon  \left|D_x\right|^{\frac{2}{3}} \theta^{\mathrm{NL}}\|_{L^2}^2 \|e^{c \nu^{\frac{1}{3}} \lambda\left(D_x\right) t}\langle D_x\rangle\langle\frac{1}{D_x}\rangle^\epsilon \omega^{\mathrm{NL}}\|_{L^2}  \\
				& + \|e^{c \nu^{\frac{1}{3}} \lambda\left(D_x\right) t}\langle D_x\rangle    \langle \frac{1}{D_x}\rangle^\epsilon  \left|D_x\right|^{\frac{1}{3}} \theta^{\mathrm{NL}}\|_{L^2} \|e^{c \nu^{\frac{1}{3}} \lambda\left(D_x\right) t}\langle D_x\rangle\langle\frac{1}{D_x}\rangle^\epsilon \px \nabla \phi^{\mathrm{NL}}\|_{L^2} \\
				& \quad \times\|e^{c \nu^{\frac{1}{3}} \lambda\left(D_x\right) t}\langle D_x\rangle\langle\frac{1}{D_x}\rangle^\epsilon \nabla \langle D_x\rangle^{\frac{1}{3}} \theta^{\mathrm{NL}}\|_{L^2}. 
		\end{aligned}\end{equation*}

		\underline{\bf Step IX: Collecting the estimates.}  In summary, we have
		\footnotesize{\begin{align*} 
				& \frac{d}{d t}\|\sqrt{\mathcal{M}} e^{c \nu^{\frac{1}{3}} \lambda\left(D_x\right) t}\langle D_x\rangle\langle\frac{1}{D_x}\rangle^\epsilon \langle D_x \rangle^\frac13  \theta^{\mathrm{NL}}\|_{L^2}^2+\nu\|e^{c \nu^{\frac{1}{3}} \lambda\left(D_x\right) t}\langle D_x\rangle\langle\frac{1}{D_x}\rangle^\epsilon \nabla \langle D_x \rangle^\frac13  \theta^{\mathrm{NL}}\|_{L^2}^2 \\
				& \quad+\frac{\nu^{\frac{1}{3}}}{16}\|e^{c \nu^{\frac{1}{3}} \lambda\left(D_x\right) t}\langle D_x\rangle\langle\frac{1}{D_x}\rangle^\epsilon\left|D_x\right|^{\frac{1}{3}}  \langle D_x \rangle ^\frac13 \theta^{\mathrm{NL}}\|_{L^2}^2  
				\\
				& \quad+\|e^{c \nu^{\frac{1}{3}} \lambda\left(D_x\right) t}\langle D_x\rangle\langle\frac{1}{D_x}\rangle^\epsilon \sqrt{\Upsilon\left(t, D_x, D_y\right)} \langle D_x \rangle^\frac13  \theta^{\mathrm{NL}}\|_{L^2}^2 \\
				\lesssim	& \langle t\rangle^{-2} \|e^{c \nu^{\frac{1}{3}} \lambda\left(D_x\right) t}\langle D_x\rangle  \langle \frac{1}{D_x}\rangle^\epsilon \langle D_x \rangle^\frac13  \theta^{\mathrm{NL}}\|_{L^2}    \|e^{c \nu^{\frac{1}{3}} \lambda\left(D_x\right) t}\langle D_x, D_y + tD_x\rangle^5\langle\frac{1}{D_x}\rangle^4 \omega^{\mathrm{L}} \|_{L^2} \\
				&\quad \times \left( \|e^{c \nu^{\frac{1}{3}} \lambda\left(D_x\right) t}\langle D_x, D_y + tD_x\rangle^5\langle\frac{1}{D_x}\rangle^4 \langle D_x \rangle^\frac13  \theta^{\mathrm{L}} \|_{L^2} +\|e^{c \nu^{\frac{1}{3}} \lambda\left(D_x\right) t} \nabla \langle D_x \rangle^\frac13  \theta^{\mathrm{NL}} \|_{L^2} \right)\\
				& +  \langle t\rangle^{-1}     \|e^{c \nu^{\frac{1}{3}} \lambda\left(D_x\right) t}\langle D_x, D_y+t D_x\rangle^5\langle\frac{1}{D_x}\rangle^4 \omega^{\mathrm{L}}\|_{L^2} \\
				& \quad \times \left(\|e^{c \nu^{\frac{1}{3}} \lambda\left(D_x\right) t}\langle D_x\rangle\langle\frac{1}{D_x}\rangle^\epsilon \langle D_x \rangle^\frac13 \theta^{\mathrm{NL}}\|_{L^2}  \| e^{c \nu^{\frac{1}{3}} \lambda\left(D_x\right) t} \langle D_x \rangle \langle\frac{1}{D_x}\rangle^\epsilon
				\left|D_x\right|^{\frac{1}{3}}  \langle D_x \rangle ^\frac13 \theta^{\mathrm{NL}} \|_{L^2} \rt.\\
				&\lt. \qquad + \|e^{c \nu^{\frac{1}{3}} \lambda\left(D_x\right) t}\langle D_x\rangle\langle\frac{1}{D_x}\rangle^\epsilon\left|D_x\right|^{\frac{1}{3}}  \langle D_x \rangle^\frac13  \theta^{\mathrm{NL}}\|_{L^2}^{\frac{3}{2}}   \|e^{c \nu^{\frac{1}{3}} \lambda\left(D_x\right) t}\langle D_x\rangle\langle\frac{1}{D_x}\rangle^\epsilon \nabla \langle D_x \rangle^\frac13  \theta^{\mathrm{NL}}\|_{L^2}^{\frac{1}{2}}\right)\\
				&+ \lt( \|e^{c \nu^{\frac{1}{3}} \lambda\left(D_x\right) t}\langle D_x\rangle  \langle\frac{1}{D_x}\rangle^\epsilon \langle D_x \rangle^\frac13  \theta^{\mathrm{NL}}\|_{L^2}\|e^{c \nu^{\frac{1}{3}} \lambda\left(D_x\right) t}\langle D_x\rangle \langle\frac{1}{D_x}\rangle^\epsilon \partial_x \nabla \phi^{\mathrm{NL}}\|_{L^2} \rt. \\
				&\lt. \quad + \|e^{c \nu^{\frac{1}{3}} \lambda\left(D_x\right) t}\langle D_x\rangle    \langle\frac{1}{D_x}\rangle^\epsilon    \left|D_x\right|^{\frac{1}{3}} \langle D_x \rangle^\frac13  \theta^{\mathrm{NL}}\|_{L^2}\|e^{c \nu^{\frac{1}{3}} \lambda\left(D_x\right) t}\langle D_x\rangle\langle\frac{1}{D_x}\rangle^\epsilon \omega^{\mathrm{NL}}\|_{L^2} \rt) \\
				& \qquad \times\|e^{c \nu^{\frac{1}{3}} \lambda\left(D_x\right) t}\langle D_x, D_y+t D_x\rangle^5\langle\frac{1}{D_x}\rangle^4\left|D_x\right|^{\frac{1}{3}} \langle D_x \rangle^\frac13  \theta^{\mathrm{L}}\|_{L^2} \\
				&  + \nu^{-\frac{1}{3}-\frac{\delta}{6}}\|e^{c \nu^{\frac{1}{3}} \lambda\left(D_x\right)}\langle D_x\rangle\langle\frac{1}{D_x}\rangle^\epsilon \partial_x \nabla \phi^{\mathrm{NL}}\|_{L^2}\|e^{c_0 \nu\left|D_x\right|^2 t^3}\langle D_x, D_y+t D_x\rangle^5\langle\frac{1}{D_x}\rangle^4 \theta^{\mathrm{L}}\|_{L^1 \cap L^2} \\
				&\quad  \times\left(\|e^{c \nu^{\frac{1}{3}} \lambda\left(D_x\right) t}\langle D_x\rangle\langle\frac{1}{D_x}\rangle^\epsilon \sqrt{\Upsilon} \langle D_x \rangle^\frac13  \theta^{\mathrm{NL}}\|_{L^2}+\nu^{\frac{1}{6}}\|e^{c \nu^{\frac{1}{3}} \lambda\left(D_x\right) t}\langle D_x\rangle\langle\frac{1}{D_x}\rangle^\epsilon |D_x|^\frac13 \langle D_x \rangle^{\frac{1}{3}} \theta^{\mathrm{NL}}\|_{L^2}\right) \\
				&+\|e^{c \nu^{\frac{1}{3}} \lambda\left(D_x\right) t}\langle D_x\rangle    \langle \frac{1}{D_x}\rangle^\epsilon  \left|D_x\right|^{\frac{1}{3}}\langle D_x \rangle^\frac13 \theta^{\mathrm{NL}}\|_{L^2}^\frac32 \|e^{c \nu^{\frac{1}{3}} \lambda\left(D_x\right) t}\langle D_x\rangle\langle\frac{1}{D_x}\rangle^\epsilon \omega^{\mathrm{NL}}\|_{L^2} \\
				& \quad \times\|e^{c \nu^{\frac{1}{3}} \lambda\left(D_x\right) t}\langle D_x\rangle\langle\frac{1}{D_x}\rangle^\epsilon \nabla  \langle D_x \rangle^{\frac{1}{3}} \theta^{\mathrm{NL}}\|_{L^2}^\frac12 \\
				&+ \|e^{c \nu^{\frac{1}{3}} \lambda\left(D_x\right) t}\langle D_x\rangle    \langle \frac{1}{D_x}\rangle^\epsilon  \langle D_x \rangle^{\frac{1}{3}} \theta^{\mathrm{NL}}\|_{L^2} \|e^{c \nu^{\frac{1}{3}} \lambda\left(D_x\right) t}\langle D_x\rangle\langle\frac{1}{D_x}\rangle^\epsilon \px \nabla \phi^{\mathrm{NL}}\|_{L^2} \\
				& \quad \times\|e^{c \nu^{\frac{1}{3}} \lambda\left(D_x\right) t}\langle D_x\rangle\langle\frac{1}{D_x}\rangle^\epsilon \nabla \langle D_x \rangle^{\frac{1}{3}} \theta^{\mathrm{NL}}\|_{L^2} \\
				&+\|e^{c \nu^{\frac{1}{3}} \lambda\left(D_x\right) t}\langle D_x\rangle    \langle \frac{1}{D_x}\rangle^\epsilon  \left|D_x\right|^{\frac{1}{3}} \langle D_x \rangle^{\frac{1}{3}} \theta^{\mathrm{NL}}\|_{L^2}^2 \|e^{c \nu^{\frac{1}{3}} \lambda\left(D_x\right) t}\langle D_x\rangle\langle\frac{1}{D_x}\rangle^\epsilon \omega^{\mathrm{NL}}\|_{L^2}  .
		\end{align*} }
		Integrating the inequality above over $[T_{0},T]$ and using 
		\begin{equation}\begin{aligned} \label{eq:t}
				\| \langle t \rangle^{-2} \|_{L_{[T_0, T]}^1} + \nu^{-\frac1{12}} \|\langle t \rangle^{-2} \|_{L_{[T_0, T]}^2}  + \nu^{\frac1{12}}\| \langle t \rangle^{-1} \|_{L_{[T_0, T]}^2} + \| \langle t \rangle^{-1} \|_{L_{[T_0, T]}^\infty} \leq \nu^\frac16,
		\end{aligned}\end{equation}
		we complete the proof.
	\end{proof}
	
	\subsection{Estimate of $\omega^{\mathrm{NL}}$ and proof of Proposition \ref{prop2.3}}
	The estimate for $\omega^{\mathrm{NL}}$ is similar, and we omit it.
	\begin{Prop} \label{lem:est of wnl}
		Let $\frac{1-\delta}{2}<\epsilon<\frac{1}{2}$, $0<\kappa<\frac{2\epsilon}{1-\delta}-1$ and $c<c(\delta,\epsilon)$ small enough, if the initial data $(w_{in}, \theta_{in})$ satisfies \eqref{eq:the initial data} then for any $  T \geq T_{0} = \nu^{-\frac{1}{6}} $, it holds that
		\footnotesize{	\begin{align*}
				& \| e^{c \nu^{\frac{1}{3}} \lambda\left(D_x\right) t}\langle D_x\rangle\langle\frac{1}{D_x}\rangle^\epsilon \omega^{\mathrm{NL}}\|_{L_{[T_0, T]}^\infty L^2}^2+\nu\|e^{c \nu^{\frac{1}{3}} \lambda\left(D_x\right) t}\langle D_x\rangle\langle\frac{1}{D_x}\rangle^\epsilon \nabla \omega^{\mathrm{NL}}\|_{L_{[T_0, T]}^2L^2}^2 \\
				& \quad+ \nu^{\frac{1}{3}}\|e^{c \nu^{\frac{1}{3}} \lambda\left(D_x\right) t}\langle D_x\rangle\langle\frac{1}{D_x}\rangle^\epsilon\left|D_x\right|^{\frac{1}{3}}  \omega^{\mathrm{NL}}\|_{L_{[T_0, T]}^2 L^2}^2  + \|e^{c \nu^{\frac{1}{3}} \lambda\left(D_x\right) t}\langle D_x\rangle\langle\frac{1}{D_x}\rangle^\epsilon \partial_x \nabla \phi^{\mathrm{NL}}\|_{L_{[T_0, T]}^2 L^2}^2
				\\
				& \quad+\|e^{c \nu^{\frac{1}{3}} \lambda\left(D_x\right) t}\langle D_x\rangle\langle\frac{1}{D_x}\rangle^\epsilon \sqrt{\Upsilon\left(t, D_x, D_y\right)} \omega^{\mathrm{NL}}\|_{L_{[T_0, T]}^2 L^2}^2 \\
				&\lesssim  \| e^{c \nu^{\frac{1}{3}} \lambda\left(D_x\right) t}\langle D_x\rangle\langle\frac{1}{D_x}\rangle^\epsilon \omega^{\mathrm{NL}} (T_0)\|_{ L^2}^2\\
				&\quad+\nu^{\frac{1}{6}}  \|e^{c \nu^{\frac{1}{3}} \lambda\left(D_x\right) t}\langle D_x\rangle  \langle \frac{1}{D_x}\rangle^\epsilon \omega^{\mathrm{NL}}\|_{L_{[T_0, T]}^\infty L^2}    \|e^{c \nu^{\frac{1}{3}} \lambda\left(D_x\right) t}\langle D_x, D_y + tD_x\rangle^4\langle\frac{1}{D_x}\rangle^4 \omega^{\mathrm{L}} \|_{L_{[T_0, T]}^\infty L^2}^2 \\
				&\quad + \nu^{\frac{1}{4}} \|e^{c \nu^{\frac{1}{3}} \lambda\left(D_x\right) t}\langle D_x\rangle  \langle \frac{1}{D_x}\rangle^\epsilon \omega^{\mathrm{NL}}\|_{L_{[T_0, T]}^\infty L^2}    \|e^{c \nu^{\frac{1}{3}} \lambda\left(D_x\right) t}\langle D_x, D_y + tD_x\rangle^ 5 \langle\frac{1}{D_x}\rangle^3 \omega^{\mathrm{L}} \|_{L^\infty L^2} \\
				&\qquad \times \|e^{c \nu^{\frac{1}{3}} \lambda\left(D_x\right) t} \nabla \omega^{\mathrm{NL}} \|_{L_{[T_0, T]}^2 L^2} \\
				&\quad +      \|e^{c \nu^{\frac{1}{3}} \lambda\left(D_x\right) t}\langle D_x, D_y+t D_x\rangle^4\langle\frac{1}{D_x}\rangle^2 \omega^{\mathrm{L}}\|_{L_{[T_0, T]}^\infty L^2} \\
				& \qquad \times \left(  \nu^{\frac{1}{12}}\|e^{c \nu^{\frac{1}{3}} \lambda\left(D_x\right) t}\langle D_x\rangle\langle\frac{1}{D_x}\rangle^\epsilon \omega^{\mathrm{NL}}\|_{L_{[T_0, T]}^\infty L^2}  \| e^{c \nu^{\frac{1}{3}} \lambda\left(D_x\right) t} \langle D_x \rangle
				\left|D_x\right|^{\frac{1}{3}}  \omega^{\mathrm{NL}} \|_{L_{[T_0, T]}^2 L^2} \rt.\\
				&\qquad \quad  \lt.+ \nu^{\frac{1}{6}} \|e^{c \nu^{\frac{1}{3}} \lambda\left(D_x\right) t}\langle D_x\rangle\langle\frac{1}{D_x}\rangle^\epsilon\left|D_x\right|^{\frac{1}{3}}  \omega^{\mathrm{NL}}\|_{L_{[T_0, T]}^2 L^2}^{\frac{3}{2}}   \|e^{c \nu^{\frac{1}{3}} \lambda\left(D_x\right) t}\langle D_x\rangle\langle\frac{1}{D_x}\rangle^\epsilon \nabla   \omega^{\mathrm{NL}}\|_{L_{[T_0, T]}^2 L^2}^{\frac{1}{2}}\right) \\
				&\quad+  \lt( \|e^{c \nu^{\frac{1}{3}} \lambda\left(D_x\right) t}\langle D_x\rangle \partial_x \nabla \phi^{\mathrm{NL}}\|_{L_{[T_0, T]}^2 L^2} + \|e^{c \nu^{\frac{1}{3}} \lambda\left(D_x\right) t}\langle D_x\rangle\left|D_x\right|^{\frac{1}{3}} \omega^{\mathrm{NL}}\|_{L_{[T_0, T]}^2 L^2} \rt) \\
				&  \qquad \times\|e^{c \nu^{\frac{1}{3}} \lambda\left(D_x\right) t}\langle D_x, D_y+t D_x\rangle^4\langle\frac{1}{D_x}\rangle^2\left|D_x\right|^{\frac{1}{3}} \omega^{\mathrm{L}}\|_{L_{[T_0, T]}^2 L^2} \|e^{c \nu^{\frac{1}{3}} \lambda\left(D_x\right) t}\langle D_x\rangle  \langle\frac{1}{D_x}\rangle^\epsilon \omega^{\mathrm{NL}}\|_{L_{[T_0, T]}^\infty L^2} \\
				& \quad  + \nu^{-\frac{1}{3}-\frac{\delta}{6}}\|e^{c \nu^{\frac{1}{3}} \lambda\left(D_x\right)}\langle D_x\rangle\langle\frac{1}{D_x}\rangle^\epsilon \partial_x \nabla \phi^{\mathrm{NL}}\|_{L_{[T_0, T]}^2 L^2}\|e^{c_0 \nu\left|D_x\right|^2 t^3}\langle D_x, D_y+t D_x\rangle^4\langle\frac{1}{D_x}\rangle^2 \omega^{\mathrm{L}}\|_{L_{[T_0, T]}^\infty (L^1 \cap L^2)} \\
				&\qquad  \times\left(\|e^{c \nu^{\frac{1}{3}} \lambda\left(D_x\right) t}\langle D_x\rangle\langle\frac{1}{D_x}\rangle^\epsilon \sqrt{\Upsilon} \omega^{\mathrm{NL}}\|_{L_{[T_0, T]}^2L^2}+\nu^{\frac{1}{6}}\|e^{c \nu^{\frac{1}{3}} \lambda\left(D_x\right) t}\langle D_x\rangle\langle\frac{1}{D_x}\rangle^\epsilon |D_x|^\frac13 \omega^{\mathrm{NL}}\|_{L_{[T_0, T]}^2L^2}\right) \\
				&\quad + \|e^{c \nu^{\frac{1}{3}} \lambda\left(D_x\right) t}\langle D_x\rangle    \langle \frac{1}{D_x}\rangle^\epsilon  \left|D_x\right|^{\frac{1}{3}}\omega^{\mathrm{NL}}\|_{L_{[T_0, T]}^2L^2}^\frac32 \|e^{c \nu^{\frac{1}{3}} \lambda\left(D_x\right) t}\langle D_x\rangle\langle\frac{1}{D_x}\rangle^\epsilon \omega^{\mathrm{NL}}\|_{L_{[T_0, T]}^\infty L^2} \\
				& \qquad \times\|e^{c \nu^{\frac{1}{3}} \lambda\left(D_x\right) t}\langle D_x\rangle\langle\frac{1}{D_x}\rangle^\epsilon \nabla  \omega^{\mathrm{NL}}\|_{L_{[T_0, T]}^2 L^2}^\frac12 \\
				& \quad + \|e^{c \nu^{\frac{1}{3}} \lambda\left(D_x\right) t}\langle D_x\rangle    \langle \frac{1}{D_x}\rangle^\epsilon  \omega^{\mathrm{NL}}\|_{L_{[T_0, T]}^\infty L^2} \|e^{c \nu^{\frac{1}{3}} \lambda\left(D_x\right) t}\langle D_x\rangle\langle\frac{1}{D_x}\rangle^\epsilon \px \nabla \phi^{\mathrm{NL}}\|_{L_{[T_0, T]}^2L^2} \\
				& \qquad \times\|e^{c \nu^{\frac{1}{3}} \lambda\left(D_x\right) t}\langle D_x\rangle\langle\frac{1}{D_x}\rangle^\epsilon \nabla \omega^{\mathrm{NL}}\|_{L_{[T_0, T]}^2L^2} \\
				&\quad + \nu^{-\frac13 }\| e^{c \nu^{\frac{1}{3}} \lambda\left(D_x\right) t}\langle D_x\rangle\langle\frac{1}{D_x}\rangle^\epsilon |D_x|^\frac23 \theta^{\mathrm{NL}}  \|_{L_{[T_0, T]}^2L^2}^2.
		\end{align*} }
	\end{Prop}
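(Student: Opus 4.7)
The plan is to mirror the scheme of Proposition~\ref{lem:est of thetanl}, now applied to the $\omega^{\mathrm{NL}}$ equation
$$
\partial_t \omega^{\mathrm{NL}}-\nu\Delta\omega^{\mathrm{NL}}+y\partial_x\omega^{\mathrm{NL}}+(u^{\mathrm{L}}+u^{\mathrm{NL}})\cdot\nabla(\omega^{\mathrm{L}}+\omega^{\mathrm{NL}})=\partial_x\theta^{\mathrm{NL}}.
$$
I would test this equation against $\mathcal{M}\,e^{2c\nu^{1/3}\lambda(D_x)t}\langle D_x\rangle^2\langle\tfrac{1}{D_x}\rangle^{2\epsilon}\omega^{\mathrm{NL}}$ and invoke Lemma~\ref{lem:M12} together with $\lambda(k)\le|k|^{2/3}$ to produce, on the left-hand side, the enhanced dissipation term $\nu^{1/3}\||D_x|^{1/3}\omega^{\mathrm{NL}}\|^2$, the standard dissipation $\nu\|\nabla\omega^{\mathrm{NL}}\|^2$, the inviscid damping contribution $\|\partial_x\nabla\phi^{\mathrm{NL}}\|^2$ produced by $k\partial_\xi\mathcal{M}_2=k^2/(k^2+\xi^2)$, and the $\Upsilon$-weight coming from $\mathcal{M}_3$, all modulated by $e^{2c\nu^{1/3}\lambda(D_x)t}\langle D_x\rangle^2\langle \tfrac{1}{D_x}\rangle^{2\epsilon}$.

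For the nonlinear transport term, I would split $(u^{\mathrm{L}}+u^{\mathrm{NL}})\cdot\nabla(\omega^{\mathrm{L}}+\omega^{\mathrm{NL}})$ into the four interactions and run, on the Fourier side, the exact same case analysis as Steps~I,~III,~V, and VII of Proposition~\ref{lem:est of thetanl}, with $\omega^{\mathrm{L}}$, $\omega^{\mathrm{NL}}$ replacing $\langle D_x\rangle^{1/3}\theta^{\mathrm{L}}$, $\langle D_x\rangle^{1/3}\theta^{\mathrm{NL}}$ and no extra $\langle D_x\rangle^{1/3}$ to carry. Concretely: the source interaction $u^{\mathrm{L}}\cdot\nabla\omega^{\mathrm{L}}$ uses \eqref{eq:the point-wise inviscid damping estimate} to gain $\langle t\rangle^{-2}$ (Step~I analogue); the transport term $u^{\mathrm{L}}\cdot\nabla\omega^{\mathrm{NL}}$ splits into $u_2^{\mathrm{L}}\partial_y$ and $u_1^{\mathrm{L}}\partial_x$ and uses \eqref{eq:trick2}, \eqref{eq:trick3} to gain $\langle t\rangle^{-2}$ and $\langle t\rangle^{-1}$ respectively (Step~III analogue); the quadratic nonlinear term $u^{\mathrm{NL}}\cdot\nabla\omega^{\mathrm{NL}}$ is treated by the three-region decomposition ($\tfrac{|k-l|}{2}\le|k|\le 2|k-l|$, $|k|>2|k-l|$, $2|k|<|k-l|$) exactly as in Step~VII, yielding the last two $L_t^2L^2$ cubic bounds in the statement.

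The genuinely new feature is the buoyancy source $\partial_x\theta^{\mathrm{NL}}$, which does not appear in the $\theta^{\mathrm{NL}}$ estimate. Pairing it with the test function gives
$$
\bigl|(\partial_x\theta^{\mathrm{NL}}\,|\,\mathcal{M}\,e^{2c\nu^{1/3}\lambda(D_x)t}\langle D_x\rangle^2\langle\tfrac{1}{D_x}\rangle^{2\epsilon}\omega^{\mathrm{NL}})\bigr|,
$$
which by Cauchy--Schwarz and the elementary bound $|k|\le |k|^{2/3}\cdot|k|^{1/3}$ is controlled, after absorbing $|k|^{1/3}$ into the enhanced dissipation norm of $\omega^{\mathrm{NL}}$ via Young's inequality, by
$$
\nu^{-1/3}\|e^{c\nu^{1/3}\lambda(D_x)t}\langle D_x\rangle\langle\tfrac{1}{D_x}\rangle^\epsilon|D_x|^{2/3}\theta^{\mathrm{NL}}\|_{L_t^2L^2}^2 +\tfrac{\nu^{1/3}}{32}\|e^{c\nu^{1/3}\lambda(D_x)t}\langle D_x\rangle\langle\tfrac{1}{D_x}\rangle^\epsilon|D_x|^{1/3}\omega^{\mathrm{NL}}\|_{L_t^2L^2}^2.
$$
The first piece is precisely the final term on the right-hand side of the statement, while the second is absorbed into the enhanced dissipation gained on the left.

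The expected main obstacle is, as in Proposition~\ref{lem:est of thetanl}, the real reaction term $u^{\mathrm{NL}}\cdot\nabla\omega^{\mathrm{L}}$: after the frame-shift decomposition into $I_1$ (bounded by elementary Fourier methods using \eqref{eq:trick4}--\eqref{eq:trick6}) and $I_2$ (the dangerous piece carrying $t\,l(k-l)/(|l|^2+|\eta|^2)$), absorbing $I_2$ requires precisely the $\mathcal{M}_3$ multiplier together with the kernel identity
$$
\int_{\mathbb{R}^2}\frac{\langle\tfrac{1}{l}\rangle^{-2\epsilon}}{(|l|^2+|\eta|^2)\langle k-l,\xi-\eta+(k-l)t\rangle^2}\,d\eta\,dl\lesssim \Upsilon(t,k,\xi)^{1-\delta},
$$
valid under $\tfrac{1-\delta}{2}<\epsilon<\tfrac12$ and $0<\kappa<\tfrac{2\epsilon}{1-\delta}-1$, followed by the conversion $t\,e^{c\nu^{1/3}|k-l|^{2/3}t}\lesssim\nu^{-1/3}|k-l|^{-2/3}e^{c_0\nu|k-l|^2t^3}$ to route $\omega^{\mathrm{L}}$ to its linear-part norm from Proposition~\ref{lem:linear theta}. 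Finally, the time weights $\langle t\rangle^{-2}$ and $\langle t\rangle^{-1}$ produced by the various steps are converted into the powers $\nu^{1/6}$ and $\nu^{1/4}$ appearing in the statement through \eqref{eq:t} upon integration over $[T_0,T]$, completing the scheme.
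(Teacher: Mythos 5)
Your proposal is correct and follows essentially the same route as the paper: the same energy pairing with $\mathcal{M}\,e^{2c\nu^{1/3}\lambda(D_x)t}\langle D_x\rangle^2\langle\tfrac{1}{D_x}\rangle^{2\epsilon}$, the same four-way decomposition of the transport term mirroring Steps I, III, V, VII of Proposition~\ref{lem:est of thetanl} (with $\mathcal{M}_3$ handling the reaction piece $I_2$), and the same treatment of the buoyancy source via $|k|\le|k|^{2/3}|k|^{1/3}$ and Young's inequality, yielding the $\nu^{-1/3}\||D_x|^{2/3}\theta^{\mathrm{NL}}\|^2$ term with the remainder absorbed into the enhanced dissipation.
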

	\begin{proof}
		Taking the inner product of $\eqref{eq:nonlinear part}_1$ with $\mathcal{M} e^{2 c \nu^{\frac{1}{3}} \lambda\left(D_x\right) t}\langle D_x\rangle^2\langle\frac{1}{D_x}\rangle^{2 \epsilon} \omega^{\mathrm{NL}}$, we have that
		$$
		\begin{aligned}
			\frac{d}{d t} \| \sqrt{\mathcal{M}} & e^{c \nu^{\frac{1}{3}} \lambda\left(D_x\right) t}\langle D_x\rangle \langle\frac{1}{D_x}\rangle^\epsilon \omega^{\mathrm{NL}}\|_{L^2}^2-2 c \nu^{\frac{1}{3}}\| \sqrt{\mathcal{M}} e^{c \nu^{\frac{1}{3}} \lambda\left(D_x\right) t}\langle D_x\rangle\langle\frac{1}{D_x}\rangle^\epsilon \sqrt{\lambda\left(D_x\right)} \omega^{\mathrm{NL}} \|_{L^2}^2 \\
			& +2 \nu\|\sqrt{\mathcal{M}} e^{c \nu^{\frac{1}{3}} \lambda\left(D_x\right) t}\langle D_x\rangle\langle\frac{1}{D_x}\rangle^\epsilon \nabla\omega^{\mathrm{NL}}\|_{L^2}^2 \\
			& +\int_{\mathbb{R}^2}\left(-\partial_t+k \partial_{\xi}\right) \mathcal{M}(k, \xi) e^{2 c \nu^{\frac{1}{3}} \lambda(k) t}\langle k\rangle^2\langle\frac{1}{k}\rangle^{2 \epsilon}  \left|\hat{\omega}^{\mathrm{NL}}(k, \xi)\right|^2 d k d \xi \\
			= & -2 \Re\left(\left(u^{\mathrm{L}}+u^{\mathrm{NL}}\right) \cdot \nabla\left(\omega^{\mathrm{L}}+\omega^{\mathrm{NL}}\right) - \px \theta^{\mathrm{NL}} \left\lvert\, \mathcal{M} e^{2 c \nu^{\frac{1}{3}} \lambda\left(D_x\right) t}\langle D_x\rangle^2\langle\frac{1}{D_x}\rangle^{2 \epsilon}  \omega^{\mathrm{NL}}\right.\right).
		\end{aligned}
		$$
		which gives 
		\begin{align*}
			& \frac{d}{d t}\|\sqrt{\mathcal{M}} e^{c \nu^{\frac{1}{3}} \lambda\left(D_x\right) t}\langle D_x\rangle\langle\frac{1}{D_x}\rangle^\epsilon \omega^{\mathrm{NL}}\|_{L^2}^2+\nu\|e^{c \nu^{\frac{1}{3}} \lambda\left(D_x\right) t}\langle D_x\rangle\langle\frac{1}{D_x}\rangle^\epsilon \nabla \omega^{\mathrm{NL}}\|_{L^2}^2 \\
			& \quad+\frac{\nu^{\frac{1}{3}}}{16}\|e^{c \nu^{\frac{1}{3}} \lambda\left(D_x\right) t}\langle D_x\rangle\langle\frac{1}{D_x}\rangle^\epsilon\left|D_x\right|^{\frac{1}{3}}  \omega^{\mathrm{NL}}\|_{L^2}^2  + \|e^{c \nu^{\frac{1}{3}} \lambda\left(D_x\right) t}\langle D_x\rangle\langle\frac{1}{D_x}\rangle^\epsilon \partial_x \nabla \phi^{\mathrm{NL}}\|_{L^2}^2
			\\
			& \quad+\|e^{c \nu^{\frac{1}{3}} \lambda\left(D_x\right) t}\langle D_x\rangle\langle\frac{1}{D_x}\rangle^\epsilon \sqrt{\Upsilon\left(t, D_x, D_y\right)} \omega^{\mathrm{NL}}\|_{L^2}^2 \\
			& \leq 2\left|\Re\left((u^{\mathrm{L}}+u^{\mathrm{NL}}) \cdot \nabla(\omega^{\mathrm{L}}+\omega^{\mathrm{NL}})  \big| \mathcal{M} e^{2 c \nu^{\frac{1}{3}} \lambda\left(D_x\right) t}\langle D_x\rangle^2\langle\frac{1}{D_x}\rangle^{2 \epsilon}  \omega^{\mathrm{NL}} \right)\right| \\
			&\quad  + 2 \| \sqrt{\mathcal{M}} e^{c \nu^{\frac{1}{3}} \lambda\left(D_x\right) t}\langle D_x\rangle\langle\frac{1}{D_x}\rangle^\epsilon |D_x|^\frac23 \theta^{\mathrm{NL}}  \|_{L^2}  \| \sqrt{\mathcal{M}} e^{c \nu^{\frac{1}{3}} \lambda\left(D_x\right) t}\langle D_x\rangle\langle\frac{1}{D_x}\rangle^\epsilon |D_x|^\frac13 \omega^{\mathrm{NL}} \|_{L^2} 
		\end{align*}
		
		\underline{\bf Estimate of  $u^{\mathrm{L}} \cdot  \nabla \omega^{\mathrm{L}}$.} It is similar the estimate of  $u^{\mathrm{L}} \cdot  \nabla \langle D_x \rangle^\frac13 \theta^{\mathrm{L}}$ and we get
		\begin{equation*}\begin{aligned} \label{eq:sum11}
				&\left|\left(u^{\mathrm{L}} \cdot \nabla \omega^{\mathrm{L}} \big| \mathcal{M} e^{2 c \nu^{\frac{1}{3}} \lambda\left(D_x\right) t}\langle D_x\rangle^2\langle\frac{1}{D_x}\rangle^{2 \epsilon} \omega^{\mathrm{NL}} \right)\right| \\
				\lesssim	& \langle t\rangle^{-2} \|e^{c \nu^{\frac{1}{3}} \lambda\left(D_x\right) t}\langle D_x\rangle  \langle \frac{1}{D_x}\rangle^\epsilon \omega^{\mathrm{NL}}\|_{L^2}    \|e^{c \nu^{\frac{1}{3}} \lambda\left(D_x\right) t}\langle D_x, D_y + tD_x\rangle^4\langle\frac{1}{D_x}\rangle^4 \omega^{\mathrm{L}} \|_{L^2}^2.
		\end{aligned}\end{equation*}

		\underline{\bf Estimate of $  u^{\mathrm{L}} \cdot  \nabla \omega^{\mathrm{NL}}$.} Similarly, we have that
		\begin{align*} 
			&\left|\left(  u^{\mathrm{L}} \cdot\nabla \omega^{\mathrm{NL}} \big| \mathcal{M} e^{2 c \nu^{\frac{1}{3}} \lambda\left(D_x\right) t}\langle D_x\rangle^2\langle\frac{1}{D_x}\rangle^{2 \epsilon} \omega^{\mathrm{NL}} \right)\right| \\
			&\lesssim \langle t\rangle^{-2} \|e^{c \nu^{\frac{1}{3}} \lambda\left(D_x\right) t}\langle D_x\rangle  \langle \frac{1}{D_x}\rangle^\epsilon \omega^{\mathrm{NL}}\|_{L^2}    \|e^{c \nu^{\frac{1}{3}} \lambda\left(D_x\right) t}\langle D_x, D_y + tD_x\rangle^ 5 \langle\frac{1}{D_x}\rangle^3 \omega^{\mathrm{L}} \|_{L^2} \\
			&\qquad \times \|e^{c \nu^{\frac{1}{3}} \lambda\left(D_x\right) t} \py \omega^{\mathrm{NL}} \|_{L^2} \\
			&\quad +  \langle t\rangle^{-1}     \|e^{c \nu^{\frac{1}{3}} \lambda\left(D_x\right) t}\langle D_x, D_y+t D_x\rangle^4\langle\frac{1}{D_x}\rangle^2 \omega^{\mathrm{L}}\|_{L^2} \\
			& \qquad \times \left(\|e^{c \nu^{\frac{1}{3}} \lambda\left(D_x\right) t}\langle D_x\rangle\langle\frac{1}{D_x}\rangle^\epsilon \omega^{\mathrm{NL}}\|_{L^2}  \| e^{c \nu^{\frac{1}{3}} \lambda\left(D_x\right) t} \langle D_x \rangle
			\left|D_x\right|^{\frac{1}{3}}  \omega^{\mathrm{NL}} \|_{L^2} \rt.\\
			&\qquad \quad  \lt.+ \|e^{c \nu^{\frac{1}{3}} \lambda\left(D_x\right) t}\langle D_x\rangle\langle\frac{1}{D_x}\rangle^\epsilon\left|D_x\right|^{\frac{1}{3}}  \omega^{\mathrm{NL}}\|_{L^2}^{\frac{3}{2}}   \|e^{c \nu^{\frac{1}{3}} \lambda\left(D_x\right) t}\langle D_x\rangle\langle\frac{1}{D_x}\rangle^\epsilon \px \omega^{\mathrm{NL}}\|_{L^2}^{\frac{1}{2}}\right).
		\end{align*}
		
		\underline{\bf Estimate of $  u^{\mathrm{NL}} \cdot  \nabla \omega^{\mathrm{L}}$.} Analogous to the estimates of $\theta$ it concludes that
		\begin{equation*}\begin{aligned}
				&\left|\left(u^{\mathrm{NL}} \cdot \nabla \omega^{\mathrm{L}} \big| \mathcal{M} e^{2 c \nu^{\frac{1}{3}} \lambda\left(D_x\right) t}\langle D_x\rangle^2\langle\frac{1}{D_x}\rangle^{2 \epsilon} \omega^{\mathrm{NL}} \right)\right| \\
				\lesssim &  \lt( \|e^{c \nu^{\frac{1}{3}} \lambda\left(D_x\right) t}\langle D_x\rangle \partial_x \nabla \phi^{\mathrm{NL}}\|_{L^2} + \|e^{c \nu^{\frac{1}{3}} \lambda\left(D_x\right) t}\langle D_x\rangle\left|D_x\right|^{\frac{1}{3}} \omega^{\mathrm{NL}}\|_{L^2} \rt) \\
				&\quad  \times\|e^{c \nu^{\frac{1}{3}} \lambda\left(D_x\right) t}\langle D_x, D_y+t D_x\rangle^4\langle\frac{1}{D_x}\rangle^2\left|D_x\right|^{\frac{1}{3}} \omega^{\mathrm{L}}\|_{L^2} \|e^{c \nu^{\frac{1}{3}} \lambda\left(D_x\right) t}\langle D_x\rangle  \langle\frac{1}{D_x}\rangle^\epsilon \omega^{\mathrm{NL}}\|_{L^2} \\
				&  + \nu^{-\frac{1}{3}-\frac{\delta}{6}}\|e^{c \nu^{\frac{1}{3}} \lambda\left(D_x\right)}\langle D_x\rangle\langle\frac{1}{D_x}\rangle^\epsilon \partial_x \nabla \phi^{\mathrm{NL}}\|_{L^2}\|e^{c_0 \nu\left|D_x\right|^2 t^3}\langle D_x, D_y+t D_x\rangle^4\langle\frac{1}{D_x}\rangle^2 \omega^{\mathrm{L}}\|_{L^1 \cap L^2} \\
				&\quad  \times\left(\|e^{c \nu^{\frac{1}{3}} \lambda\left(D_x\right) t}\langle D_x\rangle\langle\frac{1}{D_x}\rangle^\epsilon \sqrt{\Upsilon} \omega^{\mathrm{NL}}\|_{L^2}+\nu^{\frac{1}{6}}\|e^{c \nu^{\frac{1}{3}} \lambda\left(D_x\right) t}\langle D_x\rangle\langle\frac{1}{D_x}\rangle^\epsilon |D_x|^\frac13 \omega^{\mathrm{NL}}\|_{L^2}\right).
		\end{aligned}\end{equation*}

		\underline{\bf Estimate of $   u^{\mathrm{NL}} \cdot  \nabla \omega^{\mathrm{NL}}$.} Using the same trick, we obtain that
		\begin{equation*}\begin{aligned}
				&\left|\left(  u^{\mathrm{NL}} \cdot \nabla \omega^{\mathrm{NL}} \big| \mathcal{M} e^{2 c \nu^{\frac{1}{3}} \lambda\left(D_x\right) t}\langle D_x\rangle^2\langle\frac{1}{D_x}\rangle^{2 \epsilon} \omega^{\mathrm{NL}} \right)\right| \\
				\lesssim& \|e^{c \nu^{\frac{1}{3}} \lambda\left(D_x\right) t}\langle D_x\rangle    \langle \frac{1}{D_x}\rangle^\epsilon  \left|D_x\right|^{\frac{1}{3}}\omega^{\mathrm{NL}}\|_{L^2}^\frac32 \|e^{c \nu^{\frac{1}{3}} \lambda\left(D_x\right) t}\langle D_x\rangle\langle\frac{1}{D_x}\rangle^\epsilon \omega^{\mathrm{NL}}\|_{L^2} \\
				& \quad \times\|e^{c \nu^{\frac{1}{3}} \lambda\left(D_x\right) t}\langle D_x\rangle\langle\frac{1}{D_x}\rangle^\epsilon \nabla  \omega^{\mathrm{NL}}\|_{L^2}^\frac12 \\
				&+ \|e^{c \nu^{\frac{1}{3}} \lambda\left(D_x\right) t}\langle D_x\rangle    \langle \frac{1}{D_x}\rangle^\epsilon  \omega^{\mathrm{NL}}\|_{L^2} \|e^{c \nu^{\frac{1}{3}} \lambda\left(D_x\right) t}\langle D_x\rangle\langle\frac{1}{D_x}\rangle^\epsilon \px \nabla \phi^{\mathrm{NL}}\|_{L^2} \\
				& \quad \times\|e^{c \nu^{\frac{1}{3}} \lambda\left(D_x\right) t}\langle D_x\rangle\langle\frac{1}{D_x}\rangle^\epsilon \nabla \omega^{\mathrm{NL}}\|_{L^2}.
		\end{aligned}\end{equation*}
		Summing up, we get that
		\footnotesize{	\begin{align*}
				& \frac{d}{d t}\| \sqrt{\mathcal{M}}  e^{c \nu^{\frac{1}{3}} \lambda\left(D_x\right) t}\langle D_x\rangle\langle\frac{1}{D_x}\rangle^\epsilon \omega^{\mathrm{NL}}\|_{L^2}^2+\nu\|e^{c \nu^{\frac{1}{3}} \lambda\left(D_x\right) t}\langle D_x\rangle\langle\frac{1}{D_x}\rangle^\epsilon \nabla \omega^{\mathrm{NL}}\|_{L^2}^2 \\
				& \quad+ \nu^{\frac{1}{3}}\|e^{c \nu^{\frac{1}{3}} \lambda\left(D_x\right) t}\langle D_x\rangle\langle\frac{1}{D_x}\rangle^\epsilon\left|D_x\right|^{\frac{1}{3}}  \omega^{\mathrm{NL}}\|_{L^2}^2  + \|e^{c \nu^{\frac{1}{3}} \lambda\left(D_x\right) t}\langle D_x\rangle\langle\frac{1}{D_x}\rangle^\epsilon \partial_x \nabla \phi^{\mathrm{NL}}\|_{L^2}^2
				\\
				& \quad+\|e^{c \nu^{\frac{1}{3}} \lambda\left(D_x\right) t}\langle D_x\rangle\langle\frac{1}{D_x}\rangle^\epsilon \sqrt{\Upsilon\left(t, D_x, D_y\right)} \omega^{\mathrm{NL}}\|_{L^2}^2 \\
				&\lesssim \langle t\rangle^{-2} \|e^{c \nu^{\frac{1}{3}} \lambda\left(D_x\right) t}\langle D_x\rangle  \langle \frac{1}{D_x}\rangle^\epsilon \omega^{\mathrm{NL}}\|_{L^2}    \|e^{c \nu^{\frac{1}{3}} \lambda\left(D_x\right) t}\langle D_x, D_y + tD_x\rangle^4\langle\frac{1}{D_x}\rangle^4 \omega^{\mathrm{L}} \|_{L^2}^2 \\
				&\quad +  \langle t\rangle^{-2} \|e^{c \nu^{\frac{1}{3}} \lambda\left(D_x\right) t}\langle D_x\rangle  \langle \frac{1}{D_x}\rangle^\epsilon \omega^{\mathrm{NL}}\|_{L^2}    \|e^{c \nu^{\frac{1}{3}} \lambda\left(D_x\right) t}\langle D_x, D_y + tD_x\rangle^ 5 \langle\frac{1}{D_x}\rangle^3 \omega^{\mathrm{L}} \|_{L^2} \\
				&\qquad \times \|e^{c \nu^{\frac{1}{3}} \lambda\left(D_x\right) t} \py \omega^{\mathrm{NL}} \|_{L^2} \\
				&\quad +  \langle t\rangle^{-1}     \|e^{c \nu^{\frac{1}{3}} \lambda\left(D_x\right) t}\langle D_x, D_y+t D_x\rangle^4\langle\frac{1}{D_x}\rangle^2 \omega^{\mathrm{L}}\|_{L^2} \\
				& \qquad \times \left(\|e^{c \nu^{\frac{1}{3}} \lambda\left(D_x\right) t}\langle D_x\rangle\langle\frac{1}{D_x}\rangle^\epsilon \omega^{\mathrm{NL}}\|_{L^2}  \| e^{c \nu^{\frac{1}{3}} \lambda\left(D_x\right) t} \langle D_x \rangle
				\left|D_x\right|^{\frac{1}{3}}  \omega^{\mathrm{NL}} \|_{L^2} \rt.\\
				&\qquad \quad  \lt.+ \|e^{c \nu^{\frac{1}{3}} \lambda\left(D_x\right) t}\langle D_x\rangle\langle\frac{1}{D_x}\rangle^\epsilon\left|D_x\right|^{\frac{1}{3}}  \omega^{\mathrm{NL}}\|_{L^2}^{\frac{3}{2}}   \|e^{c \nu^{\frac{1}{3}} \lambda\left(D_x\right) t}\langle D_x\rangle\langle\frac{1}{D_x}\rangle^\epsilon \px \omega^{\mathrm{NL}}\|_{L^2}^{\frac{1}{2}}\right) \\
				&\quad+  \lt( \|e^{c \nu^{\frac{1}{3}} \lambda\left(D_x\right) t}\langle D_x\rangle \partial_x \nabla \phi^{\mathrm{NL}}\|_{L^2} + \|e^{c \nu^{\frac{1}{3}} \lambda\left(D_x\right) t}\langle D_x\rangle\left|D_x\right|^{\frac{1}{3}} \omega^{\mathrm{NL}}\|_{L^2} \rt) \\
				&  \qquad \times\|e^{c \nu^{\frac{1}{3}} \lambda\left(D_x\right) t}\langle D_x, D_y+t D_x\rangle^4\langle\frac{1}{D_x}\rangle^2\left|D_x\right|^{\frac{1}{3}} \omega^{\mathrm{L}}\|_{L^2} \|e^{c \nu^{\frac{1}{3}} \lambda\left(D_x\right) t}\langle D_x\rangle  \langle\frac{1}{D_x}\rangle^\epsilon \omega^{\mathrm{NL}}\|_{L^2} \\
				& \quad  + \nu^{-\frac{1}{3}-\frac{\delta}{6}}\|e^{c \nu^{\frac{1}{3}} \lambda\left(D_x\right)}\langle D_x\rangle\langle\frac{1}{D_x}\rangle^\epsilon \partial_x \nabla \phi^{\mathrm{NL}}\|_{L^2}\|e^{c_0 \nu\left|D_x\right|^2 t^3}\langle D_x, D_y+t D_x\rangle^4\langle\frac{1}{D_x}\rangle^2 \omega^{\mathrm{L}}\|_{L^1 \cap L^2} \\
				&\qquad  \times\left(\|e^{c \nu^{\frac{1}{3}} \lambda\left(D_x\right) t}\langle D_x\rangle\langle\frac{1}{D_x}\rangle^\epsilon \sqrt{\Upsilon} \omega^{\mathrm{NL}}\|_{L^2}+\nu^{\frac{1}{6}}\|e^{c \nu^{\frac{1}{3}} \lambda\left(D_x\right) t}\langle D_x\rangle\langle\frac{1}{D_x}\rangle^\epsilon |D_x|^\frac13 \omega^{\mathrm{NL}}\|_{L^2}\right) \\
				&\quad + \|e^{c \nu^{\frac{1}{3}} \lambda\left(D_x\right) t}\langle D_x\rangle    \langle \frac{1}{D_x}\rangle^\epsilon  \left|D_x\right|^{\frac{1}{3}}\omega^{\mathrm{NL}}\|_{L^2}^\frac32 \|e^{c \nu^{\frac{1}{3}} \lambda\left(D_x\right) t}\langle D_x\rangle\langle\frac{1}{D_x}\rangle^\epsilon \omega^{\mathrm{NL}}\|_{L^2} \\
				& \qquad \times\|e^{c \nu^{\frac{1}{3}} \lambda\left(D_x\right) t}\langle D_x\rangle\langle\frac{1}{D_x}\rangle^\epsilon \nabla  \omega^{\mathrm{NL}}\|_{L^2}^\frac12 \\
				& \quad + \|e^{c \nu^{\frac{1}{3}} \lambda\left(D_x\right) t}\langle D_x\rangle    \langle \frac{1}{D_x}\rangle^\epsilon  \omega^{\mathrm{NL}}\|_{L^2} \|e^{c \nu^{\frac{1}{3}} \lambda\left(D_x\right) t}\langle D_x\rangle\langle\frac{1}{D_x}\rangle^\epsilon \px \nabla \phi^{\mathrm{NL}}\|_{L^2} \\
				& \qquad \times\|e^{c \nu^{\frac{1}{3}} \lambda\left(D_x\right) t}\langle D_x\rangle\langle\frac{1}{D_x}\rangle^\epsilon \nabla \omega^{\mathrm{NL}}\|_{L^2} \\
				&\quad + \nu^{-\frac13 }\| e^{c \nu^{\frac{1}{3}} \lambda\left(D_x\right) t}\langle D_x\rangle\langle\frac{1}{D_x}\rangle^\epsilon |D_x|^\frac23 \theta^{\mathrm{NL}}  \|_{L^2}^2,
		\end{align*} }
		which, combined with \eqref{eq:t}, completes the proof.
	\end{proof}
	
	At last, we present the proof of Proposition \ref{prop2.3}.
	\begin{proof}[Proof of Proposition \ref{prop2.3}]
		Let us designate $T$ as the terminal point of the largest range $[T_0, T]$ such that the following hypothesis holds:
		\begin{align*}
			& \| e^{c \nu^{\frac{1}{3}} \lambda\left(D_x\right) t}\langle D_x\rangle\langle\frac{1}{D_x}\rangle^\epsilon \omega^{\mathrm{NL}}\|_{L_{[T_0, T]}^\infty L^2}^2+\nu\|e^{c \nu^{\frac{1}{3}} \lambda\left(D_x\right) t}\langle D_x\rangle\langle\frac{1}{D_x}\rangle^\epsilon \nabla \omega^{\mathrm{NL}}\|_{L_{[T_0, T]}^2L^2}^2 \\
			& \quad+ \nu^{\frac{1}{3}}\|e^{c \nu^{\frac{1}{3}} \lambda\left(D_x\right) t}\langle D_x\rangle\langle\frac{1}{D_x}\rangle^\epsilon\left|D_x\right|^{\frac{1}{3}}  \omega^{\mathrm{NL}}\|_{L_{[T_0, T]}^2 L^2}^2  + \|e^{c \nu^{\frac{1}{3}} \lambda\left(D_x\right) t}\langle D_x\rangle\langle\frac{1}{D_x}\rangle^\epsilon \partial_x \nabla \phi^{\mathrm{NL}}\|_{L_{[T_0, T]}^2 L^2}^2
			\\
			& \quad+\|e^{c \nu^{\frac{1}{3}} \lambda\left(D_x\right) t}\langle D_x\rangle\langle\frac{1}{D_x}\rangle^\epsilon \sqrt{\Upsilon\left(t, D_x, D_y\right)} \omega^{\mathrm{NL}}\|_{L_{[T_0, T]}^2 L^2}^2 \leq 2C_1\nu^{1+4\delta},\\
			& \| e^{c \nu^{\frac{1}{3}} \lambda\left(D_x\right) t}\langle D_x\rangle\langle\frac{1}{D_x}\rangle^\epsilon \langle D_x \rangle^\frac13  \theta^{\mathrm{NL}}\|_{L_{[T_0, T]}^\infty L^2}^2+\nu\|e^{c \nu^{\frac{1}{3}} \lambda\left(D_x\right) t}\langle D_x\rangle\langle\frac{1}{D_x}\rangle^\epsilon \nabla \langle D_x \rangle^\frac13  \theta^{\mathrm{NL}}\|_{L_{[T_0, T]}^2 L^2}^2 \\
			& \quad+ \nu^{\frac{1}{3}}\|e^{c \nu^{\frac{1}{3}} \lambda\left(D_x\right) t}\langle D_x\rangle\langle\frac{1}{D_x}\rangle^\epsilon\left|D_x\right|^{\frac{1}{3}}  \langle D_x \rangle ^\frac13 \theta^{\mathrm{NL}}\|_{L_{[T_0, T]}^2 L^2}^2  
			\\
			& \quad+\|e^{c \nu^{\frac{1}{3}} \lambda\left(D_x\right) t}\langle D_x\rangle\langle\frac{1}{D_x}\rangle^\epsilon \sqrt{\Upsilon\left(t, D_x, D_y\right)} \langle D_x \rangle^\frac13  \theta^{\mathrm{NL}}\|_{L_{[T_0, T]}^2 L^2}^2 \leq 2C_1'\nu^{\frac53 + 6\delta}.  		
		\end{align*}
		By Proposition \ref{lem:est of wnl}, Proposition \ref{lem:linear theta} and Corollary \ref{cor2}, we get that
		\begin{align*}
			& \| e^{c \nu^{\frac{1}{3}} \lambda\left(D_x\right) t}\langle D_x\rangle\langle\frac{1}{D_x}\rangle^\epsilon \omega^{\mathrm{NL}}\|_{L_{[T_0, T]}^\infty L^2}^2+\nu\|e^{c \nu^{\frac{1}{3}} \lambda\left(D_x\right) t}\langle D_x\rangle\langle\frac{1}{D_x}\rangle^\epsilon \nabla \omega^{\mathrm{NL}}\|_{L_{[T_0, T]}^2L^2}^2 \\
			& \quad+ \nu^{\frac{1}{3}}\|e^{c \nu^{\frac{1}{3}} \lambda\left(D_x\right) t}\langle D_x\rangle\langle\frac{1}{D_x}\rangle^\epsilon\left|D_x\right|^{\frac{1}{3}}  \omega^{\mathrm{NL}}\|_{L_{[T_0, T]}^2 L^2}^2  + \|e^{c \nu^{\frac{1}{3}} \lambda\left(D_x\right) t}\langle D_x\rangle\langle\frac{1}{D_x}\rangle^\epsilon \partial_x \nabla \phi^{\mathrm{NL}}\|_{L_{[T_0, T]}^2 L^2}^2
			\\
			& \quad+\|e^{c \nu^{\frac{1}{3}} \lambda\left(D_x\right) t}\langle D_x\rangle\langle\frac{1}{D_x}\rangle^\epsilon \sqrt{\Upsilon\left(t, D_x, D_y\right)} \omega^{\mathrm{NL}}\|_{L_{[T_0, T]}^2 L^2}^2 \\
			&\leq C_1\nu^{\frac43 + 4\delta }+ C_2\lt[ \nu^\frac16 \nu^{\frac12 + 2\delta} \nu^{\frac23 + 2\delta}  +  \nu^{\frac14}\nu^{\frac12 + 2\delta}\nu^{\frac13 + \delta}\nu^{  2\delta}   + (\nu^{\frac12 + 2\delta} + \nu^{\frac13 + 2\delta})\nu^{\frac16 + \delta}\nu^{\frac12 + 2\delta} \rt.\\
			&\lt.\qquad + \nu^{\frac13 + \delta}(\nu^{\frac1{12}}\nu^{\frac12 + 2\delta}\nu^{\frac13 + 2\delta}  + \nu^{\frac16 }\nu^{\frac32 (\frac13 + 2\delta)}\nu^{\frac12 (2\delta)}) + \nu^{-\frac13 - \frac\delta6}\nu^{\frac12 + 2\delta}\nu^{\frac13 +\delta}(\nu^{\frac12 + 2\delta}+ \nu^{\frac16}\nu^{\frac13 + 2\delta}) \rt.\\
			&\lt. \qquad   +  \nu^{\frac32(\frac13 + 2\delta)}\nu^{\frac12 + 2\delta}\nu^{\frac12 ( 2\delta)}   +  \nu^{\frac12 + 2\delta}\nu^{\frac12 + 2\delta}\nu^{ 2\delta}  + \nu^{-\frac13}\nu^{2(\frac23 + 3\delta)}\rt] \\
			&\leq  C_1\nu^{\frac43 + 4\delta }  + C_2 \nu^{1+4\delta}\left(  \nu^\frac16 + \nu^{\frac1{12} + \delta} + \nu^{\frac{5}{6} \delta} + \nu^{\frac1{4} + \delta}  \right) \leq (C_1 + C_2\nu^{\frac{5}{6} \delta} ) \nu^{1+4\delta} .
		\end{align*}
		For $\theta^{\mathrm{NL}}$, using Proposition \ref{lem:est of thetanl}, Proposition \ref{lem:linear theta} and Corollary \ref{cor2}, we have that
		\begin{align*}
			& \| e^{c \nu^{\frac{1}{3}} \lambda\left(D_x\right) t}\langle D_x\rangle\langle\frac{1}{D_x}\rangle^\epsilon \langle D_x \rangle^\frac13  \theta^{\mathrm{NL}}\|_{L_{[T_0, T]}^\infty L^2}^2+\nu\|e^{c \nu^{\frac{1}{3}} \lambda\left(D_x\right) t}\langle D_x\rangle\langle\frac{1}{D_x}\rangle^\epsilon \nabla \langle D_x \rangle^\frac13  \theta^{\mathrm{NL}}\|_{L_{[T_0, T]}^2 L^2}^2 \\
			& \quad+ \nu^{\frac{1}{3}}\|e^{c \nu^{\frac{1}{3}} \lambda\left(D_x\right) t}\langle D_x\rangle\langle\frac{1}{D_x}\rangle^\epsilon\left|D_x\right|^{\frac{1}{3}}  \langle D_x \rangle ^\frac13 \theta^{\mathrm{NL}}\|_{L_{[T_0, T]}^2 L^2}^2  
			\\
			& \quad+\|e^{c \nu^{\frac{1}{3}} \lambda\left(D_x\right) t}\langle D_x\rangle\langle\frac{1}{D_x}\rangle^\epsilon \sqrt{\Upsilon\left(t, D_x, D_y\right)} \langle D_x \rangle^\frac13  \theta^{\mathrm{NL}}\|_{L_{[T_0, T]}^2 L^2}^2 \\
			&\leq C_1'\nu^{\frac53 + 6\delta} +  C_2' \lt[ \nu^\frac16 \nu^{\frac56 + 3\delta} \nu^{\frac13 +\delta } \nu^{\frac23 + 2\delta}  + \nu^\frac14 \nu^{\frac56 + 3\delta} \nu^{\frac13 +\delta } \nu^{\frac13 + 3\delta} \rt. \\
			&\lt.\quad +\nu^{\frac13 +\delta }(\nu^{\frac1{12}} \nu^{\frac56 + 3\delta} \nu^{\frac23 + 3\delta} +  \nu^\frac16  \nu^{\frac32 (\frac23 + 3\delta)}    \nu^{\frac12 (\frac13 + 3\delta)}  ) \rt. \\
			&\lt. \quad + (\nu^{\frac56 + 3\delta}  \nu^{\frac12 + 2\delta} + \nu^{\frac23 + 3\delta} \nu^{\frac12 + 2\delta} ) \nu^{\frac12 + 2\delta}   + \nu^{-\frac13 - \frac\delta6} \nu^{\frac12 + 2\delta} \nu^{\frac23 + 2\delta}(\nu^{\frac56 + 3\delta} + \nu^\frac16 \nu^{\frac23 + 3\delta}) \rt. \\
			& \lt. \quad + \nu^{\frac32 (\frac23 + 3\delta)}  \nu^{\frac12 + 2\delta} \nu^{\frac12 (\frac13 + 3\delta)}  + \nu^{\frac56 + 3\delta} \nu^{\frac12 + 2\delta} \nu^{\frac13 + 3\delta}  + \nu^{2(\frac23 + 3\delta)}\nu^{\frac12 + 2\delta} \rt] \\
			&\leq  C_1'\nu^{\frac53 + 6\delta} +  C_2'
			\nu^{\frac{5}{3} + 6\delta}  \left(
			\nu^{\frac{1}{3}} 
			+ \nu^{\frac{1}{12} + \delta} 
			+ \nu^{\frac{1}{4} + \delta} 
			+ \nu^\delta
			+ \nu^{\frac{1}{3} + 2\delta} 
			+ \nu^{\frac{1}{6} + 2\delta} 
			+ \nu^{\frac{5}{6}\delta} 
			+ \nu^{2\delta} 
			+ \nu^{\frac{1}{6} + 2\delta}
			\right) \\
			&\leq ( C_1' +   C_2'\nu^{\frac{5}{6} \delta}   )  \nu^{\frac{5}{3} + 6\delta}.
		\end{align*}
		Choose $0<\nu_2<1$ sufficiently small such that for all $0<\nu<\nu_2$, 
		$$
		\nu^{\frac{5}{6} \delta} \max\{C_2, C_2'\} \leq \frac{1}{2} \min\{C_1, C_1'\}.
		$$
		Then, applying a standard bootstrap argument, we deduce that $T=+\infty$ and 
		\begin{equation*} 
			\begin{aligned}
				&\| e^{c \nu^{\frac{1}{3}} \lambda(D_x) t} \langle D_x \rangle \langle \tfrac{1}{D_x} \rangle^\epsilon \omega^{\mathrm{NL}} \|_{L^\infty_{[T_0, +\infty]} L^2}  + \|e^{c \nu^{\frac{1}{3}} \lambda\left(D_x\right) t}\langle D_x\rangle\langle\frac{1}{D_x}\rangle^\epsilon \partial_x \nabla \phi^{\mathrm{NL}}\|_{L_{[T_0, +\infty]}^2 L^2} \\
				&\quad + \nu^{-\frac{1}{3} - \delta} \| e^{c \nu^{\frac{1}{3}} \lambda(D_x) t} \langle D_x \rangle \langle \tfrac{1}{D_x} \rangle^\epsilon \langle D_x \rangle^{\frac{1}{3}} \theta^{\mathrm{NL}} \|_{L^\infty_{[T_0, +\infty]} L^2} 
				\lesssim \nu^{\frac{1}{2} + 2\delta},
			\end{aligned}
		\end{equation*}
		which completes the proof.
	\end{proof}

    \noindent {\bf Acknowledgments.}
	The authors would like to thank Professors Zhifei Zhang, Weiren Zhao and Hui Li for some helpful communications. W. Wang was supported by National Key R\&D Program of China (No. 2023YFA1009200) and NSFC under grant 12471219.\\
	\noindent {\bf Declaration of competing interest.}
	The authors state that there is no conflict of interest.\\
	
	\noindent {\bf Data availability.}
	No data was used in this paper.

\end{document}